\newtheorem{thm}{Theorem}[section]
\newtheorem{lem}[thm]{Lemma}
\newtheorem{prop}[thm]{Proposition}
\newtheorem{cor}[thm]{Corollary}
\theoremstyle{definition}
\newtheorem{defn}[thm]{Definition}
\newtheorem*{notn}{Notation}
\theoremstyle{remark}
\newtheorem{example}[thm]{Example}
\newtheorem{rem}[thm]{Remark}
\newtheorem*{rem*}{Remark}
\numberwithin{equation}{section} 
\numberwithin{figure}{section}
\numberwithin{table}{section}
\let\oldtocsection=\tocsection
\let\oldtocsubsection=\tocsubsection
\let\oldtocsubsubsection=\tocsubsubsection
\renewcommand{\tocsection}[2]{\hspace{-1.2em}\oldtocsection{#1}{#2}}
\renewcommand{\tocsubsection}[2]{\hspace{-.2em}\oldtocsubsection{#1}{#2}}
\renewcommand{\tocsubsubsection}[2]{\hspace{0.8em}\oldtocsubsubsection{#1}{#2}}
\DeclareRobustCommand{\gobblefive}[5]{}
\newcommand*{\SkipTocEntry}{\addtocontents{toc}{\gobblefive}}
\newcommand{\ntr}{\mathop{\mathrm{tr}}}
\newcommand{\tr}{\mathop{\mathrm{Tr}}}
\newcommand{\M}{\mathrm{M}}
\newcommand{\EE}{\mathbf{E}}
\newcommand{\spc}{\mathrm{sp}}
\newcommand{\id}{\mathbf{1}}
\newcommand{\II}{\mathbf{I}}
\newcommand{\I}{\mathrm{I}}
\newcommand{\diag}{\mathop{\mathrm{diag}}}
\newcommand{\diaginv}{\mathop{\mathrm{diag}^{-1}}}
\begin{document}

\title[Matrix concentration and free probability II]{Matrix 
concentration 
inequalities and free probability II. Two-sided bounds and applications}

\author[Bandeira]{Afonso S.\ Bandeira}
\address{Department of Mathematics, ETH Z\"urich, Switzerland}
\email{bandeira@math.ethz.ch}

\author[Cipolloni]{Giorgio Cipolloni}
\address{Department of Mathematics, University of Rome Tor Vergata,
Via della Ricerca Scientifica 1, 00133 Roma RM, Italy}
\email{cipolloni@axp.mat.uniroma2.it}

\author[Schr\"oder]{Dominik Schr\"oder}
\address{Department of Mathematics, ETH Z\"urich, Switzerland}
\email{schroeder.dominik@gmail.com}

\author[Van Handel]{Ramon van Handel}
\address{Department of Mathematics, Princeton University, Princeton, NJ 
08544, USA}
\email{rvan@math.princeton.edu}

\begin{abstract}
The first paper in this series introduced a new family of 
nonasymptotic matrix concentration inequalities that sharply capture 
the spectral properties of very general 
random matrices in terms of an associated noncommutative model. These 
methods achieved matching upper and lower bounds for smooth spectral 
statistics, but only provided upper bounds for the spectral edges. Here 
we obtain matching lower bounds for the spectral edges, completing the 
theory initiated in the first paper. 

The resulting two-sided bounds enable the study of problems that require 
an exact determination of the spectral edges to leading order, which is 
fundamentally beyond the reach of classical matrix concentration 
inequalities. To illustrate their utility, we develop two general results
that explain phase transitions of spectral outliers of a large 
class of nonhomogeneous random matrices. This enables us to 
elucidate phase transition phenomena that arise in diverse applications,
including decoding node labels on graphs, tensor PCA, contextual 
stochastic block models, and centered sample covariance matrices.
\end{abstract}

\subjclass[2010]{60B20; 
                 60E15; 
                 46L53; 
                 46L54; 
                 15B52} 

\keywords{Random matrices; matrix concentration inequalities; 
free probability}

\maketitle

\thispagestyle{empty}
{\small
\setcounter{tocdepth}{2}
\tableofcontents
}

\section{Introduction}

Let $X$ be any $d\times d$ self-adjoint random matrix with jointly 
Gaussian entries. What can we say about its spectrum? As we made no 
assumptions on the mean and covariance of the entries, classical methods 
of random matrix theory shed little light on this question. Nonetheless, 
nontrivial bounds on the spectrum are achievable at this level of 
generality by means of operator-theoretic results that are often referred 
to as matrix concentration inequalities.

The classical such result, the noncommutative Khintchine 
inequality \cite{Pis03,Buc01}, estimates any finite moment of a 
(centered) Gaussian matrix explicitly up to a 
constant factor in terms of the covariance of its entries.
Stated precisely, for any self-adjoint Gaussian random matrix $X$ with
$\mathbf{E}X=0$ we have
\begin{equation}
\label{eq:nck}
	\ntr[(\mathbf{E}X^2)^p]^{\frac{1}{2p}}
	\le
	\mathbf{E}[\ntr X^{2p}]^{\frac{1}{2p}}
	\le
	\sqrt{2p}\,
	\ntr[(\mathbf{E}X^2)^p]^{\frac{1}{2p}}
\end{equation}
for $p\in\mathbb{N}$, where we define the normalized trace $\ntr M := 
\frac{1}{d}\tr M$ for any $M\in \M_d(\mathbb{C})$.
Using the
basic fact (see \eqref{eq:whylogdmoment} below) that for any  
$M\in \M_d(\mathbb{C})$
\begin{equation}
\label{eq:logdmoment}
	\ntr[|M|^{2p}]^{\frac{1}{2p}}
	= (1+o(1))\|M\|\qquad
	\text{for }p\gg \log d,
\end{equation}
the bound
\eqref{eq:nck} also yields upper and lower 
bounds for the spectral norm $\|X\|$ up to a factor that grows 
logarithmically with dimension. Much work in the past two decades has been 
devoted to extending such bounds to a large class of non-Gaussian models, 
cf.\ \cite{Tro15} and the references therein.

Due to their generality and ease of use, matrix concentration inequalities 
have found numerous applications in pure and applied mathematics. At the 
same time, they can provide only rough bounds on the behavior of the 
spectrum that are often increasingly inaccurate in high dimension, in 
contrast to classical results in random matrix theory that become 
increasingly precise as $d\to\infty$. It has been a long-standing question 
whether there exist results at the level of generality of matrix 
concentration inequalities which can nonetheless sharply capture the 
spectral properties of many random matrix models.

Significant progress in this direction was achieved in the first part of 
this series \cite{BBV23} (inspired in part by \cite{HT05,Tro18}), which 
introduced a new family of matrix concentration inequalities that 
optimally capture the behavior of very general Gaussian random matrices to 
leading order. A key feature of these inequalities is that they do not  
bound the spectrum of $X$ directly, but rather quantify the 
deviation of the spectrum of $X$ from that of an associated 
deterministic operator $X_{\rm free}$ defined by a matrix with 
entries in a $C^*$-probability space
$(\mathcal{A},\tau)$ (we recall its definition
in section \ref{sec:main}). For example, \cite[Theorem 2.7]{BBV23} yields
the inequality
\begin{equation}
\label{eq:freenckmom}
	|\mathbf{E}[\ntr X^{2p}]^{\frac{1}{2p}} -
	({\ntr}\otimes\tau)[X_{\rm free}^{2p}]^{\frac{1}{2p}}|
	\le 2 p^{\frac{3}{4}} \tilde v(X) 
\end{equation} 
for all $p\in\mathbb{N}$, where $\tilde v(X)^4 
:=\|\mathrm{Cov}(X)\|\,\|\mathbf{E}[(X-\mathbf{E}X)^2]\|$ and 
$\mathrm{Cov}(X)$ is the $d^2\times d^2$ covariance matrix of the 
entries of $X$. Here $X$ need not be centered,
that is, both mean and covariance of $X$ are arbitrary.

Unlike \eqref{eq:nck}, which upper and lower bounds the moments of $X$ up 
to a constant factor, \eqref{eq:freenckmom} computes the moments of $X$ 
exactly to leading order when $\tilde v(X)$ is small. The latter situation 
is ubiquitous in applications. Such bounds extend also to non-Gaussian 
models by means of a universality principle \cite{TV23}. At the same time, 
tools of free probability theory \cite{HT05,Leh97} make it possible to 
compute or estimate the spectral statistics of $X_{\rm free}$ explicitly 
in terms of the mean and covariance of $X$, making \eqref{eq:freenckmom} 
genuinely applicable to concrete situations.

The inequality \eqref{eq:freenckmom} is just one example of the kind of 
results that are achieved by the theory of \cite{BBV23}; the same method 
of proof yields analogous two-sided bounds for many smooth spectral 
statistics. But arguably the most powerful aspect of this theory lies in 
its ability to capture the edges of the spectrum, which is considerably 
more delicate than the bulk spectral behavior. In this regard, however,
the theory is incomplete. For example, it is shown in 
\cite[Corollary 2.2]{BBV23} that
\begin{equation}
\label{eq:normupper}
	\mathbf{E}\|X\| \le 
	\|X_{\rm free}\| + C\tilde v(X)(\log d)^{\frac{3}{4}}
\end{equation}
for a universal constant $C$, which yields a sharp \emph{upper} bound on 
$\|X\|$ whenever $\tilde v(X)$ is small. However, unlike in 
\eqref{eq:freenckmom}, the corresponding \emph{lower} bound is missing. 
The reason for this discrepancy, as explained in \cite[\S 8.2.3]{BBV23}, 
lies in the elementary fact \eqref{eq:logdmoment}: while the norm of any 
$d\times d$ matrix can be approximated by moments of order $\log d$, it is 
far from clear whether the analogous property holds for the 
infinite-dimensional operator $X_{\rm free}$. This problem is resolved
in this paper, which completes the theory of \cite{BBV23} and opens the
door to new applications.

\subsection{Main results}

The simplest implication of the new results of this paper may be 
readily understood in the context of the above discussion: the following 
theorem extends the upper bound \eqref{eq:normupper} to a two-sided bound.

\begin{thm}
\label{thm:normtwoside}
For any $d\times d$ random matrix $X$ with jointly Gaussian entries
$$
	| \mathbf{E}\|X\| - \|X_{\rm free}\| | \le
	C\tilde v(X)(\log d)^{\frac{3}{4}},
$$
where $C$ is a universal constant. When $X$ is self-adjoint, the
same inequality holds if $\|X\|,\|X_{\rm free}\|$ are replaced by 
the upper edge of the spectrum
$\lambda_{\rm max}(X),\lambda_{\rm max}(X_{\rm free})$.
\end{thm}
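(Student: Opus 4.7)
The upper bound is \eqref{eq:normupper}, so the task is the matching lower bound $\mathbf{E}\|X\| \ge \|X_{\rm free}\| - C\tilde v(X)(\log d)^{3/4}$. By the Hermitian dilation $X \mapsto \bigl(\begin{smallmatrix}0 & X\\ X^*& 0\end{smallmatrix}\bigr)$, it suffices to prove the $\lambda_{\max}$-version in the self-adjoint case, since this dilation preserves the norm, alters $\tilde v$ only by a universal constant, and identifies $\|\cdot\|$ with $\lambda_{\max}$ on the dilated operator.

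The natural first attempt is a moment argument: $\mathbf{E}\lambda_{\max}(X) \ge \mathbf{E}(\ntr X^{2p})^{1/2p} \ge (\ntr \otimes \tau)[X_{\rm free}^{2p}]^{1/2p} - 2p^{3/4}\tilde v(X)$ by Jensen and \eqref{eq:freenckmom}. Choosing $p \asymp \log d$ makes the loss of the target size, reducing matters to a moment-to-norm inequality for $X_{\rm free}$,
\[
(\ntr \otimes \tau)[X_{\rm free}^{2p}]^{1/2p} \ge \|X_{\rm free}\| - C'\tilde v(X)(\log d)^{3/4} \qquad (p \asymp \log d).
\]
As \cite[\S 8.2.3]{BBV23} makes clear, this is exactly the obstruction to the lower edge bound: the elementary fact \eqref{eq:logdmoment} is finite-dimensional, and the rate at which moments of an infinite-dimensional operator approach its norm depends on how much spectral mass lies near the edge. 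Moreover, the naive moment-based version of this estimate yields an error $\sim \|X_{\rm free}\|/\log d$, which dominates the target whenever $\|X_{\rm free}\| \gg \tilde v(X)(\log d)^{7/4}$. Raw moments are therefore insufficient, and a finer test function is needed.

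The approach I would take replaces the monomial $x^{2p}$ by a smooth bump $\psi_\eta$ supported in $[\|X_{\rm free}\| - \eta,\,\|X_{\rm free}\|]$ with $\eta \asymp \tilde v(X)(\log d)^{3/4}$, and invokes a smooth spectral-statistics comparison of the form $|\mathbf{E}\ntr\psi_\eta(X) - (\ntr\otimes\tau)\psi_\eta(X_{\rm free})| \lesssim \tilde v(X)(\log d)^{3/4}\,\Phi(\psi_\eta)$ for an appropriate regularity functional $\Phi$; such comparisons are produced by the same moment-method machinery that yields \eqref{eq:freenckmom}. If $\mathbf{E}\lambda_{\max}(X)$ were substantially below $\|X_{\rm free}\| - \eta$, Gaussian concentration would force $\mathbf{E}\ntr \psi_\eta(X)$ to be negligible, while the deterministic trace $(\ntr\otimes\tau)\psi_\eta(X_{\rm free})$ is bounded below by the spectral mass of $X_{\rm free}$ near its edge; the comparison then delivers the desired lower bound by contradiction.

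The main obstacle is the quantitative edge regularity of $X_{\rm free}$: one must show that the spectrum of $X_{\rm free}$ charges any interval of length $\asymp \tilde v(X)(\log d)^{3/4}$ at its upper edge with mass at least a universal constant, uniformly across the class of operator-valued semicircular models arising in \cite{BBV23}. I would extract this from a perturbative analysis of the matrix Dyson equation $G(z)^{-1} = z - \mathbf{E}X - \mathcal{S}(G(z))$ for the resolvent $G(z) = (\id \otimes \tau)[(z - X_{\rm free})^{-1}]$, which should yield a square-root singularity of $\mathrm{Im}\,G$ on the intrinsic scale $\tilde v(X)$; the crucial point is to establish this sharp in $\tilde v(X)$, not in the potentially much larger quantity $\|X_{\rm free}\|$. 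Packaging this edge regularity with the smooth-statistics comparison and Gaussian concentration — all three of which must interact at the common scale $\tilde v(X)(\log d)^{3/4}$ — then completes the proof.
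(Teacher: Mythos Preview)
Your proposal follows the edge-regularity route via the matrix Dyson equation, which is the first of the two methods the paper describes in the introduction (the one implemented in the early draft \cite{BCSV23}). Two corrections are needed. First, the spectral mass of $X_{\rm free}$ in an interval of length $\asymp\tilde v(X)(\log d)^{3/4}$ at its edge is not bounded below by a universal constant: Figure~\ref{fig:bbp} shows it can be $1/d$. What suffices is a lower bound polynomial in $1/d$, since one works with moments of order $\asymp\log d$. Second, and more seriously, the MDE analysis of \cite{AEK20} on which you implicitly rely requires flatness (all entry variances comparable) and bounded $\|\EE X\|$, neither of which hold for the nonhomogeneous models treated here. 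The paper explains that these can be enforced by perturbing $X$ so that the regularity constants diverge only polynomially in $d$, followed by a quantitative refinement of \cite{AEK20}; this is the bulk of the technical work and your sketch does not address it.

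The paper itself takes a different and shorter route. Rather than proving edge density, it shows directly (Corollary~\ref{cor:xfreeres}) that for $q\ge\log d$ the operator norm of the resolvent $(z-X_{\rm free})^{-1}$ is controlled by its $L^{4q}(\ntr\otimes\tau)$-norm up to an additive error $\sigma_*(X)/(\mathrm{Im}\,z)^2$. The ingredients are Biane's ultracontractive bound $\|P(s)\|\le(k+1)^{3/2}\|P(s)\|_{L^2(\tau)}$ for degree-$k$ polynomials of free semicirculars (Lemma~\ref{lem:biane}), a polynomial approximation of the resolvent (Theorem~\ref{thm:polyres}), and a Davis--Kahan spectral perturbation (Proposition~\ref{prop:daviskah}) that shrinks large gaps in $\spc(A_0)$ so as to bound $\|A_0\|$ by a polynomial in $d$ with negligible effect on the resolvent near $z$. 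The resolvent then plays the role of your bump function, and the comparison with $X$ via \cite[Theorem~6.1]{BBV23} together with Gaussian concentration (Lemmas~\ref{lem:tailsingle}--\ref{lem:tailunif}) finishes the argument. The ultracontractive approach buys brevity and robustness: it bypasses the MDE edge analysis and its flatness hypotheses entirely, at the cost of being less informative about the actual shape of the spectral density.
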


However, our results are much more general than is suggested by 
Theorem~\ref{thm:normtwoside}. The main result of this paper provides a 
subgaussian matrix concentration inequality for the Hausdorff distance 
between the spectra of $X$ and $X_{\rm free}$. This makes it possible both 
to achieve high probability results, and to detect interior edges of the 
spectrum in addition to exterior edges. Our results will be stated in full 
generality in section \ref{sec:main} after we recall the relevant 
definitions.

\begin{figure}
\centering
\begin{tikzpicture}

\begin{scope}
\clip (4.2,0) rectangle (7.2,1);
\fill[gray!20]
(4,0) to
(4,0.4)
to[out=295,in=180] (4.5,0.11)
to[out=0,in=180] (7,0.1) to[out=0,in=95] (7.1,0)
to (4,0); 
\end{scope}

\draw[thick] (0,0) to[out=85,in=180] (2,2) 
to[out=0,in=115] (4,0.4)
to[out=295,in=180] (4.5,0.11)
to[out=0,in=180] (7,0.1) to[out=0,in=95] (7.1,0);

\draw[thick,->] (-.25,0) -- (8,0);
\draw[thick,->] (-.25,0) -- (-.25,2.5);

\draw[thick] (7.1,0) -- (7.1,-.1);
\draw[thick] (-.25,0) -- (-.25,-.1);
\draw (7.1,-.25) node {$\scriptstyle\|X_{\rm free}\|$};
\draw (-.25,-.25) node {$\scriptstyle0$};

\draw[thick,dashed] (4.2,0) -- (4.2,1);

\draw[->] (6,1) to[out=260,in=75] (5.6,0.2);
\draw (6.3,1.2) node {$\scriptstyle\text{mass }e^{-d}$};

\draw[->] (1.9,1) to[out=280,in=95] (2.2,0.2);
\draw (2,1.2) node {$\scriptstyle\text{eigenvalues of }X$};

\pgfmathsetseed{701}

\foreach \i in {0,...,10}
{
	\draw ({2*(\i/10)^(2/3)+rnd/10},0) node
	{$\scriptstyle\times$};
}
\foreach \i in {0,...,9}
{
	\draw ({4-2*(\i/10)^(2/3)-rnd/10},0) node
	{$\scriptstyle\times$};
}

\end{tikzpicture}
\caption{Illustration of a hypothetical obstruction to the validity of 
Theorem~\ref{thm:normtwoside}. The proof must show that this situation
cannot occur.\label{fig:bad}}
\end{figure}
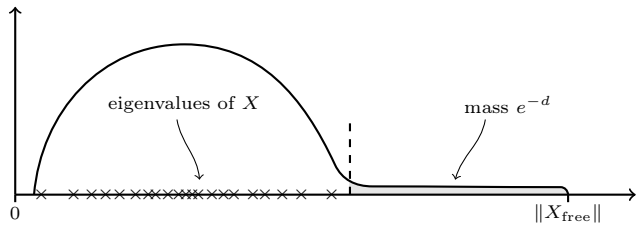

As was explained above, the key new ingredient that is needed in the proof 
of these results is that the norm of $X_{\rm free}$ (more generally, of 
its resolvent $(z-X_{\rm free})^{-1}$) is well approximated by its 
moments. That the moments are upper bounded by the norm is trivial, which 
is the reason that the upper bound \eqref{eq:normupper} was achievable in 
\cite{BBV23}. The converse direction is far from clear, however.

To understand where the difficulty lies, it is instructive to recall 
why \eqref{eq:logdmoment} holds for a self-adjoint $d\times d$ matrix $M$
with eigenvalues $\lambda_1(M),\ldots,\lambda_d(M)$: as
\begin{equation}
\label{eq:whylogdmoment}
	\ntr[|M|^{2p}] = \frac{1}{d}\sum_{i=1}^d |\lambda_i(M)|^{2p}
	\ge \frac{1}{d}\max_i|\lambda_i(M)|^{2p}
	= \frac{1}{d}\|M\|^{2p},
\end{equation}
we have $\ntr[|M|^{2p}]^{\frac{1}{2p}}\ge d^{-\frac{1}{2p}}\|M\|= 
(1+o(1))\|M\|$ for $p\gg\log d$. Thus \eqref{eq:logdmoment} holds because 
the empirical spectral distribution of $M$ has mass $\frac{1}{d}$ at 
$\|M\|$. However, it is not clear why the spectral distribution of the 
infinite-dimensional operator $X_{\rm free}$ should also have large mass 
near its edges. For example, Figure~\ref{fig:bad} illustrates a 
hypothetical scenario where the spectral distribution of $X_{\rm free}$ 
only has mass $e^{-d}$ near its upper edge; in this case, it would be 
extremely unlikely that any eigenvalue of $X$ (each of which has mass 
$\frac{1}{d}$) is located near $\|X_{\rm free}\|$, contradicting 
Theorem~\ref{thm:normtwoside}. Our proof must therefore show
that such situations cannot occur.

We have in fact developed two distinct methods of proof to achieve this 
aim. The first method is based on the work of Alt, Erd\H{o}s, and Kruger 
\cite{AEK20}, who made a detailed study of the behavior of the spectral 
distribution of $X_{\rm free}$ near the edges of the spectrum under two 
strong regularity assumptions: flatness, which requires in particular that 
all entries of $X$ have variance of the same order, and a uniform bound on 
$\|\mathbf{E} X\|$. Both assumptions are highly problematic in our 
setting, as they rule out precisely the kind of nonhomogeneous models that 
matrix concentration inequalities aim to capture. However, in fixed 
dimension $d$, any random matrix can be perturbed with negligible effect 
on the spectrum so that it satisfies these regularity assumptions with 
constants that diverge polynomially with $d$. One can therefore rule out 
situations as in Figure~\ref{fig:bad} by a combination of spectral 
perturbation theory and a quantitative refinment of the results of 
\cite{AEK20}. A proof of our main results by this approach appears in an 
early draft of this paper \cite{BCSV23}.

An entirely different approach arises, in a slightly different setting, in 
the work of Bordenave and Collins \cite[\S 7.1]{BC23}. The idea introduced 
there is that the comparison between moments and norm of $X_{\rm free}$ is 
closely connected to the ultracontractive properties of free operators. 
Exploiting this idea in the present setting considerably shortens the 
proof of our main results, as it replaces the rather technical work based 
on \cite{AEK20} by operator-theoretic tools. We therefore present the 
latter approach in this paper. Despite this simplification, 
ultracontractivity does not suffice in itself to achieve satisfactory 
bounds, so that spectral perturbation arguments 
remain crucial for the proof. Details and further discussion are given 
in section \ref{sec:ultra}.

While the above discussion focused on Gaussian models, random matrix 
models that arise in applications are often non-Gaussian. This will not 
present any additional complications, however, as the theory of this paper 
extends directly to many non-Gaussian situations using the universality 
principle of \cite{TV23}.

\subsection{Phase transitions}
\label{sec:introapp}

The development of inequalities that exactly capture the spectral edges to 
leading order enables the study of problems that are fundamentally 
beyond the reach of classical matrix concentration inequalities, such as 
phase transition phenomena for spectral outliers of nonhomogeneous random 
matrices. We will develop this theme in some detail, both as a compelling 
illustration of our general theory and for its independent interest.

\begin{figure}
\centering
\begin{tikzpicture}

\begin{scope}
\clip (-.25,0) rectangle (7.2,2.1);

\fill[gray!20] (6,0) circle (0.25);
\draw[thick] (2,0) circle (2);
\draw[thick] (6,0) circle (0.25);
\end{scope}

\draw[thick,<->] (8,0) -- (-.25,0) -- (-.25,2.5);

\pgfmathsetseed{671}

\foreach \i in {0,...,10}
{
	\draw ({1.9*(\i/10)^(2/3)+rnd/10},0) node
	{$\scriptstyle\times$};
}
\foreach \i in {0,...,9}
{
	\draw ({4-1.9*(\i/10)^(2/3)-rnd/10},0) node
	{$\scriptstyle\times$};
}

\draw (5.9,0) node {$\scriptstyle\times$};

\draw[thick] (6,0) -- (6,-.1);
\draw[thick] (2,0) -- (2,-.1);
\draw[thick] (-.01,0) -- (-.01,-.1);
\draw[thick] (4.01,0) -- (4.01,-.1);
\draw (6,-.25) node {$\scriptstyle \theta+\frac{1}{\theta}$};
\draw (-.1,-.25) node {$\scriptstyle-2$};
\draw (2,-.25) node {$\scriptstyle0$};
\draw (4.01,-.25) node {$\scriptstyle2$};

\draw[->] (6.4,1.2) to[out=260,in=75] (6,0.4);
\draw (6.55,1.4) node {$\scriptstyle\text{mass }\frac{1}{d}$};

\end{tikzpicture}
\caption{Spectral distribution of $X_{\rm free}$ for the
spiked Wigner model.\label{fig:bbp}}
\end{figure}
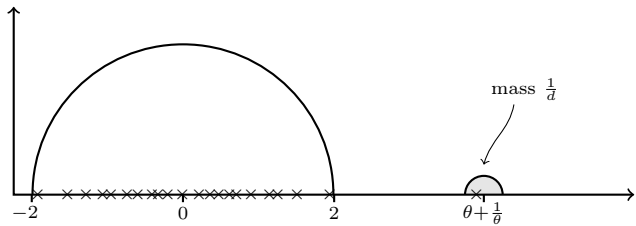

The classical study of phase transitions for spectral outliers due to 
Baik, Ben Arous, and P\'ech\'e \cite{BBP05} has led to a large body of 
work, see, e.g., the survey \cite{CD17}. Let us briefly recall one of the 
standard models in this area. Let $G$ be a $d\times d$ self-adjoint 
matrix with i.i.d.\ standard Gaussian entries above the diagonal, and let
$$
	X = \theta\,vv^* + G
$$
where $\theta\ge 0$ and $\|v\|=1$. This is the spiked 
Wigner model. It is classical that the largest eigenvalue of $G$ is 
$2+o(1)$. For the random matrix $X$, however, we observe a phase 
transition: its largest eigenvalue is still $2+o(1)$ when 
$\theta\le 1$, while an outlier eigenvalue emerges at 
$\theta+\frac{1}{\theta}+o(1)$ when $\theta>1$.

While such a sharp transition is clearly inaccessible by classical matrix 
concentration inequalities, it can be recovered as an easy exercise from 
Theorem \ref{thm:normtwoside} using an explicit formula of Lehner 
(see \eqref{eq:lehner} below) for the largest eigenvalue of $X_{\rm 
free}$.\footnote{
	It may appear surprising that $X_{\rm free}$, which has a 
	continuous spectrum, can detect the presence of a single 
	eigenvalue of $X$. This paradox is explained in Figure 
	\ref{fig:bbp}: the spectral distribution of $X_{\rm free}$ has a 
	small connected component of mass $\frac{1}{d}$ which produces the 
	outlier eigenvalue of $X$.
}
As our theory applies to arbitrarily structured random matrices, however, 
it enables the study of such phenomena in far more general 
situations:
\begin{enumerate}[$\bullet$]
\itemsep\medskipamount
\item We may replace $G$ by a much more general 
nonhomogeneous model, including models that are highly sparse
or whose entries exhibit strong dependence;
\item We may replace $\theta\,vv^*$ by much more
general perturbations whose rank may diverge at a rate determined by 
the fluctuations of the spectral statistics of $G$;
\item We can obtain fully nonasymptotic results that provide
explicit guarantees for the given random matrix $X$ of fixed dimension 
$d$.
\end{enumerate}
None of these features are readily accessible by prior work in this area. 
Most methods that have been used to study outliers of random matrices rely 
strongly on mean-field (that is, near-homogeneous) random matrix structure 
\cite{CD17,BM21,LL22}. Some results for sparse models have appeared only 
very recently \cite{LM22,Au23}, but rely on restrictive assumptions and 
make use of specialized tools.

In principle, our theory can be applied to an arbitrarily structured 
nonohomogeneous spiked model. However, the phase transition behavior will 
be determined in general by a complicated variational principle that does 
not have a simple analytic solution. Instead, we will primarily focus our 
attention on two general classes of models that can be understood in an 
explicit closed form:
\begin{enumerate}[1.]
\itemsep\medskipamount
\item 
Models of the form $X=A+G$ where $A$ has rank $o(d)$ and the noise matrix 
$G$ is \emph{isotropic}, that is, $\mathbf{E}[G]=0$ and $\mathbf{E}[G^2]=\id$.
\item
A class of explicitly solvable \emph{anisotropic} models due to
Krzakala et al.\ (e.g., \cite{KKP23}) that arise by
linearizing message passing algorithms of statistical physics.
\end{enumerate}
These general results are established in strong nonasymptotic form that 
can be applied in a black box manner in complex situations.

It should be emphasized that the specific features of spiked models are 
completely irrelevant to our sharp matrix concentration theory: our theory 
reduces the study of arbitrarily structured random matrices to the 
question of understanding $X_{\rm free}$. The difficulty in understanding 
the above models lies entirely in the latter deterministic question. To 
this end, we will make fundamental use of a formula of Lehner
\begin{equation}
\label{eq:lehner}
	\lambda_{\rm max}(X_{\rm free}) = 
	\inf_{M>0} \lambda_{\rm max}\big(
	\mathbf{E} X + 
	M^{-1} + 
	\mathbf{E}[(X-\mathbf{E}X)M(X-\mathbf{E}X)]
	\big)
\end{equation}
for arbitrary self-adjoint random matrices $X$ (cf.\ 
\cite[Corollary 1.5]{Leh97} and \cite[\S 4.1]{BBV23}).
In our study of spiked models, we will develop methods to 
capture the structure of this variational principle that may be useful
also in other applications.

\subsection{Applications}

As a direct consequence of the general phase transition results described 
above, we are able to elucidate phase transition phenomena that arise in a 
diverse set of applied mathematical problems, including:
\begin{enumerate}[$\bullet$]
\itemsep\medskipamount
\item 
We establish the sharp threshold for recovery of node labels on graphs 
from noisy observations of the edges by a simple spectral method 
\cite{ABBS14,Cuc15}, under natural geometric assumptions on the 
underlying graph.
\item We establish the sharp detection threshold
of an important family of spectral methods for the tensor PCA 
problem \cite{MR14,WEM19}, providing the only
sharp phase transition known to date for any tensor PCA algorithm for 
symmetric tensors.
\item We prove a conjecture of Pak, Ko, and Krzakala \cite[Conjecture 
1.6]{KKP23} on the outlier phase transitions in a block-structured 
variant of the spiked Wigner model.\footnote{%
	This conjecture is proved here in a strong nonasymptotic form.
	A weaker asymptotic form of this conjecture was proved 
	independently in \cite{MKK24} concurrently with our work.
}
\item We prove a conjecture of Duranthon, Krzakala, and Zdeborov\'a
(personal communication) on the recovery threshold of contextual 
stochastic block models \cite{DMMS18} by means of a spectral 
algorithm that is based on ideas of statistical physics.
\end{enumerate}
These applications will be introduced in detail in section
\ref{sec:appl}.
Each of these problems
is of considerable interest in its own right and
had remained open prior to this work. We emphasize 
that the theory of the present paper not only makes it possible to resolve 
such problems, but that each of these diverse applications is deduced with 
minimal effort from the general results described in section 
\ref{sec:introapp}.

At the same time, problems that do not belong to one of the general 
classes described in the previous section can still be studied on a 
case-by-case basis using similar techniques. As an illustration, we will 
prove a conjecture of Han \cite{Han22} on a phase transition phenomenon of 
centered sample covariance matrices.

\subsection{Organization of this paper}

This paper is organized as follows. In section \ref{sec:main}, we 
formulate the main results of this paper: two-sided sharp matrix 
concentration inequalities for spectral edges, and ``master theorems'' for 
the two general classes of spiked models described above. In section 
\ref{sec:appl}, we will illustrate the applicability of these results in a 
diverse range of applied mathematical problems.

The rest of the paper is devoted to the proofs of these results.
Section \ref{sec:ultra} develops the key ultracontractive estimates
that are needed in the proofs of two-sided bounds on the spectral edges.
The latter will subsequently be proved in section \ref{sec:pfmain}.
Sections \ref{sec:pfiso} and \ref{sec:pfaniso} are devoted to the proofs 
of the two ``master theorems'' for spiked models. Finally, section 
\ref{sec:pfappl} contains proofs of some results needed in the 
applications.

\subsection{Notation}

The following notation will be used throughout this paper. For a bounded 
operator $X$, we denote by $\|X\|$ its operator norm and by 
$|X|:=(X^*X)^{\frac{1}{2}}$ its modulus. If $X$ is self-adjoint, we denote 
its spectrum as $\spc(X)$, and denote by $\lambda_{\rm 
max}(X):=\sup\spc(X)$ and $\lambda_{\rm min}(X):=\inf\spc(X)$ the upper 
and lower edges of the spectrum. The identity operator or matrix is 
denoted as $\id$. The algebra of $d\times d$ matrices with entries in a 
$*$-algebra $\mathcal{A}$ is denoted as $\mathrm{M}_d(\mathcal{A})$, and 
its subspace of self-adjoint elements is $\mathrm{M}_d(\mathcal{A})_{\rm 
sa}$. For $M\in\mathrm{M}_d(\mathbb{C})$, we denote by $\tr M := 
\sum_{i=1}^d M_{ii}$ its unnormalized trace and by $\ntr M := 
\frac{1}{d}\tr M$ its normalized trace.
Finally, we use the convention that when a functional is followed by 
square brackets, it is applied before any other operations; for example,
$\mathbf{E}[X]^\alpha := (\mathbf{E}X)^\alpha$ and
$\ntr[M]^\alpha := (\ntr M)^\alpha$.

\section{Main results}
\label{sec:main}

\subsection{Basic model and matrix parameters}

Throughout this paper, we fix $d\ge 2$ and consider a $d\times d$ 
random matrix $X$ with jointly Gaussian entries. Such a random matrix 
can always be represented (for some $n\in\mathbb{N}$) as 
\begin{equation}
\label{eq:x}
	X = A_0 + \sum_{i=1}^n A_i g_i,
\end{equation}
where $g_1,\ldots,g_n$ are i.i.d.\ standard 
Gaussian variables and $A_0,\ldots,A_n\in\mathrm{M}_d(\mathbb{C})$.

To the given random matrix $X$, we associate a corresponding 
noncommutative model $X_{\rm free}\in\mathrm{M}_d(\mathcal{A})\simeq
\mathrm{M}_d(\mathbb{C})\otimes\mathcal{A}$ defined as
\begin{equation}
\label{eq:xfree}
	X_{\rm free} = A_0\otimes \id +
	\sum_{i=1}^n A_i\otimes s_i,
\end{equation}
where $s_1,\ldots,s_n$ is a free semicircular family in some 
$C^*$-probability space $(\mathcal{A},\tau)$. We refer to
\cite[\S 4.1]{BBV23} for a very brief introduction and
\cite{NS06} for a pedagogical treatment of the basic notions 
of free probability. The main outcome of the sharp matrix 
concentration theory of \cite{BBV23} and of the present paper is 
that, in many situations, the spectrum of $X$ is well approximated
by that of $X_{\rm free}$. 

Before we formulate our main results, we recall the definitions of
the most common parameters that will appear in our bounds. In the 
following, we denote by
$$
	\mathrm{Cov}(X)_{ij,kl} :=
	\EE\big[
	(X-\EE X)_{ij} \overline{(X-\EE X)_{kl}}\big]
$$
the $d^2\times d^2$ covariance matrix of the entries of $X$.
We now 
define 
the parameters
\begin{align*}
	\sigma(X)^2 &:=
	\|\EE[(X-\EE X)^*(X-\EE X)]\| \vee \|\EE[(X-\EE X)(X-\EE X)^*]\|,\\
	v(X)^2 &:= 
	\sup_{\tr |M|^2=1} \EE[|{\tr[M(X-\EE X)]}|^2] =
	\|\mathrm{Cov}(X)\|,
\\
	\sigma_*(X)^2 &:=
	\sup_{\|v\|=\|w\|=1} \EE[|\langle v,(X-\EE X)w\rangle|^2],
\end{align*}
as well as the frequently appearing combination
$$
	\tilde v(X)^2 := v(X)\sigma(X).
$$
We emphasize that these parameters depend only on $\mathrm{Cov}(X)$ and 
not on $\EE X=A_0$. All these parameters are readily expressed explicitly 
in terms of $A_1,\ldots,A_n$ and we have 
$\sigma_*(X)\le\sigma(X)$ and $\sigma_*(X)\le v(X)$, 
cf.\ \cite[\S 2.1]{BBV23}. 

\begin{rem}
Roughly speaking, these parameters will play the following roles in our 
theory: $\sigma(X)$ controls the scale of the spectrum of $X-\EE X$ (e.g., 
by \eqref{eq:nck}); $v(X)$ controls the degree to which the 
spectrum of $X$ is approximated by $X_{\rm free}$; and 
$\sigma_*(X)$ captures the fluctuations of the spectrum of $X$.
\end{rem}

Finally, let us note that we will often restrict attention in the 
formulation and proofs of our results to self-adjoint random matrices $X$ 
(that is, random matrices defined by 
$A_0,\ldots,A_n\in\mathrm{M}_d(\mathbb{C})_{\rm sa}$). This entails no 
loss of generality, however, as results on the spectrum of self-adjoint 
operators extend directly to the singular value spectrum of 
non-self-adjoint operators by means of a standard dilation argument 
\cite[Remark 2.6]{BBV23}. For this reason, we will formulate some of our 
main results for self-adjoint matrices whenever this leads to greater 
notational simplicity.

\subsection{Sharp matrix concentration inequalities}

\subsubsection{Gaussian random matrices}

Recall that the Hausdorff distance between two subsets 
$A,B\subseteq\mathbb{R}$ of the real line is defined as
$$
	\mathrm{d_H}(A,B) :=
	\inf\{\varepsilon>0: A\subseteq B+[-\varepsilon,\varepsilon]
	\text{ and }B\subseteq A+[-\varepsilon,\varepsilon]\}.
$$
The following is the central result of this paper.

\begin{thm}
\label{thm:main}
For any $d\times d$ self-adjoint Gaussian random matrix $X$, we have
$$
	\mathbf{P}\big[
	\mathrm{d_H}(\spc(X),\spc(X_{\rm free}))
	> C\tilde v(X) (\log d)^{\frac{3}{4}}
	+ C\sigma_*(X)t
	\big] \le e^{-t^2}
$$
for all $t\ge 0$, where $C$ is a universal constant.
\end{thm}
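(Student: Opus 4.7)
The plan is to separate a Gaussian concentration step (which yields the $\sigma_*(X)t$ term) from an expected-Hausdorff-distance step (which yields the $\tilde v(X)(\log d)^{3/4}$ term), and then to handle the expected distance via its two one-sided inclusions. Writing $X = A_0 + \sum_i A_i g_i$, the functional $F(g) := \mathrm{d_H}(\spc(X(g)), \spc(X_{\rm free}))$ is $1$-Lipschitz in the operator norm, and via the variational characterization of eigenvalues it has Gaussian Lipschitz constant at most $\sigma_*(X)$ in the coordinates $g_i$. Applying Borell--TIS then yields the sub-Gaussian tail at scale $\sigma_*(X)$ and reduces the theorem to proving
$$
\EE[\mathrm{d_H}(\spc(X), \spc(X_{\rm free}))] \le C\tilde v(X)(\log d)^{3/4}.
$$

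For the easy inclusion $\spc(X) \subseteq \spc(X_{\rm free}) + [-\varepsilon,\varepsilon]$, I would directly extend the method of \cite{BBV23} that produced \eqref{eq:normupper}: applied to a shifted operator $X-z$, or more flexibly to a smoothed resolvent $(\eta^2 + (z-X)^2)^{-p}$ in order to detect interior edges, the argument combines \eqref{eq:whylogdmoment} on the finite-dimensional side with the trivial bound $({\ntr}\otimes\tau)[M^{2p}] \le \|M\|^{2p}$ on the free side, ruling out eigenvalues of $X$ far from $\spc(X_{\rm free})$. The reverse inclusion $\spc(X_{\rm free}) \subseteq \spc(X) + [-\varepsilon,\varepsilon]$ is the hard direction, and the hypothetical obstruction of Figure~\ref{fig:bad} is exactly the scenario the proof must exclude. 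I would reformulate it using $\mathrm{d}(z, \spc(Y)) = \|(z-Y)^{-1}\|^{-1}$: fixing $z$ close to $\spc(X_{\rm free})$ and a smoothing scale $\eta$, apply the two-sided moment comparison of \cite{BBV23} in the spirit of \eqref{eq:freenckmom} to $(\eta^2+(z-X)^2)^{-p}$, so that $\EE[\ntr(\eta^2+(z-X)^2)^{-p}]^{1/2p}$ is close to its free counterpart. On the $X$-side, \eqref{eq:whylogdmoment} converts the moment directly into $\|(z-X)^{-1}\|^{-1}$ up to a factor $d^{1/2p}$, and hence into $\mathrm{d}(z,\spc(X))$.

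The crux of the argument is the converse moment-to-norm comparison on the free side,
$$
({\ntr}\otimes\tau)\bigl[(\eta^2+(z-X_{\rm free})^2)^{-p}\bigr]^{1/2p}
\ \gtrsim\ \|(z-X_{\rm free})^{-1}\|
$$
for $p$ of order $\log d$. As emphasized by Figure~\ref{fig:bad}, this is far from automatic: it would fail whenever the spectral measure of $X_{\rm free}$ has only exponentially small mass near $z$. I would obtain it from the ultracontractive estimate for the semigroup generated by $X_{\rm free}$ developed in Section~\ref{sec:ultra}, following the strategy of \cite[\S 7.1]{BC23}. Ultracontractivity alone does not suffice in our generality, however, since the covariance structure of $X$ may be arbitrary; the plan is therefore to combine it with a deterministic spectral perturbation that replaces $X$ by a nearby matrix for which the ultracontractive constants are controlled, and then to undo the perturbation via standard spectral stability. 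Choosing $p \asymp \log d$, optimizing $\eta$, and applying a net argument in $z$ over $\spc(X_{\rm free})$, all error terms balance at the level $\tilde v(X)(\log d)^{3/4}$. The main obstacle is precisely this ultracontractive lower bound and its robustness across the edge geometries allowed by arbitrary covariance structures.
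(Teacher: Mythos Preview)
Your proposal is correct and follows essentially the same route as the paper: moment comparison from \cite{BBV23} on the resolvent, the ultracontractive moment-to-norm bound for $X_{\rm free}$ from Section~\ref{sec:ultra}, a spectral perturbation argument to tame the error, and a net over $\spc(X_{\rm free})$. Two small points are worth correcting. First, your organization (concentrate $\mathrm{d_H}$ around its mean, then bound the mean) differs slightly from the paper, which proves the tail bound directly by concentrating the resolvent norm $\|(z-X)^{-1}\|$ at each $z$ (Lemma~\ref{lem:tailsingle}), then union-bounding over the net (Lemma~\ref{lem:tailunif}); both work, and yours is arguably cleaner. Second, your diagnosis that ultracontractivity alone fails ``since the covariance structure of $X$ may be arbitrary'' mislocates the obstruction: the raw ultracontractive bound (Theorem~\ref{thm:polyres}) produces an error proportional to $\|X_{\rm free}\|$, which blows up when $\|A_0\|=\|\EE X\|$ is large, not because of the covariance. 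The spectral perturbation (Proposition~\ref{prop:daviskah}, Theorem~\ref{thm:xfreeres}) acts on $A_0$---shrinking gaps in $\spc(A_0)$ far from $\mathrm{Re}\,z$---to replace $\|X_{\rm free}\|$ by $d\sigma(X)$, which is then absorbed via $\sigma(X)\le\sqrt{d}\,\sigma_*(X)$.
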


Theorem \ref{thm:main} controls the entire spectrum of $X$ and $X_{\rm 
free}$. As the spectral edges are often of special interest, we spell 
out the following simple corollary.

\begin{cor}
\label{cor:norm}
Let $X$ be an arbitrary (not necessarily self-adjoint) $d\times d$
Gaussian random matrix. Then we have
$$
	\mathbf{P}\big[
	|\|X\|-\|X_{\rm free}\||
	> C\tilde v(X) (\log d)^{\frac{3}{4}}
	+ C\sigma_*(X)t
	\big] \le e^{-t^2}
$$
for all $t\ge 0$ and
$$
	|\mathbf{E}\|X\|-\|X_{\rm free}\||
	\le C\tilde v(X) (\log d)^{\frac{3}{4}},
$$
where $C$ is a universal constant.  If $X$ is self-adjoint, the   
same inequalities hold if $\|X\|,\|X_{\rm free}\|$ are replaced by
$\lambda_{\rm max}(X),\lambda_{\rm max}(X_{\rm free})$ or by
$\lambda_{\rm min}(X),\lambda_{\rm min}(X_{\rm free})$.
\end{cor}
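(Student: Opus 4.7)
The plan is to deduce Corollary~\ref{cor:norm} from Theorem~\ref{thm:main} by two routine reductions: a Hausdorff-to-edge reduction and a self-adjoint dilation reduction.

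For the self-adjoint case with $\lambda_{\rm max}$ or $\lambda_{\rm min}$, I would begin from the elementary fact that any nonempty compact sets $A,B\subseteq\mathbb{R}$ satisfy
\begin{equation*}
	\max\bigl(|\sup A-\sup B|,\ |\inf A-\inf B|\bigr)\le \mathrm{d_H}(A,B),
\end{equation*}
which is immediate from the definition of Hausdorff distance. Applying this with $A=\spc(X)$ and $B=\spc(X_{\rm free})$ and invoking Theorem~\ref{thm:main} yields the desired tail bound for $\lambda_{\rm max}(X)$ and $\lambda_{\rm min}(X)$ at once.

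For the general (not necessarily self-adjoint) case involving $\|X\|$, I would pass to the $2d\times 2d$ self-adjoint dilation
\begin{equation*}
	\tilde X:=\begin{pmatrix}0 & X\\ X^* & 0\end{pmatrix},
\end{equation*}
whose spectrum is the symmetrized singular value spectrum of $X$, so that $\lambda_{\rm max}(\tilde X)=\|X\|$. The associated free operator $\tilde X_{\rm free}$ has the analogous block form, and hence $\|\tilde X_{\rm free}\|=\|X_{\rm free}\|$. It is straightforward to check (see \cite[Remark 2.6]{BBV23}) that $\tilde v(\tilde X)$ and $\sigma_*(\tilde X)$ agree with $\tilde v(X)$ and $\sigma_*(X)$ up to universal constants; combining this with $(\log 2d)^{3/4}\le C(\log d)^{3/4}$ for $d\ge 2$ and applying the self-adjoint tail bound to $\tilde X$ produces the stated tail bound for $\|X\|$.

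Finally, to obtain the expectation bound I would integrate the tail. The key parameter inequality is $\sigma_*(X)\le\tilde v(X)$, which follows from $\sigma_*(X)^2\le\sigma(X)v(X)=\tilde v(X)^2$ (using $\sigma_*(X)\le\min(\sigma(X),v(X))$ noted in section~\ref{sec:main}). After the change of variables $s=C\tilde v(X)(\log d)^{3/4}+C\sigma_*(X)t$ one obtains
\begin{equation*}
	\EE\bigl|\|X\|-\|X_{\rm free}\|\bigr|
	\le C\tilde v(X)(\log d)^{3/4}+C\sigma_*(X)\int_0^\infty e^{-t^2}\,dt
	\le C'\tilde v(X)(\log d)^{3/4},
\end{equation*}
and the deterministic inequality $|\EE\|X\|-\|X_{\rm free}\||\le \EE|\|X\|-\|X_{\rm free}\||$ concludes this case; the same integration handles $\lambda_{\rm max}$ and $\lambda_{\rm min}$ in the self-adjoint setting. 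There is no genuine obstacle: the corollary is a formal packaging of Theorem~\ref{thm:main}, combining that theorem with the dilation trick, the Hausdorff-to-edge inequality, and the tail-to-expectation integration.
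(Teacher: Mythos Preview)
Your proposal is correct and follows essentially the same approach as the paper: reduce to spectral edges via the Hausdorff distance, handle the non-self-adjoint case by the $2d\times 2d$ dilation of \cite[Remark~2.6]{BBV23}, and derive the expectation bound by integrating the tail using $\sigma_*(X)\le\tilde v(X)$.
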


Theorem \ref{thm:main} and Corollary \ref{cor:norm} will be proved in 
section \ref{sec:pfmain}.
Note that Theorem~\ref{thm:normtwoside} in the introduction is merely a 
special case of Corollary \ref{cor:norm}.

\subsubsection{Non-Gaussian random matrices}

While the above inequalities are formulated for Gaussian random matrices, 
random matrices that arise in applications are often non-Gaussian. The 
Gaussian case is nonetheless of central importance, as the behavior of 
many non-Gaussian matrices can be understood in terms of an associated 
Gaussian model. For ease of reference, we presently state two general 
results of this kind that will be used in the applications in section 
\ref{sec:appl}.

One widely used non-Gaussian random matrix model is
\begin{equation}
\label{eq:series}
	Z = Z_0 + \sum_{i=1}^n Z_i,
\end{equation}
where $Z_0\in\mathrm{M}_d(\mathbb{C})$ is a nonrandom matrix and
$Z_1,\ldots,Z_n$ are arbitrary independent $d\times d$ random matrices
with $\EE Z_i=0$. Such models arise naturally in many applications 
\cite{Tro15,TV23}. We presently state a universality principle 
\cite[Theorem 2.6]{TV23} that reduces the study of such models to the 
Gaussian case.

\begin{thm}[\cite{TV23}]
\label{thm:univ}
Let $Z$ be a $d\times d$ self-adjoint random matrix as in 
\eqref{eq:series}, and suppose that $\|Z_i\|\le R$ a.s.\ for 
$i=1,\ldots,n$. Let $X$ be the $d\times d$ 
Gaussian random matrix whose entries have the same mean and covariance as 
those of $Z$. Then
$$
	\mathbf{P}\big[
	\mathrm{d_H}(\spc(Z),\spc(X)) > 
	C\sigma_*(X)t^{\frac{1}{2}} +
	CR^{\frac{1}{3}}\sigma(X)^{\frac{2}{3}}t^\frac{2}{3} +
	CRt\big] \le de^{-t}
$$
for all $t\ge 0$, where $C$ is a universal constant.
\end{thm}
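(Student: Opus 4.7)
The plan is to reduce the Hausdorff-distance estimate to a comparison of smooth spectral statistics $\mathbf{E}[\tr f(Z)]$ and $\mathbf{E}[\tr f(X)]$, execute that comparison by a Lindeberg replacement of the $Z_i$ by $X_i$, and upgrade the resulting expected inequality to a high-probability bound via matrix concentration. Throughout, Theorem~\ref{thm:main} applied to the Gaussian model $X$ can be used as a black box to transfer control of $\spc(X_{\rm free})$ to $\spc(X)$.

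First, I would reduce to smooth statistics. If $\spc(Z) \not\subseteq \spc(X) + [-\varepsilon,\varepsilon]$, then some eigenvalue $\lambda$ of $Z$ sits at distance at least $\varepsilon$ from $\spc(X)$; for a smooth bump $f$ of width $\sim \varepsilon$ supported in a neighborhood of $\lambda$ disjoint from $\spc(X)$, one has $\tr[f(Z)] \ge 1$ while $\tr[f(X)] = 0$. A net argument over $\lambda$, combined with Theorem~\ref{thm:main}, reduces the Hausdorff bound to showing $\tr[f(Z)] = o(1)$ with the claimed probability for each such $f$, and the symmetric direction (eigenvalues of $X$ far from $\spc(Z)$) is handled identically.

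Next, I would compare $\mathbf{E}[\tr f(Z)]$ with $\mathbf{E}[\tr f(X)]$ by Lindeberg exchange. Defining the interpolation $W^{(k)} := Z_0 + \sum_{i<k} X_i + \sum_{i>k} Z_i$, telescope
$$
\mathbf{E}[\tr f(Z) - \tr f(X)] = \sum_{k=1}^n \mathbf{E}\bigl[\tr f(W^{(k)} + Z_k) - \tr f(W^{(k)} + X_k)\bigr].
$$
A third-order Taylor expansion in each summand kills the zeroth, first, and second order terms by the matching of means and covariances. The cubic remainder is represented via the Helffer--Sjöstrand formula as an integral of three resolvents of $W^{(k)}$ contracted with three copies of $Z_k$ (respectively $X_k$). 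Using the pathwise bound $\|Z_k\| \le R$ and Gaussian tails for $\|X_k\|$, this produces an error of order $R\,\sigma(X)^2\,\eta^{-2}$ (times seminorms of $f$), where $\eta \sim \varepsilon$ is the smoothing scale of the bump.

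Finally, I would pass from expectations to high probability via concentration. Fluctuations of $\tr[f(Z)]$ around its mean are controlled by an Efron--Stein/Bernstein inequality adapted to the sum-of-independent-bounded-matrices structure of $Z$, giving a subgaussian regime at scale $\sigma_*(X)\,t^{1/2}$ and a subexponential regime at scale $R\,t$. Combining with the Lindeberg error and optimizing over $\eta$ yields the mixed term $R^{1/3}\sigma(X)^{2/3}\,t^{2/3}$, and a union bound over an $\varepsilon$-net of size $O(d)$ absorbs the prefactor $d$ in the failure probability. The main obstacle is the concentration step: extracting the sharper $\sigma_*(X)$ scaling (rather than $\sigma(X)$) in the Gaussian regime requires a Ward-identity-type manipulation of the resolvent variance that genuinely uses the matrix structure of the derivatives of $\tr f$, not merely scalar bounds on $f$ itself; this is exactly the delicate ingredient that distinguishes this universality principle from a classical Chatterjee-style invariance bound.
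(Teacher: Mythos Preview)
The paper does not prove this theorem. Theorem~\ref{thm:univ} is quoted verbatim from \cite[Theorem~2.5]{TV23} as a black-box input; the theorem header carries the citation, and the surrounding text makes clear that it is being stated, not established. So there is no ``paper's own proof'' to compare against.

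Your sketch is broadly in the spirit of how such universality principles are proved in \cite{TV23}: reduce spectral containment to smooth test functions, run a Lindeberg swap exploiting the matched first two moments so that the error is cubic in the summands, and upgrade to high probability via matrix concentration. One point to flag: you invoke Theorem~\ref{thm:main} of the present paper to pin down $\spc(X)$ near the deterministic set $\spc(X_{\rm free})$ so that your bump functions can be chosen non-randomly. That is a legitimate device, but it is not how \cite{TV23} proceeds, and it would make Theorem~\ref{thm:univ} logically downstream of Theorem~\ref{thm:main} rather than an independent input. In \cite{TV23} the comparison is carried out via resolvent moments and interpolation directly, with the deterministic anchor being $X_{\rm free}$ rather than the random $X$; the Hausdorff statement between $\spc(Z)$ and $\spc(X)$ then follows by triangle inequality through $\spc(X_{\rm free})$, using only the one-sided inclusions available at the time. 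If you want a self-contained proof, you should route through $X_{\rm free}$ rather than through Theorem~\ref{thm:main}.
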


Theorem \ref{thm:univ} shows that $Z$ behaves as a Gaussian random matrix 
$X$, while Theorem \ref{thm:main} shows that $X$ behaves as its 
noncommutative model $X_{\rm free}$. The combination of these two theorems 
therefore provides a powerful tool to study a large class of non-Gaussian 
random matrices. A variant of Theorem \ref{thm:univ} that is applicable 
when $Z_i$ are unbounded may be found in \cite[Theorem 2.7]{BBV23}.

A non-Gaussian model of a different nature arises in the study of sample 
covariance matrices, which requires an understanding of the spectra of 
quadratic polynomials of a Gaussian random matrix $X$ such as $XX^*-\EE 
XX^*$. Such models are captured by the following quadratic analogue of 
Theorem \ref{thm:main}.

\begin{thm}
\label{thm:quad}
Let $X$ be any (not necessarily self-adjoint) $d\times d$ Gaussian random 
matrix, and let $B\in\mathrm{M}_d(\mathbb{C})_{\rm sa}$. Then we have
$$
	\mathbf{P}\big[
	\mathrm{d_H}(\spc(XX^*+B),
		\spc(X_{\rm free}X_{\rm free}^* + B\otimes\id))
	>
	C\{\|X_{\rm free}\|+\|B\|^{\frac{1}{2}}\}t
	+Ct^2
	\big]
	\le e^{-\frac{t^2}{\sigma_*(X)^2}}
$$
for all $t\ge \tilde v(X)(\log d)^{\frac{3}{4}}$,
where $C$ is a universal constant.
\end{thm}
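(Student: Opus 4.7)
The plan is to reduce Theorem \ref{thm:quad} to the linear matrix concentration statement of Theorem \ref{thm:main} via a Schur-complement linearization of the quadratic form $XX^* + B$. For a scaling parameter $r > 0$ to be chosen later, introduce the $2d \times 2d$ self-adjoint Gaussian matrix
\[
L_r(z) := \begin{pmatrix} B - z\id & rX \\ rX^* & -r^2\id \end{pmatrix},
\]
together with its free counterpart $L_{r,\mathrm{free}}(z)$ obtained by replacing $X$ with $X_{\rm free}$. A direct Schur-complement computation with the lower-right $-r^2\id$ block yields $\det L_r(z) = (-r^2)^d \det(XX^* + B - z\id)$, so $z \in \spc(XX^* + B)$ iff $0 \in \spc(L_r(z))$, with the analogous equivalence in the free setting. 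Since only the off-diagonal Gaussian block depends on $X$, the relevant matrix parameters are independent of $z$ and satisfy $\tilde v(L_r(z)) = r\tilde v(X)$ and $\sigma_*(L_r(z)) \lesssim r\sigma_*(X)$.

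Theorem \ref{thm:main} applied to $L_r(z)$ at each fixed $z$ then gives
\[
\mathrm{d_H}\bigl(\spc(L_r(z)),\,\spc(L_{r,\mathrm{free}}(z))\bigr) \le C r \tilde v(X) (\log d)^{3/4} + C r \sigma_*(X) s
\qquad \text{w.p.\ } \ge 1 - e^{-s^2}.
\]
To use this for all $z$ simultaneously, I would take a deterministic net in $z$ of size polynomial in $\|X_{\rm free}\|^2 + \|B\|$ (large enough, via Corollary \ref{cor:norm}, to contain both spectra with high probability), apply a union bound, and interpolate between net points using the Lipschitz identity $\|L_r(z_1) - L_r(z_2)\| = |z_1 - z_2|$. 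The resulting $\sqrt{\log N}$ inflation of $s$ is absorbed into the $\tilde v(X)(\log d)^{3/4}$ term, since $\sigma_*(X) \le \tilde v(X)$ and $\log N$ is at most logarithmic in the scale of the problem.

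The key translation back to the quadratic form proceeds as follows. Suppose $z \in \spc(XX^* + B)$, so $0 \in \spc(L_r(z))$; then there exists $\mu \in \spc(L_{r,\mathrm{free}}(z))$ with $|\mu| \le C r \delta$, where $\delta := \tilde v(X)(\log d)^{3/4} + \sigma_*(X) s$. A second Schur complement applied to $L_{r,\mathrm{free}}(z) - \mu\id$ (with lower-right block $-(r^2 + \mu)\id$) shows
\[
z + \mu \in \spc\!\left(B \otimes \id + \frac{r^2}{r^2 + \mu}\,X_{\rm free}X_{\rm free}^*\right).
\]
Provided $|\mu| \le r^2/2$, the operator-norm perturbation $\bigl|\tfrac{r^2}{r^2 + \mu} - 1\bigr| \|X_{\rm free}\|^2 \le 2|\mu|\|X_{\rm free}\|^2/r^2$ combined with Weyl's inequality gives
\[
\mathrm{dist}\bigl(z,\,\spc(X_{\rm free}X_{\rm free}^* + B\otimes\id)\bigr) \le |\mu| + \frac{2|\mu|\|X_{\rm free}\|^2}{r^2} \le C r \delta + \frac{C \|X_{\rm free}\|^2 \delta}{r}.
\]
Choosing $r := \|X_{\rm free}\| + \|B\|^{1/2} + c\delta$ for a suitable universal constant $c$ ensures $r \ge 2C\delta$ (hence $|\mu|\le r^2/2$) and $\|X_{\rm free}\|^2/r^2 \le 1$, so the right-hand side is at most $C(\|X_{\rm free}\| + \|B\|^{1/2})\delta + C\delta^2$, matching the theorem. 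The reverse inclusion (each $z' \in \spc(X_{\rm free}X_{\rm free}^* + B\otimes\id)$ is close to $\spc(XX^* + B)$) is symmetric, except the perturbation then involves $\|X\|^2$, which I control via the high-probability bound $\|X\| \le \|X_{\rm free}\| + O(\delta)$ provided by Corollary \ref{cor:norm}.

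The main obstacle is the interplay between the uniformization over $z$, the optimization over $r$, and the regime $\|X_{\rm free}\|,\|B\|^{1/2} \lesssim \delta$, in which the bound must degenerate to the purely quadratic $C\delta^2$ piece. Notably, no direct $\|B\|$-dependence enters the core estimate (only $\|X_{\rm free}\|$ appears in the perturbation); the $\|B\|^{1/2}$ in the theorem arises solely from the freedom to take $r \gtrsim \|B\|^{1/2}$ as a safe scaling that keeps the perturbation factor $\|X_{\rm free}\|^2/r^2$ bounded across all regimes of $\|X_{\rm free}\|$ relative to $\|B\|^{1/2}$, obviating any case analysis.
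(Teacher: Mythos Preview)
Your approach differs substantially from the paper's. The paper linearizes via a single fixed $3d\times 3d$ self-adjoint matrix
\[
\breve X_\varepsilon = \begin{pmatrix} 0 & X & B_\varepsilon^{1/2} \\ X^* & 0 & 0 \\ B_\varepsilon^{1/2} & 0 & 0\end{pmatrix},
\qquad B_\varepsilon := B + (\|B\|+4\varepsilon^2)\id,
\]
the Hermitian dilation of the $d\times 2d$ block $[\,X\ \ B_\varepsilon^{1/2}\,]$. Its nonzero spectrum is exactly $\pm\spc\big((XX^*+B_\varepsilon)^{1/2}\big)$, so \emph{one} application of Theorem~\ref{thm:main} to $\breve X_\varepsilon$ controls the square-root spectrum directly; the shift by $\|B\|+4\varepsilon^2$ keeps everything bounded away from zero, and the passage from square roots back to the original spectrum is what produces the factor $\|X_{\rm free}\|+\|B\|^{1/2}$. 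No parameter $z$, no family of matrices, no union bound.

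Your $z$-dependent $2d\times 2d$ linearization has a genuine gap in the uniformization step. To get the Hausdorff bound you need $\mathrm{d_H}\big(\spc L_r(z),\spc L_{r,\mathrm{free}}(z)\big)\le Cr\delta$ simultaneously for all $z$ in an interval of length $\asymp\|X_{\rm free}\|^2+\|B\|$, with mesh $\asymp r\delta$. Hence the net has cardinality
\[
N \;\asymp\; \frac{\|X_{\rm free}\|+\|B\|^{1/2}}{\delta}+1.
\]
Since $\|X_{\rm free}\|$ and $\|B\|$ depend on $A_0$ and $B$, while $\tilde v(X),\sigma_*(X)$ (and hence $\delta$) do not, this ratio can be made arbitrarily large at fixed $d$. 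Your claim that ``$\log N$ is at most logarithmic in the scale of the problem'' and can be absorbed into the $\tilde v(X)(\log d)^{3/4}$ term is therefore false in general: the extra $\sigma_*(X)\sqrt{\log N}$ incurred by the union bound is not dominated by any term with a universal constant. This is exactly the obstruction that motivates the spectral-perturbation machinery of Section~\ref{sec:ultra}, where considerable effort goes into making all estimates independent of the scale of the deterministic part. The paper's dilation sidesteps the entire issue by encoding the full quadratic spectrum in a single matrix to which Theorem~\ref{thm:main} is applied once.
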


Theorem \ref{thm:quad} will be proved in section \ref{sec:pfmain}.

\begin{rem}
Theorem \ref{thm:quad} can be extended in two directions. 
On the one hand, we may consider models where $X$ itself is 
non-Gaussian as in \cite[\S 3.4]{TV23}. On the other hand, the method of 
proof of Theorem \ref{thm:main} can be adapted to bound
general noncommutative polynomials $P(X_1,\ldots,X_m)$ of Gaussian 
random matrices $X_1,\ldots,X_m$ in terms of their noncommutative models 
$P(X_{1,\rm free},\ldots,X_{m,\rm free})$. As such extensions digress
from the main theme of this paper, we do not develop them further here.
\end{rem}

\subsection{Phase transitions: isotropic case}
\label{sec:isophase}

The theorems stated above explain the spectral properties of very general 
random matrices in terms of the noncommutative model $X_{\rm free}$. To 
understand specific phenomena, it therefore remains to understand the 
corresponding behavior of $X_{\rm free}$. We presently formulate a number 
of results that enable the study of spectral outliers in a broad range of 
models.

It will be convenient to define the function
$$
	\mathrm{B}(\theta) := \begin{cases}
	2 & \text{for }\theta\le 1,\\
	\theta+\frac{1}{\theta} & \text{for }\theta>1.
	\end{cases}
$$
As was discussed in section \ref{sec:introapp}, this function describes 
the largest eigenvalue of the classical spiked Wigner model. The following 
result may be viewed as a far-reaching generalization of this phenomenon.

\begin{thm}
\label{thm:bbp}
Let $X$ be any $d\times d$ self-adjoint random matrix.
Suppose that
$\EE[(X-\EE X)^2]=\id$
and that $\EE X$ has rank $r$ with $\sigma_*(X)\sqrt{r}\le 1$. Then
$$
	|\lambda_{\rm max}(X_{\rm free}) -
	\mathrm{B}(\lambda_{\rm max}(\EE X))|
	\le 2\sigma_*(X)\sqrt{r}.
$$
\end{thm}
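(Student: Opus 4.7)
My plan is to derive Theorem~\ref{thm:bbp} from Lehner's variational formula \eqref{eq:lehner}, which reduces $\lambda_{\rm max}(X_{\rm free})$ to a scalar optimization over positive $d\times d$ matrices. Set $A:=\EE X$, $Y:=X-\EE X$, let $P$ denote the orthogonal projection onto the range of $A$ (so $\mathrm{rank}\,P=r$), and write $\theta:=\lambda_{\rm max}(A)$. The isotropy assumption $\EE[Y^2]=\id$ yields two identities I will use repeatedly: $\ntr(\EE[YMY])=\ntr(M)$ for every $M$, and $Q_P:=\EE[YPY]$ has trace $r$ and operator norm at most $r\sigma_*(X)^2$ (by applying the definition of $\sigma_*$ to each column of an orthonormal basis of $\mathrm{range}\,P$).

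For the upper bound, I would apply Lehner with the two-parameter ansatz $M=aP+b(\id-P)$, taking $a:=1/(\sigma_*(X)\sqrt r)$ large and tuning $b:=\min(1,1/\theta)$ in each regime. Direct computation using isotropy yields
\[
A+M^{-1}+\EE[YMY] = (b+b^{-1})\id + A + (a^{-1}-b^{-1})P + (a-b)Q_P,
\]
and $\|Q_P\|\le r\sigma_*(X)^2$ controls the last term by $\sigma_*(X)\sqrt r$. Evaluating the spectrum separately on the range of $P$ and on its orthogonal complement, and then optimizing over $b$, yields $\lambda_{\rm max}(X_{\rm free})\le\mathrm{B}(\theta)+2\sigma_*(X)\sqrt r$.

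For the lower bound, the plan is to show that for \emph{every} $M>0$, $\lambda_{\rm max}(A+M^{-1}+\EE[YMY])\ge \mathrm{B}(\theta)-2\sigma_*(X)\sqrt r$, so that Lehner's infimum inherits the bound. The starting point is the trace estimate $\ntr(M^{-1}+\EE[YMY])=\ntr(M^{-1})+\ntr(M)\ge 2$, obtained from $\ntr(M)\ntr(M^{-1})\ge 1$; together with a Weyl inequality this handles the subcritical regime $\theta\le 1$. For $\theta>1$, I would test against the top eigenvector $v_1$ of $A$ and invoke the operator Cauchy--Schwarz $\mathrm{tr}(MQ_{v_1})\cdot\mathrm{tr}(M^{-1}Q_{v_1})\ge 1$, which follows from $\mathrm{tr}\,Q_{v_1}=1$ with $Q_{v_1}:=\EE[Yv_1v_1^*Y]$. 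Combined with the crucial norm bound $\|Q_{v_1}\|\le\sigma_*(X)^2$, which forces $Q_{v_1}$ to be spread over at least $\sigma_*^{-2}$ eigendirections, this is the mechanism that should generate the $1/\theta$ free-probability correction of BBP type.

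The principal obstacle I anticipate is precisely this supercritical lower bound: a crude test vector supplies only $\lambda_{\rm max}\ge\max(\theta,2)$, and extracting the additional $1/\theta$ term that interpolates between these two regimes requires an $M$-adapted test vector mixing $v_1$ with the bulk directions carried by $Q_{v_1}$ in proportions dictated by $M$. The final error $2\sigma_*(X)\sqrt r$ then arises, as in the upper bound, from passing from a single top spike to the full $r$-dimensional spike subspace via the compressed estimate $\|Q_P\|\le r\sigma_*^2$, which explains the precise form of the error claimed in the theorem.
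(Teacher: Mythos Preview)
Your upper bound is essentially the paper's: the ansatz $M=aP+b(\id-P)$ with $a=(\sigma_*(X)\sqrt r)^{-1}$ and $b=\min(1,1/\theta)$ is exactly what the paper does after a trivial change of variables, and your computation is correct.

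The lower bound, however, has a genuine gap. In the subcritical regime your trace argument gives $\lambda_{\max}(M^{-1}+\EE[YMY])\ge 2$, but no Weyl inequality transfers this to $\lambda_{\max}(A+M^{-1}+\EE[YMY])$: the matrix $A$ may have eigenvalues as negative as you like, and a rank-$r$ additive perturbation can pull the top eigenvalue down arbitrarily far (Weyl gives $\lambda_1(A+B)\ge\lambda_{r+1}(B)$, but the trace bound says nothing about $\lambda_{r+1}(M^{-1}+\EE[YMY])$). In the supercritical regime you correctly identify the difficulty, but the route you sketch---an $M$-adapted test vector mixing $v_1$ with the directions of $Q_{v_1}$---is not carried out, and it is not clear how to make it yield the clean $1/\theta$ term with error exactly $2\sigma_*(X)\sqrt r$.

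The paper's device resolves both issues at once and is simpler than what you propose. First compress: let $Q$ project onto the orthogonal complement of the \emph{negative} eigenspace of $A$ (rank $r'\le r$), and use $\lambda_{\max}(X_{\rm free})\ge\lambda_{\max}((Q\otimes\id)X_{\rm free}(Q^*\otimes\id))$. On the compressed space one now has $Q A Q^*\ge 0$, which is exactly what was missing. Second, bound the covariance term below uniformly: isotropy plus your own observation $\|Q_P\|\le r\sigma_*(X)^2$ give
\[
Q\,\EE\!\big[YQ^*MQY\big]Q^* \;\ge\; \lambda_{\min}(M)\,Q\,\EE[YQ^*QY]Q^* \;\ge\; (1-r\sigma_*(X)^2)\,\lambda_{\min}(M)=:\beta\,\lambda_{\min}(M).
\]
Now, using $QAQ^*\ge 0$ and $M^{-1}\ge 0$ separately,
\[
\lambda_{\max}\!\big(QAQ^*+M^{-1}+\beta\lambda_{\min}(M)\big)
\;\ge\;
\max\{\theta,\;\lambda_{\min}(M)^{-1}\}+\beta\lambda_{\min}(M),
\]
and minimizing over $t=\lambda_{\min}(M)$ gives $\beta^{1/2}\mathrm{B}(\beta^{-1/2}\theta)\ge\mathrm{B}(\theta)-2\sigma_*(X)\sqrt r$ by an elementary estimate. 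No test-vector construction, no case split: the compression step is the missing idea.
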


When combined with sharp matrix concentration inequalities, this yields a 
phase transition for a broad range of models: the isotropic assumption 
$\EE[(X-\EE X)^2]=\id$ holds in many (including sparse or dependent) 
applications, while $\sigma_*(X)\sqrt{r}=o(1)$ typically allows the rank 
to grow rapidly with dimension. 

\begin{rem}
\label{rem:bulk}
Let us briefly explain in what sense Theorem \ref{thm:bbp} captures an 
outlier of the spectrum. Denote by $\mu_X:=\frac{1}{d}\sum_{i=1}^d
\delta_{\lambda_i(X)}$ the empirical spectral distribution of $X$.
We begin by recalling the deterministic fact (see, e.g., \cite{HS02}) that 
$$
	\bigg|
	\int f\,d\mu_X - \int f\,d\mu_{X-\EE X}
	\bigg| =
	|{\ntr f(X)}-{\ntr f(X-\EE X)}| 
	\le \frac{r}{d}\|f'\|_{L^1(\mathbb{R})} = o(1)
$$
for any $f\in C^\infty_0(\mathbb{R})$
when $\EE X$ has rank $r=o(d)$.

Suppose the relevant matrix parameters are 
sufficiently small that the spectrum of $X$ is well modelled by that of
$X_{\rm free}$. Then Theorem~\ref{thm:bbp} (applied to $X\leftarrow X-\EE 
X$) implies that $\lambda_{\rm max}(X-\EE X)=2+o(1)$. Therefore, by the 
above deterministic fact, a fraction $1-o(1)$ of the eigenvalues of $X$ 
are bounded by $2+o(1)$.

On the other hand, Theorem~\ref{thm:bbp} (applied to $X$ itself) shows 
that $X$ has an eigenvalue at $\mathrm{B}(\lambda_{\rm max}(\EE 
X))>2+\varepsilon$ when $\lambda_{\rm max}(\EE X)>1$. This outlier 
eigenvalue is therefore bounded away from the bulk of the spectrum.
\end{rem}

When there is an outlier, it is expected that the eigenvector associated 
to the largest eigenvalue of $X$ yields information on the eigenvectors of 
$\EE X$. This behavior is readily deduced from Theorem \ref{thm:bbp} 
by the following device. Here $1_A(M)$ is defined by functional calculus,
that is, it is the projection onto the space spanned by the 
eigenvectors of $M\in\mathrm{M}_d(\mathbb{C})_{\rm sa}$ with eigenvalues 
in $A\subseteq\mathbb{R}$.

\begin{thm}
\label{thm:eigenvec}
Let $X$ be any $d\times d$ self-adjoint random matrix with
$\lambda_{\rm max}(\EE X)=:\theta$, and fix $0<t\le\delta$.
Define $X_s := X + s 1_{(\theta-\delta,\theta]}(\EE X)$, and suppose that
for $s\in\{0,\pm t\}$
$$
	\mathbf{P}\big[
	|\lambda_{\rm max}(X_s)
	-\mathrm{B}(\lambda_{\rm max}(\EE X_s))|
	> \varepsilon\big]\le \rho.
$$
Then any unit norm eigenvector $v_{\rm max}(X)$ of 
$X$ with eigenvalue $\lambda_{\rm max}(X)$ satisfies
$$
	\mathbf{P}\bigg[
	\bigg|
	\langle v_{\rm max}(X),
	1_{(\theta-\delta,\theta]}(\EE X)
	v_{\rm max}(X)\rangle 
	-
	\bigg(
	1-\frac{1}{\theta^2}
	\bigg)_+	
	\bigg| >
	t+\frac{2\varepsilon}{t}
	\bigg]\le 3\rho.
$$
\end{thm}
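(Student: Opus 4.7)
The plan is a convexity/subgradient argument exploiting the fact that $X_s$ is an affine function of $s$. Write $\Pi := 1_{(\theta-\delta,\theta]}(\mathbf{E}X)$ and $\alpha := \langle v_{\rm max}(X),\Pi v_{\rm max}(X)\rangle$, so that $X_s = X + s\Pi$. The variational formula
$$f(s) := \lambda_{\rm max}(X_s) = \sup_{\|u\|=1}\big[\langle u,Xu\rangle + s\langle u,\Pi u\rangle\big]$$
exhibits $f$ as a supremum of affine functions in $s$, hence convex. Since $u=v_{\rm max}(X)$ attains the supremum at $s=0$, the number $\alpha$ is a subgradient of $f$ at $0$, which yields the discrete-derivative sandwich
$$\frac{f(0)-f(-t)}{t}\;\le\;\alpha\;\le\;\frac{f(t)-f(0)}{t}.$$

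Next I read off the spectrum of $\mathbf{E}X_s$. Since $\Pi$ is a spectral projector of $\mathbf{E}X$, the two matrices commute, so $\mathbf{E}X_s$ has the same eigenvectors as $\mathbf{E}X$ with precisely those eigenvalues lying in $(\theta-\delta,\theta]$ shifted by $s$. For $|s|\le t\le\delta$ the shifted top eigenvalue $\theta+s$ remains maximal, giving $\lambda_{\rm max}(\mathbf{E}X_s)=\theta+s$ for $s\in\{0,\pm t\}$. By a union bound over these three values of $s$, with probability at least $1-3\rho$ the hypothesized estimates hold simultaneously, producing $|f(s)-\mathrm{B}(\theta+s)|\le\varepsilon$. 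Plugging this into the sandwich yields
$$\frac{\mathrm{B}(\theta)-\mathrm{B}(\theta-t)}{t}-\frac{2\varepsilon}{t}\;\le\;\alpha\;\le\;\frac{\mathrm{B}(\theta+t)-\mathrm{B}(\theta)}{t}+\frac{2\varepsilon}{t}.$$

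The remaining step is the deterministic observation that each secant slope of $\mathrm{B}$ is within $t$ of $\mathrm{B}'(\theta)=(1-1/\theta^2)_+$. This follows because $\mathrm{B}$ is convex and $C^1$ on $\mathbb{R}$ with $\mathrm{B}'(\theta)=0$ for $\theta\le 1$ and $\mathrm{B}''(\theta)=2/\theta^3\le 2$ for $\theta>1$, so $\mathrm{B}'$ is $2$-Lipschitz; integrating this gives the Taylor-type estimate $|\mathrm{B}(\theta+h)-\mathrm{B}(\theta)-h\,\mathrm{B}'(\theta)|\le h^2$ for any real $h$, which bounds each secant slope within $t$ of $(1-1/\theta^2)_+$. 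Combining yields $|\alpha-(1-1/\theta^2)_+|\le t+2\varepsilon/t$, which is the claim.

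I expect the main content of the argument to lie in recognising the convexity structure and the validity of $\alpha$ as a subgradient of $f$; everything else is a short calculation. A subtle point worth double checking is that $\lambda_{\rm max}(\mathbf{E}X_s)$ really does equal $\theta+s$ for the full range $s\in\{0,\pm t\}$, since this is where the hypothesis $t\le\delta$ enters: without it, the finite-difference comparison between $f$ and $\mathrm{B}$ would break down, as other eigenvalues of $\mathbf{E}X_{-t}$ could overtake the shifted top eigenvalue.
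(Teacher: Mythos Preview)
Your proof is correct and follows essentially the same approach as the paper. The paper isolates your subgradient sandwich as a separate lemma (Lemma~\ref{lem:perturb}), proved by the identical variational argument, and your Taylor estimate for $\mathrm{B}$ is exactly the content of Lemma~\ref{lem:Bderiv}; the remaining union-bound step and the verification that $\lambda_{\rm max}(\mathbf{E}X_s)=\theta+s$ for $|s|\le t\le\delta$ are the same in both.
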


For example, suppose the largest eigenvalue $\theta$ of $\EE X$ is simple,
and that there is a gap of size $\delta$ between the 
largest and second-largest eigenvalues of $\EE X$. Then Theorem 
\ref{thm:eigenvec} yields $|\langle v_{\rm max}(\EE X),v_{\rm 
max}(X)\rangle|^2= (1-\frac{1}{\theta^2})_++o(1)$,
provided that the lower-order terms that arise from the sharp matrix 
concentration inequalities and from Theorem \ref{thm:bbp} (i.e., 
$\varepsilon$ in Theorem \ref{thm:eigenvec}) are $o(\delta)$.

We have deliberately formulated the above results independently of any 
specific random matrix model so that they can be applied equally easily 
in the context of either Theorem \ref{thm:main} or Theorem \ref{thm:univ}; 
applications to concrete models will be illustrated in in section 
\ref{sec:appl}. The above results are proved in section \ref{sec:pfiso}.

\begin{rem}
While we have focused our treatment of outliers on the largest 
eigenvalue, one may also investigate 
other outliers in the spectrum using Theorem \ref{thm:main}.
As the above results already suffice for all the applications
we will consider, we omit further development of such questions in the
interest of space.
\end{rem}

\subsection{Phase transitions: an anisotropic model}
\label{sec:krzdefn}

In principle, there is nothing special about the isotropic assumption 
$\EE[(X-\EE X)^2]=\id$ made in Theorem \ref{thm:bbp}: we can establish an 
analogous phase transition for an arbitrarily structured self-adjoint 
random matrix $X$ so that $\EE X$ has low rank. However, for anisotropic 
models the phase transition will generally not admit a simple description; 
it is determined by the solution to the variational problem 
\eqref{eq:lehner}, which cannot be expected to yield an analytic solution 
in the absence of some special structure. We presently discuss a class of 
anisotropic models where such special structure is present.

To define the model, we fix the following parameters:
\medskip
\begin{enumerate}[$\bullet$]
\itemsep\medskipamount
\item A partition of $[d]=C_1\sqcup\cdots\sqcup C_q$ 
into $q$ disjoint sets of size $|C_k|>1$.
\item A matrix $B\in\mathrm{M}_q(\mathbb{R})_{\rm sa}$ with
nonnegative entries.
\item A vector $z\in\mathbb{R}^d$ such that
$\sum_{i\in C_k} z_i^2 = |C_k|$ for $k=1,\ldots,q$.
\end{enumerate}
\medskip
Let $\mathbf{B}\in\mathrm{M}_{d}(\mathbb{R})_{\rm sa}$ be the block
matrix defined by $\mathbf{B}_{ij}:=B_{kl}$ for all $i\in C_k,j\in C_l$.
Then we consider the $d\times d$ random matrix $X$ defined as\footnote{%
Here $1_d\in\mathbb{R}^d$ is the vector whose entries are all equal to 
one. As the analysis of this model involves both $q$- and $d$-dimensional
vectors and matrices, we will indicate the dimension of the ones vector 
$1_d$ and of the identity matrix $\id_d$ in subscript to avoid confusion.}
\begin{equation}
\label{eq:krz}
\begin{aligned}
	&X = \frac{1}{d}\diag(z)\mathbf{B}\diag(z) + X_\varnothing,
	\\
	&X_{\varnothing} = 
	-\diag\bigg(\frac{1}{d}\mathbf{B} 1_d\bigg)
	+ G,
\end{aligned}
\end{equation}
where $G$ is a $d\times d$ real symmetric random matrix whose entries
$(G_{ij})_{i\ge j}$ are independent with $\mathbf{E}[G_{ij}]=0$
and $\mathbf{E}[G_{ij}^2]=\frac{1+1_{i=j}}{d}\mathbf{B}_{ij}$. 
To interpret the structure of this model, note that $X$ is a low-rank 
perturbation of $X_{\varnothing}$ when $q\ll d$ as 
$\mathrm{rank}(\mathbf{B})\le q$. 
We aim to understand the resulting
outlier phase transition.

\begin{rem}
The significance of this model is that random matrices of the form 
\eqref{eq:krz} arise in applications as a linearization of message 
passing algorithms of statistical physics. Two such applications are 
discussed in sections \ref{sec:krz} and \ref{sec:csbm}.
\end{rem}

In the following, we will assume that the nonnegative matrix $B$ is 
irreducible. This entails no loss of generality: if $B$ is reducible, then 
$X$ is block-diagonal and it suffices to consider its irreducible blocks. 
We denote by $c,b\in\mathbb{R}^q$ the vector with entries
$c_k=\frac{|C_k|}{d}$ and the Perron-Frobenius (right) eigenvector $b>0$ 
of $B\diag(c)$.

We are now ready to formulate the analogue of Theorem \ref{thm:bbp} in the 
present setting. Such a phase transition was first conjectured in
\cite{KKP23} (see section \ref{sec:krz}).

\begin{thm}
\label{thm:krz}
Let $X,X_\varnothing$ be defined as in \eqref{eq:krz}, and suppose all
the above assumptions are in force. Then there exist 
$\lambda,\lambda_\varnothing\in\mathbb{R}$ so that
$$
	|\lambda_{\rm max}(X_{\rm free})-\lambda| \le
	\sqrt{\frac{8\|B1_q\|_{\infty}}{d}},
	\qquad
	|\lambda_{\rm max}(X_{\varnothing,\rm free})-\lambda_\varnothing| 
	\le
	\sqrt{\frac{8\|B1_q\|_{\infty}}{d}},
$$
where $\lambda_\varnothing$ satifies
$$
	\lambda_\varnothing \le 
	1-
	\frac{\min_i b_i}{\max_i b_i}
	\big(1-\lambda_{\rm max}(\diag(c)^{\frac{1}{2}}B
		\diag(c)^{\frac{1}{2}})^{\frac{1}{2}}\big)^2,
$$
while $\lambda$ exhibits the following phase transition.
\begin{enumerate}[a.]
\item 
If $\lambda_{\rm max}(\diag(c)^{\frac{1}{2}}B\diag(c)^{\frac{1}{2}}) < 1$,
then $\lambda_\varnothing=\lambda<1$.
\item 
If $\lambda_{\rm max}(\diag(c)^{\frac{1}{2}}B\diag(c)^{\frac{1}{2}}) = 1$,
then $\lambda_\varnothing=\lambda=1$.
\item 
If $\lambda_{\rm max}(\diag(c)^{\frac{1}{2}}B\diag(c)^{\frac{1}{2}}) > 1$,
then $\lambda_\varnothing<\lambda=1$.
\end{enumerate}
\end{thm}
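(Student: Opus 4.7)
The strategy is to apply Lehner's variational formula \eqref{eq:lehner} with an ansatz for $M$ adapted to the block structure of the covariance, thereby reducing the spectral analysis of the free models to an explicitly solvable $q$-dimensional minimax problem.

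The vectors $\hat{e}_k := z|_{C_k}/\sqrt{|C_k|}$ (the restriction of $z$ to $C_k$, extended by zero) are orthonormal by the hypothesis $\sum_{i \in C_k} z_i^2 = |C_k|$, and a direct computation shows that the rank-$q$ perturbation $\frac{1}{d}\diag(z)\mathbf{B}\diag(z)$ acts as $\diag(c)^{1/2}B\diag(c)^{1/2}$ on $V := \mathrm{span}(\hat{e}_1,\ldots,\hat{e}_q)$ and vanishes on $V^\perp$. I would therefore restrict the infimum in \eqref{eq:lehner} to positive matrices $M$ block-diagonal with respect to $\mathbb{C}^d = V \oplus \bigoplus_l W_l$, where $W_l := C_l \ominus \mathrm{span}(\hat{e}_l)$; such $M$ is specified by scalars $\alpha_l > 0$ on each $W_l$ and a positive $q\times q$ matrix $N$ on $V$. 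Using the covariance identities $\mathbf{E}[GMG]_{ij} = M_{ij}v_{ij}$ for $i \neq j$ and $\mathbf{E}[GMG]_{ii} = \sum_k M_{kk} v_{ik}$, one verifies that $\mathbf{E}X + M^{-1} + \mathbf{E}[(X-\mathbf{E}X)M(X-\mathbf{E}X)]$ is also block-diagonal with respect to this decomposition, acting as the scalar
\[
f_l(\alpha) := \alpha_l^{-1} + \sum_{k=1}^q c_k B_{lk}(\alpha_k - 1)
\]
on each $W_l$ and as a positive $q\times q$ matrix $H(\alpha,N)$ built from $N^{-1}$ and $\diag(c)^{1/2}B\diag(c)^{1/2}$ on $V$, up to an $O(\|B1_q\|_\infty/d)$ correction from the $\delta_{ij}$-term in the variances of $G$. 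Lehner's formula then gives $\lambda_{\max}(X_{\varnothing,\rm free}) \leq \lambda_\varnothing + O(\sqrt{\|B1_q\|_\infty/d})$ and $\lambda_{\max}(X_{\rm free}) \leq \lambda + O(\sqrt{\|B1_q\|_\infty/d})$, where I set $\lambda_\varnothing := \inf_\alpha \max_l f_l(\alpha)$ and $\lambda := \inf_{\alpha,N} \max\{\max_l f_l(\alpha), \lambda_{\max}(H(\alpha,N))\}$.

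For the bound on $\lambda_\varnothing$, I would plug in $\alpha = \gamma b$ where $b > 0$ is the Perron--Frobenius eigenvector of $B\diag(c)$ with eigenvalue $\rho := \lambda_{\max}(\diag(c)^{1/2}B\diag(c)^{1/2})$ and $\gamma > 0$ is a scalar to be optimized. The identity $\sum_k c_k B_{lk} b_k = \rho b_l$ reduces $f_l(\gamma b)$ to a scalar function of $\gamma$; the per-$l$ optimum $\gamma = 1/(b_l\sqrt{\rho})$ would give value $2\sqrt{\rho} - (Bc)_l$, and the fact that $\gamma$ must be chosen uniformly in $l$ produces the factor $b_{\min}/b_{\max}$ in the stated bound $1 - (b_{\min}/b_{\max})(1-\sqrt{\rho})^2$. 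For the phase transition in $\lambda$, the scalar case $q=1$ captures the mechanism: after optimizing $N$ (letting its eigenvalue tend to infinity eliminates $N^{-1}$ from $H$), the problem reduces to
\[
\inf_{\alpha > 0} \max(\alpha^{-1} + \rho(\alpha - 1),\; \rho\alpha),
\]
which equals $2\sqrt{\rho} - \rho$ when $\rho \leq 1$ (attained at the interior critical point $\alpha = 1/\sqrt{\rho}$ in the bulk-dominated region) and equals $1$ when $\rho > 1$ (attained at the boundary $\alpha = 1/\rho$ where the two competing terms coincide). The general $q$ case proceeds via the Perron structure in the same way, producing the stated trichotomy for $\lambda$.

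To complement these upper bounds with matching lower bounds, operator monotonicity $X_{\rm free} \geq X_{\varnothing,\rm free}$ (since the rank-$q$ perturbation is positive semidefinite) gives $\lambda_{\max}(X_{\rm free}) \geq \lambda_{\max}(X_{\varnothing,\rm free})$, which covers the subcritical regime where $\lambda = \lambda_\varnothing$. In the critical and supercritical regimes, the required lower bound $\lambda_{\max}(X_{\rm free}) \geq 1 - O(\sqrt{\|B1_q\|_\infty/d})$ follows from analyzing the matrix Dyson self-consistent equation for $X_{\rm free}$, whose reduction on $V$ produces a pole in the resolvent at $z = 1$ precisely when $\rho \geq 1$; the square-root rate reflects the square-root density of the bulk spectral edge. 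The main technical obstacle is verifying that the block-respecting ansatz for $M$ saturates Lehner's infimum to the stated $O(\sqrt{\|B1_q\|_\infty/d})$ accuracy (rather than only producing an upper bound), which relies on the operator-valued semicircular structure of $X_{\varnothing,\rm free}$ over the block-constant diagonal subalgebra and the uniqueness of the solution to the Dyson equation within that subalgebra. Together with the closed-form analysis of the $q$-dimensional minimax problem across all three regimes of $\rho$, these form the substantive content of the proof.
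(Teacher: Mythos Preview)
Your setup is correct: the block decomposition $V\oplus\bigoplus_l W_l$ and the functions $f_l(\alpha)$ match the paper's invariant subalgebra $\mathcal{A}=\{\mathrm{A}(M,v)\}$ exactly. But there are two genuine gaps.

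\textbf{The lower bound.} Restricting the Lehner infimum only gives an upper bound, and your proposed routes to the matching lower bound do not work. First, the claim that $X_{\rm free}\ge X_{\varnothing,\rm free}$ because the rank-$q$ perturbation is positive semidefinite is false: $B$ has nonnegative entries but need not be positive semidefinite, so neither is $\frac{1}{d}\diag(z)\mathbf{B}\diag(z)$. Second, the Dyson-equation sketch is not a proof. The paper handles this cleanly via a reduction principle (Lemma~\ref{lem:reduction}): because the covariance map $M\mapsto\mathbf{E}[(X-\mathbf{E}X)M(X-\mathbf{E}X)]$ is self-adjoint in Hilbert--Schmidt inner product and preserves $\mathcal{A}$, it also preserves $\mathcal{A}^\perp$; combined with the conditional-expectation inequality $\pi(M^{-1})\ge\pi(M)^{-1}$, this shows the Lehner infimum over all $M>0$ equals the infimum over $M\in\mathcal{A}$, $M>0$. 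This gives two-sided bounds directly from the same $q$-dimensional variational problem, with the $\sqrt{8\|B1_q\|_\infty/d}$ error coming only from the $O(1/d)$ terms in Lemma~\ref{lem:varA}.

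\textbf{The phase transition for general $q$.} Your $q=1$ computation is fine, but ``the general $q$ case proceeds via the Perron structure in the same way'' hides essentially all the content. The paper needs: (i) first-order optimality conditions showing any minimizer $v_\varnothing^*$ satisfies $\frac{1}{v_\varnothing^*}+B\diag(c)(v_\varnothing^*-1_q)=\lambda_\varnothing 1_q$ and $\lambda_{\max}(\diag(c)^{1/2}B\diag(c)^{1/2}-\diag(v_\varnothing^*)^{-2})=0$, plus uniqueness of $v_\varnothing^*$; (ii) the key algebraic identity (Lemma~\ref{lem:krzmagic}) that rewrites the first condition as $(\diag(c)^{1/2}B\diag(c)^{1/2}-\diag(v)^{-1})\diag(c)^{1/2}(v-1_q)=(\mu-1)\diag(c)^{1/2}1_q$ and pairs it with the Perron eigenvector to force $\mu=1$; (iii) Lemma~\ref{lem:phasebd}, which uses both optimality conditions together to pin down $v_\varnothing^*=1_q$ on the phase boundary; and (iv) a homotopy argument connecting general $B$ to the explicitly solvable case $B=2\,1_q1_q^*$. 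None of this is a routine extension of the scalar case, and your proposal does not supply it. (Incidentally, for the $\lambda_\varnothing$ bound the paper uses the ansatz $v=1_q+tb$ rather than $\alpha=\gamma b$; your scaling ansatz does not obviously produce the stated bound since $(Bc)_l$ enters and does not cancel.)
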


To interpret this result, note that by the same argument as in Remark 
\ref{rem:bulk}, the bulk of the spectrum of $X$ is bounded by 
$\lambda_\varnothing$. The largest eigenvalue of $X$ is therefore an 
outlier precisely when $\lambda_{\rm 
max}(\diag(c)^{\frac{1}{2}}B\diag(c)^{\frac{1}{2}}) > 1$, and when the 
outlier appears it is always at $\lambda=1$. Moreover, the bound on 
$\lambda_\varnothing$ yields an explicitly computable estimate on how far 
the outlier lies from the bulk, which is essential for the applicability 
of the result in nonasymptotic situations.

The proof of Theorem \ref{thm:krz} in section \ref{sec:pfaniso} will 
provide explicit variational expressions for $\lambda,\lambda_\varnothing$ 
that are not analytically tractable in general. It is a remarkable feature 
of this model that we can nonetheless describe the phase transition in 
terms of the explictly computable parameter $\lambda_{\rm 
max}(\diag(c)^{\frac{1}{2}}B\diag(c)^{\frac{1}{2}})$. The proof of this 
fact requires a number of ideas and tools for analyzing Lehner-type 
variational principles that may be useful also in other applications.

\begin{rem}
\label{rem:ohnonooverlap}
It would be of interest to establish a counterpart of
Theorem \ref{thm:eigenvec} in the present setting, which yields
quantitative bounds on the overlap between $z$ and the largest eigenvector 
of $X$. While it is rather easy to read off the correct behavior of the 
overlap from the proof of Theorem \ref{thm:krz} in the asymptotic setting where
$q,B,c$ are fixed and $d\to\infty$ (see section \ref{sec:krz}), it has
proved more challenging to obtain an explicitly computable nonasymptotic
estimate for the overlap in the present setting. We leave this as an open 
problem.
\end{rem}

\section{Applications}
\label{sec:appl}

\subsection{Simple examples}
\label{sec:simpleex}

For sake of illustration, we begin by spelling out the simplest form of 
the phase transition phenomenon that arises from section 
\ref{sec:isophase}.

\begin{thm}
\label{thm:simplebbp}
Let $G$ be any $d\times d$ self-adjoint random matrix with $\EE G=0$ and
$\EE G^2=\id$, which either has jointly Gaussian entries or is of the 
form \eqref{eq:series}. Let $\theta\ge 0$ and $v\in S^{d-1}$. 
Then $X = \theta\,vv^* + G$ satisfies
$$
	\mathbf{P}\big[
	|\lambda_{\rm max}(X)-\mathrm{B}(\theta)|
	> C\varepsilon(t)
	\big] \le Cde^{-t}
$$
for all $t>0$, and
$$
	\mathbf{P}\Big[
	\Big| |\langle v,v_{\rm max}(X)\rangle|^2
	- \Big(1-\tfrac{1}{\theta^2}\Big)_+\Big|^2
	>
	C\varepsilon(t)
	\Big] \le Cde^{-t}
$$
whenever $C\varepsilon(t)\le\theta^2$. Here
$\varepsilon(t)=
v(G)^{\frac{1}{2}}(\log d)^{\frac{3}{4}}+\sigma_*(G)t^{\frac{1}{2}}$ in 
the Gaussian 
case and $\varepsilon(t) = 
v(G)^{\frac{1}{2}}(\log d)^{\frac{3}{4}}+\sigma_*(G)t^{\frac{1}{2}} +
R^{\frac{1}{3}}t^{\frac{2}{3}}+Rt$
in the setting of Theorem \ref{thm:univ}.
\end{thm}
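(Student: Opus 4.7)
The plan is to chain three results already developed in this paper. Only a brief model-specific computation is needed: since $\EE X = \theta vv^*$ has rank $r = 1$ and $X - \EE X = G$ with $\EE G^2 = \id$, I get $\sigma(X) = 1$, $\tilde v(X) = v(G)^{1/2}$, $\sigma_*(X) = \sigma_*(G)$, and $\lambda_{\max}(\EE X) = \theta$.

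For the first inequality, I apply Corollary \ref{cor:norm} to $X$ (or, in the series case, to the Gaussian proxy $\tilde X = \theta vv^* + \tilde G$ with the same mean and covariance as $X$, after prepending the universality step of Theorem \ref{thm:univ}) to obtain
$$
|\lambda_{\max}(X) - \lambda_{\max}(X_{\rm free})| \le C v(G)^{1/2} (\log d)^{3/4} + C\sigma_*(G) s
$$
with probability $\ge 1 - e^{-s^2}$; substituting $s = \sqrt{t}$ converts the sub-Gaussian tail into the exponential tail $e^{-t}$ demanded by the statement and produces the $t^{1/2}$ dependence in $\varepsilon(t)$. The non-Gaussian case contributes the additional $R^{1/3} t^{2/3} + R t$ terms in $\varepsilon(t)$ directly from Theorem \ref{thm:univ}. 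Next I invoke Theorem \ref{thm:bbp} (valid since $r = 1$ and $\EE[(X - \EE X)^2] = \id$) to obtain $|\lambda_{\max}(X_{\rm free}) - \mathrm{B}(\theta)| \le 2\sigma_*(G)$, which is absorbed into the $\sigma_*(G) t^{1/2}$ term as soon as $t \ge 1$; the regime $t < 1$ is trivial because the failure probability $Cde^{-t}$ already exceeds $1$ for the constant $C$ at hand.

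For the overlap, I apply Theorem \ref{thm:eigenvec} with $\delta := \theta$, so that the spectral projector $\mathbf{1}_{(0, \theta]}(\EE X)$ coincides with $vv^*$ and its conclusion specializes to a bound on $|\langle v, v_{\max}(X)\rangle|^2$. The hypothesis is verified by reapplying the eigenvalue argument above to each perturbed matrix $X_s = X + s\, vv^*$ for $s \in \{0, \pm \tau\}$: these have the same noise $G$, and $\EE X_s = (\theta + s) vv^*$ is again rank one, so Theorem \ref{thm:bbp} applies uniformly. This yields $||\langle v, v_{\max}(X)\rangle|^2 - (1 - 1/\theta^2)_+| \le \tau + 2C\varepsilon(t)/\tau$. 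I then pick $\tau := \sqrt{2C\varepsilon(t)}$ to optimize; this is admissible ($\tau \le \delta = \theta$) precisely under the stated hypothesis $C\varepsilon(t) \le \theta^2$, and squaring turns the resulting $\sqrt{\varepsilon(t)}$ error on the overlap into the $\varepsilon(t)$ error on its square.

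There is no genuine obstacle: the argument is pure bookkeeping on top of three off-the-shelf results. The only subtleties are the conversion between the sub-Gaussian tail $e^{-s^2}$ of Corollary \ref{cor:norm} and the exponential tail $e^{-t}$ of Theorem \ref{thm:univ} and of the target statement (which is what forces the exponent $1/2$ on $t$ in $\varepsilon(t)$), and handling the degenerate regime $\sigma_*(G) > 1$ where Theorem \ref{thm:bbp} does not formally apply but the target inequality is trivially satisfied because $\varepsilon(t)$ itself is then already of order $1$.
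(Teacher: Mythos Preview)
Your proposal is correct and follows essentially the same route as the paper: combine Corollary~\ref{cor:norm} (plus Theorem~\ref{thm:univ} in the non-Gaussian case) with Theorem~\ref{thm:bbp} for the eigenvalue, then feed this into Theorem~\ref{thm:eigenvec} with $\delta=\theta$ and optimize over the perturbation parameter for the overlap. One minor remark: your concern about the edge case $\sigma_*(G)>1$ is moot, since $\sigma_*(G)\le\sigma(G)=1$ is automatic from the isotropy assumption $\EE G^2=\id$.
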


\begin{proof}
The first inequality follows by combining Corollary \ref{cor:norm},
Theorem \ref{thm:univ}, and Theorem \ref{thm:bbp},
where we note that $\sigma(X)=\sigma(G)=1$, $v(X)=v(G)$,
$\sigma_*(X)=\sigma_*(G)$ and the parameter $R$ in Theorem \ref{thm:univ} 
are independent of $\theta$, and that $\sigma_*(G)\le\tilde v(G)$.
The second inequality follows from the first by applying Theorem 
\ref{thm:eigenvec} with $\delta=\theta$ and optimizing over the parameter 
$t$ that appears in its statement.
\end{proof}

As a simple example, let us consider sparse Wigner matrices.

\begin{example}
Let $([d],E)$ be a $k$-regular graph with $d$ vertices. Then we can define 
a $d\times d$ self-adjoint random matrix $G$ with 
$G_{ij}=k^{-\frac{1}{2}}\eta_{ij} 1_{\{i,j\}\in E}$ for $i\ge j$, where
$\eta_{ij}$ are independent random variables such that $\EE[\eta_{ij}]=0$,
$\EE[|\eta_{ij}|^2]=1$, 
$\|\eta_{ij}\|_\infty\le K$. In other words, $G$ is a sparse Wigner matrix
with an arbitrary deterministic sparsity pattern that has $k$ nonzero 
entries in each row and column.

It is readily verified that we have $\EE[G]=0$, $\EE[G^2]=\id$, 
$\sigma_*(G)\le v(G)\lesssim k^{-\frac{1}{2}}$, and
$R\lesssim Kk^{-\frac{1}{2}}$. Thus choosing $t=(1+c)\log d$ in Theorem 
\ref{thm:simplebbp} shows that
$$
	\lambda_{\rm max}(\theta\,vv^*+G) = \mathrm{B}(\theta)+o(1),
	\qquad
	|\langle v,v_{\rm max}(\theta\,vv^*+G)\rangle|^2  
        = \Big(1-\tfrac{1}{\theta^2}\Big)_+ + o(1)
$$
with probability at least $1-\frac{C}{d^c}$ whenever
$k\gg K^2(\log d)^4$.

One very special case of this model is a periodic random band matrix with 
band width $k$, which is illustrated in Figure \ref{fig:simplebbp}. In 
this case, as long as $K=O(1)$, we find that the 
classical phase transition for the spiked Wigner model (as described in 
section \ref{sec:introapp}) extends to this 
highly sparse setting as soon as the width of the band grows at least 
polylogarithmically in the dimension of the matrix. This special case was 
recently investigated using entirely different methods in \cite{Au23}.
\end{example}
\begin{figure}
\centering
\includegraphics[width=0.9\textwidth]{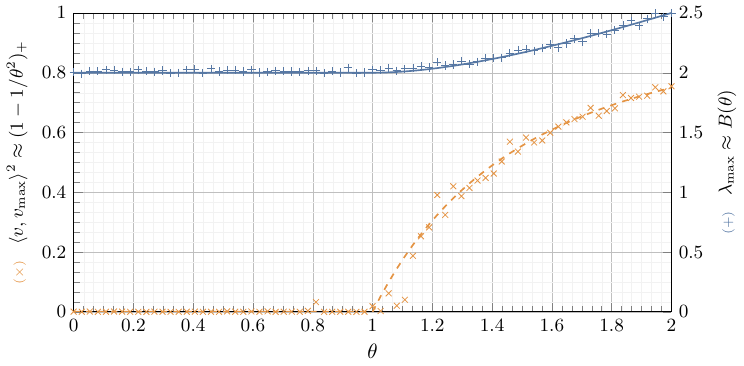}
\caption{Outlier phase transition of a $2000\times 2000$ random band
matrix with band width $101$. The markers represent the empricial result,
the lines represent the theoretical prediction
of Theorem \ref{thm:simplebbp}.\label{fig:simplebbp}}
\end{figure}

While Theorem \ref{thm:simplebbp} provides precise information
on the largest eigenvalue and eigenvector, it does not in itself explain 
why this largest eigenvalue is an outlier of the spectrum when $\theta>1$.
The rough explanation given in Remark \ref{rem:bulk} can however readily 
be made precise in concrete situations such as the present one.

\begin{cor}
\label{cor:minmax}
Consider the setting of Theorem \ref{thm:simplebbp}, and denote by 
$\lambda_2(X)$ the second largest eigenvalue of $X$. Then we have for
all $t>0$
$$
	\mathbf{P}\big[\lambda_2(X) > 2+C\varepsilon(t)\big]
	\le Cde^{-t}.
$$
\end{cor}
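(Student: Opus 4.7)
The plan is to reduce the bound on $\lambda_2(X)$ to the bound on $\lambda_{\rm max}(G)$ that is already encoded in Theorem~\ref{thm:simplebbp}. The essential input is that $\theta\,vv^*$ is a rank-one positive semidefinite perturbation, so by Weyl's inequality $\lambda_{i+j-1}(A+B)\le\lambda_i(A)+\lambda_j(B)$ applied with $i=j=2$,
$$
\lambda_2(X)=\lambda_2(\theta\,vv^*+G)\le \lambda_2(\theta\,vv^*)+\lambda_{\rm max}(G)=\lambda_{\rm max}(G),
$$
since $\theta\,vv^*$ has only one nonzero eigenvalue. It therefore suffices to control $\lambda_{\rm max}(G)$ from above.

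For the latter, I would simply apply Theorem~\ref{thm:simplebbp} in the degenerate case $\theta=0$, where the perturbed matrix reduces to $G$ itself and $\mathrm{B}(0)=2$. The first inequality of that theorem then gives $|\lambda_{\rm max}(G)-2|\le C\varepsilon(t)$ with probability at least $1-Cde^{-t}$. The expression $\varepsilon(t)$ is unchanged since $v(G)$, $\sigma_*(G)$ and the boundedness parameter $R$ depend only on $G$ and not on the spike, so the form of $\varepsilon(t)$ matches the one appearing in the statement of the corollary in both the Gaussian and the sum-of-independent-matrices settings. Chaining this bound with the Weyl estimate above yields the claim.

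There is no real obstacle in this argument; the content of the corollary is precisely the heuristic of Remark~\ref{rem:bulk}, namely that a rank-one perturbation can displace at most one eigenvalue from the bulk, and all substantive work has already been absorbed into Theorem~\ref{thm:simplebbp}. The only thing worth emphasizing is that the bound is one-sided: a matching lower bound on $\lambda_2(X)$ would require an interior-edge argument via the full Hausdorff version of Theorem~\ref{thm:main}, but for the purpose of identifying the outlier only the upper bound is needed.
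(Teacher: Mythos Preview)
Your argument is correct and essentially identical to the paper's: both reduce $\lambda_2(X)$ to $\lambda_{\rm max}(G)$ via a rank-one interlacing fact and then invoke Theorem~\ref{thm:simplebbp} with $\theta=0$. The paper phrases the interlacing step with the min-max theorem and the projection $P=\id-vv^*$ (so $\lambda_2(X)\le\lambda_{\rm max}(PXP)=\lambda_{\rm max}(PGP)\le\lambda_{\rm max}(G)$), while you use Weyl's inequality; these are the same estimate. One small slip: the indices should be $i=2$, $j=1$ (not $i=j=2$, which would bound $\lambda_3$), giving $\lambda_2(\theta vv^*+G)\le\lambda_2(\theta vv^*)+\lambda_1(G)$.
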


\begin{proof}
Let $P=\id-vv^*$. Then the min-max theorem yields
$$
	\lambda_2(X) \le \lambda_{\rm max}(PXP)=\lambda_{\rm max}(PGP)
	\le\lambda_{\rm max}(G).
$$
The conclusion follows by applying Theorem \ref{thm:simplebbp} with 
$\theta=0$.
\end{proof}

Corollary \ref{cor:minmax} shows that whenever $\varepsilon(t)=o(1)$, at 
most one eigenvalue of $X$ can exceed $2+o(1)$. When $\theta>1$, Theorem 
\ref{thm:simplebbp} then implies that the largest eigenvalue of $X$ is 
simple and is separated from the rest of the spectrum.

Theorem \ref{thm:simplebbp} could be applied directly or with minimal 
modifications to various models that appear in applications (both with 
independent and dependent entries), such as adjacency matrices of 
nonhomogeneous random graphs \cite{LM22}, stochastic block models 
\cite{LL22}, or synchronization problems \cite[\S 7]{Sin11}. In the 
following sections, we will investigate applications that exhibit
more complex structures.

\subsection{Decoding node labels on graphs}

The following model is considered in \cite{ABBS14}. Let $\Gamma=([d],E)$ 
be a given $k$-regular graph with $d$ vertices, and let $x\in \{-1,+1\}^d$ 
be an (unknown) binary labeling of the vertices. For each edge $\{i,j\}\in 
E$, we are given a noisy observation $Y_{ij}=x_ix_j\xi_{ij}$ 
of the correlation between the labels of the incident vertices, where 
$\xi_{ij}=\xi_{ji}$ are i.i.d.\ random variables for $i\ge j$ such that
$$
	\mathbf{P}[\xi_{ij}=1]=1-p,\qquad\quad
	\mathbf{P}[\xi_{ij}=-1]=p
$$
with $p\le\frac{1}{2}$.
The aim is to understand when it is possible to recover the vertex labels 
(up to a global sign) from the noisy observations. 

Here we investigate a simple spectral method for this problem (see, e.g., 
\cite{Cuc15}).
Let us augment the above definitions by setting $Y_{ij}=0$ for 
$\{i,j\}\not\in E$, so that $Y$ defines a $d\times d$ self-adjoint 
random matrix with independent entries. Note that
$$
	\mathbf{E}[Y_{ij}] = (1-2p) x_ix_j1_{\{i,j\}\in E} ,\qquad
	\mathrm{Var}(Y_{ij}) = 4p(1-p)1_{\{i,j\}\in E} .
$$
Thus $\EE[(Y-\EE Y)^2] = 4kp(1-p)\id$, where we used that $\Gamma$ is 
$k$-regular. We may therefore apply Theorems
\ref{thm:bbp} and \ref{thm:eigenvec} to a suitable modification of $Y$.
In the following result, we will consider a sequence of graphs
with $d,k\to\infty$ for simplicity of exposition; a nonasymptotic 
statement can be read off from the proof.

\begin{thm}
\label{thm:masked}
Let $A$ be the adjacency matrix of $\Gamma$, and denote its singular 
values as $k=\mathrm{s}_1\ge\mathrm{s}_2\ge \cdots\ge\mathrm{s}_d$
and its spectral gap as $\lambda = k-\lambda_2(A)$.
Parametrize the error probability as
$p = \frac{1}{2}-\frac{1}{2}k^{-\frac{1}{2}}\theta$
with $0\le\theta\ll\sqrt{k}$. If
$$
	\min_{1\le r\le k}
	\big\{\theta\,\mathrm{s}_{r+1}+ 
	\sqrt{rk}\big\}
	+ k^{\frac{5}{6}} (\log d)^{\frac{2}{3}}  
	\ll \min\{\theta\lambda,k\},
$$
then 
$$
	\frac{1}{d} |\langle x,v_{\rm max}(Y)\rangle|^2 =
	\Big(1-\tfrac{1}{\theta^2}\Big)_+ + o(1)	
$$
with probability $1-o(1)$. In particular, there exists an estimator
$\hat x(Y)\in \{-1,+1\}^d$ so that $\frac{1}{d}|\langle x,\hat x(Y)\rangle|>
\delta+o(1)$ for some $\delta>0$ as soon as
$\theta\ge 
1+\varepsilon$ for some $\varepsilon>0$.
\end{thm}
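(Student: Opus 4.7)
The plan is to apply Theorem~\ref{thm:bbp} and Theorem~\ref{thm:eigenvec} in the spirit of the proof of Theorem~\ref{thm:simplebbp}, after a rank-truncation trick to handle the fact that $\EE Y$ is full rank. Setting $D=\diag(x)$, we have $\EE Y=(1-2p)DAD$ and $\EE[(Y-\EE Y)^2]=(k-\theta^2)\id$, so $X:=Y/\sqrt{k-\theta^2}$ satisfies $\EE[(X-\EE X)^2]=\id$ and inherits from $A$ (via conjugation by $D$) the top eigenpair $(\theta',x/\sqrt{d})$ with $\theta':=\theta\sqrt{k}/\sqrt{k-\theta^2}=\theta+O(\theta^3/k)$, while the spectral gap of $\EE X$ is of order $\theta\lambda/k$. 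The obstacle is that $\EE X$ has full rank, so Theorem~\ref{thm:bbp} is not directly applicable.

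For $1\le r\le k$ I would let $A_r$ be the best rank-$r$ approximation of $A$ in operator norm (which automatically retains the top eigenpair $(k,1_d/\sqrt{d})$) and set $\tilde X:=X-(1-2p)D(A-A_r)D/\sqrt{k-\theta^2}$, so that $\EE\tilde X$ has rank $\le r$ with the same top eigenpair as $\EE X$, and the covariance of $\tilde X$ coincides with that of $X$; in particular $\sigma_*(\tilde X),v(\tilde X)\lesssim k^{-1/2}$ and $\sigma(\tilde X)=1$. Three bounds then combine: Theorem~\ref{thm:bbp} applied to $\tilde X$ gives $|\lambda_{\rm max}(\tilde X_{\rm free})-\mathrm{B}(\theta)|\lesssim\sqrt{r/k}+\theta^3/k$; the chain Theorem~\ref{thm:univ}+Theorem~\ref{thm:main} used in the proof of Theorem~\ref{thm:simplebbp}, with boundedness parameter $R\lesssim k^{-1/2}$, yields $|\lambda_{\rm max}(\tilde X)-\lambda_{\rm max}(\tilde X_{\rm free})|\lesssim k^{-1/4}(\log d)^{3/4}+k^{-1/6}(\log d)^{2/3}$ with probability $1-o(1)$ upon taking $t\asymp\log d$; and Weyl's inequality gives $|\lambda_{\rm max}(X)-\lambda_{\rm max}(\tilde X)|\le\|\EE X-\EE\tilde X\|\lesssim\theta\,\mathrm{s}_{r+1}/k$. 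Summing these, multiplying by $k$ to rescale to the level at which the theorem's hypothesis is stated, and minimizing over $r$ yields
\[
k\cdot|\lambda_{\rm max}(X)-\mathrm{B}(\theta)|\lesssim \min_{1\le r\le k}\{\theta\,\mathrm{s}_{r+1}+\sqrt{rk}\}+k^{5/6}(\log d)^{2/3},
\]
which by hypothesis is $\ll\min\{\theta\lambda,k\}$; note that $\theta\lambda$ is precisely the gap of $\EE X$ at the same scale.

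For the overlap, I would apply Theorem~\ref{thm:eigenvec} with $\delta$ a fraction of the gap of $\EE X$. Each shifted matrix $X_s=X+s\,xx^\ast/d$ is a rank-one perturbation of $X$ with unchanged covariance, so the chain above applies with $r$ replaced by $r+1$ and yields an error $\varepsilon$ of the same order; since $\varepsilon\ll\delta$ by hypothesis, one can choose $t$ in Theorem~\ref{thm:eigenvec} so that $t+2\varepsilon/t=o(1)$, giving $|\langle v_{\rm max}(\EE X),v_{\rm max}(X)\rangle|^2=(1-1/\theta^2)_++o(1)$. Identifying $v_{\rm max}(X)=v_{\rm max}(Y)$ and $v_{\rm max}(\EE X)=x/\sqrt{d}$ yields the stated overlap. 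Weak recovery when $\theta\ge 1+\varepsilon$ then follows by a standard rounding of $v_{\rm max}(Y)$: for $\theta>\sqrt{2}$ the sign estimator $\hat x_i=\mathrm{sign}(v_{\rm max}(Y)_i)$ already achieves correlation $\ge 1-2/\theta^2>0$, and for $1<\theta\le\sqrt{2}$ a randomized threshold combined with delocalization of $v_{\rm max}(Y)$ produces a positive-correlation sign pattern.

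The main obstacle is the careful bookkeeping of error terms across the three scales ($X$, $Y$, and the rescaled level $\sqrt{k}\,Y$ used in the hypothesis) and the verification that $\sigma_*(\tilde X)\sqrt{r}\le 1$ remains compatible with the hypothesis of Theorem~\ref{thm:bbp}, which is the reason the minimum in the theorem's condition runs only up to $r=k$.
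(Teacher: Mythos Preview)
Your approach is essentially the same as the paper's: normalize $Y$ so that $\EE[(X-\EE X)^2]=\id$, replace $A$ by its best rank-$r$ approximation to make Theorem~\ref{thm:bbp} applicable, combine with Theorems~\ref{thm:univ} and~\ref{thm:main} and Weyl's inequality, optimize over $r$, and then invoke Theorem~\ref{thm:eigenvec} for the overlap. The only notable difference is the final rounding step: your sign-estimator argument for $\theta>\sqrt{2}$ is not quite right (the naive Chebyshev bound on the orthogonal component gives positive correlation only for $\theta>\sqrt{3}$), and your appeal to ``delocalization of $v_{\rm max}(Y)$'' for smaller $\theta$ is not something you have established. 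The paper sidesteps this entirely with a self-contained randomized rounding lemma (Lemma~\ref{lem:round}) that requires only the overlap bound $\tfrac{1}{d}|\langle x,v\rangle|^2\ge\varepsilon$ and no structural information about $v$.
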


The proof of this result is given in section \ref{sec:pfmasked}. The basic 
idea of the proof is approximate $\EE Y$ by a matrix of rank $r$. We then 
apply Theorem~\ref{thm:bbp} to the low-rank part of the model, and 
optimize the resulting bound over $r$ to trade off between the width of 
the phase transition and the approximation error.

Let us note that the recovery condition $\theta>1$ in Theorem 
\ref{thm:masked} is the best possible in general: for example, when 
$\Gamma$ is the complete graph, no estimator can recover a nontrivial 
fraction of the vertex labels when $\theta<1$ (this follows, e.g., from 
\cite[Theorem 6.3]{PWBM16}). A key feature of Theorem \ref{thm:masked} is 
that it enables us to achieve this recovery threshold for a large class of 
deterministic graphs. An analogous problem for Erd\H{o}s-R\'enyi graphs 
was previously considered in \cite{SLKZ15}.

Let us give two examples to illustrate the assumptions of 
Theorem \ref{thm:masked}.

\begin{example}[Good expanders]
Suppose that $\mathrm{s}_2(A)\le c\sqrt{k}$; 
this is the case, for example, when $\Gamma$ is a random $k$-regular 
graph \cite{TY19}. Under mild conditions, such graphs achieve the largest
possible spectral gap by the Alon-Boppana theorem \cite{Nil91}.
In this situation, we can choose $r\leftarrow 1$ in the
assumption of Theorem \ref{thm:masked}. The conclusion of the theorem
then follows for any $1\le\theta\ll\sqrt{k}$ and $k\gg (\log d)^4$.
\end{example}

\begin{example}[Graphs of intermediate degree]
The previous example considered expanders, that is, graphs whose
spectral gap $\lambda$ is of order $k$. However,
expansion is not necessary for the conclusion to hold when the degree $k$ 
is sufficiently large, as we will presently 
illustrate. Note first that we can trivially estimate
$$
	\sum_{i=1}^d \mathrm{s}_i^2 =
	\tr[A^2] = dk,
$$
which implies $\mathrm{s}_i \le (\frac{dk}{i})^{\frac{1}{2}}$. 
Assuming $\theta\ge 1$ for simplicity, we can estimate
$$
        \min_{1\le r\le k}
        \big\{\theta\,\mathrm{s}_{r+1}+
        \sqrt{rk}\big\}
	\le	
	\sqrt{k}
        \min_{1\le r\le k}
        \big\{r^{-\frac{1}{2}} \theta\sqrt{d}
        +r^{\frac{1}{2}}\big\}
	\lesssim
	d^{\frac{1}{4}}\sqrt{\theta k} 
$$
when $k\ge\theta\sqrt{d}$. Then the conclusion of 
Theorem 
\ref{thm:masked} holds whenever
$$
	d^{\frac{1}{4}}\sqrt{\theta k} +
	k^{\frac{5}{6}}(\log d)^{\frac{2}{3}}
	\ll
	\min\{\theta\lambda,k\}.
$$
For example, if $k\sim d^a$ for some 
$\frac{1}{2}<a\le 1$, then the conclusion of Theorem
\ref{thm:masked} holds whenever $1\le\theta\ll d^{a-\frac{1}{2}}$
and $\frac{\lambda}{k}\gg 
\max\{\theta^{-1}d^{-\frac{a}{6}}(\log 
d)^{\frac{2}{3}},\theta^{-\frac{1}{2}}d^{-\frac{1}{2}(a-\frac{1}{2})}\}$.
\end{example}

It is instructive to note that the spectral gap $\lambda$ appears in 
Theorem \ref{thm:masked} only in order to resolve the top eigenvector of 
$Y$. This is necessary: for example, if $\Gamma$ were not connected, then 
the distribution of $Y$ would be unchanged if we flip the signs of all 
vertex labels in one connected component, so that it is fundamentally 
impossible to recover $x$ up to a global sign. On the other hand, if we 
were interested only in detecting the presence of an outlier eigenvalue in 
the spectrum of $Y$, no assumption on the spectral gap would be needed in 
the proof.

\subsection{Tensor PCA}

The following may be viewed as an analogue of the classical spiked 
Wigner model for tensors of order $p$.
Fix $\lambda>0$, a signal $x\in\{-1,+1\}^n$, and i.i.d.\ standard 
Gaussian variables $(Z_S)_{S\subseteq[n]:|S|=p}$. We are given a noisy 
observation tensor $Y=\lambda\, x^{\otimes p}+Z$; more precisely, 
we observe
$$
	Y_S := \lambda X_S + Z_S
$$
for all $S\subseteq[n]$ with $|S|=p$, 
where $X_S := \prod_{i\in S}x_i$. This is the tensor PCA model,
cf.\ \cite{MR14,WEM19} and the references therein. The key problems
in this context are detecting whether a signal is present, and 
recovering the signal. In the interest of space we focus 
on detection, though recovery may be similarly investigated.

We presently describe a general family of spectral methods for the tensor 
PCA problem that was proposed in \cite{WEM19}. Let $p\ge 4$ be even,
fix an integer 
$\ell\in[\frac{p}{2},n-\frac{p}{2}]$, and define a symmetric
${n\choose\ell}\times{n\choose\ell}$ random matrix 
$M=(M_{S,T})_{S,T\subseteq[n]:|S|=|T|=\ell}$ as
$$
	M_{S,T} := \begin{cases}
	Y_{S\triangle T} & \text{if }|S\triangle T|=p,\\
	0 & \text{otherwise},
	\end{cases}
$$
where $\triangle$ denotes the symmetric difference. The presence of a 
signal is then detected by the presence of an outlier eigenvalue in the 
spectrum of $M$. It is shown in \cite{WEM19} that correct detection of the 
presence or absence of a signal with probability $1-o(1)$ is achieved
by this method when $\lambda\gg n^{-\frac{p}{4}}\sqrt{\log n}$.

Here we achieve a much more precise understanding of this method for a 
certain range of the design parameter $\ell$ of the detection algorithm.

\begin{thm}
\label{thm:kikuchi}
Fix $\frac{p}{2}\le \ell < \frac{3p}{4}$ and
$\alpha>0$, and define
$$
	k_* := {\ell \choose p/2} {n-\ell \choose p/2}.
$$
Then there is a constant $C$ that depends only on
$p,\ell,\alpha$ so that
$$
	\mathbf{P}\big[
	|\lambda_{\rm max}(k_*^{-\frac{1}{2}}M)
	-\mathrm{B}(\lambda k_*^{\frac{1}{2}})|
	>
	C n^{-1} \lambda k_*^{\frac{1}{2}} +
	C n^{\frac{4\ell-3p}{4}} +
	C n^{\frac{\ell-p}{4}}(\log n)^{\frac{3}{4}}
	\big] \le \frac{C}{n^\alpha}.
$$
In particular, for any $\varepsilon>0$, the test
$$
	f(M) := \begin{cases}
	1 &\text{if }\lambda_{\rm max}(k_*^{-\frac{1}{2}}M)>2+n^{-\frac{1}{5}},\\
	0 &\text{otherwise},
	\end{cases}
$$
has $\mathbf{P}[f(M)=1]=o(1)$ if $\lambda\le k_*^{-\frac{1}{2}}$ and
$\mathbf{P}[f(M)=1]=1-o(1)$ if $\lambda\ge 
(1+\varepsilon)k_*^{-\frac{1}{2}}$.
\end{thm}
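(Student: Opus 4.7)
The plan is to apply Theorem~\ref{thm:bbp} and Corollary~\ref{cor:norm} to the rescaled Kikuchi matrix $k_*^{-1/2}M$ after extracting the high-order part of the signal as a controlled rank-$r$ piece, and to absorb the residual by Weyl's inequality. First, I would write $M=\lambda \bar{A}+W$ with $\bar{A}_{S,T}:=X_{S\triangle T}1_{|S\triangle T|=p}$ and $W_{S,T}:=Z_{S\triangle T}1_{|S\triangle T|=p}$. The signal factorizes as $\bar{A}=DBD$ with $D=\diag(\phi)$, $\phi_S=\prod_{i\in S}x_i$, and $B_{S,T}=1_{|S\triangle T|=p}$, so $B$ lies in the Johnson association scheme on $\binom{[n]}{\ell}$. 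Using the Eberlein-polynomial formulas for its spectrum, I decompose $\bar{A}=\bar{A}_r+\tilde{A}_r$, where $\bar{A}_r$ projects onto the eigenspaces of leading order $n^{p/2}$; these correspond to scheme indices $j=0,\dots,\ell-p/2$, have total dimension $r=\binom{n}{\ell-p/2}\asymp n^{\ell-p/2}$, and contain the top eigenvalue $k_*$. The residual satisfies $\|\tilde{A}_r\|\lesssim_{p,\ell}n^{p/2-1}$.

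For the noise, representing $W=\sum_{|D|=p}Z_D E_D$ with $(E_D)_{S,T}=1_{S\triangle T=D}$ a symmetric partial involution, a direct count gives $\EE W^2=k_*\id$, so $X_0:=k_*^{-1/2}W$ satisfies $\EE X_0^2=\id$. In the range $\ell<3p/4$, combinatorial estimates on the collision structure of the $E_D$'s yield $\sigma(X_0)=1$, $\tilde v(X_0)\lesssim_{p,\ell}n^{(\ell-p)/4}$, and $\sigma_*(X_0)\lesssim_{p,\ell}n^{(\ell-p)/2}$. Consider the rank-$r$ model
\begin{equation*}
X_r:=\lambda k_*^{-1/2}\bar{A}_r+k_*^{-1/2}W.
\end{equation*}
Then $\EE X_r$ has rank at most $r$ with $\lambda_{\rm max}(\EE X_r)=\lambda k_*^{1/2}$, $\EE[(X_r-\EE X_r)^2]=\id$, and $\sigma_*(X_r)\sqrt{r}\lesssim n^{(\ell-p)/2+(\ell-p/2)/2}=n^{(4\ell-3p)/4}\le 1$. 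Theorem~\ref{thm:bbp} then gives the deterministic bound $|\lambda_{\rm max}(X_{r,\rm free})-\mathrm{B}(\lambda k_*^{1/2})|\lesssim n^{(4\ell-3p)/4}$, which is the second error term. Corollary~\ref{cor:norm} with $t\asymp\sqrt{\alpha\log n}$ gives $|\lambda_{\rm max}(X_r)-\lambda_{\rm max}(X_{r,\rm free})|\lesssim n^{(\ell-p)/4}(\log n)^{3/4}$ with probability at least $1-n^{-\alpha}$ (the $\sigma_*(X_r)t$ contribution is dominated by the preceding term in the admissible range); this produces the third error term.

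Finally, Weyl's inequality applied to $k_*^{-1/2}M=X_r+\lambda k_*^{-1/2}\tilde{A}_r$ yields
\begin{equation*}
|\lambda_{\rm max}(k_*^{-1/2}M)-\lambda_{\rm max}(X_r)|\le\lambda k_*^{-1/2}\|\tilde{A}_r\|\lesssim_{p,\ell}\lambda k_*^{1/2}/n,
\end{equation*}
the first error term. Combining the three contributions and taking a union bound gives the main inequality. The detection claim is then immediate: in the range $\ell<3p/4$ each exponent is at most a negative constant depending only on $p$, so every error term is $o(n^{-1/8})$. Hence for $\lambda=0$ we obtain $\lambda_{\rm max}(k_*^{-1/2}M)<2+n^{-1/8}$ with high probability, while for $\lambda\ge(1+\varepsilon)k_*^{-1/2}$ we have $\mathrm{B}(\lambda k_*^{1/2})\ge 2+\delta_\varepsilon$ for some $\delta_\varepsilon>0$, which places $\lambda_{\rm max}(k_*^{-1/2}M)$ above $2+n^{-1/8}$ with high probability.

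The main technical obstacle is the combinatorial estimation of $v(X_0)$ and $\sigma_*(X_0)$ from the collision structure of the partial involutions $E_D$; this is precisely where the threshold $\ell<3p/4$ enters, as it is the range in which $\sigma_*(X_r)\sqrt{r}\le 1$ and the hypotheses of Theorem~\ref{thm:bbp} are met for the natural choice of $r$. The Eberlein-polynomial bounds controlling $\|\tilde{A}_r\|$ in the first step are classical for the Johnson scheme but must be carried through uniformly in $n$ to produce the clean operator-norm bound used above.
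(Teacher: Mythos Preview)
Your proposal is correct and follows essentially the same route as the paper: decompose $\mathbf{E}M$ via the Johnson-scheme spectrum into a rank-$r$ piece with $r=\binom{n}{\ell-p/2}$ plus a residual of norm $\lesssim n^{-1}k_*$, verify that the rescaled noise $k_*^{-1/2}(M-\mathbf{E}M)$ is isotropic with $v\lesssim n^{(\ell-p)/2}$, apply Theorem~\ref{thm:bbp} to get the $n^{(4\ell-3p)/4}$ term, Corollary~\ref{cor:norm} with $t\asymp\sqrt{\alpha\log d}$ to get the $n^{(\ell-p)/4}(\log n)^{3/4}$ term, and Weyl for the residual. The paper computes $v(M)^2=\binom{p}{p/2}\binom{n-p}{\ell-p/2}$ by a one-line counting argument (how many entries of $M-\mathbf{E}M$ equal a given $Z_U$) rather than via ``collision structure of the $E_D$'s,'' and cites \cite{WEM19} directly for the Eberlein-polynomial eigenvalue bounds you mention; otherwise the arguments coincide.
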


The order $\lambda \sim k_*^{-\frac{1}{2}} \sim c(p,\ell) 
n^{-\frac{p}{4}}$ of the signal strength is believed to be the weakest 
that can be detected by computationally efficient algorithms, cf.\ 
\cite{HSS15,WEM19}. To the best of our knowledge, however, Theorem 
\ref{thm:kikuchi} is the first result to establish a sharp phase 
transition for any tensor PCA algorithm for symmetric 
tensors.\footnote{%
For the asymmetric analogue of the tensor PCA model, a sharp transition 
was established in \cite[\S 3.2]{MR14} . This case is 
considerably simpler, as the natural counterpart of $M$ with 
$\ell=\frac{p}{2}$ has
independent entries and thus its analysis reduces
to that of the classical spiked Wigner model.}

The proof of Theorem \ref{thm:kikuchi} is given in section 
\ref{sec:pftensor}. As in the previous section, the idea of the proof is 
to approximate $\EE M$ by a low rank matrix. However, in the present case 
the entries of $M$ exhibit a complicated dependence structure, which is 
nonetheless captured effortlessly by our main results.

\begin{rem}
The design parameter $\ell$ provides a tradeoff between computational cost 
and the detection threshold: the larger $\ell$, the more costly is the 
computational method (as the dimension of $M$ is of order $n^\ell$) but 
the smaller is the detection threshold $c(p,\ell)$. It is conjectured in 
\cite[Conjecture 3.6]{WEM19} that an arbitrarily small detection threshold 
$c(p,\ell)$ can be achieved by choosing $\ell$ sufficiently large. This 
regime is not captured by Theorem \ref{thm:kikuchi}, however, as its 
validity is restricted to the range $\frac{p}{2}\le \ell < \frac{3p}{4}$. 
When $\ell$ is large compared to $p$, the dependence structure of $M$ is 
so strong that it is unclear whether it could be accurately modeled by 
$M_{\rm free}$. Nonetheless, even the detection threshold achieved by 
Theorem \ref{thm:kikuchi} for $\ell=\frac{p}{2}$ is already of smaller 
order than is captured by the analysis of \cite{WEM19} for any value of 
$\ell$.
\end{rem}

\begin{rem}
The matrix $M$ used here is known as a Kikuchi matrix. Such matrices have
had a number of unexpected applications in recent years, such as to the 
study
of Moore bounds for hypergraphs. We refer to \cite{HKM23}
for more on this topic.
\end{rem}

\subsection{Spike detection in block-structured models}
\label{sec:krz}

The above applications feature various nonhomogeneous but isotropic 
models. We now study an anisotropic model proposed by Pak, Ko, 
and Krzakala \cite{KKP23}.

Let $x\in\{-1,+1\}^d$, and 
let $H$ be a $d\times d$ self-adjoint random matrix whose entries 
$(H_{ij})_{i\ge j}$ are independent with $H_{ij}\sim 
N(0,\frac{1+1_{i=j}}{d}\boldsymbol{\Delta}_{ij})$; here
$\boldsymbol{\Delta}_{ij}=\boldsymbol{\Delta}_{ji}\ge 0$ define
an arbitrary variance profile of the entries of $H$. Then
$$
	\tilde X = \frac{1}{d}\,xx^* + H
$$
is an anisotropic variant of the spiked Wigner model of section 
\ref{sec:introapp}. The question is when it is possible to detect the
presence of the spike $xx^*$, and to recover
the entries of $x$, when we can only observe $\tilde X$.
One may expect that this question can be addressed using the largest 
eigenvalue and eigenvector of $\tilde X$ as in section \ref{sec:simpleex}. 
Unfortunately, the variational principle \eqref{eq:lehner} is generally 
not analytically tractable for anisotropic models. More surprisingly, 
the detection threshold achieved by this method turns out to be  
information-theoretically suboptimal \cite{GKKZ22}.

Instead, \cite{KKP23} propose to consider the largest eigenvalue and 
eigenvector of a deterministic transformation of $\tilde X$ 
that is motivated by statistical physics:
$$
	X = \frac{1}{\boldsymbol{\Delta}}\odot \tilde X - 
	\diag\bigg(\frac{1}{d\boldsymbol{\Delta}}1_d\bigg),
$$
where $\frac{1}{\boldsymbol{\Delta}}$ denotes the elementwise inverse and 
$\odot$ denotes elementwise (Hadamard) product. This procedure can be 
implemented provided all variances $\boldsymbol{\Delta}_{ij}>0$ are 
positive and known. It is conjectured in 
\cite[Conjecture 1.6]{KKP23} that this approach achieves the optimal 
detection threshold when the variance profile $\boldsymbol{\Delta}$ has 
block structure. Here we prove this conjecture in a strong form.

In the following theorem, we denote by $X_\varnothing$ the null model
associated to $X$, that is, the model where we replace 
$x\leftarrow 0$ in the definition of $X$.

\begin{thm}
\label{thm:aniso}
Let $q\ge 1$, let $\Delta$ be a $q\times q$ self-adjoint matrix
with positive entries, and let $C_1\sqcup\cdots\sqcup C_q$
be a partition of $[d]$ into $q$ disjoint sets of size $|C_k|=:c_kd>1$.
Assume the variance profile $\boldsymbol{\Delta}$ is defined
by $\boldsymbol{\Delta}_{ij}=\Delta_{kl}$ for $i\in C_k,j\in C_l$.
Let
$$
	\mathrm{SNR}(\Delta) :=
	\lambda_{\rm max}\big(
	\diag(c)^{\frac{1}{2}}\tfrac{1}{\Delta}\diag(c)^{\frac{1}{2}}
	\big).
$$
Then there exists $\mu\le 
1-\kappa\,\big(1-\mathrm{SNR}(\Delta)^{\frac{1}{2}}\big)^2$
so that the following hold.
\begin{enumerate}[a.]
\item If $\mathrm{SNR}(\Delta)>1$, we have with probability
$1-e^{-d^{\frac{1}{2}}}$
$$
	|\lambda_{\rm max}(X)-1| \le 
	C\beta^{\frac{1}{2}}\Big(\tfrac{(\log 
		d)^{\frac{3}{4}}}{d^{\frac{1}{4}}}+
		\tfrac{q^{\frac{1}{2}}}{d^{\frac{1}{2}}}\Big),
	\quad
	|\lambda_{\rm max}(X_\varnothing)-\mu| \le
	C\beta^{\frac{1}{2}}\Big(\tfrac{(\log 
		d)^{\frac{3}{4}}}{d^{\frac{1}{4}}}+
		\tfrac{q^{\frac{1}{2}}}{d^{\frac{1}{2}}}\Big).
$$
\item If $\mathrm{SNR}(\Delta)\le 1$, 
we have with probability $1-e^{-d^{\frac{1}{2}}}$
$$
	|\lambda_{\rm max}(X)-\mu| \le 
	C\beta^{\frac{1}{2}}\Big(\tfrac{(\log 
		d)^{\frac{3}{4}}}{d^{\frac{1}{4}}}+
		\tfrac{q^{\frac{1}{2}}}{d^{\frac{1}{2}}}\Big),
	\quad
	|\lambda_{\rm max}(X_\varnothing)-\mu| \le
	C\beta^{\frac{1}{2}}\Big(\tfrac{(\log 
		d)^{\frac{3}{4}}}{d^{\frac{1}{4}}}+
		\tfrac{q^{\frac{1}{2}}}{d^{\frac{1}{2}}}\Big).
$$
\end{enumerate}
Here $\beta := \max_{i,j}\frac{1}{\Delta_{ij}}$, $\kappa:= 
\frac{\min_i b_i}{\max_i b_i}$ where
$b$ is the Perron eigenvector of $\frac{1}{\Delta}\diag(c)$.
\end{thm}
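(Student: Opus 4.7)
The plan is to recognize $X$ and $X_\varnothing$ as instances of the anisotropic model of Theorem~\ref{thm:krz} and then combine that theorem with Corollary~\ref{cor:norm} to obtain both the deterministic description of the spectral edge and its concentration.

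\textbf{Reduction.} I would first verify, by expanding elementwise, that setting $z:=x$ and $B:=1/\Delta$ (with $\mathbf{B}$ the associated block matrix) makes \eqref{eq:krz} coincide with the definitions here: the spike $\frac{1}{\boldsymbol{\Delta}}\odot\frac{1}{d}xx^*$ equals $\frac{1}{d}\diag(x)\mathbf{B}\diag(x)$; the noise $\frac{1}{\boldsymbol{\Delta}}\odot H$ has exactly the variance profile $\frac{1+1_{i=j}}{d}\mathbf{B}_{ij}$ required for $G$; and the diagonal correction matches $-\diag(\frac{1}{d}\mathbf{B}1_d)$. The hypotheses of Theorem~\ref{thm:krz} hold: $x_i^2=1$ gives $\sum_{i\in C_k}z_i^2=|C_k|$, $|C_k|>1$ by assumption, and $B=1/\Delta$ is irreducible since $\Delta$ has positive entries. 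In particular $\mathrm{SNR}(\Delta)=\lambda_{\rm max}(\diag(c)^{1/2}B\diag(c)^{1/2})$, so the cases $\mathrm{SNR}(\Delta)>1$ and $\mathrm{SNR}(\Delta)\le 1$ of Theorem~\ref{thm:aniso} correspond respectively to subcase~c and the union of subcases~a,b of Theorem~\ref{thm:krz}.

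\textbf{Assembling the bound.} Applying Theorem~\ref{thm:krz} produces constants $\lambda,\lambda_\varnothing$; setting $\mu:=\lambda_\varnothing$ yields the bound $\mu\le 1-\kappa(1-\mathrm{SNR}(\Delta)^{1/2})^2$, the three-way phase transition, and the approximations $|\lambda_{\rm max}(X_{\rm free})-\lambda|,\ |\lambda_{\rm max}(X_{\varnothing,\rm free})-\mu|\le\sqrt{8\|B1_q\|_\infty/d}\le\sqrt{8q\beta/d}$, where I use $\|B1_q\|_\infty=\max_k\sum_l\frac{1}{\Delta_{kl}}\le q\beta$. Next, $X-\mathbf{E}X$ is a Gaussian matrix with independent (self-adjoint) entries of variance at most $\frac{2\beta}{d}$, which yields at once $\sigma(X)\le\sqrt{2\beta}$, $v(X),\sigma_*(X)\lesssim\sqrt{\beta/d}$, and therefore $\tilde v(X)\lesssim\beta^{1/2}d^{-1/4}$; identical bounds hold for $X_\varnothing$. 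Corollary~\ref{cor:norm} applied with $t=d^{1/4}$ then gives, with probability at least $1-e^{-d^{1/2}}$,
$$
	|\lambda_{\rm max}(X)-\lambda_{\rm max}(X_{\rm free})|\lesssim
	\beta^{1/2}d^{-1/4}(\log d)^{3/4},
$$
and likewise for $X_\varnothing$. A triangle inequality and a union bound combine these with the deterministic estimates above to deliver the conclusion of Theorem~\ref{thm:aniso}.

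Because this proof is essentially a direct concatenation of earlier results, the only obstacle specific to Theorem~\ref{thm:aniso} is confirming that the choice $B=1/\Delta$ makes all the hypotheses of Theorem~\ref{thm:krz} transparent; the substantive difficulty is entirely upstream, in the variational analysis producing Theorem~\ref{thm:krz}.
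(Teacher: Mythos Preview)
Your proposal is correct and follows essentially the same route as the paper: identify $X,X_\varnothing$ with the model \eqref{eq:krz} via $B=1/\Delta$, $z=x$, invoke Theorem~\ref{thm:krz} with $\mu:=\lambda_\varnothing$ and the bound $\|B1_q\|_\infty\le q\beta$, compute $\sigma(X)^2\le 2\beta$ and $v(X)^2\lesssim\beta/d$, and apply Corollary~\ref{cor:norm} with $t=d^{1/4}$. The paper's proof is exactly this concatenation, with the same parameter estimates and the same choice of $t$.
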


The proof of this result in section \ref{sec:aniso} is a 
straightforward consequence of the fact that $X$ is of the form
\eqref{eq:krz} with $\mathbf{B}=\frac{1}{\boldsymbol{\Delta}}$ and
$z = x$.

Theorem \ref{thm:aniso} shows that the largest eigenvalue of $X$ can 
detect the presence or absence of a signal with probability $1-o(1)$ when 
$\mathrm{SNR}(\Delta)\ge 1+\varepsilon$ for any $\varepsilon>0$, which 
coincides with the information-theoretic detection limit for this problem 
\cite{GKKZ22}. We emphasize that this is established here in a much 
stronger \emph{nonasymptotic} form than was conjectured in \cite{KKP23} 
(and proved in \cite{MKK24} concurrently with our work). In particular, 
the conclusion is valid when $q\ll d$ and $\beta,\frac{1}{\kappa}\lesssim 
1$, which means the number of blocks may be chosen to diverge rapidly as 
the dimension increases.

\begin{rem}
In \cite[Conjecture 1.6]{KKP23}, the largest eigenvalue of $X$ is 
conjectured to detach from the bulk of the spectrum if and only if 
$\mathrm{SNR}(\Delta)>1$. This formulation must be interpreted with care 
in the nonasymptotic setting, however, as it is possible when both 
$d,q\to\infty$ that even the spectrum of $X_\varnothing$ has components 
that detach from the bulk. The formulation of Theorem \ref{thm:aniso} 
avoids this pitfall.
\end{rem}

\begin{rem}
We assumed that $x\in\{-1,1\}^d$ primarily for simplicity of 
exposition. In \cite{KKP23}, it is assumed instead that the entries of $x$ 
are i.i.d.\ with $\EE[x_i^2]=1$. It is completely 
straightforward to extend Theorem \ref{thm:aniso} to this setting, see 
Remark~\ref{rem:gensignal}. However, the quantitative rates in 
Theorem \ref{thm:aniso} must then depend on what assumptions are 
made on the distribution of $x_i$. As no new insights are obtained from
such an extension, we have chosen to focus on the above concrete setting.
\end{rem}

Beside the behavior of the top eigenvalue, \cite{KKP23} 
also conjectured a corresponding phase transition for the overlap between 
$x$ and the largest eigenvector of $X$. As was explained in 
Remark \ref{rem:ohnonooverlap}, it has proved challenging to achieve 
nonasymptotic bounds for this quantity. However, in the asymptotic setting 
where all the model parameters are fixed as $d\to\infty$, the correct 
behavior may be readily established.

\begin{thm}
\label{thm:anisoeigen}
Consider the setting of Theorem \ref{thm:aniso}, where $q,\Delta,c$ 
are taken to be fixed as $d\to\infty$. Then we have
$$
	\frac{1}{d}
	|\langle x,v_{\rm max}(X)
	\rangle|^2 \to 0\text{ in probability as }d\to\infty
$$ 
if and only if $\mathrm{SNR}(\Delta)\le 1$.
\end{thm}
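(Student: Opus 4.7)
The plan is to relate the overlap to the derivative of a perturbed top eigenvalue via convexity, and to identify this derivative in the asymptotic limit using Lehner's variational principle \eqref{eq:lehner} together with the asymptotic analysis behind Theorem~\ref{thm:krz}. For $\theta\in\mathbb{R}$ set $\lambda(\theta) := \lambda_{\rm max}(X + \theta\, xx^*/d)$, which is convex in $\theta$ as a supremum of linear functionals. Since the top eigenvalue of a Gaussian random matrix is almost surely simple, Hellmann-Feynman yields
$$
	\lambda'(0) = \tfrac{1}{d}|\langle x, v_{\rm max}(X)\rangle|^2,
$$
and convexity sandwiches this derivative between the one-sided difference quotients $(\lambda(0)-\lambda(-\theta))/\theta$ and $(\lambda(\theta)-\lambda(0))/\theta$ for any $\theta>0$. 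It therefore suffices to evaluate $\lambda(\pm\theta)$ asymptotically for small fixed $\theta$ and then send $\theta\to 0$.

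\textbf{Passing to the free model.} The matrix $X + \theta\,xx^*/d$ is Gaussian with the same noise parameters as $X$ and a deterministic rank-one shift of the mean, so Corollary~\ref{cor:norm} gives $\lambda(\theta)=\lambda_{\rm max}(X_{\rm free}+\theta\,xx^*/d\otimes\id)+o(1)$ in probability in the asymptotic regime of Theorem~\ref{thm:aniso}. Lehner's formula represents the right-hand side as
$$
	\inf_{M>0}\lambda_{\rm max}\!\Big(\EE X+\theta\tfrac{xx^*}{d}+M^{-1}+\EE[(X-\EE X)M(X-\EE X)]\Big).
$$
Let $M^*$ denote the optimizer at $\theta=0$ and $P^*$ the top eigenprojector of the bracketed matrix at $(\theta,M)=(0,M^*)$; both are identified asymptotically in the proof of Theorem~\ref{thm:krz} in section~\ref{sec:pfaniso}.

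\textbf{Phase dichotomy.} In the supercritical phase $\mathrm{SNR}(\Delta)>1$, the outlier $\lambda_{\rm max}(X_{\rm free})\to 1$ corresponds to a rank-bounded $P^*$ whose image consists of vectors of the form $\diag(x)u$ with $u$ block-constant and aligned with the Perron eigenvector $b$ of $\tfrac{1}{\Delta}\diag(c)$; the normalization $\sum_{i\in C_k}x_i^2=|C_k|$ then gives $\tfrac{1}{d}\langle x,P^*x\rangle\to c_0>0$. The upper bound $\lambda(\theta)\le \lambda_{\rm max}(\EE X+\theta xx^*/d+(M^*)^{-1}+\EE[(X-\EE X)M^*(X-\EE X)])$ (by feasibility of $M^*$) together with a matching lower bound obtained from a first-order variation of $M$ then pins down $\lambda'(0)\to c_0$. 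In the subcritical phase $\mathrm{SNR}(\Delta)\le 1$, Theorem~\ref{thm:aniso}(b) places $\lambda_{\rm max}(X_{\rm free})$ at the bulk edge $\mu$; extending the proof of Theorem~\ref{thm:krz} to the perturbed model shows that $\lambda_{\rm max}((X+\theta xx^*/d)_{\rm free})=\mu+o(1)$ for all sufficiently small fixed $\theta$, i.e., the rank-one perturbation is below the BBP threshold for this bulk. Convexity then forces $\lambda'(0)\to 0$ in probability.

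\textbf{Main obstacle.} The technical crux is the asymptotic analysis of Lehner's variational principle for the \emph{perturbed} model $X+\theta xx^*/d$. The perturbation $\theta xx^*/d$ is not of the block form of~\eqref{eq:krz}, so one must extend---rather than apply directly---the analysis in the proof of Theorem~\ref{thm:krz}. In the supercritical phase this amounts to establishing enough sensitivity of the variational problem at $\theta=0$ (i.e., uniqueness and continuity of $M^*(\theta)$) to extract the explicit positive overlap $c_0$; in the subcritical phase it amounts to a BBP-type statement ruling out the emergence of an outlier under the rank-one perturbation. Both are tractable because, with $q,\Delta,c$ fixed, the Lehner problem reduces to a finite-dimensional optimization that can be analyzed perturbatively---but the quantitative book-keeping needed for a nonasymptotic statement is precisely what proved elusive (cf.\ Remark~\ref{rem:ohnonooverlap}), which is why Theorem~\ref{thm:anisoeigen} is restricted to the asymptotic regime.
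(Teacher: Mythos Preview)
Your high-level strategy---sandwich the overlap between difference quotients of $\theta\mapsto\lambda_{\rm max}(X+\theta xx^*/d)$ via convexity (Lemma~\ref{lem:perturb}), pass to the free model, and analyze the perturbed Lehner principle---is exactly the paper's approach. But you misidentify the main technical point and your case analysis has gaps.

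You state that ``the perturbation $\theta xx^*/d$ is not of the block form of~\eqref{eq:krz}'' and therefore requires an \emph{extension} of the analysis behind Theorem~\ref{thm:krz}. This is false, and the whole proof hinges on it being false. Since $z=x\in\{-1,+1\}^d$ here, one has $\tfrac{1}{d}xx^*=\tfrac{1}{d}\diag(z)1_d1_d^*\diag(z)$, which in the notation of section~\ref{sec:reduction} is $\sum_{k,l}\sqrt{c_k}\sqrt{c_l}\,f_kf_l^*\in\mathcal{A}$. Hence (cf.\ Lemma~\ref{lem:meanA}) $\EE X_t=\mathrm{A}\big(\diag(c)^{1/2}(B+t1_q1_q^*)\diag(c)^{1/2}-\diag(Bc),\,-Bc\big)\in\mathcal{A}$, and the reduction principle (Lemma~\ref{lem:reduction}) together with Proposition~\ref{prop:lambda} transfer \emph{verbatim} to $X_t$, yielding a $q$-dimensional variational quantity $\lambda_t$ depending only on $q,B,c,t$ with $\lambda_{\rm max}(X_t)=\lambda_t+o(1)$ in probability. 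The paper then works entirely with the deterministic $\lambda_t$.

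Second, your subcritical argument bundles $\mathrm{SNR}(\Delta)<1$ and $\mathrm{SNR}(\Delta)=1$ and asserts $\lambda(\theta)=\mu+o(1)$ for all small $\theta$. In the strictly subcritical case this holds (at the minimizer $v^*=v^*_\varnothing$ of $\lambda_0$ the first branch of the max is \emph{strictly} inactive by Lemmas~\ref{lem:phasereg}--\ref{lem:phasebd}, so a small $t1_q1_q^*$ does not change the max), but at criticality it is false: for any $t>0$ the first branch at $v=1_q$ exceeds $1$. The paper handles this separately by exhibiting the trial point $v=1_q-tsb$ (with $b$ the Perron vector of $B\diag(c)$ and suitable $s>0$) to get $\lambda_t-\lambda_0=O(t^2)$, hence $(\lambda_t-\lambda_0)/t\to 0$. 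Your sketch omits this. In the supercritical case the paper does not attempt to compute any explicit overlap $c_0$: it only shows $\lambda_{-t}<\lambda_0$ for every fixed $t>0$ (Perron--Frobenius gives a strict drop of the first branch under the rank-one deflation $-t1_q1_q^*$, and since $v^*\ne v^*_\varnothing$ one can then also strictly decrease the second branch), which already prevents the overlap from converging to $0$.
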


\subsection{Contextual stochastic block models}
\label{sec:csbm}

We now discuss an entirely different anisotropic application: a spike 
detection problem with side information.

Let $G$ be an $n\times n$ self-adjoint random matrix whose entries 
$(G_{ij})_{i\ge j}$ are independent with $G_{ij}\sim 
N(0,\frac{1+1_{i=j}}{n})$, $H$ be a $p\times n$ random matrix all 
of whose entries are i.i.d.\ with distribution $N(0,\frac{1}{p})$,
and $u\sim N(0,\frac{1}{p}\id_p)$ be an independent random vector.
We further fix $v\in\{-1,+1\}^n$ and
$\lambda,\mu\ge 0$. We now define
$$
	A = \frac{\lambda}{n}\, vv^* + G,\qquad\quad
	Y = \sqrt{\frac{\mu}{n}}\,uv^* + H.
$$
We aim to recover the signal $v$ from observation of both $A$ and $Y$.
Note that $A$ is precisely the classical spiked Wigner model, while $Y$ 
provides additional information or ``context'' about $v$. The 
availability of side information should make it possible to detect
weaker signals than is possible otherwise.

The above model was proposed in \cite[eq.\ (8)--(9)]{DMMS18} as a Gaussian 
counterpart of an analogous discrete model called the contextual 
stochastic block model. In particular, it is shown in the proof of 
\cite[Theorem 4]{DMMS18} by an indirect argument that the existence of any 
nontrivial estimator of $v$ in the Gaussian model will imply the existence 
of such an estimator in the discrete model. We will therefore focus our
attention here for simplicity on the Gaussian model.

In \cite[Theorem 6]{DMMS18}, the authors propose a detection algorithm 
that combines a spectral method with a univariate optimization problem. 
Here we investigate a simpler detection algorithm that is purely 
spectral in nature, which was suggested by O.\ Duranthon, F.~Krzakala, 
and L.~Zdeborov{\'a} (personal communication) based on ideas of
statistical physics. To define this algorithm, let
$$
	\hat X = 
	\begin{bmatrix}
	\lambda A - (\lambda^2+\frac{\mu p}{n})\id_n & Y^*\sqrt{\frac{\mu p}{n}} 
	\\[.2cm]
	Y\sqrt{\frac{\mu p}{n}} & - \mu\id_p
	\end{bmatrix},
$$
and let $\hat v\in\mathbb{R}^n$ be the restriction of $v_{\rm 
max}(\hat X)\in\mathbb{R}^{n+p}$ to the first $n$ coordinates. The 
following theorem shows that $\hat v$ has positive correlation 
with $v$ up to the information-theoretic detection limit for this problem 
(cf.\ \cite[Theorem 6]{DMMS18}).

\begin{thm}
\label{thm:context}
Let $\lambda,\mu$ be fixed as $n,p\to\infty$ with $\frac{n}{p}\to 
\gamma\in (0,\infty)$. Then 
$$
	\frac{1}{n}|\langle v,\hat v\rangle|^2 \ge \varepsilon - o(1)
	\text{ with probability }1-o(1)\text{ for some }\varepsilon>0
$$
if and only if $\lambda^2+\frac{\mu^2}{\gamma}>1$.
\end{thm}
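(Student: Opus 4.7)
My plan is to reduce Theorem~\ref{thm:context} to the anisotropic block-model framework of Theorem~\ref{thm:krz} by conditioning on $u$.

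First, I would condition on a realization of $u$ in the high-probability event $\{\|u\|=1+o(1)\}$ (valid since $\|u\|^2\sim\chi_p^2/p$ is concentrated at $1$). On this event, $\hat X$ fits the form \eqref{eq:krz} with ambient dimension $d=n+p$, partition $C_1=[1,n]$ and $C_2=[n+1,n+p]$, block proportions $c=(n/d,p/d)$, signal vector $z=(v,\sqrt{p}\,u/\|u\|)$, and coefficient matrix
\[
B=\begin{pmatrix}(1+p/n)\lambda^2 & \mu(1+p/n)\\ \mu(1+p/n) & 0\end{pmatrix}.
\]
The matching is a direct check: the scalar shift $-\diag(\tfrac{1}{d}\mathbf{B}1_d)$ reproduces the diagonal subtractions $-(\lambda^2+\mu p/n)\id_n$ and $-\mu\id_p$; the rank-one spike $\tfrac{1}{d}\diag(z)\mathbf{B}\diag(z)$ reproduces both $\tfrac{\lambda^2}{n}vv^*$ on the top block and (conditional on $u$) the $\tfrac{\mu\sqrt p}{n}vu^*$ off-diagonal spike; the variance profile of the remaining Gaussian part matches $\EE[G_{ij}^2]=(1+1_{i=j})\mathbf{B}_{ij}/d$, since $\lambda G$ has entry variance $\lambda^2/n=\mathbf{B}_{11}/d$ and $\sqrt{\mu p/n}\,H$ has entry variance $\mu/n=\mathbf{B}_{12}/d$, while the vanishing of the bottom-right block matches $B_{22}=0$. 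The null model $\hat X_\varnothing$ arises by dropping the $z$-spike.

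A short computation then yields
\[
\diag(c)^{\frac12}B\diag(c)^{\frac12}=\begin{pmatrix}\lambda^2 & \mu\sqrt{p/n}\\ \mu\sqrt{p/n} & 0\end{pmatrix},
\]
whose top eigenvalue $\tfrac12(\lambda^2+\sqrt{\lambda^4+4\mu^2 p/n})$ exceeds $1$ iff $\lambda^2+\mu^2 p/n>1$, that is, $\lambda^2+\mu^2/\gamma>1$ in the limit. Combining Theorem~\ref{thm:krz} applied conditionally on $u$ (the parameters $\|B1_q\|_\infty$ and $\kappa$ are $O(1)$, so the theorem produces only $O(n^{-1/2})$ error) with the sharp matrix concentration of Corollary~\ref{cor:norm} applied to $\hat X$ as a Gaussian random matrix with $\tilde v(\hat X),\sigma_*(\hat X)=O(n^{-1/2})$, I conclude that when $\lambda^2+\mu^2/\gamma>1$ one has $\lambda_{\max}(\hat X)=1+o(1)$ with probability $1-o(1)$, separated by a constant gap from the null bulk edge $\lambda_{\max}(\hat X_{\varnothing,\rm free})<1$; when $\lambda^2+\mu^2/\gamma\le 1$, no outlier emerges above the bulk.

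Finally, to convert the outlier into an overlap statement, I would follow the asymptotic eigenvector analysis described in Remark~\ref{rem:ohnonooverlap}: in the supercritical regime the top eigenvector of $\hat X$ aligns with the direction realizing the infimum in the Lehner variational principle for $\hat X_{\rm free}$, and its projection onto the first $n$ coordinates is proportional to $v$ with a coefficient computable from the Perron eigenvector of $\diag(c)^{\frac12}B\diag(c)^{\frac12}$; this yields $\tfrac{1}{n}|\langle v,\hat v\rangle|^2\ge\varepsilon-o(1)$ for some explicit $\varepsilon=\varepsilon(\lambda,\mu,\gamma)>0$. For the converse direction, when $\lambda^2+\mu^2/\gamma\le 1$, I would appeal to the information-theoretic lower bound of \cite[Theorem~6]{DMMS18}, which rules out any estimator of $v$ achieving positive correlation below this threshold. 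The principal obstacle is the eigenvector analysis: as flagged in Remark~\ref{rem:ohnonooverlap}, a nonasymptotic overlap bound in the anisotropic block model is open, so the argument depends essentially on being in the asymptotic regime where closed-form perturbative expansions of the outlier eigenvector are tractable.
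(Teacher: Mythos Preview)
Your reduction to the block model \eqref{eq:krz} is correct and matches the paper's setup exactly: same partition, same $B$ (your $(1+p/n)$ is the paper's $d/n$), and the same normalization of $u$ via the law of large numbers. The eigenvalue phase transition via Theorem~\ref{thm:krz} and Corollary~\ref{cor:norm} is also as in the paper.

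The gap is the eigenvector step. Your claim that the top eigenvector ``aligns with the direction realizing the infimum in the Lehner variational principle'' with overlap ``computable from the Perron eigenvector of $\diag(c)^{1/2}B\diag(c)^{1/2}$'' is not an established statement in this framework; the Lehner formula \eqref{eq:lehner} concerns eigenvalues only, and there is no general principle linking its minimizer to the top eigenvector of $X$. The paper instead uses the perturbation device of Lemma~\ref{lem:perturb}: with $P=\frac{1}{n}\II_{C_1}zz^*\II_{C_1}=\frac{1}{n}(v,0)(v,0)^*$, the overlap $\frac{1}{n}|\langle v,\hat v\rangle|^2=\langle v_{\max}(\hat X),Pv_{\max}(\hat X)\rangle$ is sandwiched between the difference quotients $(\lambda_{\max}(\hat X_{\pm t})-\lambda_{\max}(\hat X))/(\pm t)$. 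Each perturbed matrix $\hat X_t=\hat X+tP$ is again of the block-model form, with $\diag(c)^{1/2}B\diag(c)^{1/2}$ replaced by $\diag(c)^{1/2}B\diag(c)^{1/2}+te_1e_1^*$ in the first line of the variational formula for $\lambda_t$; one then follows the case analysis of the proof of Theorem~\ref{thm:anisoeigen} verbatim to show $\lambda_{-t}<\lambda_0$ for all $t>0$ in the supercritical regime (giving positive overlap) and $\lambda_t-\lambda_0=o(t)$ in the critical and subcritical regimes (forcing the overlap to vanish). This handles both directions of the equivalence uniformly. Your appeal to the information-theoretic bound of \cite{DMMS18} for the converse is a legitimate alternative for the strict subcritical case, but does not cover the boundary $\lambda^2+\mu^2/\gamma=1$, which the perturbation argument does (Case~3 of Theorem~\ref{thm:anisoeigen} yields $\lambda_t-\lambda_0=O(t^2)$ there).
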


The proof of this result is given in section \ref{sec:pfcontext}. The main 
idea of the proof is that we can identify $\hat X$ as another special 
instance of the model \eqref{eq:krz}. The model differs from that of the 
previous section in that the matrix $\mathbf{B}$ now has many vanishing 
entries; this will, however, not cause any complications in our analysis.

\begin{rem}
Our analysis could be extended using Theorem \ref{thm:univ} to achieve the 
same result directly for the discrete contextual stochastic block model, 
provided that its average degrees grow polylogarithmically in the 
dimension. The latter is necessary, as direct spectral methods 
fundamentally cannot work in the regime of constant average degrees due to 
the presence of high degree nodes (see, e.g., \cite{BBK19}). This does not 
contradict the indirect universality argument used in \cite{DMMS18}, which 
merely ensures the existence of an estimator in the discrete setting. 
\end{rem}

\subsection{Sample covariance error}

Let $X_1,\ldots,X_n$ be i.i.d.\ Gaussian random vectors in $\mathbb{R}^p$
with distribution $N(0,\Sigma)$. Then the sample covariance matrix
\begin{equation}
\label{eq:scov}
	\hat \Sigma := \frac{1}{n} \sum_{i=1}^n X_iX_i^*
\end{equation}
is a natural estimator of the covariance matrix $\Sigma$. If 
$X$ is the $p\times n$ random matrix whose columns are $X_1,\ldots,X_n$, 
we may write $\hat\Sigma = \frac{1}{n}XX^*$.

Sample covariance matrices exhibit outlier phase transitions much like in
the spiked Wigner model when the covariance matrix $\Sigma$ is defined by 
a low-rank perturbation. This is in fact the original setting studied by 
Baik et al.\ \cite{BBP05,BS06}. For sake of illustration, we will consider
here the simplest such model where
\begin{equation}
\label{eq:scovrk1}
	\Sigma = \lambda\, vv^* + \id_p,
\end{equation}
where $\lambda\ge 0$ and $\|v\|=1$. In this case, it is shown in 
\cite{BBP05,BS06} that in the asymptotic regime $n,p\to\infty$ with 
$\frac{p}{n}\to\delta$,
the largest eigenvalue of $\hat\Sigma$ is a spectral outlier if and 
only if $\lambda>\sqrt{\delta}$ (cf.\ Theorem \ref{thm:scov} below).

If we view $\hat\Sigma$ as an estimator of $\Sigma$, however, then it is 
often more relevant to understand the norm of the estimation error 
$\|\hat\Sigma-\Sigma\|$, rather than the norm of the estimator itself 
$\|\hat\Sigma\|$ as in \cite{BBP05,BS06}. It was conjectured by Han 
\cite{Han22} that the sample covariance 
error also exhibits a phase transition; however, this transition occurs at 
a larger value of the signal strength $\lambda>1+\sqrt{\delta}$. 

We will prove a nonasymptotic form of the above phase 
transitions and of yet another transition for the smallest
eigenvalue of $\hat\Sigma-\Sigma$. Here we define
\begin{align*}
	\mathrm{S}(\lambda,\delta) &:= \begin{cases}
	(1+\sqrt{\delta})^2\hspace{2cm}
	& \text{for }\lambda\le \sqrt{\delta},\\
	(1+\lambda)(1+\tfrac{\delta}{\lambda}) & \text{for }\lambda>\sqrt{\delta},
	\end{cases}
	\\
	\mathrm{H}_+(\lambda,\delta) &:= \begin{cases}
	\delta + 2\sqrt{\delta}
	& \text{for }\lambda\le 1+\sqrt{\delta},\\
	\tfrac{1+\lambda}{2\lambda}(\sqrt{\delta}+\sqrt{\delta+4\lambda})
	\sqrt{\delta} & \text{for }\lambda>1+\sqrt{\delta},
	\end{cases}
	\\
	\mathrm{H}_-(\lambda,\delta) &:= \begin{cases}
	\delta -
	2\sqrt{\delta} & \text{for }\lambda\le 1-\sqrt{\delta},\\
	\tfrac{1+\lambda}{2\lambda}(\sqrt{\delta}-\sqrt{\delta+4\lambda})
	\sqrt{\delta} & \text{for }\lambda>1-\sqrt{\delta}.
	\end{cases}
\end{align*}
These functions play the same role for $\hat\Sigma$ and $\hat\Sigma-\Sigma$
as does $\mathrm{B}(\theta)$ in section \ref{sec:isophase}.
The following theorem is illustrated in Figures \ref{fig:scov1} and 
\ref{fig:scov2}. 
\begin{figure}
\centering
\includegraphics[width=0.9\textwidth]{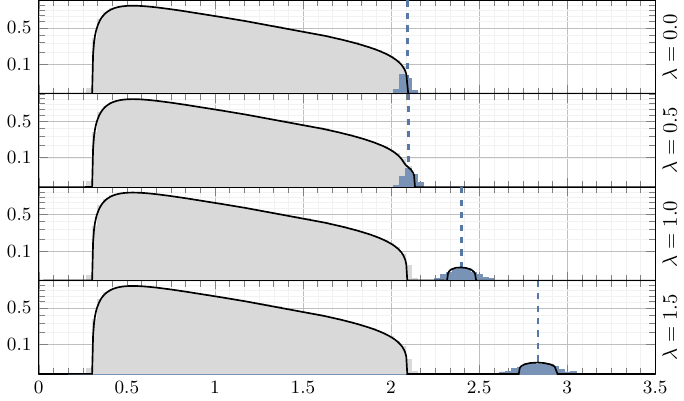}
\caption{Illustration of $200$ samples of  $\hat\Sigma$ in Theorem 
\ref{thm:scov} with
$p = 400$ and $n=2000$ (so that $\sqrt{\delta}\approx 0.45$).
The light shaded area is the empirical histogram of all 
eigenvalues normalized to have total area $1$, while the colored shaded 
area is the empirical histogram of the largest eigenvalue normalized to 
have area $\frac{1}{p}$. The solid line is the spectrum of the free model, 
and the dashed vertical line marks 
$\mathrm{S}(\lambda,\delta)$. The vertical axis follows a 
square-root scale to visualize the density of the 
outlier.\label{fig:scov1}}
\end{figure}

\begin{thm}
\label{thm:scov}
Let $\delta=\frac{p}{n}$ and $d=\max\{n,p\}$, and assume 
$n\ge (\log d)^3$. Then
\begin{align*}
&	\mathbf{P}\big[\big|\|\hat\Sigma\| - \mathrm{S}(\lambda,\delta)\big|
	>
	C(1+\lambda+\delta)\, n^{-\frac{1}{4}} (\log d)^{\frac{3}{4}}] \le
	e^{-Cn^{\frac{1}{2}}},
\\
&
	\mathbf{P}\big[\big|\lambda_{\rm max}(\hat\Sigma-\Sigma)
	- \mathrm{H}_+(\lambda,\delta)\big|
	>
	C(1+\lambda+\delta)\, n^{-\frac{1}{4}} (\log d)^{\frac{3}{4}}] \le
	e^{-Cn^{\frac{1}{2}}},
\\
&
	\mathbf{P}\big[\big|\lambda_{\rm min}(\hat\Sigma-\Sigma)
	- \mathrm{H}_-(\lambda,\delta)\big|
	>
	C(1+\lambda+\delta)\, n^{-\frac{1}{4}} (\log d)^{\frac{3}{4}}] \le
	e^{-Cn^{\frac{1}{2}}}.
\end{align*}
\end{thm}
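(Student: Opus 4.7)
The plan is to recognize $\hat\Sigma = XX^*$ as the quadratic of a Gaussian matrix, where $X := n^{-1/2}\Sigma^{1/2}Z$ and $Z$ is a $p\times n$ matrix with i.i.d.\ $N(0,1)$ entries, then apply Theorem~\ref{thm:quad}. After padding $X$ by zero rows or columns to obtain a $d\times d$ Gaussian matrix with $d = \max(n,p)$ (which preserves the nonzero spectra of $XX^*$ and of $XX^*-\Sigma$), Theorem~\ref{thm:quad} with $B=0$ controls $\|\hat\Sigma\|$, while with $B = -\Sigma$ it controls the Hausdorff distance between the spectra of $\hat\Sigma - \Sigma$ and of $X_{\rm free}X_{\rm free}^* - \Sigma\otimes\id$, which simultaneously gives bounds on $\lambda_{\rm max}$ and $\lambda_{\rm min}$. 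Each of the three assertions thus reduces to identifying the corresponding spectral edge of the free model in closed form.

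The matrix parameters of $X$ are straightforward. From $\EE[XX^*] = \Sigma$, $\EE[X^*X] = n^{-1}\tr(\Sigma)\,\id_n$ and the tensor covariance structure, one obtains $\sigma_*(X)^2 = v(X)^2 = (1+\lambda)/n$ and $\sigma(X)^2 = \max\{(p+\lambda)/n,\,1+\lambda\} \le 1+\lambda+\delta$, so $\tilde v(X)(\log d)^{3/4} \lesssim (1+\lambda+\delta)^{1/2} n^{-1/4}(\log d)^{3/4}$. The norm $\|X_{\rm free}\|$ can be bounded cheaply by applying Lehner's formula \eqref{eq:lehner} to the self-adjoint dilation with a scalar weight matrix, yielding $\|X_{\rm free}\| + \|B\|^{1/2} \lesssim (1+\lambda+\delta)^{1/2}$. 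Choosing $t$ in Theorem~\ref{thm:quad} equal to the threshold $\tilde v(X)(\log d)^{3/4}$ and using the hypothesis $n \ge (\log d)^3$ to absorb the quadratic $t^2$ term, the error takes the form $C(1+\lambda+\delta)\,n^{-1/4}(\log d)^{3/4}$ with failure probability at most $e^{-cn^{1/2}}$, matching the stated inequalities.

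The substantive content is then the closed-form computation of the three spectral edges of the free model. By rotational invariance of $Z$, I may assume $v = e_1$, so that $\Sigma = (1+\lambda)e_1 e_1^* + (\id - e_1 e_1^*)$ is a rank-one perturbation of $\id_p$, and $X_{\rm free}X_{\rm free}^* = \Sigma^{1/2}W\Sigma^{1/2}$ is a spiked free Marchenko--Pastur operator of shape $\delta = p/n$. The norm formula $\|\Sigma^{1/2}W\Sigma^{1/2}\| = \mathrm{S}(\lambda,\delta)$ is the standard free-probabilistic BBP result, which can be derived by applying Lehner's formula to the self-adjoint dilation of $X_{\rm free}$ and exploiting the rank-one structure of the expected squared matrix at the spike direction; this reduces the infimum over weight matrices to a scalar optimization whose solution is the two-branch formula $\mathrm{S}$. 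For $\mathrm{H}_\pm$, I would analyze the resolvent of $X_{\rm free}X_{\rm free}^* - \Sigma\otimes\id$ directly: after conjugation by $\Sigma^{-1/2}$, the edge condition becomes the extremal $z$ for which $W - \id - z\Sigma^{-1}$ fails to be invertible. Since $\id + z\Sigma^{-1}$ acts as $z+1$ on $e_1^\perp$ and as $z/(1+\lambda)+1$ on $e_1$, a Schur complement along the spike direction collapses this operator-valued edge condition to a scalar fixed-point equation in $z$ involving the Stieltjes transform of $W$; solving it and tracking whether its solution lies inside the bulk support of $W-\id$ or detaches yields the two branches of $\mathrm{H}_\pm$.

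The principal obstacle lies in this last Schur-complement analysis. Unlike for $\mathrm{S}$, the shift $\Sigma\otimes\id$ acts with different weights in the spike and bulk directions, so the extremal eigenvector is not aligned with $e_1$ and the outlier position is not a simple translate of that of $\hat\Sigma$. Identifying the precise transition at $\lambda = 1 \pm \sqrt\delta$ and obtaining the stated closed form for $\mathrm{H}_\pm$ requires a careful case analysis of the scalar fixed-point equation, including the regime $\delta > 1$ where the atom of $W$ at $0$ produces the non-classical branch of $\mathrm{H}_-$. Once these identifications are established, combining them with the deviation bound from the first step immediately yields the three stated inequalities.
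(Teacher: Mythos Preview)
Your reduction step via Theorem~\ref{thm:quad} matches the paper's exactly: padding to a $d\times d$ matrix, the parameter computations, the choice of $t \asymp \tilde v(X)(\log d)^{3/4}$, and the use of $n\ge(\log d)^3$ to absorb the $t^2$ term all appear in the paper's Lemma~\ref{lem:quadapp}. The only cosmetic difference is that you bound $\|X_{\rm free}\|$ by Lehner with scalar weight while the paper simply cites the free Khintchine inequality $\|X_{\rm free}\|\le 2\sigma(X)$; these give the same bound.

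Where you diverge is in computing the three spectral edges of the free model. The paper does \emph{not} use resolvent or Schur-complement analysis. Instead it invokes a quadratic analogue of Lehner's formula from \cite{EvH24} (stated as Lemma~\ref{lem:emre}), which directly expresses each of $\|\tfrac1n X_{\rm free}X_{\rm free}^*\|$ and $\lambda_{\max},\lambda_{\min}$ of the centered model as an explicit $\inf_{a}\inf_{x\in\Delta_p}\max_i$ variational problem. Because $\Sigma$ has only two distinct eigenvalues, a convexity argument collapses the infimum over $\Delta_p$ to a two-parameter one; Sion's minimax theorem then turns each expression into a one-dimensional $\sup_{0<\pi<1}$ which is identified with $\mathrm{S}$, $\mathrm{H}_+$, $\mathrm{H}_-$ by direct computation. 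No Stieltjes transforms, subordination functions, or case analysis at the detachment threshold are needed.

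Your proposed route through the Schur complement of $W-\id-z\Sigma^{-1}$ along $e_1$ is in principle valid: the rank-one structure does reduce the edge condition to a scalar equation in the Stieltjes transform of the free Marchenko--Pastur law, and solving it carefully would recover $\mathrm{H}_\pm$. But you have only sketched this, and the details (handling the operator-valued resolvent entry, the branch selection, the atom at $0$ when $\delta>1$) are precisely where the footnote in the paper says Han's original argument \cite{Han22} had a gap. The paper's variational approach sidesteps all of this: the formula from \cite{EvH24} already encodes the correct edge, so the remaining work is purely finite-dimensional optimization. If you want to complete your version, you would need to carry out the full subordination analysis; alternatively, you could adopt the paper's tool and the proof becomes a short calculation.
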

\begin{figure}
\centering
\includegraphics[width=0.9\textwidth]{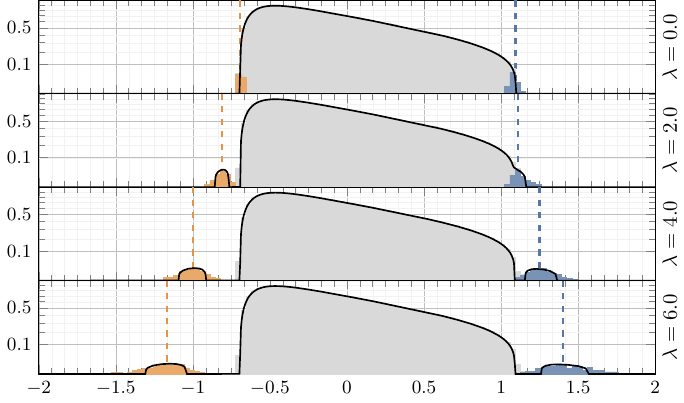}
\caption{
Illustration of $200$ samples of $\hat\Sigma-\Sigma$ in Theorem 
\ref{thm:scov} with 
the same parameters as in Figure \ref{fig:scov1}. Here the two colored 
shaded areas are the empirical histograms of the smallest and largest 
eigenvalues, and the dashed vertical lines mark the locations of
$\mathrm{H}_\pm(\lambda,\delta)$.\label{fig:scov2}}
\end{figure}

\begin{rem}
Note that as $|\mathrm{H}_-(\lambda,\delta)|\le 
\mathrm{H}_+(\lambda,\delta)$, Theorem \ref{thm:scov} also implies that
$\|\hat\Sigma-\Sigma\|=(1+o(1))\mathrm{H}_+(\lambda,\delta)$ with
probability $1-o(1)$ as conjectured in \cite{Han22}.
\end{rem}

The proof of Theorem \ref{thm:scov} is given in section \ref{sec:pfscov}. 
We will use Theorem \ref{thm:quad} to reduce the problem to the analysis 
of its deterministic model, which can be performed using a quadratic 
counterpart of the Lehner formula \cite{EvH24}.

\section{Ultracontractive bounds}
\label{sec:ultra}

The aim of this section is to prove that suitable analogues of the 
elementary fact \eqref{eq:logdmoment} for matrices with scalar entries 
hold for matrices whose entries are polynomials of semicircular variables. 
In the following, we let $s_1,\ldots,s_n$ be a free semicircular family, 
and denote by $\|Z\|_{L^p(\tau)} := \tau(|Z|^p)^{\frac{1}{p}}$ the 
noncommutative $L^p$-norm.

The following may be viewed as a direct analogue of the scalar case.

\begin{thm}
\label{thm:polymoments}
Let $P\in \M_d(\mathbb{C})\otimes\mathbb{C}\langle x_1,\ldots,x_n\rangle$ 
be any noncommutative 
polynomial of degree $k$ with matrix coefficients. Then we have 
$$
	\|P(s_1,\ldots,s_n)\| \le
	d^{\frac{3}{4q}}
	(2qk+1)^{\frac{3}{4q}} 
	\|P(s_1,\ldots,s_n)\|_{L^{4q}(\ntr\otimes\tau)}
$$
for every $q\in\mathbb{N}$.
\end{thm}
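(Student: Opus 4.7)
The plan is to reduce the bound, via the $C^*$-identity applied to $P^*P$, to an operator-norm-versus-trace comparison for positive polynomials in the semicircular family, and then to establish that comparison via a lower bound on the spectral mass near the upper edge. Raising both sides of the claim to the $4q$-th power, using $\|P\|^{4q}=\|(P^*P)^{2q}\|$ together with $\|P\|_{L^{4q}(\ntr\otimes\tau)}^{4q}=(\ntr\otimes\tau)[(P^*P)^{2q}]$, the theorem becomes equivalent to the assertion that every positive operator $R\in\M_d(\mathcal{A})$ which is a noncommutative polynomial of degree at most $4qk$ in $s_1,\ldots,s_n$ satisfies
\[
  \|R\|\ \le\ d^{3}\,(2qk+1)^{3}\,(\ntr\otimes\tau)[R].
\]
This is a self-contained claim about positive polynomials of bounded degree in a free semicircular family.

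Writing $\mu_R$ for the spectral distribution of $R$ with respect to $\ntr\otimes\tau$, a probability measure supported in $[0,\|R\|]$, the above bound is in turn equivalent to a quantitative lower bound of the form
\[
  \mu_R\bigl(\bigl[\tfrac{1}{2}\|R\|,\|R\|\bigr]\bigr)\ \gtrsim\ \frac{1}{d^{3}\,(2qk+1)^{3}}
\]
on the edge mass, since $(\ntr\otimes\tau)[R]\ge \tfrac{1}{2}\|R\|\,\mu_R([\tfrac12\|R\|,\|R\|])$. The scalar prototype for such an estimate, which already explains the general shape of the exponents, is the single-semicircular computation $\|s^{2m}\|=4^{m}$ versus $(\ntr\otimes\tau)[s^{2m}]=C_{m}\sim 4^{m}/m^{3/2}$: the ratio $m^{3/2}$ is precisely the manifestation of the square-root vanishing of the Wigner density at the edge.

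To establish the edge-mass lower bound at the required level of generality, I would invoke the ultracontractivity of the free Ornstein--Uhlenbeck semigroup acting on polynomials of bounded degree, following the strategy of \cite[\S 7.1]{BC23} previewed in the discussion at the start of Section \ref{sec:ultra}. The matrix factor $d^{3}$ arises from the elementary bound $\|M\|\le d^{1/p}\ntr[|M|^p]^{1/p}$ for $d\times d$ matrices, applied after the Wick/chaos decomposition has absorbed the $\mathcal{A}$-valued part, while the polynomial factor $(2qk+1)^{3}$ is the operator-theoretic counterpart of a Nikolskii-type comparison of the $L^{2q}$ and $L^{\infty}$ norms of a degree-$k$ polynomial in one variable against the semicircle law. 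The main obstacle is to make the ultracontractive estimate genuinely quantitative in both $d$ and the polynomial degree at the same time: one must carefully carry the matrix index $d$ through the semigroup argument on the free side, and the exponent $3$ in the theorem (as opposed to the intrinsic semicircular exponent $3/2$) should be viewed as the mild price paid for working uniformly at the level of matrix-valued polynomials rather than for a single scalar semicircular.
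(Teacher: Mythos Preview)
Your reduction to the positive operator $R=(P^*P)^{2q}$ is correct, as is the identification of ultracontractivity for free semicirculars as the essential ingredient. But after that reduction you do not actually prove anything: you reformulate the claim as an edge-mass lower bound for $\mu_R$, and then describe combining the matrix dimension $d$ with the polynomial degree as an ``obstacle'' still to be overcome. That obstacle is the whole content of the theorem, and your proposal does not resolve it. Your attribution of the factor $d^3$ to the inequality $\|M\|\le d^{1/p}\ntr[|M|^p]^{1/p}$ is also not right: with $p=1$ that bound contributes only a single factor of $d$, not $d^3$.

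The paper's argument avoids the difficulty you anticipate by decoupling the matrix dimension from the ultracontractivity entirely. Set $Q=(P^*P)^q=|P|^{2q}$, a self-adjoint element of $\M_d(\mathcal{A})$ whose matrix entries $Q_{ij}\in\mathcal{A}$ are \emph{scalar} polynomials of degree $2qk$ in $s_1,\ldots,s_n$. Biane's scalar ultracontractive bound then applies entrywise: $\|Q_{ij}\|\le(2qk+1)^{3/2}\|Q_{ij}\|_{L^2(\tau)}$. Two completely elementary inequalities,
\[
\|Q\|\le d\max_{i,j}\|Q_{ij}\|\qquad\text{and}\qquad
\max_{i,j}\|Q_{ij}\|_{L^2(\tau)}\le d^{1/2}\|Q\|_{L^2(\ntr\otimes\tau)},
\]
combine with this to give $\|Q\|\le d^{3/2}(2qk+1)^{3/2}\|Q\|_{L^2(\ntr\otimes\tau)}$. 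Since $\|Q\|=\|P\|^{2q}$ and $\|Q\|_{L^2(\ntr\otimes\tau)}=\|P\|_{L^{4q}(\ntr\otimes\tau)}^{2q}$, taking $2q$-th roots finishes the proof. The point is that there is no need to ``carry the matrix index through the semigroup'': one applies the scalar semicircular bound to each entry and handles $d$ by the crude flattening estimates above, which is where the exponent $3/2$ (hence $3/(4q)$ in the statement) on $d$ actually comes from. Your edge-mass detour, while not incorrect in spirit, obscures this elementary structure.
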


However, we will require such a property not for the norm of the matrix 
itself, but rather for the norm of its resolvent. For the resolvent of a 
self-adjoint polynomial, we can deduce an analogous result up to a small 
error.

\begin{thm}
\label{thm:polyres}
Let $P\in \M_d(\mathbb{C})\otimes\mathbb{C}\langle x_1,\ldots,x_n\rangle$ 
be any self-adjoint noncommutative 
polynomial of degree $k$ with matrix coefficients. Then we have 
\begin{multline*}
	\|(z-P(s_1,\ldots,s_n))^{-1}\| \le \\
	d^{\frac{3}{4q}}
	(2qkr+1)^{\frac{3}{4q}} 
	\bigg(
	\|(z-P(s_1,\ldots,s_n))^{-1}\|_{L^{4q}(\ntr\otimes\tau)}
	+
	\frac{12 r^{-1}\|P(s_1,\ldots,s_n)\|}{(\mathrm{Im}\,z)^2}
	\bigg)
\end{multline*}
for every $q,r\in\mathbb{N}$ and $z\in\mathbb{C}$, $\mathrm{Im}\,z>0$.
\end{thm}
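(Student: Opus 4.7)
The plan is to approximate $(z-P(s))^{-1}$ in operator norm by a polynomial $Q_r(P(s))$ of degree at most $r$ in $P(s) := P(s_1,\ldots,s_n)$, so that $Q_r(P(s))$ is a noncommutative polynomial in $s_1,\ldots,s_n$ of degree at most $kr$, and then to apply Theorem \ref{thm:polymoments} to this polynomial. Since $P(s)$ is self-adjoint, its spectrum is contained in $[-M,M]$ with $M := \|P(s)\|$, and by the spectral theorem the problem reduces to constructing a scalar polynomial $Q_r$ of degree at most $r$ satisfying
$$
  \sup_{x \in [-M, M]} \left|\frac{1}{z-x} - Q_r(x)\right| \le \frac{6\, \|P(s)\|}{r\,(\mathrm{Im}\, z)^2}.
$$

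For this approximation I would use a classical Jackson-type construction. The function $f(x) := (z-x)^{-1}$ satisfies $|f'(x)| \le (\mathrm{Im}\,z)^{-2}$ on $[-M,M]$, so after the substitution $x = M\cos\theta$ the composition $\theta \mapsto f(M\cos\theta)$ is Lipschitz in $\theta$ with constant $M/(\mathrm{Im}\,z)^2$. Jackson's theorem in the form of a Fej\'er or de la Vall\'ee-Poussin kernel then produces a trigonometric polynomial of degree $r$ approximating this function uniformly to within $C\,M/(r(\mathrm{Im}\, z)^2)$ for an absolute Jackson constant $C\le 6$. Reverting $\cos\theta = x/M$ yields an algebraic polynomial $Q_r$ of degree $r$, and functional calculus transfers this to
$$
  \|(z-P(s))^{-1} - Q_r(P(s))\| \le \frac{6\, \|P(s)\|}{r\,(\mathrm{Im}\, z)^2} =: \varepsilon_r.
$$

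With $Q_r$ in hand, Theorem \ref{thm:polymoments} applied to $Q_r(P(s))$ (of degree $\le kr$ in $s_1,\ldots,s_n$) gives
$$
  \|Q_r(P(s))\| \le K\,\|Q_r(P(s))\|_{L^{4q}(\ntr\otimes\tau)},\qquad K := d^{3/(4q)}(2qkr+1)^{3/(4q)}.
$$
Two triangle inequalities then suffice: in $L^{4q}$ (using $\|\cdot\|_{L^{4q}}\le\|\cdot\|$) one bounds $\|Q_r(P(s))\|_{L^{4q}} \le \|(z-P(s))^{-1}\|_{L^{4q}} + \varepsilon_r$, and in operator norm one bounds $\|(z-P(s))^{-1}\| \le \|Q_r(P(s))\| + \varepsilon_r$. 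Assembling the two and absorbing the trailing $+\varepsilon_r$ using $K \ge 1$ yields
$$
  \|(z-P(s))^{-1}\| \le K\,\big(\|(z-P(s))^{-1}\|_{L^{4q}} + 2\varepsilon_r\big),
$$
which is exactly the claimed bound (since $2\varepsilon_r = 12\,r^{-1}\|P(s)\|/(\mathrm{Im}\,z)^2$).

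The only step that requires any work beyond Theorem \ref{thm:polymoments} is the polynomial approximation of $(z-x)^{-1}$ with the correct dependence on $M$, $\mathrm{Im}\,z$ and the explicit constant. This is a purely one-variable classical analysis problem, with no free-probabilistic content; I expect it to be the main (and fairly minor) technical hurdle, essentially a matter of tracking the Jackson constant carefully so that the final factor is at most $12$.
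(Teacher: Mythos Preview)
Your proposal is correct and follows essentially the same approach as the paper: approximate the resolvent by a polynomial $Q_r$ of degree $r$ via Jackson's theorem, apply Theorem~\ref{thm:polymoments} to $Q_r(P)$, and conclude by two triangle inequalities together with $K\ge 1$. The only cosmetic difference is that the paper approximates the real-valued function $x\mapsto |(z-x)^{-1}|$ (citing Rivlin for the constant $6$) rather than the complex-valued $(z-x)^{-1}$, which avoids having to remark that the Jackson kernel construction works equally well for complex targets; either choice yields the same bound.
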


Of primary interest in this paper is the noncommutative model $X_{\rm 
free}$ associated to a self-adjoint Gaussian random matrix $X$. It is 
evident from the definition \eqref{eq:xfree} that $X_{\rm free}$ is a 
self-adjoint noncommutative polynomial of degree $k=1$ with matrix 
coefficients. However, while Theorem \ref{thm:polyres} can be applied 
directly to $X_{\rm free}$ as a special case, this result is not adequate 
for our purposes because the error term in the resulting inequality is 
proportional to $\|X_{\rm free}\|$. Such a bound would give rise to sharp 
matrix concentration inequalities that become increasingly inaccurate as 
$\|\mathbf{E} X\|=\|A_0\|\to\infty$, which is unnatural in applications.

The following bound, which eliminates the dependence of the error term on 
$A_0$, is essential for the main results of this paper. Its proof is based 
on the fact that the resolvent $(z-X_{\rm free})^{-1}$ primarily captures 
the part of the spectrum of $X_{\rm free}$ that is close to 
$\mathrm{Re}\,z$, and is insensitive to eigenvalues of $A_0$ that are far 
from $\mathrm{Re}\,z$.

\begin{thm}
\label{thm:xfreeres}
Let $X_{\rm free}$ be the noncommutative model associated to a
self-adjoint random matrix $X$.
For any $z\in\mathbb{C}$ with $0<\mathrm{Im}\,z\le\sigma(X)$
and $q,r\in\mathbb{N}$, $K>4$
\begin{multline*}
	\|(z-X_{\rm free})^{-1}\| \le  \\
	d^{\frac{3}{4q}}
	(2qr+1)^{\frac{3}{4q}} 
	\bigg(
	\|(z-X_{\rm free})^{-1}\|_{L^{4q}(\ntr\otimes\tau)}
	+
	\bigg(
	\frac{K}{r}
	+
	\frac{1}{K-2}
	\bigg)
	\frac{18d\sigma(X)}{(\mathrm{Im}\,z)^2}
	\bigg).
\end{multline*}
\end{thm}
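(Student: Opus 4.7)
The strategy is to reduce Theorem~\ref{thm:xfreeres} to Theorem~\ref{thm:polyres} via a spectral truncation of $A_0$ that eliminates the dependence of the error on $\|A_0\|$. Let $P$ be the spectral projection of $A_0$ onto $[\mathrm{Re}\,z - K\sigma(X),\mathrm{Re}\,z + K\sigma(X)]$, and set $\tilde A_0 := A_0 P + (\mathrm{Re}\,z)P^\perp$, so that $\|\tilde A_0 - (\mathrm{Re}\,z)\id\| \le K\sigma(X)$. Then $\tilde X_{\rm free} := \tilde A_0\otimes\id + \sum_{i=1}^n A_i\otimes s_i$ is again a self-adjoint degree-one noncommutative polynomial, and the standard free-probability estimate $\|\sum_i A_i\otimes s_i\|\le 2\sigma(X)$ (for self-adjoint coefficients) gives $\|\tilde X_{\rm free} - (\mathrm{Re}\,z)\id\|\le (K+2)\sigma(X)$.

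Applying Theorem~\ref{thm:polyres} to the recentered polynomial $\tilde X_{\rm free} - (\mathrm{Re}\,z)\id$ with resolvent parameter $i\,\mathrm{Im}\,z$ then yields
\[
\|(z-\tilde X_{\rm free})^{-1}\| \le d^{3/(4q)}(2qr+1)^{3/(4q)}\bigg(\|(z-\tilde X_{\rm free})^{-1}\|_{L^{4q}} + \frac{12(K+2)\sigma(X)}{r(\mathrm{Im}\,z)^2}\bigg).
\]
Since $K>4$ implies $K+2\le\tfrac{3K}{2}$, the polynomial error here comfortably fits into the $\tfrac{K}{r}$-piece of the claimed bound.

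The main remaining task, and the principal technical obstacle, is to control the truncation error
\[
(z-X_{\rm free})^{-1} - (z-\tilde X_{\rm free})^{-1} = (z-X_{\rm free})^{-1}\,\Delta\,(z-\tilde X_{\rm free})^{-1},\qquad \Delta := \big((A_0-\mathrm{Re}\,z)P^\perp\big)\otimes\id,
\]
in operator norm by $\tfrac{Cd\sigma(X)}{(K-2)(\mathrm{Im}\,z)^2}$; the corresponding $L^{4q}$-bound for the difference then follows immediately from $\|\cdot\|_{L^{4q}}\le\|\cdot\|$, and combining the two bounds via the triangle inequality applied to both $\|(z-X_{\rm free})^{-1}\|$ and $\|(z-X_{\rm free})^{-1}\|_{L^{4q}}$ yields the theorem. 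The crux is that the naive bound $\|R-\tilde R\|\le\|\Delta\|/(\mathrm{Im}\,z)^2$ is useless since $\|\Delta\|$ can be of order $\|A_0\|$; one must genuinely exploit that $\Delta$ is supported on the range of $P^\perp\otimes\id$, on which the spectrum of $X_{\rm free}$ lies at distance at least $(K-2)\sigma(X)$ from $\mathrm{Re}\,z$ (by construction $|A_0-\mathrm{Re}\,z|\ge K\sigma(X)$ on that subspace, while the free noise term $\sum_i A_i\otimes s_i$ perturbs it by at most $2\sigma(X)$). This spectral gap would be extracted via a Schur complement representation of $(z-X_{\rm free})^{-1}$ in the block decomposition induced by $P\otimes\id$ and $P^\perp\otimes\id$; the factor $d$ in the final estimate ultimately arises from a Hilbert--Schmidt comparison between $L^2$- and operator-type norms on the off-diagonal blocks of this decomposition.
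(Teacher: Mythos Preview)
Your truncation strategy has a genuine gap: sending the eigenvalues of $A_0$ outside $[\mathrm{Re}\,z-K\sigma,\mathrm{Re}\,z+K\sigma]$ to $\mathrm{Re}\,z$ can change the resolvent by order $1/\mathrm{Im}\,z$, not by anything that decays with $K$. Take $d=2$, $\mathrm{Re}\,z=0$, $A_0=\mathrm{diag}(0,M)$ with $M\gg K$, and $A_1=\left(\begin{smallmatrix}0&1\\1&0\end{smallmatrix}\right)$, so $\sigma(X)=1$. Then $\tilde A_0=0$ and a Schur-complement computation gives, for $z=i\varepsilon$ and $M\to\infty$,
\[
\big((z-X_{\rm free})^{-1}\big)_{11}\to z^{-1},
\qquad
\big((z-\tilde X_{\rm free})^{-1}\big)_{11}=\frac{z}{z^2-s^2},
\]
whose difference has operator norm $\sup_{t\in[0,4]}\frac{t}{\varepsilon(\varepsilon^2+t)}\asymp 1/\varepsilon$, independent of $K$. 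The intuitive reason is that you have moved spectral mass of $A_0$ \emph{onto} $\mathrm{Re}\,z$: the $P^\perp$ block of $z-\tilde X_{\rm free}$ is $i\,\mathrm{Im}\,z-S_{22}$, whose inverse is only bounded by $1/\mathrm{Im}\,z$, so the second Schur complement acquires a large correction $S_{12}(i\,\mathrm{Im}\,z-S_{22})^{-1}S_{21}$ that the first one never had. No Hilbert--Schmidt trick recovers the $1/(K-2)$ decay here, because the error is already visible at a single matrix entry.

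The paper's argument is different in exactly this respect. Instead of truncating, it \emph{compresses} $\spc(A_0)$: one adjoins $\{\mathrm{Re}\,z\}$ to $\spc(A_0)$, and for each bounded gap $(a,b)$ of this set with $b-a>K\sigma$ one shifts the eigenvalues beyond the gap by $\delta=b-a-K\sigma$ toward $\mathrm{Re}\,z$, so the gap shrinks to exactly $K\sigma$ and no eigenvalue is moved past another eigenvalue or past $\mathrm{Re}\,z$. Each such shift costs at most $\tfrac{9\sigma}{K-2}\,(\mathrm{Im}\,z)^{-2}$ in the resolvent; this is Proposition~\ref{prop:daviskah}, proved by differentiating in the shift parameter and bounding $\|1_{[b,\infty)}(A_0\otimes\id)(z-X_{\rm free}^{(r)})^{-1}\|$ via the Davis--Kahan theorem across the surviving gap of width $\ge (K-2)\sigma$ in $\spc(X_{\rm free}^{(r)})$. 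There are at most $d$ such gaps, which produces the factor $d$; after all shrinks $\|A_0'\|\le Kd\sigma$, and Theorem~\ref{thm:polyres} is applied to the compressed model. In your example this would send $A_0$ to $\mathrm{diag}(0,K)$ rather than $\mathrm{diag}(0,0)$, and the resolvent perturbation is then genuinely $O(1/K)$.
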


We will apply this theorem in the following form.

\begin{cor}
\label{cor:xfreeres}
Let $X_{\rm free}$ be the noncommutative model associated to a
self-adjoint random matrix $X$.
For any $z\in\mathbb{C}$ with $\mathrm{Re}\,z\in\spc(X_{\rm free})$
and $\mathrm{Im}\,z>0$, we have
$$
	\|(z-X_{\rm free})^{-1}\| \le  
	C
	\bigg(
	\|(z-X_{\rm free})^{-1}\|_{L^{4q}(\ntr\otimes\tau)}
	+
	\frac{\sigma_*(X)}{(\mathrm{Im}\,z)^2}
	\bigg)
$$
for all $q\ge \log d$, where $C$ is a universal constant.
\end{cor}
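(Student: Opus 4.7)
The plan is to deduce Corollary~\ref{cor:xfreeres} from Theorem~\ref{thm:xfreeres} by an appropriate choice of the free parameters $q,r,K$, combined with the elementary bound
\[
\sigma(X)^2 \le d\,\sigma_*(X)^2.
\]
This bound is immediate from the definitions: writing $Y:=X-\EE X$, one has $\sigma(X)^2=\sup_{\|v\|=1}\EE\|Yv\|^2=\sup_{\|v\|=1}\sum_{i=1}^d\EE|\langle e_i,Yv\rangle|^2\le d\,\sigma_*(X)^2$.

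In the main regime $0<\mathrm{Im}\,z\le\sigma(X)$, the plan is to take $q\ge\log d$, so that $d^{3/(4q)}\le e^{3/4}$, together with $r=d^3$. Because $r=e^{3\log d}\le e^{3q}$ under this choice of $q$, the second prefactor $(2qr+1)^{3/(4q)}$ is also bounded by a universal constant. Finally, choosing $K=2+\sqrt{r}$ minimises $\tfrac{K}{r}+\tfrac{1}{K-2}$ and yields a value of order $r^{-1/2}=d^{-3/2}$. Inserting these choices into Theorem~\ref{thm:xfreeres} gives an error term of order
\[
\frac{d\,\sigma(X)}{d^{3/2}(\mathrm{Im}\,z)^2}=\frac{\sigma(X)/\sqrt{d}}{(\mathrm{Im}\,z)^2},
\]
and applying $\sigma(X)\le\sqrt{d}\,\sigma_*(X)$ converts this into exactly $\sigma_*(X)/(\mathrm{Im}\,z)^2$, which is the term in the statement.

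The complementary regime $\mathrm{Im}\,z>\sigma(X)$ lies outside the hypothesis of Theorem~\ref{thm:xfreeres}, but here the assumption $\mathrm{Re}\,z\in\spc(X_{\rm free})$ makes the bound essentially immediate: the left-hand side equals $1/\mathrm{Im}\,z$, and since the spectral measure of $X_{\rm free}$ with respect to $\ntr\otimes\tau$ is a probability measure supported in $[-\|X_{\rm free}\|,\|X_{\rm free}\|]$, the crude lower bound $\|(z-X_{\rm free})^{-1}\|_{L^{4q}(\ntr\otimes\tau)}\ge ((\mathrm{Im}\,z)^2+(2\|X_{\rm free}\|)^2)^{-1/2}$ (obtained by pushing the integrand to its minimum on the support) together with a short case analysis or a preliminary application of Theorem~\ref{thm:xfreeres} at $\tilde z=\mathrm{Re}\,z+i\sigma(X)$ suffices to recover the comparison.

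The main obstacle is the parameter balancing in the main regime. The prefactor $d^{3/(4q)}(2qr+1)^{3/(4q)}$ forces $r$ to be at most exponential in $q$, whereas absorbing the error demands $\sqrt{r}\gtrsim d\,\sigma(X)/\sigma_*(X)$. The inequality $\sigma(X)\le\sqrt{d}\,\sigma_*(X)$ is precisely what reconciles these constraints with the clean polynomial choice $r=d^3$; absent it, one could have $\sigma(X)/\sigma_*(X)\asymp\sqrt{d}$ and no polynomial $r$ would suffice, while a super-polynomial $r$ would destroy the boundedness of the prefactor.
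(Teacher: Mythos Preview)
Your treatment of the main regime $0<\mathrm{Im}\,z\le\sigma(X)$ is correct and coincides with the paper's argument up to inessential numerical choices of $r$ and $K$; the paper takes $K=2d^{3/2}+2$ and $r=\lceil 2d^{3/2}K\rceil\le 8d^3$, which is the same balancing as your $r=d^3$, $K=2+\sqrt{r}$.

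The complementary regime $\mathrm{Im}\,z>\sigma(X)$, however, is where your proposal has a genuine gap. Your crude lower bound
\[
\|(z-X_{\rm free})^{-1}\|_{L^{4q}(\ntr\otimes\tau)}\ge\big((\mathrm{Im}\,z)^2+(2\|X_{\rm free}\|)^2\big)^{-1/2}
\]
is only useful when $\|X_{\rm free}\|\lesssim\mathrm{Im}\,z$, but $\|X_{\rm free}\|$ can be arbitrarily large because of $A_0=\EE X$, whereas you only know $\mathrm{Im}\,z>\sigma(X)$. Your fallback of ``a preliminary application of Theorem~\ref{thm:xfreeres} at $\tilde z=\mathrm{Re}\,z+i\sigma(X)$'' does not close this: transferring the $L^{4q}$ norm from $\tilde z$ to $z$ costs a factor $\sigma(X)/\mathrm{Im}\,z$, and the resulting lower bound $\tfrac{1}{C\,\mathrm{Im}\,z}-\tfrac{\sigma_*(X)}{\sigma(X)\,\mathrm{Im}\,z}$ can be negative when $\sigma_*(X)$ is comparable to $\sigma(X)$; the additive term $\sigma_*(X)/(\mathrm{Im}\,z)^2$ is too small in this regime to compensate.

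The paper handles this case by a different mechanism: it applies the operator Jensen inequality to pass from $(\ntr\otimes\tau)[(|z-X_{\rm free}|^2)^{-2q}]$ to $\ntr[((\mathrm{id}\otimes\tau)|z-X_{\rm free}|^2)^{-2q}]$, which reduces to the $d\times d$ matrix $(\mathrm{Im}\,z)^2\id+\EE[(X-\EE X)^2]+(\mathrm{Re}\,z-A_0)^2$. Lemma~\ref{lem:spcov} then guarantees an eigenvalue of $A_0$ within $2\sigma(X)$ of $\mathrm{Re}\,z$, so this matrix has an eigenvalue at most $6(\mathrm{Im}\,z)^2$, contributing mass $\ge 1/d$. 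The factor $d^{1/(4q)}$ is then absorbed by $q\ge\log d$, yielding $\|(z-X_{\rm free})^{-1}\|=1/\mathrm{Im}\,z\le d^{1/(4q)}\sqrt{6}\,\|(z-X_{\rm free})^{-1}\|_{L^{4q}}$. This is exactly the step that evades the obstruction described around Figure~\ref{fig:bad}: the spectral measure of $X_{\rm free}$ could in principle have negligible mass near $\mathrm{Re}\,z$, and the Jensen reduction to the finite-dimensional $A_0$ is what recovers the $1/d$ lower bound.
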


The remainder of this section is devoted to the proofs of these results.

\subsection{Proof of Theorems \ref{thm:polymoments} and \ref{thm:polyres}}

The proof of Theorem \ref{thm:polymoments} is essentially equivalent to 
that of \cite[Corollary 7.2]{BC23}. The basic tool we will use is 
ultracontractivity, due in the present setting to Bozejko and Biane 
\cite{Bia97}.

\begin{lem}
\label{lem:biane}
Let $P\in\mathbb{C}\langle x_1,\ldots,x_n\rangle$ be any noncommutative 
polynomial of degree $k$ with scalar coefficients. Then we have
$$
	\|P(s_1,\ldots,s_n)\| \le
	(k+1)^{\frac{3}{2}}\, \|P(s_1,\ldots,s_n)\|_{L^2(\tau)}.
$$
\end{lem}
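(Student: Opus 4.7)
The plan is to pass to the Fock space realization of the free semicircular family and invoke a Haagerup-type norm bound for Wick-homogeneous elements originating in the work of Bozejko and Biane \cite{Bia97}. Concretely, with $H=\mathbb{C}^n$ and standard basis $e_1,\ldots,e_n$, consider the full Fock space $\mathcal{F}(H)=\bigoplus_{j\ge 0} H^{\otimes j}$ with vacuum vector $\Omega$. Let $\ell_i$ be the left creation operator associated to $e_i$; then $s_i := \ell_i+\ell_i^*$ realizes the free semicircular family, and $\tau(X)=\langle \Omega, X\Omega\rangle$ is its tracial state. In this picture, any noncommutative polynomial $Q$ in $s_1,\ldots,s_n$ of degree $k$ admits a unique Wick decomposition $Q=\sum_{j=0}^k W_j$, where $W_j$ is characterized by the condition $W_j\Omega\in H^{\otimes j}$ (the free analogue of decomposing a random variable into orthogonal Wiener chaoses). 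Since distinct Fock chaoses are mutually orthogonal, the vectors $W_j\Omega$ are pairwise orthogonal in $\mathcal{F}(H)$ and hence
$$\|Q\|_{L^2(\tau)}^2 = \sum_{j=0}^k \|W_j\Omega\|^2 = \sum_{j=0}^k \|W_j\|_{L^2(\tau)}^2.$$

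The central ingredient I would invoke is the free Haagerup inequality: for every Wick-homogeneous element $W_j$ one has
$$\|W_j\| \le (j+1)\,\|W_j\|_{L^2(\tau)}.$$
This is the essential operator-theoretic input; it is proved by expressing the action of $W_j$ on an arbitrary Fock vector $\eta = \bigoplus_m \eta_m$ as a sum of $j+1$ terms obtained by contracting the vector $\xi := W_j\Omega \in H^{\otimes j}$ with $0,1,\ldots,j$ tensor factors of $\eta$, each of which is an isometric or partial-isometric-type operation on the remaining legs and therefore has operator norm at most $\|\xi\|$.

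Granted this bound, the lemma follows at once from the triangle inequality and Cauchy--Schwarz:
$$\|P(s_1,\ldots,s_n)\| \le \sum_{j=0}^k \|W_j\| \le \sum_{j=0}^k (j+1)\,\|W_j\|_{L^2(\tau)} \le \Bigl(\sum_{j=0}^k (j+1)^2\Bigr)^{\!1/2}\|P\|_{L^2(\tau)},$$
and $\sum_{j=0}^k (j+1)^2 \le (k+1)^3$ yields the stated constant $(k+1)^{3/2}$. The only genuinely nontrivial step is the Haagerup-type estimate for a single Wick-homogeneous polynomial; the additional factor of $(k+1)^{1/2}$ beyond the per-chaos bound arises purely from summing the $k+1$ chaos components via Cauchy--Schwarz, and the orthogonality of the Fock chaoses converts the resulting $\ell^2$ sum back into the single $L^2(\tau)$ norm of $P$.
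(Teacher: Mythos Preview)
Your argument is correct and is essentially identical to the paper's proof: both pass to the Fock space realization, decompose $P(s_1,\ldots,s_n)$ into its Wick-homogeneous components (equivalently, eigenvectors of the number operator), apply the Bozejko--Biane bound $\|W_j\|\le (j+1)\|W_j\|_{L^2(\tau)}$ on each component, and then combine via the triangle inequality, Cauchy--Schwarz, and orthogonality of the chaoses. The only cosmetic difference is that the paper crudely replaces $(j+1)$ by $(k+1)$ before applying Cauchy--Schwarz, whereas you keep the sharper per-level constant and bound $\sum_{j=0}^k (j+1)^2\le (k+1)^3$ at the end; both routes yield the same final constant $(k+1)^{3/2}$.
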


\begin{proof}
We adopt the notation of \cite[\S 1]{Bia97}. Let $e_1,\ldots,e_n$ be the 
coordinate basis of $\mathbb{C}^n$. Then we can represent 
$s_i=l(e_i)+l^*(e_i)$ in terms of the annihilation and creation operators 
on the free Fock space $F_0(\mathbb{C}^n)$. It follows readily from the 
definitions that any polynomial of $s_1,\ldots,s_n$ of degree $k$ is a 
linear combination of eigenvectors of the number operator $N^0$ with 
eigenvalue at most $k$, that is,
$$
	P(s_1,\ldots,s_n) = P_0 + \cdots + P_k
$$
where $N^0P_r = rP_r$. Thus 
\begin{align*}
	\|P(s_1,\ldots,s_n)\| &\le
	\|P_0\|+\cdots+\|P_k\| \\ &\le
	(k+1)\{\|P_0\|_{L^2(\tau)} + \cdots + \|P_k\|_{L^2(\tau)}\}
	 \\ &\le
	(k+1)^{\frac{3}{2}}
	\|P(s_1,\ldots,s_n)\|_{L^2(\tau)}
\end{align*}
where the first line uses the triangle inequality, the second 
line uses \cite[Theorem 4]{Bia97}, and the last line uses Cauchy-Schwarz 
and that the 
eigenspaces of 
$N^0$ are orthogonal with respect to the inner product of $L^2(\tau)$.
\end{proof}

We now recall a standard trick to handle matrix coefficients.

\begin{lem}
\label{lem:flat}
Let $(\mathcal{A},\tau)$ be a noncommutative probability space,
let $P\in\M_d(\mathbb{C})\otimes\mathcal{A}$, and denote by
$P_{ij}\in \mathcal{A}$ its matrix elements. Then we have
$$
	\|P\| \le d \max_{ij}\|P_{ij}\|,\qquad\quad
	\max_{ij}\|P_{ij}\|_{L^2(\tau)} \le
	d^{\frac{1}{2}}\|P\|_{L^2(\ntr\otimes\tau)}.
$$
\end{lem}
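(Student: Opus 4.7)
The plan is that both bounds are completely elementary and reduce to short direct computations, so I would split the proof into two independent steps.

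For the first inequality, I would realize $P$ as an operator on the Hilbert module $\mathcal{H}^d$, where $\mathcal{H}$ is the GNS Hilbert space associated with $(\mathcal{A},\tau)$, and view $P_{ij}$ as a bounded operator on $\mathcal{H}$. For any $x=(x_1,\ldots,x_d)\in\mathcal{H}^d$, one has $(Px)_i=\sum_{j}P_{ij}x_j$, so setting $c:=\max_{ij}\|P_{ij}\|$ and applying the triangle inequality and Cauchy--Schwarz gives $\|(Px)_i\|_{\mathcal{H}}\le c\sum_{j}\|x_j\|_{\mathcal{H}}\le c\sqrt{d}\,\|x\|_{\mathcal{H}^d}$, and summing $\|(Px)_i\|_{\mathcal{H}}^2$ over $i$ produces $\|Px\|_{\mathcal{H}^d}^2\le d^2c^2\|x\|_{\mathcal{H}^d}^2$, i.e.\ $\|P\|\le dc$. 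Equivalently, one can dominate the scalar matrix of norms $(\|P_{ij}\|)_{ij}$ entrywise by the $d\times d$ all-ones matrix scaled by $c$, whose operator norm on $\mathbb{C}^d$ is exactly $dc$.

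For the second inequality, I would unfold the $L^2$-norm directly. By definition,
\[
\|P\|_{L^2(\ntr\otimes\tau)}^{2}
= (\ntr\otimes\tau)(P^{*}P)
= \frac{1}{d}\sum_{i=1}^{d}\tau\!\left((P^{*}P)_{ii}\right)
= \frac{1}{d}\sum_{i,j=1}^{d}\tau\!\left(P_{ji}^{*}P_{ji}\right)
= \frac{1}{d}\sum_{i,j=1}^{d}\|P_{ji}\|_{L^{2}(\tau)}^{2}.
\]
Dropping all but the largest term on the right then yields $\max_{ij}\|P_{ij}\|_{L^2(\tau)}^2\le d\|P\|_{L^2(\ntr\otimes\tau)}^2$, which is the claimed bound.

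There is really no obstacle here: both facts are purely algebraic manipulations that rely only on the definitions of the operator norm on $\M_d(\mathcal{A})\simeq\M_d(\mathbb{C})\otimes\mathcal{A}$ and of the tracial $L^2$-inner product with respect to $\ntr\otimes\tau$. The only thing worth flagging is that the factor $d$ (rather than $d^2$) in the first inequality requires a Cauchy--Schwarz-type argument on $\mathcal{H}^d$ rather than a naive entrywise triangle inequality over the $d^2$ matrix units, but even this refinement is standard.
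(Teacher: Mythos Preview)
Your proof is correct. For the first inequality your argument is essentially identical to the paper's: both represent $\mathcal{A}$ faithfully on a Hilbert space and use Cauchy--Schwarz on the vector $(\|x_j\|)_j$ (the paper phrases it via the bilinear form $\sup|\sum_{i,j}\langle v_i,P_{ij}w_j\rangle|$, you via $\sup\|Px\|$, but the estimate is the same). For the second inequality you take a more direct route than the paper: you simply expand $\|P\|_{L^2(\ntr\otimes\tau)}^2=\frac{1}{d}\sum_{i,j}\|P_{ji}\|_{L^2(\tau)}^2$ and drop terms, whereas the paper writes $d^{-1/2}\|P_{ij}\|_{L^2(\tau)}=\|(e_1e_i^*\otimes\id)P(e_je_1^*\otimes\id)\|_{L^2(\ntr\otimes\tau)}$ and invokes contractivity of multiplication by a norm-one element. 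Your version is slightly more elementary and in fact yields the stronger identity $\|P\|_{L^2(\ntr\otimes\tau)}^2=\frac{1}{d}\sum_{i,j}\|P_{ij}\|_{L^2(\tau)}^2$; the paper's matrix-unit trick has the virtue of generalizing immediately to other unitarily invariant norms, but for $L^2$ both arguments are equally short.
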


\begin{proof}
For the first inequality, we may assume that $\mathcal{A}$ has been 
represented as a subalgebra of $B(H)$ for some Hilbert space $H$. Then
by Cauchy-Schwarz
$$
	\|P\| =
	\sup
	\Bigg|
	\sum_{i,j} \langle v_i,P_{ij}w_j\rangle
	\Bigg|
	\le
	\sup
	\sum_{i,j} \|v_i\| \|P_{ij}\| \|w_j\|
	\le
	d \max_{ij}\|P_{ij}\|,
$$
where the supremum is taken over $v_i,w_j\in H$ so that
$\sum_i \|v_i\|^2 = \sum_j \|w_j\|^2 = 1$.

For the second inequality, we note that
$$
	d^{-\frac{1}{2}}
	\|P_{ij}\|_{L^2(\tau)}=
	\|e_1e_1^*\otimes P_{ij}\|_{L^2(\ntr\otimes\tau)}
	=
	\|(e_1e_i^*\otimes\id)P(e_je_1^*\otimes\id)\|_{L^2(\ntr\otimes\tau)}
$$
and use that $\|e_1e_i^*\otimes\id\|=\|e_je_1^*\otimes\id\|=1$.
\end{proof}

We can now prove Theorem \ref{thm:polymoments}.

\begin{proof}[Proof of Theorem \ref{thm:polymoments}]
Fix $q\in\mathbb{N}$ and set $Q = |P(s_1,\ldots,s_n)|^{2q}$. Then the 
matrix elements $Q_{ij}$ are polynomials of $s_1,\ldots,s_n$ of degree 
$2qk$. Thus
$$
	\|Q\| \le
	d \max_{i,j} \|Q_{ij}\|
	\le
	d\, (2qk+1)^{\frac{3}{2}} \max_{ij}\|Q_{ij}\|_{L^2(\tau)}
	\le
	d^{\frac{3}{2}} (2qk+1)^{\frac{3}{2}} \|Q\|_{L^2(\ntr\otimes\tau)}
$$
by Lemmas \ref{lem:biane} and \ref{lem:flat}.
It remains to note that
$$
	\|Q\|=\|P(s_1,\ldots,s_n)\|^{2q},\qquad\quad
	\|Q\|_{L^2(\ntr\otimes\tau)} =
	\|P(s_1,\ldots,s_n)\|_{L^{4q}(\ntr\otimes\tau)}^{2q},
$$
and the conclusion follows immediately.
\end{proof}

To deduce a corresponding bound on the resolvent (in the case that the 
polynomial $P$ is self-adjoint), we apply an approximation argument.

\begin{proof}[Proof of Theorem \ref{thm:polyres}]
Fix $z\in\mathbb{C}$, $\mathrm{Im}\,z>0$, and
write $P:=P(s_1,\ldots,s_n)$.
As $x\mapsto |(z-x)^{-1}|$ is 
$(\mathrm{Im}\,z)^{-2}$-Lipschitz, Jackson's theorem 
\cite[Corollary 1.4.1]{Riv69} yields
$$
	\sup_{x\in [-\|P\|,\|P\|]}
	\big{|}|(z-x)^{-1}|-Q_r(x)\big{|} \le 
	\frac{6r^{-1}\|P\|}{(\mathrm{Im}\,z)^2}
$$
for a polynomial $Q_r$ of degree $r$. Therefore
\begin{align*}
	\|(z-P)^{-1}\| &\le
	\|Q_r(P)\| +
	\frac{6r^{-1}\|P\|}{(\mathrm{Im}\,z)^2}
	\\
	&\le
	d^{\frac{3}{4q}}
	(2qkr+1)^{\frac{3}{4q}} 
	\|Q_r(P)\|_{L^{4q}(\ntr\otimes\tau)} +
	\frac{6r^{-1}\|P\|}{(\mathrm{Im}\,z)^2}
	\\
	&\le
	d^{\frac{3}{4q}}
	(2qkr+1)^{\frac{3}{4q}} 
	\bigg(
	\|(z-P)^{-1}\|_{L^{4q}(\ntr\otimes\tau)}
	+
	\frac{12r^{-1}\|P\|}{(\mathrm{Im}\,z)^2}
	\bigg)
\end{align*}
by Theorem \ref{thm:polymoments}, where we used that $Q_r\circ P$ is a 
polynomial of degree at most $kr$ and that
$d^{\frac{3}{4q}}(2qkr+1)^{\frac{3}{4q}}\ge 1$. This completes
the proof.
\end{proof}

\subsection{Proof of Theorem \ref{thm:xfreeres} and Corollary 
\ref{cor:xfreeres}}

The difficulty in the proof of Theorem \ref{thm:xfreeres} is that we must 
capture the fact that the resolvent $(z-X_{\rm free})^{-1}$ is insensitive 
to the eigenvalues of $A_0$ that are far from $\mathrm{Re}\,z$. To 
implement this idea, we will use spectral perturbation theory to show that 
shrinking gaps in the spectrum of $A_0$ of size $\gg\sigma(X)$ will only 
result in a small perturbation of the resolvent.

We first recall an elementary fact.

\begin{lem}
\label{lem:spcov}
$\spc(X_{\rm free}) \subseteq \spc(A_0) + 2\sigma(X)[-1,1]$.
\end{lem}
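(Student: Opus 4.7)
The plan is to decompose
\[
X_{\rm free} = A_0\otimes\id + Y,\qquad Y := \sum_{i=1}^n A_i\otimes s_i,
\]
and reduce the desired inclusion to the operator-norm bound $\|Y\|\le 2\sigma(X)$. Since this section restricts to self-adjoint $X$, both $A_0\otimes\id$ and $Y$ are self-adjoint, so $A_0\otimes\id$ is normal. For any real $\lambda$ with $\mathrm{dist}(\lambda,\spc(A_0))>\|Y\|$, the operator $\lambda - A_0\otimes\id$ is invertible with $\|(\lambda - A_0\otimes\id)^{-1}\|=\mathrm{dist}(\lambda,\spc(A_0))^{-1}<\|Y\|^{-1}$, so a Neumann-series argument in $(\lambda-A_0\otimes\id)^{-1}Y$ gives invertibility of $\lambda - X_{\rm free}$. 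This yields $\spc(X_{\rm free})\subseteq \spc(A_0)+\|Y\|\cdot[-1,1]$, and it remains to prove $\|Y\|\le 2\sigma(X)$.

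For the norm estimate, $Y$ is an operator-valued semicircular element in $\M_d(\mathcal{A})$ over $\M_d(\mathbb{C})$ with covariance map $\eta\colon\M_d(\mathbb{C})\to\M_d(\mathbb{C})$, $\eta(M):=\sum_{i=1}^n A_i M A_i$. In particular $\eta(\id)=\sum_i A_i^2=\mathbf{E}[(X-\mathbf{E}X)^2]$, so that $\|\eta(\id)\|=\sigma(X)^2$. The desired inequality $\|Y\|\le 2\|\eta(\id)\|^{1/2}=2\sigma(X)$ is the classical operator-valued semicircular norm bound, and can be proved directly by the moment method.

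Concretely, since the only nonvanishing free cumulants of the free semicircular family $(s_i)$ are the pair cumulants $\kappa_2(s_i,s_j)=\delta_{ij}$, the free moment-cumulant formula gives
\[
(\mathrm{id}\otimes\tau)(Y^{2p}) = \sum_{\pi\in NC_2(2p)} T_\pi(\id),
\]
where $T_\pi(\id)\in\M_d(\mathbb{C})$ is obtained by iteratively applying $\eta$ to $\id$ according to the nesting of the non-crossing pair partition $\pi$. A short induction on $p$ shows $\|T_\pi(\id)\|\le\|\eta(\id)\|^p=\sigma(X)^{2p}$ for each $\pi$, so using $|NC_2(2p)|=C_p\le 4^p$ one obtains
\[
(\ntr\otimes\tau)(Y^{2p})\le\|(\mathrm{id}\otimes\tau)(Y^{2p})\|\le 4^p\sigma(X)^{2p}.
\]
Since $\ntr\otimes\tau$ is a faithful tracial state on $\M_d(\mathcal{A})$, one has $\|Y\|=\lim_{p\to\infty}[(\ntr\otimes\tau)(Y^{2p})]^{1/(2p)}$, whence $\|Y\|\le 2\sigma(X)$ as required.

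The only point requiring any care is the inductive bound $\|T_\pi(\id)\|\le\|\eta(\id)\|^p$ along a non-crossing pair partition, but this is entirely routine: the innermost pair of $\pi$ produces a factor $\eta(\id)$, after which one proceeds by induction on $p$ using the elementary inequality $\|\eta(M)\|\le\|\eta(\id)\|\,\|M\|$ valid for $M\ge 0$ (which follows since $\eta$ is completely positive). I therefore do not expect any serious obstacle for this lemma; it serves as a preparatory input rather than a substantive result.
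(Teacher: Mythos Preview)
Your argument is correct and matches the paper's two-step proof: first the spectral inclusion $\spc(X_{\rm free})\subseteq\spc(A_0)+\|X_{\rm free}-A_0\otimes\id\|[-1,1]$ (the paper cites Bhatia's perturbation theorem, you spell out the equivalent Neumann series), then the bound $\|X_{\rm free}-A_0\otimes\id\|\le 2\sigma(X)$ (the paper cites Pisier's free Khintchine inequality, you give the direct moment computation over $NC_2(2p)$). One small imprecision: in your recursion the intermediate $T_{\pi'}(\id)$ need not be positive, but since $\eta$ is completely positive the inequality $\|\eta(M)\|\le\|\eta(\id)\|\|M\|$ in fact holds for all $M$, so this causes no trouble.
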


\begin{proof}
We obtain $\spc(X_{\rm free}) \subseteq
\spc(A_0) + \|X_{\rm free}-A_0\otimes\id\|[-1,1]$
as in the proof of \cite[Theorem VI.3.3]{Bha97},
while $\|X_{\rm free}-A_0\otimes\id\|\le 2\sigma(X)$ follows from
\cite[p.\ 208]{Pis03}.
\end{proof}

We now implement the spectral perturbation argument for a single gap in 
$\spc(A_0)$. This bound will be iterated below in order to prove Theorem
\ref{thm:xfreeres}.

\begin{prop}
\label{prop:daviskah}
Fix $z\in\mathbb{C}$ with $0<\mathrm{Im}\,z\le\sigma(X)$, fix $K>4$, 
and fix $a,b\in\mathbb{R}$ with
$\delta := b-a-K\sigma(X)\ge 0$.
If $(a,b)\subset (\mathrm{Re}\,z,\infty)\backslash\spc(A_0)$ we have
$$
	\|(z-X_{\rm free})^{-1}-
	(z-X_{\rm free}+\delta 1_{[b,\infty)}(A_0)\otimes\id)^{-1}\|
	\le
	\frac{9\sigma(X)}{K-2}\frac{1}{(\mathrm{Im}\,z)^2},
$$
while if $(a,b)\subset 
(-\infty,\mathrm{Re}\,z)\backslash\spc(A_0)$ 
we have
$$
	\|(z-X_{\rm free})^{-1}-
	(z-X_{\rm free}-\delta 1_{(-\infty,a]}(A_0)\otimes\id)^{-1}\|
	\le
	\frac{9\sigma(X)}{K-2}\frac{1}{(\mathrm{Im}\,z)^2}.
$$
\end{prop}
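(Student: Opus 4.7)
The plan is to combine the first-order resolvent identity with a spectral-gap argument. I treat the first case $(a,b)\subset(\mathrm{Re}\,z,\infty)\setminus\spc(A_0)$; the second follows by applying the first to $-X_{\rm free}$ with $z$ replaced by $-\bar z$. Set $A:=z-X_{\rm free}$, $\tilde A:=z-\tilde X_{\rm free}=A+\delta P$ where $P:=1_{[b,\infty)}(A_0)\otimes\id$, and $Y:=X_{\rm free}-A_0\otimes\id$, with $\|Y\|\le 2\sigma(X)$ as in the proof of Lemma~\ref{lem:spcov}. The resolvent identity combined with the projection identity $P^2=P$ gives
\begin{equation*}
	A^{-1}-\tilde A^{-1} \;=\; \delta\, A^{-1}P\tilde A^{-1} \;=\; \delta\,(A^{-1}P)(P\tilde A^{-1}),
\end{equation*}
so it suffices to bound the two factors $\|A^{-1}P\|$ and $\|P\tilde A^{-1}\|$ individually.

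For the first factor, I would decompose $X_{\rm free}=X^{\rm bd}+E$ where $X^{\rm bd}:=PX_{\rm free}P+(I-P)X_{\rm free}(I-P)$ is block-diagonal with respect to $P$. Since $A_0\otimes\id$ commutes with $P$, one has $E=PY(I-P)+(I-P)YP$ with $\|E\|\le\|Y\|\le 2\sigma(X)$, while the block $PX^{\rm bd}P=P(A_0\otimes\id)P+PYP$ has spectrum in $[b-2\sigma(X),\infty)$ on the range of $P$. As $X^{\rm bd}$ commutes with $P$ and $\mathrm{Re}\,z\le a<b-2\sigma(X)$, this yields $\|(z-X^{\rm bd})^{-1}P\|\le 1/(b-2\sigma(X)-\mathrm{Re}\,z)\le 1/((K-2)\sigma(X)+\delta)$. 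Peeling off $E$ by the resolvent identity and using $\|A^{-1}\|\le 1/\mathrm{Im}\,z$ then gives
\begin{equation*}
	\|A^{-1}P\| \;\le\; \tfrac{1}{(K-2)\sigma(X)+\delta}\Bigl(1+\tfrac{2\sigma(X)}{\mathrm{Im}\,z}\Bigr).
\end{equation*}
An analogous computation for $\tilde X_{\rm free}$, whose block-diagonal part on range of $P$ has spectrum in $[a+(K-2)\sigma(X),\infty)$ (the shift by $\delta$ brings the bottom from $b-2\sigma(X)$ to $a+(K-2)\sigma(X)$), produces
\begin{equation*}
	\|P\tilde A^{-1}\| \;\le\; \tfrac{1}{(K-2)\sigma(X)}\Bigl(1+\tfrac{2\sigma(X)}{\mathrm{Im}\,z}\Bigr).
\end{equation*}

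The main obstacle to address is that a naive bound $\delta\|A^{-1}P\|\|\tilde A^{-1}\|$ grows without bound in $\delta$ and is useless for large gaps; the insertion of $P^2=P$ is essential, as it places a spectral-gap factor into \emph{both} resolvents, producing the cancellation $\delta/((K-2)\sigma(X)+\delta)\le 1$. Combining this with $(1+2\sigma(X)/\mathrm{Im}\,z)^2\le 9\sigma(X)^2/(\mathrm{Im}\,z)^2$, which follows from $\mathrm{Im}\,z\le\sigma(X)$, delivers the claimed bound $9\sigma(X)/((K-2)(\mathrm{Im}\,z)^2)$.
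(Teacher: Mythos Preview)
Your argument is correct and takes a genuinely different route from the paper's. The paper introduces a continuous interpolation $X_{\rm free}^{(r)}:=X_{\rm free}-rP$ for $r\in[0,\delta]$, bounds the derivative $\|\frac{d}{dr}(z-X_{\rm free}^{(r)})^{-1}\|$ by splitting along the spectral projections of $X_{\rm free}^{(r)}$ and invoking the Davis--Kahan theorem to control the cross term $\|1_{[b-r,\infty)}(A_0^{(r)}\otimes\id)\,1_{(-\infty,a+2\sigma(X)]}(X_{\rm free}^{(r)})\|$, and then integrates in $r$; the uniformity in $\delta$ comes out of the convergence of $\int_0^\delta (b-a-r-2\sigma(X))^{-2}\,dr$. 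Your one-shot resolvent identity with the $P^2=P$ insertion is more direct: it replaces the Davis--Kahan step by the elementary block decomposition $X_{\rm free}=X^{\rm bd}+E$ and a second resolvent identity, and the uniformity in $\delta$ is isolated cleanly in the factor $\delta/((K-2)\sigma(X)+\delta)\le 1$. Both proofs rest on the same spectral separation (the range of $P$ sits far from $\mathrm{Re}\,z$ relative to the size $2\sigma(X)$ of the fluctuation $Y$), but yours avoids the explicit appeal to Davis--Kahan and the integration step at the cost of a slightly less transparent tracking of \emph{why} the perturbation stays small along the way.
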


\begin{proof}
We will only consider the case $(a,b)\subset 
(\mathrm{Re}\,z,\infty)\backslash\spc(A_0)$, as the complementary case 
follows in a completely analogous fashion. Throughout the proof, we assume 
without loss of generality that $X_{\rm free}$ is represented concretely 
as an operator on a Hilbert space, so that we may work with its spectral 
projections $1_I(X_{\rm free})$.

Define $X_{\rm free}^{(r)} := X_{\rm free}-r
1_{[b,\infty)}(A_0\otimes\id)$ for $r\in [0,\delta]$. Then
\begin{align*}
	\bigg\|
	\frac{d}{dr} (z-X_{\rm free}^{(r)})^{-1}\bigg\| &=
	\big\|
	(z-X_{\rm free}^{(r)})^{-1} 
	1_{[b,\infty)}(A_0\otimes\id)
	(z-X_{\rm free}^{(r)})^{-1}
	\big\|
\\	&\le
	\big\|
	(z-X_{\rm free}^{(r)})^{-1} 
	1_{[b,\infty)}(A_0\otimes\id)\big\|
	\,
	\big\|
	1_{[b,\infty)}(A_0\otimes\id)
	(z-X_{\rm free}^{(r)})^{-1}
	\big\|.
\end{align*}
Define $A_0^{(r)} := A_0 -r 1_{[b,\infty)}(A_0)$, that is, $A_0^{(r)}$ 
is obtained from $A_0$ by subtracting $r$ from all eigenvalues of $A_0$ that 
are greater than $b$, while leaving all other eigenvalues unchanged.
As $r\le\delta<b-a$ and $A_0$ has no eigenvalues in $(a,b)$, this implies
$$
	1_{[b,\infty)}(A_0\otimes\id) =
	1_{[b-r,\infty)}(A_0^{(r)}\otimes\id).
$$
On the other hand, as 
$\spc(X_{\rm free}^{(r)})\subseteq
\spc(A_0^{(r)}) + 2\sigma(X)[-1,1]$ by Lemma \ref{lem:spcov}, we have
$$
	(a+2\sigma(X),b-r-2\sigma(X))\cap\spc(X_{\rm free}^{(r)})=\varnothing,
$$
where we note that $a+2\sigma(X)<b-r-2\sigma(X)$ as $r\le\delta$ and 
$K>4$. Thus
\begin{align*}
	&\big\|1_{[b,\infty)}(A_0\otimes\id)
        (z-X_{\rm free}^{(r)})^{-1}
        \big\|
	\\ &\quad\le
	\big\|1_{[b-r,\infty)}(A_0^{(r)}\otimes\id)
	1_{(-\infty,a+2\sigma(X)]}(X_{\rm free}^{(r)})
        (z-X_{\rm free}^{(r)})^{-1}
        \big\| 
	\\ &\quad\qquad+
	\big\|1_{[b-r,\infty)}(A_0^{(r)}\otimes\id)
	1_{[b-r-2\sigma(X),\infty)}(X_{\rm free}^{(r)})
        (z-X_{\rm free}^{(r)})^{-1}
        \big\|
\\
	&\quad\le
	\frac{
	\big\|1_{[b-r,\infty)}(A_0^{(r)}\otimes\id)
	1_{(-\infty,a+2\sigma(X)]}(X_{\rm free}^{(r)})
        \big\|}{\mathrm{Im}\,z}
	+
	\frac{1}{b-a-r-2\sigma(X)},
\end{align*}
using $\frac{1}{|z-x|} \le \frac{1}{b-a-r-2\sigma(X)}$
for $x\ge b-r-2\sigma(X)$ (as $\mathrm{Re}\,z\le a\le 
b-r-2\sigma(X)$). The identical bound clearly holds for
$\|(z-X_{\rm free}^{(r)})^{-1}1_{[b,\infty)}(A_0\otimes\id)\|$ as well.

We now apply the Davis-Kahan theorem \cite[Theorem VII.3.1]{Bha97} to 
estimate
\begin{align*}
	\big\|1_{[b-r,\infty)}(A_0^{(r)}\otimes\id)
        1_{(-\infty,a+2\sigma(X)]}(X_{\rm free}^{(r)})
        \big\|
	&\le
	\frac{\|X_{\rm free}^{(r)}-A_0^{(r)}\otimes\id\|}{b-a-r-2\sigma(X)}
	\\ &\le
	\frac{2\sigma(X)}{b-a-r-2\sigma(X)},
\end{align*}
where we used the free Khintchine inequality \cite[p.\ 208]{Pis03} in the 
second inequality. Putting together the above estimates and using
the assumption
$1\le \frac{\sigma(X)}{\mathrm{Im}\,z}$, we get
$$
	\bigg\|
	\frac{d}{dr} (z-X_{\rm free}^{(r)})^{-1}\bigg\| \le
	\frac{9\sigma(X)^2}{(b-a-r-2\sigma(X))^2}
	\frac{1}{(\mathrm{Im}\,z)^2}.
$$
The fundamental theorem of calculus yields
$$
	\big\|(z-X_{\rm free})^{-1}
	- (z-X_{\rm free}^{(\delta)})^{-1}\big\|
	\le
	\int_0^\delta
	\frac{9\sigma(X)^2}{(b-a-r-2\sigma(X))^2}
	\frac{1}{(\mathrm{Im}\,z)^2}\,dr,
$$
and the proof is readily completed.
\end{proof}

We can now conclude the proof of Theorem \ref{thm:xfreeres}.

\begin{proof}[Proof of Theorem \ref{thm:xfreeres}]
As $(z-X_{\rm free})^{-1}$ is unchanged if we replace
$z\leftarrow i\,\mathrm{Im}\,z$ and 
$A_0\leftarrow A_0-(\mathrm{Re}\,z)\id$, we may assume without loss
of generality that $\mathrm{Re}\,z=0$. Now note that the
connected components of $\mathbb{R}\backslash(\spc(A_0)\cup\{0\})$ 
contain at most $d$ bounded intervals.
If any such interval has length exceeding $K\sigma(X)$, 
we modify $A_0$ using Proposition \ref{prop:daviskah} to 
shrink the size of that interval to $K\sigma(X)$ while incurring an
error $\frac{9\sigma(X)}{K-2}\frac{1}{(\mathrm{Im}\,z)^2}$.
Repeating this procedure
for each such interval yields a new matrix $A_0'$ such that
$\|A_0'\|\le Kd\sigma(X)$ and
$X_{\rm free}' := A_0'\otimes\id + \sum_{i=1}^n A_i\otimes s_i$ satisfies
$$
	\|(z-X_{\rm free})^{-1} -
	(z-X_{\rm free}')^{-1}\| \le
	\frac{9d\sigma(X)}{K-2}\frac{1}{(\mathrm{Im}\,z)^2}.
$$
As $\|X_{\rm free}'\| \le (Kd+2)\sigma(X)$, we further obtain 
$$
	\|(z-X_{\rm free}')^{-1}\| \le 
	d^{\frac{3}{4q}}
	(2qr+1)^{\frac{3}{4q}} 
	\bigg(
	\|(z-X_{\rm free}')^{-1}\|_{L^{4q}(\ntr\otimes\tau)}
	+
	\frac{12 r^{-1}(Kd+2)\sigma(X)}{(\mathrm{Im}\,z)^2}
	\bigg)
$$
by Theorem \ref{thm:polyres}. Combining the above bounds and using
$d^{\frac{3}{4q}}(2qr+1)^{\frac{3}{4q}}\ge 1$
and $Kd+2\le \frac{3Kd}{2}$ readily yields the conclusion.
\end{proof}

It remains to prove Corollary \ref{cor:xfreeres}.

\begin{proof}[Proof of Corollary \ref{cor:xfreeres}]
Assume first that $\mathrm{Im}\,z\le\sigma(X)$. As
$$
	\sigma(X)^2 = \sup_{\|v\|=1} 
	\sum_{i=1}^n 
	\sum_{k=1}^d 
	\langle e_k,A_iv\rangle^2 \le
	d \sup_{\|v\|=\|w\|=1} \sum_{i=1}^n\langle w,A_iv\rangle^2
	= d\,\sigma_*(X)^2,
$$
the conclusion follows from
Theorem \ref{thm:xfreeres} with
$K=2d^{\frac{3}{2}}+2$, $r = \lceil 2d^{\frac{3}{2}}K\rceil 
\le 8d^3$.

We now consider the case that $\mathrm{Im}\,z>\sigma(X)$
and $\mathrm{Re}\,z\in \spc(X_{\rm free})$. Then
\begin{align*}
	\|(z-X_{\rm free})^{-1}\|_{L^{4q}(\ntr\otimes\tau)}^{4q} &=
	(\ntr\otimes\tau)[(|z-X_{\rm free}|^2)^{-2q}]
	\\ &\ge
	\ntr
	[((\mathrm{id}\otimes\tau)|z-X_{\rm free}|^2)^{-2q}] \\
	&=
	\ntr
	[(
	(\mathrm{Im}\,z)^2 +
	\mathbf{E}[(X-\mathbf{E}X)^2] +
	(\mathrm{Re}\,z - A_0)^2 
	)^{-2q}]  \\
	&\ge
	\ntr
	[(
	(\mathrm{Im}\,z)^2 + \sigma(X)^2 +
	(\mathrm{Re}\,z - A_0)^2
	)^{-2q}], 
\end{align*}
where we used Jensen's inequality in $C^*$-algebras \cite{Pet87} in the 
first inequality, and that
$\mathbf{E}[(X-\mathbf{E}X)^2]\le \sigma(X)^2\id$ and trace monotonicity 
\cite[\S 2.2]{Car10} in the second inequality.
Now note that Lemma 
\ref{lem:spcov} ensures there is an eigenvalue of $A_0$ within 
distance $2\sigma(X)$ of $\mathrm{Re}\,z\in \spc(X_{\rm free})$.
We can therefore estimate
$$
	\|(z-X_{\rm free})^{-1}\|_{L^{4q}(\ntr\otimes\tau)}^{4q} \ge
	\frac{1}{d} \frac{1}{((\mathrm{Im}\,z)^2 + 5\sigma(X)^2
        )^{2q}} \ge
	\frac{1}{d} \frac{1}{(6(\mathrm{Im}\,z)^2
        )^{2q}}.
$$
Thus we obtain
$$
	\|(z-X_{\rm free})^{-1}\| \le
	\frac{1}{\mathrm{Im}\,z} \le
	d^{\frac{1}{4q}}\sqrt{6}\,
	\|(z-X_{\rm free})^{-1}\|_{L^{4q}(\ntr\otimes\tau)},
$$
and the conclusion follows readily.
\end{proof}

\section{Sharp matrix concentration inequalities}
\label{sec:pfmain}

\subsection{Proof of Theorem \ref{thm:main}}

As the upper bound on $\spc(X)$ was already proved in \cite{BBV23}, we 
only need to prove the corresponding lower bound. To this end, we will 
follow the approach of \cite[\S 6]{BBV23} with the crucial input of 
Corollary \ref{cor:xfreeres}. 

The basis for the proof is the following.

\begin{lem}
\label{lem:tailsingle}
Fix $z\in\mathbb{C}$ with $\mathrm{Re}\,z\in\spc(X_{\rm free})$ and
$\mathrm{Im}\,z>0$. Then
$$
	\mathbf{P}\bigg[
	c\|(z-X_{\rm free})^{-1}\| 
	\ge
	\|(z-X)^{-1}\| 
	+ \frac{\tilde v(X)^4}{(\mathrm{Im}\,z)^5}(\log d)^3
	+ \frac{\sigma_*(X)}{(\mathrm{Im}\,z)^2}(\sqrt{\log d}+t)
	\bigg]
	\le e^{-t^2}
$$
for all $t\ge 0$, where $c$ is a universal constant.
\end{lem}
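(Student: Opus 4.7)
My plan is to chain four ingredients, following the outline of \cite[\S 6]{BBV23} but with Corollary~\ref{cor:xfreeres} as the crucial new input.

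Set $q:=\lceil\log d\rceil$. First, I would apply Corollary~\ref{cor:xfreeres} to replace the operator norm of the free resolvent by its $L^{4q}$ moment:
\[
\|(z-X_{\rm free})^{-1}\|\;\lesssim\;\|(z-X_{\rm free})^{-1}\|_{L^{4q}(\ntr\otimes\tau)}+\frac{\sigma_*(X)}{(\mathrm{Im}\,z)^2}.
\]
Next, I would transfer this free moment to its random matrix counterpart by invoking a smooth-function comparison of the type $|\mathbf{E}\ntr f(X)-(\ntr\otimes\tau)f(X_{\rm free})|\lesssim\tilde v(X)^4\|f^{(4)}\|_\infty$ established in \cite{BBV23}, applied to $f(x)=|z-x|^{-4q}$. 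The successive derivatives of $f$ scale as bounded powers of $q/\mathrm{Im}\,z$ times $f$ itself, and the natural lower bound $(\ntr\otimes\tau)f(X_{\rm free})\gtrsim(\mathrm{Im}\,z)^{-4q}$ (which uses that $\mathrm{Re}\,z\in\spc(X_{\rm free})$) absorbs all but a few inverse powers of $\mathrm{Im}\,z$; taking the $4q$-th root at $q=\log d$ therefore produces
\[
\|(z-X_{\rm free})^{-1}\|_{L^{4q}(\ntr\otimes\tau)}\;\lesssim\;\mathbf{E}\bigl[\ntr |z-X|^{-4q}\bigr]^{1/(4q)}+\frac{\tilde v(X)^4(\log d)^3}{(\mathrm{Im}\,z)^{5}}.
\]

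Third, the deterministic estimate $\ntr|z-X|^{-4q}\le\|(z-X)^{-1}\|^{4q}$ lets me replace the trace moment of $X$ by its operator norm, after which Gaussian concentration takes over. Since $X\mapsto\|(z-X)^{-1}\|$ is $(\mathrm{Im}\,z)^{-2}$-Lipschitz in operator norm, the form of Gaussian concentration with parameter $\sigma_*(X)$ used throughout \cite{BBV23} yields both an $L^{4q}$-to-mean bound
\[
\mathbf{E}\bigl[\|(z-X)^{-1}\|^{4q}\bigr]^{1/(4q)}\le\mathbf{E}\|(z-X)^{-1}\|+\frac{C\sqrt q\,\sigma_*(X)}{(\mathrm{Im}\,z)^{2}}
\]
and the subgaussian lower deviation $\|(z-X)^{-1}\|\ge\mathbf{E}\|(z-X)^{-1}\|-t\sigma_*(X)/(\mathrm{Im}\,z)^{2}$ with probability at least $1-e^{-t^2/2}$. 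Chaining the four estimates with $q=\lceil\log d\rceil$ produces the claimed inequality: the $\sqrt{\log d}$ contribution comes from the moment-to-mean step, while the $e^{-t^2}$ tail comes from the Gaussian lower deviation.

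The main obstacle I expect is Step~2. The BBV23 smooth-function comparison is originally intended for low-degree statistics, so one must verify that it still behaves well when applied to the sharply peaked function $|z-x|^{-4q}$, controlling the joint dependence of its successive derivatives on $q$ and on $\mathrm{Im}\,z$ carefully enough that after taking the $4q$-th root one lands with a polynomial $(\log d)^3$ factor in front of the expected $\tilde v(X)^4/(\mathrm{Im}\,z)^5$, rather than something catastrophically worse. Everything else in the argument is either trivial or a direct quotation of Gaussian concentration of Lipschitz functionals.
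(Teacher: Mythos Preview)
Your proposal is correct and follows essentially the same route as the paper: Corollary~\ref{cor:xfreeres}, then the resolvent moment comparison from \cite{BBV23}, then the trivial bound $\ntr|z-X|^{-4q}\le\|(z-X)^{-1}\|^{4q}$, then Gaussian concentration for the Lipschitz functional $X\mapsto\|(z-X)^{-1}\|$ (both the moment-to-mean and the tail bound). The only clarification is that the ``main obstacle'' you anticipate in Step~2 is not an obstacle at all: the paper does not apply the generic smooth-function bound $|\mathbf{E}\ntr f(X)-(\ntr\otimes\tau)f(X_{\rm free})|\lesssim\tilde v(X)^4\|f^{(4)}\|_\infty$ to $f(x)=|z-x|^{-4q}$, but instead quotes \cite[Theorem~6.1]{BBV23}, which already gives the comparison at the level of the $4q$-th roots,
\[
\|(z-X_{\rm free})^{-1}\|_{L^{4q}(\ntr\otimes\tau)}\le\mathbf{E}[\ntr |z-X|^{-4q}]^{1/(4q)}+\tfrac{8}{3}\,\frac{\tilde v(X)^4}{(\mathrm{Im}\,z)^5}(q+1)^3,
\]
with the correct $(q+1)^3$ and $(\mathrm{Im}\,z)^{-5}$ dependence built in. So no new derivative bookkeeping is needed; you simply cite that theorem and the argument closes exactly as you outlined.
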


\begin{proof}
We begin by noting that \cite[Corollary 4.14 and (6.2)]{BBV23} implies
\begin{equation}
\label{eq:resconctail}
	\mathbf{P}\bigg[
	\pm (\|(z-X)^{-1}\| - \mathbf{E}\|(z-X)^{-1}\|) \ge
	\frac{\sigma_*(X)}{(\mathrm{Im}\,z)^2}\,t
	\bigg]
	\le e^{-t^2/2}
\end{equation}
for $t\ge 0$. This further implies
\begin{equation}
\label{eq:resconcmom}
	\mathbf{E}[\|(z-X)^{-1}\|^{2p}]^{\frac{1}{2p}} \le
	\mathbf{E}\|(z-X)^{-1}\| +
	\frac{\sigma_*(X)}{(\mathrm{Im}\,z)^2}\,2\sqrt{p}
\end{equation}
for all $p\in\mathbb{N}$ by \cite[Theorem 2.1]{BLM13}

On the other hand, \cite[Theorem 6.1]{BBV23} yields for $q\in\mathbb{N}$
$$
	\|(z-X_{\rm free})^{-1}\|_{L^{4q}(\ntr\otimes\tau)}
	\le
	\mathbf{E}[\ntr |z-X|^{-4q}]^{\frac{1}{4q}} +
	\frac{8}{3}
	\frac{\tilde v(X)^4}{(\mathrm{Im}\,z)^5}(q+1)^3.
$$
Applying \eqref{eq:resconcmom} and Corollary \ref{cor:xfreeres}
with $q=\lceil\log d\rceil$ yields
$$
	c\|(z-X_{\rm free})^{-1}\| \le  
	\mathbf{E}\|(z-X)^{-1}\| +
	\frac{\tilde v(X)^4}{(\mathrm{Im}\,z)^5}(\log d)^3
	+ \frac{\sigma_*(X)}{(\mathrm{Im}\,z)^2}\sqrt{\log d}
$$
for a universal constant $c$,
where we used that $\ntr |z-X|^{-4q} \le \|(z-X)^{-1}\|^{4q}$.
The conclusion follows by applying \eqref{eq:resconctail}.
\end{proof}

We now deduce a uniform analogue of the previous lemma.

\begin{lem}
\label{lem:tailunif}
Fix $\varepsilon>0$. Then
\begin{multline*}
	\mathbf{P}\bigg[
	c\|(z-X_{\rm free})^{-1}\| 
	\le
	\|(z-X)^{-1}\|  
	+ \frac{\tilde v(X)^4}{\varepsilon^5}(\log d)^3
	+ \frac{\sigma_*(X)}{\varepsilon^2}(\sqrt{\log d}+t)
	\\
	\text{ for all }z\in \spc(X_{\rm free})+i\varepsilon
	\bigg]
	\ge 1-e^{-t^2}
\end{multline*}
for all $t\ge 0$, where $c$ is a universal constant.
\end{lem}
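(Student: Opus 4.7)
The plan is to pass from the single-point bound of Lemma \ref{lem:tailsingle} to the uniform version by a standard net argument, leveraging the facts that (i) the resolvent norms are $\varepsilon^{-2}$-Lipschitz on $\mathbb{R}+i\varepsilon$, and (ii) $\spc(X_{\rm free})$ is a small set in the sense that it is contained in a union of few intervals of controlled length.

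First, I would construct a deterministic finite $\delta$-net $\mathcal{N}_0 \subseteq \spc(X_{\rm free})$ for a parameter $\delta$ to be fixed. By Lemma~\ref{lem:spcov}, $\spc(X_{\rm free}) \subseteq \spc(A_0)+2\sigma(X)[-1,1]$, and since $|\spc(A_0)|\le d$, the spectrum of $X_{\rm free}$ is contained in a union of at most $d$ intervals of length at most $4\sigma(X)$. Thus one can choose $\mathcal{N}_0$ with
$$
	|\mathcal{N}_0|\le N := \lceil 4d\sigma(X)/\delta\rceil + d.
$$
Next, the resolvent identity gives the Lipschitz estimate
$$
	\big|\|(z-X)^{-1}\|-\|(z'-X)^{-1}\|\big|
	\le \|(z-X)^{-1}\|\cdot|z-z'|\cdot\|(z'-X)^{-1}\|
	\le \frac{|z-z'|}{\varepsilon^2}
$$
for any $z,z'\in\mathbb{R}+i\varepsilon$, and analogously for $X_{\rm free}$.

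I would then apply Lemma \ref{lem:tailsingle} at every $z\in\mathcal{N}:=\mathcal{N}_0+i\varepsilon$, with $t$ replaced by $t+\sqrt{\log N}$, and union bound. Since $(t+\sqrt{\log N})^2\ge t^2+\log N$, the total failure probability is at most $N\cdot e^{-(t+\sqrt{\log N})^2}\le e^{-t^2}$. On this event, the conclusion of Lemma~\ref{lem:tailsingle} holds simultaneously for every $z\in\mathcal{N}$, with the $\sqrt{\log d}$ term merely inflated to $\sqrt{\log d}+\sqrt{\log N}$.

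To extend from $\mathcal{N}$ to all of $\spc(X_{\rm free})+i\varepsilon$, for an arbitrary such $z$ I would pick $z'\in\mathcal{N}$ with $|z-z'|\le \delta$ and use the Lipschitz bound on both sides, picking up an additional error $O(\delta/\varepsilon^2)$. Choosing $\delta:=\sigma_*(X)$ absorbs this into the existing $\sigma_*(X)/\varepsilon^2$ term. With this choice, the inequality $\sigma_*(X)\ge \sigma(X)/\sqrt{d}$ (used already in the proof of Corollary \ref{cor:xfreeres}) yields $N\lesssim d^{3/2}$, so $\sqrt{\log N}\lesssim\sqrt{\log d}$, and all extra terms are absorbed by adjusting the universal constant $c$.

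The only real obstacle is the careful bookkeeping: one must verify that the net cardinality grows only polynomially in $d$, which fails in the naive approach of covering the interval $[-\|X_{\rm free}\|,\|X_{\rm free}\|]$ since $\|A_0\|$ may be arbitrarily large. The geometric observation that $\spc(X_{\rm free})$ has total length $\lesssim d\sigma(X)$ irrespective of $\|A_0\|$ is exactly what makes the argument dimension-free in the problematic parameter and gives $\log N=O(\log d)$.
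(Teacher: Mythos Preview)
Your proposal is correct and follows essentially the same approach as the paper's proof: both use a $\sigma_*(X)$-net of $\spc(X_{\rm free})$, exploit the key geometric fact that $\spc(X_{\rm free})$ lies in at most $d$ intervals of length $O(\sigma(X))$ (independent of $\|A_0\|$), apply the $\varepsilon^{-2}$-Lipschitz bound for resolvent norms, and combine $\sigma(X)\le\sqrt{d}\,\sigma_*(X)$ with a union bound over $\lesssim d^{3/2}$ points followed by the shift $t\leftarrow t+O(\sqrt{\log d})$. The bookkeeping differs only cosmetically.
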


\begin{proof}
Using $\sigma(X)\le\sqrt{d}\,\sigma_*(X)$ as in 
the proof of Corollary \ref{cor:xfreeres}, Lemma \ref{lem:spcov} yields
$$
	\spc(X_{\rm free}) \subseteq 
	\spc(A_0) + 2\sqrt{d}\,\sigma_*(X)[-1,1].
$$
As $|\spc(A_0)|\le d$, it follows that
$\spc(X_{\rm free})$ can be covered by at most $d$ intervals of length
$4\sqrt{d}\,\sigma_*(X)$. We can therefore find 
$N\subset\spc(X_{\rm free})$
of cardinality $|N|\le 4d^{\frac{3}{2}}$ so that each point in
$\spc(X_{\rm free})$ is within distance $\sigma_*(X)$ of a point in $N$.

Now note that for any $\lambda,\lambda'\in\mathbb{R}$ and $\varepsilon>0$
$$
	\big| \|(\lambda+i\varepsilon-X)^{-1}\| -
	\|(\lambda'+i\varepsilon-X)^{-1}\|\big| \le
	\frac{|\lambda-\lambda'|}{\varepsilon^2},
$$
and analogously when $X$ is replaced by $X_{\rm free}$. We can therefore 
estimate
\begin{align*}
	&\mathbf{P}\bigg[
	c'\|(z-X_{\rm free})^{-1}\| 
	\ge
	\|(z-X)^{-1}\|  
	+ \frac{\tilde v(X)^4}{\varepsilon^5}(\log d)^3
	+ \frac{\sigma_*(X)}{\varepsilon^2}(\sqrt{\log d}+t)
\\
	&\qquad\qquad\qquad\qquad\qquad\qquad\qquad\qquad
	\text{ for some }z\in \spc(X_{\rm free})+i\varepsilon
	\bigg] \le
\\
	&\mathbf{P}\bigg[
	c\|(z-X_{\rm free})^{-1}\| 
	\ge
	\|(z-X)^{-1}\|  
	+ \frac{\tilde v(X)^4}{\varepsilon^5}(\log d)^3
	+ \frac{\sigma_*(X)}{\varepsilon^2}(\sqrt{\log d}+t)
\\
	&\qquad\qquad\qquad\qquad\qquad\qquad\qquad\qquad
	\text{ for some }z\in N+i\varepsilon
	\bigg] \le 4d^{\frac{3}{2}}e^{-t^2}
\end{align*}
for any $t\ge 0$
using Lemma \ref{lem:tailsingle} and the union bound, where $c,c'$ 
are universal constants. The conclusion follows by replacing
$t\leftarrow t+2\sqrt{\log d}$ and noting that
$4d^{\frac{3}{2}} e^{-(t+2\sqrt{\log d})^2} \le e^{-t^2}$ for all
$t\ge 0$ (recalling the standing assumption $d\ge 2$).
\end{proof}

We can now conclude the proof of Theorem \ref{thm:main}.

\begin{proof}[Proof of Theorem \ref{thm:main}]
It was shown in \cite[Theorem 2.1]{BBV23} that
$$
	\mathbf{P}\big[
	\spc(X)\subseteq \spc(X_{\rm free}) +
	C\{\tilde v(X) (\log d)^{\frac{3}{4}}
	+ \sigma_*(X)t \}[-1,1]
	\big] \ge 1-e^{-t^2}.
$$
On the other hand, combining Lemma \ref{lem:tailunif} with
\cite[Lemma 6.4]{BBV23} yields
$$
	\mathbf{P}\big[
	\spc(X_{\rm free})\subseteq \spc(X) +
	C\{\tilde v(X) (\log d)^{\frac{3}{4}}
	+ \sigma_*(X)t \}[-1,1]
	\big] \ge 1-e^{-t^2},
$$
where we used that $\sigma_*(X)\sqrt{\log d}\le
\tilde v(X) (\log d)^{\frac{3}{4}}$. The union bound yields
$$
	\mathbf{P}\big[
	\mathrm{d_H}(\spc(X),\spc(X_{\rm free}))
	> C\{\tilde v(X) (\log d)^{\frac{3}{4}}
	+ \sigma_*(X)t \}
	\big] \le 2e^{-t^2}.
$$
We conclude by replacing $t\leftarrow t+\sqrt{\log 2}$
and using again that $\sigma_*(X)\le\tilde v(X)$.
\end{proof}

\subsection{Proof of Corollary \ref{cor:norm}}

The proof is entirely straightforward.

\begin{proof}[Proof of Corollary \ref{cor:norm}]
Assume first that $X$ is self-adjoint. Then the tail bound
follows immediately from Theorem \ref{thm:main} as
$|\|X\|-\|X_{\rm free}\| | \le \mathrm{d_H}(\spc(X),\spc(X_{\rm free}))$.
To deduce the bound in expectation, we estimate
\begin{multline*}
	|\EE\|X\| - \|X_{\rm free}\| | 
	\le 
	\int_0^\infty \mathbf{P}\big[|\|X\|-\|X_{\rm free}\|| > x\big]\,dx 
\\
	\le
	C\tilde v(X)(\log d)^{\frac{3}{4}} +
	\int_0^\infty \mathbf{P}\big[|\|X\|-\|X_{\rm free}\|| > 
	C\tilde v(X)(\log d)^{\frac{3}{4}} + x\big] \,dx
\end{multline*}
and use that $\sigma_*(X)\le \tilde v(X)$. The corresponding results for
$\lambda_{\rm max}(X),\lambda_{\rm max}(X_{\rm free})$ and $\lambda_{\rm 
min}(X),\lambda_{\rm min}(X_{\rm free})$ follow by an identical 
argument. Finally, the norm bounds extend directly to the non-self-adjoint
case by \cite[Remark 2.6]{BBV23}.
\end{proof}

\subsection{Proof of Theorem \ref{thm:quad}}

The basis for the proof is the following variant of the linearization
argument of \cite[Lemma 3.13]{BBV23}.

\begin{lem}
\label{lem:lin}
Fix $\varepsilon>0$, and define $B_\varepsilon :=
B + (\|B\|+4\varepsilon^2)\id$ and
$$
        \breve X_\varepsilon = \begin{bmatrix}
        0 & X & B^{\frac{1}{2}}_\varepsilon \\
        X^* & 0 & 0 \\
        B^{\frac{1}{2}}_\varepsilon & 0 & 0 
        \end{bmatrix},
        \qquad\quad
        \breve X_{\rm free,\varepsilon} = \begin{bmatrix}
        0 & X_{\rm free} & B^{\frac{1}{2}}_\varepsilon\otimes\id \\
        X_{\rm free}^* & 0 & 0 \\
        B^{\frac{1}{2}}_\varepsilon\otimes\id & 0 & 0
        \end{bmatrix}.
$$
Then 
$$
	\mathrm{d_H}(\spc(\breve X_\varepsilon),\spc(\breve X_{\rm 
	free,\varepsilon}))\le\varepsilon
$$
implies
$$
	\mathrm{d_H}(\spc(XX^*+B),\spc(X_{\rm free}X_{\rm 
	free}^*+B\otimes\id) \le
	4 \{\|X_{\rm free}\| + \|B\|^{\frac{1}{2}}\} \varepsilon
	+ 5\varepsilon^2.
$$
\end{lem}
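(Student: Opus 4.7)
Plan: The strategy is a linearization argument. The self-adjoint $3\times 3$ block matrix $\breve X_\varepsilon$ is constructed so that
\[
\breve X_\varepsilon^{\,2} = \begin{bmatrix} XX^* + B_\varepsilon & 0 \\ 0 & N^*N \end{bmatrix},
\qquad
N := \begin{bmatrix} X & B_\varepsilon^{1/2}\end{bmatrix},
\]
where I have collapsed the last two rows and columns into one $2d\times 2d$ block. Since $NN^* = XX^* + B_\varepsilon$, the standard identity $\spc(N^*N)\setminus\{0\} = \spc(NN^*)\setminus\{0\}$ shows that the second diagonal block contributes no new nonzero eigenvalues. Moreover $B_\varepsilon = B+(\|B\|+4\varepsilon^2)\id \geq 4\varepsilon^2\id$, so every element of $\spc(XX^*+B_\varepsilon)$ is at least $4\varepsilon^2$. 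Consequently the positive branch of $\spc(\breve X_\varepsilon)$ equals $\sqrt{\spc(XX^*+B_\varepsilon)}$ and is contained in $[2\varepsilon,\infty)$; the identical structural identity holds for the free linearization $\breve X_{\rm free,\varepsilon}$.

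Given this, I would transfer the Hausdorff hypothesis through the squaring map. For any $\mu\in\spc(XX^*+B_\varepsilon)$, set $\lambda:=\sqrt{\mu}\geq 2\varepsilon$; the assumption supplies $\lambda'\in\spc(\breve X_{\rm free,\varepsilon})$ with $|\lambda-\lambda'|\leq\varepsilon$, and since $\lambda'\geq\varepsilon>0$ lies on the positive branch, $\mu':=(\lambda')^2\in\spc(X_{\rm free}X_{\rm free}^*+B_\varepsilon\otimes\id)$. Then
\[
|\mu-\mu'| = |\lambda-\lambda'|\,(\lambda+\lambda') \leq (2R+\varepsilon)\varepsilon,
\qquad R := \max\bigl(\|\breve X_\varepsilon\|,\,\|\breve X_{\rm free,\varepsilon}\|\bigr),
\]
and the reverse inclusion is symmetric.

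It remains to bound $R$ in terms of $\|X_{\rm free}\|$ and $\|B\|^{1/2}$. The hypothesis gives $\|\breve X_\varepsilon\|\leq\|\breve X_{\rm free,\varepsilon}\|+\varepsilon$, and the same block computation yields $\|\breve X_{\rm free,\varepsilon}\|^2 = \|X_{\rm free}X_{\rm free}^* + B_\varepsilon\otimes\id\| \leq \|X_{\rm free}\|^2 + \|B_\varepsilon\|$. Combining with $\|B_\varepsilon\|\leq 2\|B\|+4\varepsilon^2$ and $\sqrt{a+b}\leq\sqrt{a}+\sqrt{b}$ gives $R\leq\|X_{\rm free}\|+\sqrt{2}\|B\|^{1/2}+3\varepsilon$. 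Substituting into the squared bound and using $2\sqrt{2}\leq 4$ produces the desired Hausdorff estimate between $\spc(XX^*+B_\varepsilon)$ and $\spc(X_{\rm free}X_{\rm free}^*+B_\varepsilon\otimes\id)$; translating by the common scalar shift $\|B\|+4\varepsilon^2$ converts this to the claimed bound for $\spc(XX^*+B)$ versus $\spc(X_{\rm free}X_{\rm free}^*+B\otimes\id)$.

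The one delicate point --- and the reason for the specific choice $B_\varepsilon = B+(\|B\|+4\varepsilon^2)\id$ --- is that $N\colon\mathbb{C}^{2d}\to\mathbb{C}^d$ is rank-deficient by construction, so $0$ is always an eigenvalue of $N^*N$, hence of $\breve X_\varepsilon^{\,2}$. Without the $4\varepsilon^2$ cushion, a Hausdorff-$\varepsilon$ match on the linearized spectra could pair this spurious $0$ with a small positive eigenvalue on the free side (or vice versa), breaking the sign-preserving step that legitimizes the squaring transfer. The $4\varepsilon^2$ offset is calibrated precisely so that the positive branches on both sides lie in $[2\varepsilon,\infty)$, allowing the Hausdorff matching to preserve sign; this is the main obstacle the argument has to address.
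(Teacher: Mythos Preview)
Your argument is correct and follows essentially the same route as the paper's proof: identify $\spc(\breve X_\varepsilon)\cup\{0\}$ with $\pm\sqrt{\spc(XX^*+B_\varepsilon)}\cup\{0\}$ (the paper cites \cite[Remark~2.6]{BBV23} for this, whereas you compute $\breve X_\varepsilon^{\,2}$ directly), use the $4\varepsilon^2$ offset to keep the positive branch in $[2\varepsilon,\infty)$ so that an $\varepsilon$-close match preserves sign, transfer through $|\lambda^2-\mu^2|=(\lambda+\mu)|\lambda-\mu|$, and finally shift back from $B_\varepsilon$ to $B$ by translation invariance of $\mathrm{d_H}$.

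The only discrepancy is in the bookkeeping of constants. Your route via $R=\max(\|\breve X_\varepsilon\|,\|\breve X_{{\rm free},\varepsilon}\|)$ and $R\le\|\breve X_{{\rm free},\varepsilon}\|+\varepsilon$ produces $6\varepsilon^2$ (or $7\varepsilon^2$ with the extra $+\varepsilon$ you wrote) rather than $5\varepsilon^2$. The paper avoids this by bounding $\lambda+\lambda'\le 2\lambda'+\varepsilon\le 2\|\breve X_{{\rm free},\varepsilon}\|+\varepsilon$ directly in each direction, never invoking $\|\breve X_\varepsilon\|$; this shaves the extra $\varepsilon$ and recovers the stated constant. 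This is a cosmetic refinement; your argument is otherwise complete.
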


\begin{proof}
By \cite[Remark 2.6]{BBV23}, we have
$$
	\spc(\breve X_\varepsilon)\cup\{0\} =
	\spc((XX^*+B_\varepsilon)^{\frac{1}{2}}) \cup
	{-\spc((XX^*+B_\varepsilon)^{\frac{1}{2}})}\cup \{0\},
$$
and analogously for $\breve X_{\rm free,\varepsilon}$.

Consider first any $\lambda\in\spc(XX^*+B_\varepsilon)$.
Then $\lambda^{\frac{1}{2}}\ge 2\varepsilon$ by the definition of
$B_\varepsilon$, and thus
$\lambda^{\frac{1}{2}}\in \spc(\breve X_\varepsilon)$.
By assumption,
there exists $\mu\in \spc(\breve X_{{\rm free},\varepsilon})$ so that
$|\lambda^{\frac{1}{2}}-\mu|\le\varepsilon$. This implies
$\mu\ge \varepsilon$, so it must be that
$\mu\in \spc((X_{\rm free}X_{\rm 
free}^*+B_\varepsilon\otimes\id)^{\frac{1}{2}})$. Moreover,
$$
	|\lambda-\mu^2| =
	(\lambda^{\frac{1}{2}}+\mu)
	|\lambda^{\frac{1}{2}}-\mu|
	\le
	2\mu\varepsilon + \varepsilon^2.
$$
As $\mu^2\in \spc(X_{\rm free}X_{\rm free}^*+B_\varepsilon\otimes\id)$,
we have shown that
$$
	\spc(XX^*+B_\varepsilon) \subseteq
	\spc(X_{\rm free}X_{\rm free}^*+B_\varepsilon\otimes\id) +
	(2\|X_{\rm free}X_{\rm 
	free}^*+B_\varepsilon\otimes\id\|^{\frac{1}{2}}
	\varepsilon + \varepsilon^2)[-1,1].
$$
Reversing the roles of
$X,X_{\rm free}$ yields
$$
	\mathrm{d_H}(
	\spc(XX^*+B_\varepsilon),
	\spc(X_{\rm free}X_{\rm free}^*+B_\varepsilon\otimes\id))
	\le 
	2\|X_{\rm free}X_{\rm 
	free}^*+B_\varepsilon\otimes\id\|^{\frac{1}{2}}
	\varepsilon + \varepsilon^2
$$
by the identical argument.

To conclude the proof, note first that
$$
	\mathrm{d_H}(
	\spc(XX^*+B_\varepsilon),
	\spc(X_{\rm free}X_{\rm free}^*+B_\varepsilon\otimes\id)) =
	\mathrm{d_H}(
	\spc(XX^*+B),
	\spc(X_{\rm free}X_{\rm free}^*+B\otimes\id))
$$
as Hausdorff distance is translation-invariant $\mathrm{d_H}(I+t,J+t)=
\mathrm{d_H}(I,J)$. On the other hand, we can estimate
$$
	\|X_{\rm free}X_{\rm
        free}^*+B_\varepsilon\otimes\id\|
	\le
	\|X_{\rm free}\|^2 + 2\|B\| + 4\varepsilon^2,
$$
and the proof is readily completed.
\end{proof}

We can now complete the proof of Theorem \ref{thm:quad}.

\begin{proof}[Proof of Theorem \ref{thm:quad}]
By Lemma \ref{lem:lin}, we can estimate
\begin{multline*}
	\mathbf{P}\big[
	\mathrm{d_H}(\spc(XX^*+B),\spc(X_{\rm free}X_{\rm 
	free}^*+B\otimes\id) >
	4 \{\|X_{\rm free}\| + \|B\|^{\frac{1}{2}}\} \varepsilon
	+ 5\varepsilon^2\big] \\
	\le
	\mathbf{P}\big[
	\mathrm{d_H}(\spc(\breve X_\varepsilon),\spc(\breve X_{\rm
        free,\varepsilon}))>\varepsilon	
	\big].
\end{multline*}
We now recall that $\sigma_*(\breve
X_\varepsilon)=\sigma_*(X)$ and $\tilde v(\breve
X_\varepsilon)\le 2^{\frac{1}{4}}\tilde v(X)$ by \cite[Remark 2.6]{BBV23}.
Thus $t\ge \tilde v(X)(\log d)^{\frac{3}{4}}$ implies
$2t\ge 2^{-\frac{1}{4}}\tilde v(\breve X_\varepsilon)(\log 
d)^{\frac{3}{4}}+\sigma_*(\breve X_\varepsilon)\frac{t}{\sigma_*(X)}$. 
The conclusion 
follows from Theorem \ref{thm:main} by choosing $\varepsilon=C't$ for
a universal constant $C'$.
\end{proof}

\section{Phase transitions: isotropic case}
\label{sec:pfiso}

\subsection{Proof of Theorem \ref{thm:bbp}}

Theorem \ref{thm:bbp} is based on the Lehner formula \eqref{eq:lehner}. At 
its core, the reason that this variational principle exhibits phase 
transitions in the presence of low-rank structure is contained in the 
following simple observation: the last term in the Lehner formula
is small when $M$ has low rank.

\begin{lem}
\label{lem:Srank}
Let $M\in\mathrm{M}_d(\mathbb{C})_{\rm sa}$ have rank $r$. Then
$$
	\|\EE[(X-\EE X)M(X-\EE X)]\| \le 
	\sigma_*(X)^2r\,\|M\|.
$$
\end{lem}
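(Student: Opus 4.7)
\medskip

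The plan is to expand $M$ in its spectral decomposition and reduce the bound on the $d\times d$ matrix $\EE[(X-\EE X)M(X-\EE X)]$ to a sum of rank-one contributions, each of which is controlled by the definition of $\sigma_*(X)$. Since $M$ is self-adjoint with rank $r$, write $M = \sum_{i=1}^r \lambda_i u_iu_i^*$ for some orthonormal $u_1,\ldots,u_r\in\mathbb{C}^d$ and real $\lambda_i$ with $|\lambda_i|\le\|M\|$.

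The key observation is that, using self-adjointness of $X$ (which by the discussion in the excerpt may be assumed without loss of generality, and is in force wherever the Lehner formula is invoked), each summand
$$
T_i := \EE[(X-\EE X)u_iu_i^*(X-\EE X)]
= \EE[Y_iY_i^*],\qquad Y_i := (X-\EE X)u_i,
$$
is a positive semidefinite matrix. Its operator norm therefore coincides with $\sup_{\|v\|=1}\langle v,T_iv\rangle = \sup_{\|v\|=1}\EE[|\langle v,(X-\EE X)u_i\rangle|^2]$, and this quantity is at most $\sigma_*(X)^2$ directly from the definition of $\sigma_*$.

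Combining these ingredients via the triangle inequality gives
$$
\|\EE[(X-\EE X)M(X-\EE X)]\|
\le \sum_{i=1}^r |\lambda_i|\,\|T_i\|
\le r\,\|M\|\,\sigma_*(X)^2,
$$
which is the claim. There is no serious obstacle: the argument is essentially a one-line unpacking of the spectral decomposition of $M$ together with the variational definition of $\sigma_*(X)$; the only point worth noting is that the rank of $M$ enters only as the length of the sum, so that the low-rank structure of $\EE X$ in Theorem \ref{thm:bbp} translates directly into a small perturbation of the Lehner variational principle.
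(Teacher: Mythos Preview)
Your proof is correct and follows essentially the same approach as the paper: spectral decomposition of $M$ followed by the triangle inequality and the definition of $\sigma_*(X)$. The paper's version is marginally more compact---it writes the norm of the full matrix as $\sup_{\|w\|=1}\big|\sum_i \lambda_i\,\EE[|\langle v_i,(X-\EE X)w\rangle|^2]\big|$ and applies the triangle inequality inside the supremum---but this is the same argument as yours, just with the order of operations swapped.
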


\begin{proof}
Writing $M=\sum_{i=1}^r \lambda_i v_iv_i^*$ with 
$|\lambda_i|\le\|M\|$ and $\|v_i\|=1$, we obtain
$$
	\|\EE[(X-\EE X)M(X-\EE X)]\| =
	\sup_{\|w\|=1}
	\bigg|\sum_{i=1}^r \lambda_i\,\EE[|\langle v_i,(X-\EE X)w\rangle^2]
	\bigg|
	\le \sigma_*(X)^2r\|M\|
$$
by the triangle inequality and the definition of $\sigma_*(X)$.
\end{proof}

We first prove the upper bound in Theorem \ref{thm:bbp}.

\begin{lem}
\label{lem:bbpupper}
Let $X$ be any $d\times d$ self-adjoint random matrix with
$\EE[(X-\EE X)^2]=\id$ and such that $\EE X$ has rank $r$. Then we have
$$
        \lambda_{\rm max}(X_{\rm free}) \le
        \mathrm{B}(\lambda_{\rm max}(\EE X)) +
	2\sigma_*(X)\sqrt{r}.
$$
\end{lem}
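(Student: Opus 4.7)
I will apply the Lehner variational principle \eqref{eq:lehner} with a test matrix $M$ tailored to the low-rank structure of $\EE X$. Let $P$ denote the orthogonal projection onto $\mathrm{range}(\EE X)$; since $\EE X$ is self-adjoint of rank $r$, so is $P$, and $P$ commutes with $\EE X$. My ansatz is
$$M = \alpha P + \beta(\id - P), \qquad \alpha, \beta > 0,$$
so that $M^{-1} = \alpha^{-1}P + \beta^{-1}(\id - P)$. The isotropy assumption $\EE[(X-\EE X)^2] = \id$ then yields the clean identity $\EE[(X-\EE X)M(X-\EE X)] = \beta\,\id + (\alpha - \beta)S$, where $S := \EE[(X-\EE X)P(X-\EE X)]$ is positive with $\|S\| \le \sigma_*(X)^2 r$ by Lemma~\ref{lem:Srank}.

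Combining these ingredients, and using that $P$ and $\EE X$ share an eigenbasis to read off the spectrum of the commuting summand, Weyl's inequality gives the concrete bound
$$\lambda_{\rm max}(X_{\rm free}) \le (\theta + \alpha^{-1} - \beta^{-1})_+ + \beta + \beta^{-1} + (\alpha - \beta)_+\|S\|,$$
where $\theta := \lambda_{\rm max}(\EE X)$. The problem thus reduces to a purely deterministic optimization over $\alpha, \beta > 0$.

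Writing $s := \sigma_*(X)\sqrt{r}$, the choice I would make is $\alpha = 1/s$ together with the piecewise $\beta = \min\{1,\, 1/(\theta+s)\}$, with the degenerate case $s=0$ dispatched by $M = \id$ directly. The motivation is that these two regimes of $\beta$ correspond to the two branches of $\mathrm{B}$: when $\theta + s > 1$, the choice $\beta = 1/(\theta + s)$ annihilates the $(\cdot)_+$ term and is tuned to produce the $\theta + 1/\theta$ contribution; when $\theta + s \le 1$, the choice $\beta = 1$ optimizes $\beta + \beta^{-1}$ to the value $2$ and reproduces the $\mathrm{B}(\theta) = 2$ branch. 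A brief case analysis, using $\|S\| \le s^2$ together with the elementary inequality $(1 - s^2)/(\theta + s) \le 1/\theta$ (valid for all $\theta > 0$) in the nontrivial regime, then verifies that the bound is at most $\mathrm{B}(\theta) + 2s$ in each case.

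The main obstacle is precisely the transitional regime $\theta \approx 1$ where the two branches of $\mathrm{B}$ meet: a single uniform choice of $\beta$ fails to deliver the correct leading order on one side of the transition, because $(\theta + \alpha^{-1} - \beta^{-1})_+$ either over- or under-contributes. The piecewise prescription above is designed to track $\mathrm{B}$ continuously through $\theta = 1$, after which only elementary algebraic verification remains.
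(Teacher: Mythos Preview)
Your approach is the same as the paper's: apply Lehner's formula \eqref{eq:lehner} with the rank-adapted test matrix $M=\alpha P+\beta(\id-P)$. The paper's bookkeeping is cleaner, though. Rather than tracking $(\alpha-\beta)S$ and committing to explicit values of $\alpha,\beta$, the paper uses the crude bounds $\EE X\le\theta P$ and $\alpha^{-1}P\le\alpha^{-1}\id$ to obtain
\[
\lambda_{\rm max}(X_{\rm free})\le \max\{\theta+\beta,\ \beta^{-1}+\beta\}+\big(\alpha^{-1}+\alpha\,\sigma_*(X)^2 r\big),
\]
which decouples the two parameters: optimizing over $\alpha$ produces $2\sigma_*(X)\sqrt{r}$, and one recognizes $\inf_{\beta>0}\max\{\theta+\beta,\beta^{-1}+\beta\}=\mathrm{B}(\theta)$ directly, with no case analysis at all.

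Your explicit choices do work, but the justification you sketch has a small gap in precisely the transitional regime you flag as the obstacle. When $\theta+s>1$, your inequality $(1-s^2)/(\theta+s)\le 1/\theta$ only delivers the bound $\theta+\tfrac{1}{\theta}+2s$, and this exceeds $\mathrm{B}(\theta)+2s=2+2s$ whenever $\theta<1$. In the window $\theta\le 1<\theta+s$ you need instead the (equally elementary) inequality $\theta+(1-s^2)/(\theta+s)\le 2$, equivalent to $(1-\theta)^2\le s(2+s-\theta)$; this holds there since $s>1-\theta\ge 0$ and $2+s-\theta>1-\theta$. So the argument is sound, but the single inequality you cite does not cover the case you identified as the crux.
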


\begin{proof}
Denote by $P$ the projection onto the range of $\EE X$. Then we can upper
bound $\lambda_{\rm max}(X_{\rm free})$ by restricting the infimum in
\eqref{eq:lehner} only to matrices of the form $M=sP+t(\id -P)$ for 
$s,t>0$, and using that for such $M$
$$
	\mathbf{E}[(X-\mathbf{E}X)M(X-\mathbf{E}X)]
	\le t + \sigma_*(X)^2rs
$$
by the isotropic assumption $\mathbf{E}[(X-\mathbf{E}X)^2]=\id$ and
Lemma \ref{lem:Srank}. This yields
\begin{align*}
	\lambda_{\rm max}(X_{\rm free}) 
	&\le
	\inf_{s,t>0} \lambda_{\rm max}\big(
	\mathbf{E} X + 
	s^{-1}P + t^{-1}(\id-P) +
	t + 
	\sigma_*(X)^2rs
	\big)
\\
	&\le
	\inf_{t>0}
	\max\{\lambda_{\rm max}(\mathbf{E} X)+t,t^{-1}+t\}
	+ 2\sigma_*(X)\sqrt{r},
\end{align*}
where we used $\EE X \le \lambda_{\rm max}(\EE X)P$ and
$s^{-1}P\le s^{-1}$ in the second inequality. It remains to note that
$\inf_{t>0} \max\{\theta+t,t^{-1}+t\}=\mathrm{B}(\theta)$.
\end{proof}

We now turn to the lower bound.

\begin{lem}
\label{lem:bbplower}
Let $X$ be any $d\times d$ self-adjoint random matrix with
$\EE[(X-\EE X)^2]=\id$ and such that $\EE X$ has rank $r$ with
$\sigma_*(X)\sqrt{r}\le 1$. Then we have
$$
        \lambda_{\rm max}(X_{\rm free}) \ge
        \mathrm{B}(\lambda_{\rm max}(\EE X)) -
	2\sigma_*(X)\sqrt{r}.
$$
\end{lem}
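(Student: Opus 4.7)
The plan is to use the variational formulation of $\lambda_{\rm max}(X_{\rm free})$ in a way complementary to Lemma \ref{lem:bbpupper}. Starting from the Lehner formula \eqref{eq:lehner} and applying standard SDP duality (Schur-complementing $N\succeq M^{-1}$ and optimizing out $M$), one obtains the dual representation
\[
\lambda_{\rm max}(X_{\rm free}) \;=\; \sup_{\Lambda\succeq 0,\ \tr\Lambda=1}\Big\{\tr(\Lambda\,\EE X) \;+\; 2\tr\!\big[(\Lambda^{1/2} S(\Lambda)\Lambda^{1/2})^{1/2}\big]\Big\},
\]
where $S(M) := \EE[(X-\EE X) M (X-\EE X)]$. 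Since this is a supremum, any feasible dual witness $\Lambda$ immediately yields a lower bound, so the plan reduces to exhibiting a single good $\Lambda$ mirroring, through primal--dual symmetry, the optimal test matrix $M = sP + t(\id - P)$ of Lemma \ref{lem:bbpupper}.

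I would take the ansatz $\Lambda_\alpha := \alpha\, vv^* + \tfrac{1-\alpha}{d-1}(\id - vv^*)$ for $\alpha\in[0,1]$, where $v$ is a unit top eigenvector of $\EE X$ realizing $\theta := \lambda_{\rm max}(\EE X)$. Direct evaluation gives $\tr(\Lambda_\alpha\, \EE X) = \alpha\theta + \tfrac{1-\alpha}{d-1}(\tr\EE X - \theta)$; the correction is controlled by $\|\EE X\|$ and the rank $r$, with an $O(\sigma_*(X)\sqrt r)$ bound tolerable under the hypothesis $\sigma_*(X)\sqrt r\le 1$. For the second term, the isotropic identity $S(\id) = \id$ gives the decomposition $S(\Lambda_\alpha) = \tfrac{1-\alpha}{d-1}\id + \tfrac{\alpha d - 1}{d-1}\rho_v$ with $\rho_v := S(vv^*)$ satisfying $\tr\rho_v = 1$ and $\|\rho_v\|\le\sigma_*(X)^2$ (by Lemma~\ref{lem:Srank}). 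The key estimate to establish is
\[
\tr\!\big[(\Lambda_\alpha^{1/2}S(\Lambda_\alpha)\Lambda_\alpha^{1/2})^{1/2}\big] \;\ge\; \sqrt{1-\alpha} \;-\; O\big(\sigma_*(X)\sqrt r\big).
\]

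Combining these estimates gives a scalar lower bound $\alpha\theta + 2\sqrt{1-\alpha} - O(\sigma_*(X)\sqrt r)$, and optimizing over $\alpha\in[0,1]$ recovers $\mathrm{B}(\theta)$: the supremum is attained at $\alpha_\ast = 1 - 1/\theta^2$ for $\theta>1$ (giving $\theta + 1/\theta$) and at $\alpha_\ast = 0$ for $\theta\le 1$ (giving $2$). This is the exact mirror of the calculation in Lemma~\ref{lem:bbpupper}, as expected from primal--dual symmetry, and produces the claimed two-sided control once the error terms are tracked.

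The main obstacle is proving the lower bound on $\tr[(\Lambda_\alpha^{1/2}S(\Lambda_\alpha)\Lambda_\alpha^{1/2})^{1/2}]$ robustly across all isotropic models. Crude operator-monotone estimates---e.g., using $S(\Lambda_\alpha)\succeq\tfrac{1-\alpha}{d-1}\id$ to conclude $T\succeq\tfrac{1-\alpha}{d-1}\Lambda_\alpha$ and hence $\tr\sqrt T\ge\sqrt{\tfrac{1-\alpha}{d-1}}\tr\sqrt{\Lambda_\alpha}$---only deliver the insufficient bound $(1-\alpha)$ in place of $\sqrt{1-\alpha}$, losing a factor of $\sqrt d$. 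A refined analysis must exploit the trace identity $\tr S(\Lambda_\alpha) = 1$ together with the operator-norm bound $\|\rho_v\|\le\sigma_*(X)^2$ to quantify the deviation of the spectrum of $S(\Lambda_\alpha)$ from its ``flat'' GOE value $1/d$; the standing hypothesis $\sigma_*(X)\sqrt r \le 1$ is precisely what guarantees that the residual model-dependent correction is $O(\sigma_*\sqrt r)$, rendering the final bound nontrivial in the regime of interest.
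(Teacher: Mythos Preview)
Your dual formulation is correct, but the specific witness $\Lambda_\alpha=\alpha\,vv^*+\tfrac{1-\alpha}{d-1}(\id-vv^*)$ does not deliver the bound you claim, and in fact cannot. There are two separate failures.

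\textbf{Second term.} The key estimate $\tr[(\Lambda_\alpha^{1/2}S(\Lambda_\alpha)\Lambda_\alpha^{1/2})^{1/2}]\ge\sqrt{1-\alpha}-O(\sigma_*\sqrt r)$ is false for general isotropic noise. Take $G$ block-diagonal with $d/m$ independent $m\times m$ GUE blocks (so $\EE G^2=\id$, $\sigma_*^2=1/m$) and $v=e_1$. Then $\rho_v=S(vv^*)=\tfrac1m P_W$ where $W$ is the first block, and a direct computation gives
\[
\tr\sqrt T \;=\; (1-\alpha)\,+\,O(m^{-1/2})\,+\,O(\sqrt{m/d}),
\]
which tends to $(1-\alpha)$, not $\sqrt{1-\alpha}$, when $m\to\infty$ with $m\ll d$. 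For this $\Lambda_\alpha$ the dual objective is only $\alpha\theta+2(1-\alpha)+o(1)=\max(\theta,2)+o(1)$, strictly below $\mathrm B(\theta)$ for every $\theta>1$. The point is that the optimal dual state must put its $(1-\alpha)$ mass on the block where the noise interacts with $v$, not uniformly on $v^\perp$; no ``refined analysis'' of your fixed $\Lambda_\alpha$ can recover the missing factor, because the bound simply does not hold for it.

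\textbf{First term.} The correction $\tfrac{1-\alpha}{d-1}(\tr\EE X-\theta)$ is not $O(\sigma_*\sqrt r)$: nothing in the hypotheses bounds $\|\EE X\|$, so if $\EE X$ has negative eigenvalues of order $d$ this term is a negative constant that ruins the lower bound.

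The paper avoids both problems by staying on the primal side. It first restricts to the subspace orthogonal to the negative eigenvectors of $\EE X$, so that the compressed mean $Q\,\EE X\,Q^*\ge 0$. Applying the Lehner formula~\eqref{eq:lehner} to the compressed model and using the isotropy $\EE[(X-\EE X)^2]=\id$ together with Lemma~\ref{lem:Srank} gives, for every $M>0$,
\[
\lambda_{\rm max}\big(Q\,\EE X\,Q^*+M^{-1}+\cdots\big)\;\ge\;\max\{\theta,\lambda_{\min}(M)^{-1}\}+\beta\,\lambda_{\min}(M),
\qquad \beta:=1-\sigma_*(X)^2 r,
\]
which after optimizing over $\lambda_{\min}(M)$ yields $\lambda_{\rm max}(X_{\rm free})\ge\beta^{1/2}\mathrm B(\beta^{-1/2}\theta)\ge\mathrm B(\theta)-2\sigma_*\sqrt r$. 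This argument never needs to exhibit a good dual state adapted to $S$, which is why it works uniformly over all isotropic models.
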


\begin{proof}
Let $r'\le r$ be the number of (strictly) negative eigenvalues of $\EE X$, 
and assume without loss of generality that the associated eigenvectors
are $e_{d-r'+1},\ldots,e_d$. Let $Q:\mathbb{C}^d\to\mathbb{C}^{d-r'}$ be 
the coordinate projection on the first $d-r'$ coordinate directions.
Then the Lehner formula \eqref{eq:lehner} yields 
\begin{align*}
	\lambda_{\rm max}(X_{\rm free}) &\ge
	\lambda_{\rm max}((Q\otimes\id)X_{\rm free}(Q^*\otimes\id))
	\\ &= 
	\inf_{M>0}
	\lambda_{\rm max}\big(
	Q\,\EE X\,Q^* + M^{-1} +
	Q\mathbf{E}[(X-\mathbf{E}X)Q^*MQ(X-\mathbf{E}X)]Q^*\big),
\end{align*}
where the infimum is taken over $(d-r')$-dimensional matrices $M$.

To proceed, note that $\id-Q^*Q$ is a projection of rank $r'\le r$. Thus
$$
	Q\mathbf{E}[(X-\mathbf{E}X)Q^*Q(X-\mathbf{E}X)]Q^*
	\ge 
	1 - \sigma_*(X)^2r =: \beta
$$
where we used the isotropic assumption $\EE[(X-\EE X)^2]=\id$, that
$QQ^*=\id$, and Lemma \ref{lem:Srank}. Moreover, note that
$\beta\ge 0$ by assumption. We can therefore bound
\begin{align*}
	&\lambda_{\rm max}\big(
	Q\,\EE X\,Q^* + M^{-1} +
	Q\mathbf{E}[(X-\mathbf{E}X)Q^*MQ(X-\mathbf{E}X)]Q^*\big)
\\ &\ge
	\lambda_{\rm max}\big(Q\,\EE X\,Q^* + M^{-1}\big)
	+
	\beta\lambda_{\rm min}(M)
\\ &\ge
	\max\{\lambda_{\rm max}(\EE X),(\lambda_{\rm min}(M))^{-1}\}
	+
	\beta\lambda_{\rm min}(M),
\end{align*}
where we used $M^{-1}\ge 0$, $\lambda_{\rm max}(Q\,\EE X\,Q^*)=
\lambda_{\rm max}(\EE X)$ and $Q\,\EE X\,Q^*\ge 0$, respectively, to
obtain the two terms in the maximum on the last line.
Thus
$$
	\lambda_{\rm max}(X_{\rm free}) \ge
	\inf_{t>0} \max\{\lambda_{\rm max}(\EE X) + \beta t,
	t^{-1}+\beta t\}
	=
	\beta^{\frac{1}{2}}\mathrm{B}(\beta^{-\frac{1}{2}}
	\lambda_{\rm max}(\EE X)).
$$
It remains to show that 
$\beta^{\frac{1}{2}}\mathrm{B}(\beta^{-\frac{1}{2}}\theta) \ge
\mathrm{B}(\theta)-2\sigma_*(X)\sqrt{r}$.

To this end, note first that $\beta^{\frac{1}{2}}\ge
1-\sigma_*(X)\sqrt{r}$ as
$\sqrt{1-x^2}\ge 1-x$ for $x\in[0,1]$. We now consider two regimes.
If $\theta\le 1$, we have
$$
	\beta^{\frac{1}{2}}\mathrm{B}(\beta^{-\frac{1}{2}}\theta) \ge
	2\beta^{\frac{1}{2}} \ge
	2-2\sigma_*(X)\sqrt{r} =
	\mathrm{B}(\theta)-2\sigma_*(X)\sqrt{r}.	
$$
On the other hand, if $\theta>1$, then we have
$$
	\beta^{\frac{1}{2}}\mathrm{B}(\beta^{-\frac{1}{2}}\theta) 
	= \theta + \frac{\beta}{\theta} =
	\mathrm{B}(\theta) - \frac{\sigma_*(X)^2r}{\theta}
	\ge \mathrm{B}(\theta)-\sigma_*(X)\sqrt{r}
$$
as $\sigma_*(X)^2r\le \sigma_*(X)\sqrt{r}$. The proof is complete.
\end{proof}

Theorem \ref{thm:bbp} follows immediately by combining
Lemmas \ref{lem:bbpupper} and \ref{lem:bbplower}.

\subsection{Proof of Theorem \ref{thm:eigenvec}}

Despite that we formulated Theorem \ref{thm:eigenvec} in the context
of random matrices, the argument is entirely deterministic in nature.
The proof is based on the following basic observation.

\begin{lem}
\label{lem:perturb}
Let $X,P\in\mathrm{M}_d(\mathbb{C})_{\rm sa}$ and $t>0$. Then
$$
	\frac{\lambda_{\rm max}(X)-\lambda_{\rm max}(X-tP)}{t}
	\le
	\langle v_{\rm max}(X),P v_{\rm max}(X)\rangle 
	\le
	\frac{\lambda_{\rm max}(X+tP)-\lambda_{\rm max}(X)}{t}
$$
for any unit norm eigenvector $v_{\rm max}(X)$ of $X$ with eigenvalue
$\lambda_{\rm max}(X)$.
\end{lem}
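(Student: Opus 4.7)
The plan is to derive both inequalities as direct consequences of the variational characterization of the largest eigenvalue, namely $\lambda_{\rm max}(Y)=\sup_{\|w\|=1}\langle w,Yw\rangle$ for any self-adjoint matrix $Y\in\mathrm{M}_d(\mathbb{C})_{\rm sa}$. This immediately yields the test-vector inequality $\lambda_{\rm max}(Y)\ge \langle w,Yw\rangle$ for every unit vector $w$, which is the only tool needed.

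First I would establish the upper bound. Choosing as test vector the unit eigenvector $w=v_{\rm max}(X)$ and applying the above to $Y=X+tP$ gives
$$
\lambda_{\rm max}(X+tP)\ge \langle v_{\rm max}(X),(X+tP)v_{\rm max}(X)\rangle
= \lambda_{\rm max}(X)+t\langle v_{\rm max}(X),Pv_{\rm max}(X)\rangle,
$$
where we used $Xv_{\rm max}(X)=\lambda_{\rm max}(X)v_{\rm max}(X)$ and $\|v_{\rm max}(X)\|=1$. Rearranging and dividing by $t>0$ yields the right-hand inequality of the lemma.

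Next I would handle the lower bound by the same device, now applied to $Y=X-tP$ with the same test vector:
$$
\lambda_{\rm max}(X-tP)\ge \langle v_{\rm max}(X),(X-tP)v_{\rm max}(X)\rangle
= \lambda_{\rm max}(X)-t\langle v_{\rm max}(X),Pv_{\rm max}(X)\rangle.
$$
Rearranging and dividing by $t>0$ gives the left-hand inequality.

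There is no real obstacle: the argument is simply the one-sided test-vector bound for $\lambda_{\rm max}$, used twice. Conceptually, the statement is the finite-difference version of $\frac{d}{ds}\lambda_{\rm max}(X+sP)\big|_{s=0}=\langle v_{\rm max}(X),Pv_{\rm max}(X)\rangle$ (valid when $\lambda_{\rm max}(X)$ is simple), combined with the convexity of $s\mapsto\lambda_{\rm max}(X+sP)$, which forces the chord slopes at $s=\pm t$ to sandwich this directional derivative.
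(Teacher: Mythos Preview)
Your proof is correct and matches the paper's approach essentially line for line: both use the variational characterization $\lambda_{\rm max}(Y)=\sup_{\|w\|=1}\langle w,Yw\rangle$ with the test vector $v_{\rm max}(X)$, applied to $Y=X+tP$ for the upper bound and then with $t\leftarrow -t$ (equivalently, $Y=X-tP$) for the lower bound. Your closing remark about convexity and the finite-difference interpretation is a nice addition not present in the paper.
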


\begin{proof}
To prove the upper bound, note that we obtain
\begin{align*}
	\lambda_{\rm max}(X+tP)-\lambda_{\rm max}(X) &=
	\sup_{\|v\|=1}\langle v,(X+tP)v\rangle -
	\langle v_{\rm max}(X),X v_{\rm max}(X)\rangle \\ &\ge
	t\langle v_{\rm max}(X),Pv_{\rm max}(X)\rangle
\end{align*}
by choosing $v\leftarrow v_{\rm max}(X)$ in the supremum. The
lower bound follows immediately if we replace $t\leftarrow {-t}$ in the 
above inequality.
\end{proof}

To exploit these inequalities in the setting of Theorem \ref{thm:bbp},
we must estimate the bounded differences of the function 
$\mathrm{B}(\cdot)$.

\begin{lem}
\label{lem:Bderiv}
For any $t>0$, we have
$$
	\frac{\mathrm{B}(\theta+t)-\mathrm{B}(\theta)}{t} \le
	\bigg(1-\frac{1}{\theta^2}\bigg)_+
	+
	t,\qquad
	\frac{\mathrm{B}(\theta)-\mathrm{B}(\theta-t)}{t} \ge
	\bigg(1-\frac{1}{\theta^2}\bigg)_+
	- t.
$$
\end{lem}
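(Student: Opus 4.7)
I would reduce both inequalities to the single Taylor-type estimate
\[
\big|\mathrm{B}(\theta \pm t) - \mathrm{B}(\theta) \mp t\,(1 - 1/\theta^2)_+\big| \le t^2
\qquad \text{for all } \theta\in\mathbb{R},\ t>0,
\]
from which the two claimed bounds follow immediately by dividing through by $t>0$ and rearranging (for the lower bound one rewrites $\mathrm{B}(\theta)-\mathrm{B}(\theta-t)-t(1-1/\theta^2)_+\ge -t^2$). Thus the entire problem is reduced to a short one-variable calculus estimate on $\mathrm{B}$.

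The estimate rests on two observations. First, $\mathrm{B}$ is of class $C^1$ on all of $\mathbb{R}$ with derivative $\mathrm{B}'(\theta)=(1-1/\theta^2)_+$: on $(-\infty,1]$ the function is the constant $2$, on $(1,\infty)$ it equals $\theta+1/\theta$, and the two pieces agree in value and first derivative at $\theta=1$ (both are $2$ and $0$ respectively). Second, $\mathrm{B}'$ is $2$-Lipschitz on $\mathbb{R}$. On $(1,\infty)$ one has $\mathrm{B}''(\theta)=2/\theta^3\le 2$; on $(-\infty,1)$ the derivative vanishes; and for $\alpha\le 1<\beta$ we check directly
\[
|\mathrm{B}'(\beta)-\mathrm{B}'(\alpha)|=\frac{(\beta-1)(\beta+1)}{\beta^2}\le 2(\beta-1)\le 2|\beta-\alpha|.
\]

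Granting these two facts, the fundamental theorem of calculus gives
\[
\mathrm{B}(\theta\pm t)-\mathrm{B}(\theta)\mp t\,\mathrm{B}'(\theta)
= \pm\int_0^t\bigl[\mathrm{B}'(\theta\pm s)-\mathrm{B}'(\theta)\bigr]\,ds,
\]
and the $2$-Lipschitz property bounds the integrand in absolute value by $2s$, so the whole expression is at most $\int_0^t 2s\,ds=t^2$ in absolute value. This yields the displayed Taylor-type estimate and hence the lemma. There is no real obstacle here: once one notices that $\mathrm{B}$ is $C^1$ with a $2$-Lipschitz derivative equal to $(1-1/\theta^2)_+$, the rest is a two-line calculation. (Alternatively one could perform a short explicit case analysis over the three regimes $\theta+t\le 1$, $\theta\le 1<\theta+t$, and $1<\theta$, and likewise for the $\theta-t$ bound, but the uniform-Lipschitz approach is cleaner.)
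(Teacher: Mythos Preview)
Your proof is correct and takes essentially the same approach as the paper: the paper computes $\mathrm{B}'(\theta)=(1-1/\theta^2)_+$ and $\mathrm{B}''(\theta)=\tfrac{2}{\theta^3}1_{\theta\ge 1}\le 2$, then writes the second-order Taylor formula with integral remainder and bounds the integrand by $2$. Your version, bounding the first-order remainder via the $2$-Lipschitz property of $\mathrm{B}'$, is equivalent; if anything, your explicit check of the Lipschitz estimate across the junction $\theta=1$ is slightly more careful than the paper's treatment of $\mathrm{B}''$ there.
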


\begin{proof}
We readily compute
$$
	\frac{d \mathrm{B}(\theta)}{d\theta} =
	\bigg(1-\frac{1}{\theta^2}\bigg)_+,\qquad\quad
	\frac{d^2 \mathrm{B}(\theta)}{d\theta^2} =
	\frac{2}{\theta^3}1_{\theta\ge 1} \le 2.
$$
Taylor expanding to first order yields
$$
	\frac{\mathrm{B}(\theta+t) -
	\mathrm{B}(\theta)}{t} =
	\bigg(1-\frac{1}{\theta^2}\bigg)_+
	+
	t \int_0^1 
	\frac{2}{(\theta+rt)^3}1_{\theta+rt\ge 1}\,(1-r)\,dr.
$$
The upper bound in the statement follows using 
$\frac{2}{(\theta+rt)^3}1_{\theta+rt\ge 1}\le 2$. The lower bound follows 
by the identical argument once we replace $t\leftarrow -t$.
\end{proof}

We can now complete the proof.

\begin{proof}[Proof of Theorem \ref{thm:eigenvec}]
We begin by writing
$$
	\frac{\lambda_{\rm max}(X)-\lambda_{\rm max}(X_{-t})}{t}
	\le
	\langle v_{\rm max}(X),1_{(\theta-\delta,\theta]}(\EE X)
		v_{\rm max}(X)\rangle 
	\le
	\frac{\lambda_{\rm max}(X_t)-\lambda_{\rm max}(X)}{t}
$$
using Lemma \ref{lem:perturb}.
If in addition
$$
	|\lambda_{\rm max}(X_s)
	-\mathrm{B}(\lambda_{\rm max}(\EE X_s))| \le \varepsilon
	\text{ for }s\in\{0,\pm t\},
$$
Lemma \ref{lem:Bderiv} yields
$$	
	\bigg|
	\langle v_{\rm max}(X),1_{(\theta-\delta,\theta]}(\EE X)
		v_{\rm max}(X)\rangle 
	-
	\bigg(1-\frac{1}{\theta^2}\bigg)_+
	\bigg|
	\le
	t+\frac{2\varepsilon}{t},
$$
where we used that $\lambda_{\rm max}(\EE X_t) = \theta+t$ and 
$\lambda_{\rm max}(\EE X_{-t}) = \theta-t$ (because $t\le\delta$).
It remains to note that the above condition holds with high probability
$$
	\mathbf{P}\big[
	|\lambda_{\rm max}(X_s)
	-\mathrm{B}(\lambda_{\rm max}(\EE X_s))|
	\le \varepsilon\text{ for }s\in\{0,\pm t\}\big]\ge 1-3\rho
$$
by the union bound, concluding the proof.
\end{proof}

\section{Phase transitions: anisotropic case}
\label{sec:pfaniso}

The aim of this section is to prove Theorem \ref{thm:krz}. The 
proof consists of several distinct parts. In section \ref{sec:reduction},
 we apply a general reduction principle to reduce the dimension of the 
Lehner variational formula \eqref{eq:lehner}. In section 
\ref{sec:anisoapprox}, we approximate the Lehner formula for $X_{\rm 
free},X_{\varnothing,\rm free}$ by simplified parameters 
$\lambda,\lambda_\varnothing$ using the low-rank structure of the model. 
We also obtain the quantitative bound on $\lambda_\varnothing$. We 
subsequently prove the phase transition of $\lambda$ in section 
\ref{sec:anisophase}.

\begin{notn}
The following notations will be used primarily in this section.
For any vector $v\in\mathbb{C}^d$ and 
matrix $M\in\mathrm{M}_d(\mathbb{C})$, we will denote
$$
	\frac{1}{v} := \begin{bmatrix} v_1^{-1} \\ \vdots \\ v_d^{-1}
	\end{bmatrix},\qquad
        \diag(v) := \begin{bmatrix}
        v_1      &        &      \\
                 & \ddots &      \\
                 &        & v_d
        \end{bmatrix},\qquad
        \diaginv(M) :=
        \begin{bmatrix} M_{11} \\ \vdots \\ M_{dd}
        \end{bmatrix}.
$$
We will denote by $\I_{C_k}\in\mathbb{C}^d$ the 
indicator $(\I_{C_k})_i = 1_{i\in C_k}$ and by 
$\II_{C_k}:=\diag(\I_{C_k})$.
The elementwise (Hadamard) product of vectors or matrices is denoted
as $\odot$.
\end{notn}

\subsection{Reduction}
\label{sec:reduction}

\subsubsection{A general reduction principle}

The Lehner formula \eqref{eq:lehner} is a minimization problem over 
$d\times d$ matrices. However, one can often reduce the dimension of the 
variational problem in models with invariant structure. The following 
general reduction principle greatly facilitates the analysis of such 
models.

\begin{lem}[Reduction principle]
\label{lem:reduction}
Let $X$ be any $d\times d$ self-adjoint random matrix and let
$\mathcal{A}$ be any $*$-subalgebra of
$\mathrm{M}_d(\mathbb{C})$. Suppose that 
$$
	\EE X\in\mathcal{A},
	\qquad\quad
	\EE[(X-\EE X)M(X-\EE X)]\in\mathcal{A}
	~\text{ for all }~
	M\in\mathcal{A}.
$$
Then we have
$$
	\lambda_{\rm max}(X_{\rm free}) = 
	\inf_{M\in\mathcal{A}:M>0} 
	\lambda_{\rm max}\big(
	\mathbf{E} X + 
	M^{-1} + 
	\mathbf{E}[(X-\mathbf{E}X)M(X-\mathbf{E}X)]
	\big).	
$$
\end{lem}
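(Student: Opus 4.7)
Denote $\Psi(M) := \EE X + M^{-1} + T(M)$ where $T(M) := \EE[(X-\EE X)M(X-\EE X)]$. Since any strictly positive $M \in \mathcal{A}$ is also a strictly positive element of $\M_d(\mathbb{C})$, the Lehner formula \eqref{eq:lehner} immediately gives
$$\inf_{M \in \mathcal{A}, M > 0} \lambda_{\rm max}(\Psi(M)) \ge \lambda_{\rm max}(X_{\rm free}).$$
The plan is to prove the reverse inequality by showing that for every $M > 0$ in $\M_d(\mathbb{C})$ there exists $\tilde M \in \mathcal{A}$ with $\tilde M > 0$ satisfying $\lambda_{\rm max}(\Psi(\tilde M)) \le \lambda_{\rm max}(\Psi(M))$; taking the infimum over $M$ and invoking \eqref{eq:lehner} again then closes the loop.

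The natural candidate is $\tilde M := E(M)$, where $E : \M_d(\mathbb{C}) \to \mathcal{A}$ is the trace-preserving conditional expectation, which coincides with the orthogonal projection onto $\mathcal{A}$ in the Hilbert-Schmidt inner product $\langle A,B\rangle := \tr(A^*B)$. Then $E$ is unital, completely positive, and self-adjoint in this inner product. Note that $E(\EE X) = \EE X$ by the hypothesis $\EE X \in \mathcal{A}$, and that $M \ge \varepsilon\id$ for some $\varepsilon > 0$ implies $\tilde M = E(M) \ge \varepsilon E(\id) = \varepsilon\id > 0$.

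The crucial step, and the one place where the second hypothesis of the lemma enters, is to show that $T$ commutes with $E$. Since $X - \EE X$ is self-adjoint, cyclicity of the trace yields
$$\tr(T(M)N) = \EE\,\tr\bigl((X-\EE X)M(X-\EE X)N\bigr) = \tr(M\,T(N))$$
for every $M,N \in \M_d(\mathbb{C})$, so $T$ is self-adjoint in the Hilbert-Schmidt inner product. Combining this with the hypothesis $T(\mathcal{A}) \subseteq \mathcal{A}$ and duality gives $T(\mathcal{A}^{\perp}) \subseteq \mathcal{A}^{\perp}$, so $T$ preserves the orthogonal decomposition $\M_d(\mathbb{C}) = \mathcal{A} \oplus \mathcal{A}^{\perp}$; this is equivalent to $T \circ E = E \circ T$. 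This is the main obstacle, as the remaining steps use no specific structure of $T$ at all.

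The conclusion then assembles quickly. The Hansen-Pedersen operator Jensen inequality applied to the operator convex function $x \mapsto x^{-1}$ and the unital CP map $E$ gives $E(M)^{-1} \le E(M^{-1})$. Combined with $T(\tilde M) = T(E(M)) = E(T(M))$ and $E(\EE X) = \EE X$, this yields
$$\Psi(\tilde M) \le \EE X + E(M^{-1}) + E(T(M)) = E(\Psi(M)).$$
Since $\Psi(M) \le \lambda_{\rm max}(\Psi(M))\,\id$ and $E$ is unital and positive, one has $E(\Psi(M)) \le \lambda_{\rm max}(\Psi(M))\,\id$, so $\lambda_{\rm max}(\Psi(\tilde M)) \le \lambda_{\rm max}(\Psi(M))$, as required.
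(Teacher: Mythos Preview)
Your proof is correct and follows essentially the same approach as the paper: both use the trace-preserving conditional expectation $E=\pi$ onto $\mathcal{A}$, invoke the operator Jensen inequality $E(M)^{-1}\le E(M^{-1})$, and establish $E\circ T=T\circ E$ via the Hilbert--Schmidt self-adjointness of $T$ together with $T(\mathcal{A})\subseteq\mathcal{A}$. The only cosmetic difference is that the paper phrases the final step as ``contractivity'' of $\pi$ rather than your $\Psi(M)\le\lambda_{\max}(\Psi(M))\id$ argument, but these are equivalent.
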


\begin{proof}
That $\lambda_{\rm max}(X_{\rm free})$ is upper bounded by the expression
in the statement is obvious from \eqref{eq:lehner}. It remains 
to prove the corresponding lower 
bound. To this end, let $\pi:\mathrm{M}_d(\mathbb{C})\to\mathcal{A}$ be 
the conditional
expectation given $\mathcal{A}$ (cf.\ \cite[\S 4.3]{Car10}).
As conditional expectations are monotone,
any $M\in\mathrm{M}_d(\mathbb{C})$ with $M>0$ satisfies
\begin{multline*}
	\lambda_{\rm max}\big(
	\mathbf{E} X + 
	M^{-1} + 
	\mathbf{E}[(X-\mathbf{E}X)M(X-\mathbf{E}X)]
	\big)
	\ge \\
	\lambda_{\rm max}\big(
	\mathbf{E} X + 
	\pi(M)^{-1} + 
	\pi(\mathbf{E}[(X-\mathbf{E}X)M(X-\mathbf{E}X)])
	\big),
\end{multline*}
where we used $\EE X\in\mathcal{A}$ and that
$\pi(M^{-1})\ge \pi(M)^{-1}$ by
\cite[Theorem 4.16]{Car10}.

We now claim that
$$
	\pi(\mathbf{E}[(X-\mathbf{E}X)M(X-\mathbf{E}X)]) =
	\mathbf{E}[(X-\mathbf{E}X)\pi(M)(X-\mathbf{E}X)].
$$
Indeed, note that $\sigma:M\mapsto 
\mathbf{E}[(X-\mathbf{E}X)M(X-\mathbf{E}X)]$ is a self-adjoint linear map 
on $\mathrm{M}_d(\mathbb{C})$ with respect to the Hilbert-Schmidt inner 
product. As we assumed $\sigma$ leaves $\mathcal{A}$ invariant and as it 
is self-adjoint, it leaves $\mathcal{A}^\perp$ invariant as well. Thus the 
claimed identity follows by writing $M=\pi(M)+M^\perp$ with 
$M^\perp\in\mathcal{A}^\perp$.

Combining the above observations with \eqref{eq:lehner} yields
$$
	\lambda_{\rm max}(X_{\rm free}) \ge
	\inf_{M>0}
	\lambda_{\rm max}\big(
	\mathbf{E} X + 
	\pi(M)^{-1} + 
	\mathbf{E}[(X-\mathbf{E}X)\pi(M)(X-\mathbf{E}X)]
	\big),
$$
and the conclusion follows immediately.
\end{proof}

\subsubsection{The invariant algebra}

From now on we assume that 
$X,X_{\varnothing}$ are defined according to the model in section 
\ref{sec:krzdefn}.
The first 
step 
in our analysis will be to introduce a specific invariant $*$-algebra 
$\mathcal{A}$ for this model, to which Lemma \ref{lem:reduction} can be 
applied.
To this end,
define $f_k\in\mathbb{C}^d$ and $P_k\in\mathrm{M}_d(\mathbb{C})$ as
$$
	f_k := \frac{z\odot\I_{C_k}}{\sqrt{|C_k|}},\qquad\qquad
	P_k := \II_{C_k}-f_kf_k^*.
$$
The assumptions of section \ref{sec:krzdefn}
imply that $f_1,\ldots,f_q$ are orthonormal, $P_1,\ldots,P_q$
are orthogonal projections onto nontrivial (as $|C_k|>1$) orthogonal 
subspaces, and $P_1+\cdots+P_q$ is the orthogonal projection onto
$\{f_k:k\in[q]\}^\perp$.

\begin{defn}
Define the $*$-subalgebra
$$
	\mathcal{A} := \{\mathrm{A}(M,v):M\in\mathrm{M}_q(\mathbb{C}),~
	v\in\mathbb{C}^q\}
$$
of $\mathrm{M}_d(\mathbb{C})$, where
$$
	\mathrm{A}(M,v):=
	\sum_{k,l=1}^q M_{kl} f_kf_l^* +
	\sum_{k=1}^q v_k P_k.
$$
\end{defn}

\begin{rem}
Note that $\mathrm{A}(M,v)\simeq
M\oplus v_1\id_{|C_1|-1}\oplus\cdots\oplus v_q\id_{|C_q|-1}$, so that we 
have
$\lambda_{\rm max}(\mathrm{A}(M,v))=\max\{\lambda_{\rm max}(M),\max_i 
v_i\}$. This will be used repeatedly below.
\end{rem}

The following two lemmas show that $\mathcal{A}$ satisfies the assumptions 
of Lemma \ref{lem:reduction}.

\begin{lem}
\label{lem:meanA}
$
	\EE X = \mathrm{A}\big(
	\diag(c)^{\frac{1}{2}}B\diag(c)^{\frac{1}{2}} -
	\diag(Bc),
	-Bc
	\big)
	\in\mathcal{A}.
$
\end{lem}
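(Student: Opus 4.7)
The plan is to prove this identity by a direct entrywise computation, unfolding both sides and matching them block-by-block. From the model definition \eqref{eq:krz}, we have $\EE X = \frac{1}{d}\diag(z)\mathbf{B}\diag(z) - \diag(\frac{1}{d}\mathbf{B}1_d)$. For $i\in C_k, j\in C_l$ the first term contributes $\frac{1}{d}z_i z_j B_{kl}$, since $\mathbf{B}_{ij}=B_{kl}$ by the block definition of $\mathbf{B}$. For the diagonal term, the identity $\sum_{j\in C_l}\mathbf{B}_{ij}=|C_l|B_{kl}=dc_l B_{kl}$ for $i\in C_k$ gives $(\frac{1}{d}\mathbf{B}1_d)_i=\sum_{l}c_l B_{kl}=(Bc)_k$, so the second term is $-(Bc)_k\delta_{ij}$ when $i\in C_k$.

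On the other side, I would expand $\mathrm{A}(M,v)$ entrywise using $(f_kf_l^*)_{ij}=\frac{z_i z_j}{\sqrt{|C_k||C_l|}}1_{i\in C_k,j\in C_l}$ and $(P_k)_{ij}=(1_{i=j}-\frac{z_iz_j}{|C_k|})1_{i,j\in C_k}$. For $i\in C_k, j\in C_l$ with $k\ne l$, only the term $M_{kl}f_kf_l^*$ contributes, and using $M_{kl}=\sqrt{c_kc_l}\,B_{kl}$ together with $\sqrt{|C_k||C_l|}=d\sqrt{c_kc_l}$, one gets precisely $\frac{z_iz_jB_{kl}}{d}$, matching $(\EE X)_{ij}$.

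The only subtle case is $k=l$: for $i,j\in C_k$, both $M_{kk}f_kf_k^*$ and $v_kP_k$ contribute. Using $M_{kk}=c_kB_{kk}-(Bc)_k$ and $v_k=-(Bc)_k$, the coefficient of $f_kf_k^*$ from both sources combines to $M_{kk}-v_k=c_kB_{kk}$, producing $\frac{c_kB_{kk}z_iz_j}{|C_k|}=\frac{z_iz_jB_{kk}}{d}$, while $v_k\II_{C_k}$ yields the diagonal contribution $-(Bc)_k\delta_{ij}$. This matches $(\EE X)_{ij}$ in the same-block case. The cancellation of the spurious $\frac{z_iz_j(Bc)_k}{dc_k}$ pieces coming separately from $M_{kk}$ and $v_k$ is the only thing to watch, and it happens automatically with the stated choice of $v=-Bc$. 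This is the closest thing to an obstacle, but it is a routine verification rather than a genuine difficulty. Self-adjointness of $M=\diag(c)^{1/2}B\diag(c)^{1/2}-\diag(Bc)$ is immediate from $B=B^*$, so $\mathrm{A}(M,v)\in\mathcal{A}$ is automatic.
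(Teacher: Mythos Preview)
Your proof is correct and follows essentially the same approach as the paper. The paper's version is slightly more compact: it works at the operator level by writing $\mathbf{B}=\sum_{k,l}B_{kl}\I_{C_k}\I_{C_l}^*$, observing that $\diag(z)\I_{C_k}=\sqrt{|C_k|}\,f_k$, and then using $\II_{C_k}=P_k+f_kf_k^*$ to split the diagonal part, whereas you carry out the same computation entrywise.
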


\begin{proof}
Note that $\mathbf{B}=\sum_{k,l=1}^q B_{kl} \I_{C_k}\I_{C_l}^*$, so
$$
	\frac{1}{d}\diag(z)\mathbf{B}\diag(z)=
	\sum_{k,l=1}^q \sqrt{c_k}B_{kl}\sqrt{c_l}\, f_kf_l^*,
	\qquad
	\frac{1}{d}\mathbf{B}1_d =
	\sum_{k,l=1}^q B_{kl}c_l\, \I_{C_k}.
$$
The conclusion follows using $\II_{C_k} = P_k + f_kf_k^*$.
\end{proof}

\begin{lem}
\label{lem:varA}
$\mathbf{E}[(X-\EE X)A(X-\EE X)]\in\mathcal{A}$ for every
$A\in\mathcal{A}$. More precisely,
\begin{align*}
	\mathbf{E}[(X-&\EE X)\,\mathrm{A}(M,v)\,(X-\EE X)]
	=\\
	\mathrm{A}\bigg(
	&\diag\Big(
	B\Big(c\odot v + \frac{1}{d}(\diaginv(M)-v)
	\Big)\Big) +
	\frac{1}{d}\,B\odot M^T,\\
	&
	B\Big(c\odot v + \frac{1}{d}(\diaginv(M)-v)
        \Big) + \frac{1}{d}\,v\odot\diaginv(B)
	\bigg)
\end{align*}
for all $M\in\mathrm{M}_q(\mathbb{C})$ and $v\in\mathbb{C}^q$, 
where $M^T$ denotes the transpose of $M$.
\end{lem}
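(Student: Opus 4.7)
The plan is to compute $\EE[GAG]$ for $A=\mathrm{A}(M,v)$ and $G := X-\EE X$ directly from the variance structure of $G$. Since $G$ is real symmetric with independent entries $G_{ij}$ ($i\ge j$) satisfying $\EE[G_{ij}^2]=\frac{1+\delta_{ij}}{d}\mathbf{B}_{ij}$, the pair correlation $\EE[G_{ik}G_{lj}]$ is nonzero exactly when $\{i,k\}=\{l,j\}$. Enumerating the two ways this can happen, namely $(i=l,k=j)$ (contributing off-diagonal entries) and $(i=j,k=l)$ (contributing diagonal entries), and folding in the diagonal correction $1+\delta_{ij}$, one obtains for any $N\in\M_d(\mathbb{C})$ the general formula
\[
\EE[GNG]=\tfrac{1}{d}\,\mathbf{B}\odot N^T+\tfrac{1}{d}\diag(\mathbf{B}\diaginv N).
\]
The $1+\delta_{ij}$ edge weight is precisely what is needed for this to hold uniformly in $i,j$.

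The next step is to specialize to $N=\mathrm{A}(M,v)$ and simplify both terms. For the Hadamard term, observe that because $z$ is real, the vectors $f_k$ and matrices $P_k$ are real, hence $\mathrm{A}(M,v)^T=\mathrm{A}(M^T,v)$. Expanding $\mathbf{B}=\sum_{k,l}B_{kl}\I_{C_k}\I_{C_l}^*$ and using that $f_p$ is supported in $C_p$, one checks termwise that $\I_{C_k}\I_{C_l}^*\odot f_p f_q^*=\delta_{kp}\delta_{lq}f_pf_q^*$ and $\I_{C_k}\I_{C_l}^*\odot P_p=\delta_{kp}\delta_{lp}P_p$, which combine to give
\[
\mathbf{B}\odot\mathrm{A}(M,v)^T=\mathrm{A}\bigl(B\odot M^T,\,v\odot\diaginv B\bigr).
\]

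For the diagonal term, I would compute $\mathrm{A}(M,v)_{ii}=v_k+\frac{z_i^2}{|C_k|}(M_{kk}-v_k)$ for $i\in C_k$, and then sum over $i\in C_l$ using the normalization $\sum_{i\in C_l}z_i^2=|C_l|$ to obtain $\sum_{i\in C_l}\mathrm{A}(M,v)_{ii}=|C_l|v_l+M_{ll}-v_l$. This gives, for $i\in C_k$,
\[
(\mathbf{B}\diaginv\mathrm{A}(M,v))_i=d\bigl(B(c\odot v)\bigr)_k+\bigl(B(\diaginv M-v)\bigr)_k,
\]
so that setting $w:=B\bigl(c\odot v+\frac{1}{d}(\diaginv M-v)\bigr)$ and using $\II_{C_k}=f_kf_k^*+P_k$ produces $\frac{1}{d}\diag(\mathbf{B}\diaginv\mathrm{A}(M,v))=\mathrm{A}(\diag w,w)$. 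Adding this to $\frac{1}{d}$ times the Hadamard term and invoking the bilinearity $\mathrm{A}(M_1,v_1)+\mathrm{A}(M_2,v_2)=\mathrm{A}(M_1+M_2,v_1+v_2)$ then reproduces the claimed formula verbatim.

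This is an explicit computation, and the statement will come out once the bookkeeping is done carefully. The only genuine subtlety is the interplay between the bulk contribution $c\odot v$ and the diagonal defect $\frac{1}{d}(\diaginv M-v)$: these arise respectively from the constant term $v_l$ and from the $M_{ll}$-dependent correction in $\mathrm{A}(M,v)_{ii}$, and it is the $1+\delta_{ij}$ variance correction together with the identity $\sum_{i\in C_l}z_i^2=|C_l|$ that conspire to make the final formula assemble into the asymmetric combination of $v$ and $M$ displayed in the lemma.
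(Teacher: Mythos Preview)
Your proposal is correct and follows essentially the same approach as the paper's proof: both derive the general identity $\EE[GNG]=\tfrac{1}{d}\mathbf{B}\odot N^T+\tfrac{1}{d}\diag(\mathbf{B}\diaginv N)$ from the covariance structure of $G$, then specialize to $N=\mathrm{A}(M,v)$ and simplify the Hadamard and diagonal pieces separately using $\II_{C_k}=f_kf_k^*+P_k$. You have simply filled in a bit more detail in the intermediate computations (notably the explicit formula for $\mathrm{A}(M,v)_{ii}$ and the role of $\sum_{i\in C_l}z_i^2=|C_l|$), but the structure and key identities are identical to the paper's.
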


\begin{proof}
Recall that $X-\EE X=G$ (cf.\ section \ref{sec:krzdefn}).
For any $A\in\mathrm{M}_d(\mathbb{C})$, we compute
\begin{align*}
	\EE[GAG]_{kl} &= 
	\sum_{r,s} A_{rs}\,\EE[G_{kr}G_{sl}] =
	\frac{1}{d} \sum_{r,s} \mathbf{B}_{kr}A_{rs}
	(1_{k=s,r=l}+1_{k=l,r=s}) \\
	&=
	\Big(
	\frac{1}{d}\,\mathbf{B}
	\odot A^T +
	\frac{1}{d}\diag(\mathbf{B}(\diaginv(A)) 
	\Big)_{kl}.
\end{align*}
We readily compute
$\mathbf{B}\odot \mathrm{A}(M,v)^T =
\mathrm{A}(B\odot M^T,v\odot \diaginv(B))$, 
while
$$
	\frac{1}{d}\diag(\mathbf{B}(\diaginv(\mathrm{A}(M,v)))
        =
	\sum_{k} 
	\Big[
	B\Big(c\odot v +
	\frac{1}{d}
	(\diaginv(M)-v)
	\Big)\Big]_k
	\II_{C_k}
$$
using $\mathbf{B}=\sum_{k,l} B_{kl} \I_{C_k}\I_{C_l}^*$.
The result follows as $\II_{C_k} = P_k + f_kf_k^*$.
\end{proof}

\subsection{The simplified parameters}
\label{sec:anisoapprox}

We now aim to approximate $\lambda_{\rm max}(X_{\rm free})$ and 
$\lambda_{\rm max}(X_{\varnothing,\rm free})$ by simplified parameters 
$\lambda,\lambda_\varnothing$: we will use the reduction principle of the 
previous section to reduce the variational principle \eqref{eq:lehner} for 
$d$-dimensional matrices to a variational principle for $q$-dimensional 
vectors, and we will eliminate all the terms of order $\frac{1}{d}$ in 
Lemmas \ref{lem:meanA} and \ref{lem:varA}. We first consider $\lambda$.

\begin{prop}
\label{prop:lambda}
Define
\begin{align*}
	\lambda :=
	\inf_{v>0} \max\big\{
	&
	\lambda_{\rm max}\big(
		\diag(c)^{\frac{1}{2}}B\diag(c)^{\frac{1}{2}} +
		\diag\big(B\diag(c)(v-1_q)\big)
	\big), \\
	&
	\lambda_{\rm max}\big(
		\diag(v)^{-1} + \diag\big(B\diag(c)(v-1_q)\big)
	\big)
	\big\}.
\end{align*}
Then
$$
	|\lambda_{\rm max}(X_{\rm free})-\lambda| \le
	\sqrt{\frac{8\|B1_q\|_{\infty}}{d}}.
$$
\end{prop}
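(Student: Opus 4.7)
The plan is to proceed in three stages: apply the reduction principle to turn the Lehner formula into a finite-dimensional problem over the algebra $\mathcal{A}$, supply the upper bound by a scalar choice of $M$, and establish the lower bound via a uniform estimate that must account for the Hadamard term $B \odot M^T$.

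First, I apply Lemma \ref{lem:reduction} to the invariant $*$-algebra $\mathcal{A}$, whose hypotheses are verified by Lemmas \ref{lem:meanA} and \ref{lem:varA}. Since $\mathrm{A}(M,v)^{-1} = \mathrm{A}(M^{-1}, 1/v)$ whenever $M > 0$ and $v > 0$, the reduction gives
\[
\lambda_{\rm max}(X_{\rm free}) = \inf_{M > 0,\, v > 0} \max\bigl\{\lambda_{\rm max}(M_1),\, \max_k (v_1)_k\bigr\},
\]
where, with $A_0 := \diag(c)^{\frac{1}{2}}B\diag(c)^{\frac{1}{2}}$ and $u(v) := B\diag(c)(v-1_q)$, Lemma \ref{lem:varA} yields $M_1 = A_0 + \diag(u(v)) + M^{-1} + \tfrac{1}{d}[\diag(B(\diaginv(M)-v)) + B\odot M^T]$ and an analogous expression for $v_1$. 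The two quantities $\lambda_{\rm max}(A_0 + \diag(u(v)))$ and $\max_k(1/v_k + u_k(v))$ appearing in the definition of $\lambda$ differ from $\lambda_{\rm max}(M_1)$ and $\max_k(v_1)_k$ by a term of order $M^{-1}$ together with corrections of order $\tfrac{1}{d}$, and all the analysis is about controlling these corrections.

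For the upper bound, I choose $M = \beta I_q$ with the balanced scale $\beta := \sqrt{d/(2\|B1_q\|_\infty)}$, which makes $M^{-1} = \beta^{-1} I_q$ vanish and keeps the $\tfrac{\beta}{d}$ terms small simultaneously. Using Weyl's inequality and the entry-wise estimate $\diaginv(B) \leq B1_q$ (valid since $B \geq 0$), the max is bounded by $\max\{\lambda_{\rm max}(A_0+\diag(u(v))),\, \max_k(1/v_k+u_k(v))\} + \beta^{-1} + \tfrac{2\beta\|B1_q\|_\infty}{d}$, and the specific choice of $\beta$ makes the error exactly $\sqrt{8\|B1_q\|_\infty/d}$. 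Infimizing over $v$ yields $\lambda_{\rm max}(X_{\rm free}) \leq \lambda + \sqrt{8\|B1_q\|_\infty/d}$.

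For the lower bound, for arbitrary $M > 0$ and $v > 0$ I must show the max exceeds $\lambda - \sqrt{8\|B1_q\|_\infty/d}$. The easier half handles the second argument: from $\diaginv(M) \geq 0$ and $v \odot \diaginv(B) \geq 0$ componentwise, one obtains $(v_1)_k \geq 1/v_k + u_k(v) - (Bv)_k/d$, hence $\max_k(v_1)_k \geq \max_k(1/v_k+u_k(v)) - \|Bv\|_\infty/d$. The harder half uses Weyl to write $\lambda_{\rm max}(M_1) \geq \lambda_{\rm max}(A_0 + \diag(u(v))) + \lambda_{\rm min}(R)$ with $R := M^{-1} + \tfrac{1}{d}[\diag(B(\diaginv(M)-v)) + B\odot M^T]$. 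The chief obstacle is that $B \odot M^T$ need not be PSD even for $B \geq 0$; I plan to use the spectral decomposition $M = \sum_i \alpha_i u_iu_i^*$ together with the elementary bound $\langle y, By\rangle \geq -\|B\|\,\|y\|^2$ applied to $y = w \odot u_i$ to obtain the operator inequality $B \odot M^T \geq -\|B\|\,\diag(\diaginv(M))$, and Perron-Frobenius for symmetric nonnegative $B$ gives $\|B\| \leq \|B1_q\|_\infty$. An AM-GM balance of $\lambda_{\rm min}(M^{-1})$ against $\tfrac{\|B\|}{d}\|\diaginv(M)\|_\infty$ then controls $\lambda_{\rm min}(R)$ when $\|M\| \lesssim \sqrt{d/\|B1_q\|_\infty}$; in the complementary regime the positive diagonal contribution $\tfrac{1}{d}\diag(B\diaginv(M))$ itself forces $\lambda_{\rm max}(M_1)$ to exceed the required threshold via $\max_k (M_1)_{kk}$. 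Combining these two regimes, and restricting the infimum to $\|v\|_\infty$ of the natural scale (large $v$ already make $u(v)$ and hence both arguments of the $\max$ large), yields the matching lower bound, with the handling of $B \odot M^T$ being the technical heart that pins down the rate $\sqrt{\|B1_q\|_\infty/d}$.
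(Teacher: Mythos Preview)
Your upper bound is correct and matches the paper's: both restrict $M$ to a scalar $\gamma\id_q$, use the positivity of $B$ to bound the $\frac{1}{d}$-corrections by $\frac{2\gamma\|B1_q\|_\infty}{d}$, and optimize $\gamma^{-1}+\frac{2\gamma\|B1_q\|_\infty}{d}$.

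The lower bound, however, has a genuine gap. Your two pointwise estimates
\[
\max_k(v_1)_k \ge \max_k\bigl(\tfrac{1}{v_k}+u_k(v)\bigr) - \tfrac{\|Bv\|_\infty}{d},
\qquad
\lambda_{\rm max}(M_1)\ge \lambda_{\rm max}\bigl(A_0+\diag(u(v))\bigr)+\lambda_{\min}(R),
\]
both carry errors that depend on $v$: the first through $\|Bv\|_\infty/d$, and the second through the summand $-\tfrac{1}{d}\diag(Bv)$ inside $R$. Your case split on $\|M\|$ controls the $M$-dependence of $R$, but it does nothing for this $v$-dependence. The parenthetical remark that ``large $v$ already make $u(v)$ and hence both arguments of the max large'' is not enough: you would need a quantitative statement comparing those values to $\lambda$ with the precise error $\sqrt{8\|B1_q\|_\infty/d}$, and no mechanism for this is indicated. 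As written, for $v$ with $\|Bv\|_\infty \gg \sqrt{d\|B1_q\|_\infty}$ neither of your two inequalities is strong enough, and the argument does not close.

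The paper avoids this difficulty by reversing the direction: instead of lower-bounding the Lehner infimum pointwise in $(M,v)$, it upper-bounds $\lambda$ itself. Writing $B\diag(c)=B\diag(c-\tfrac1d 1_q)+\tfrac1d B$, one substitutes $v\leftarrow v\wedge w$ (elementwise minimum) in the definition of $\lambda$, then uses $\tfrac{1}{v\wedge w}\le \tfrac{1}{v}+\tfrac{1}{w}$ together with the monotonicity of $\mathrm{A}(\diag(B\cdot),B\cdot)$ in its argument to split
\[
\lambda \le \inf_{v>0}\lambda_{\rm max}\bigl(\EE X+\mathrm{A}(0,\tfrac1v)+\EE[G\,\mathrm{A}(0,v)\,G]\bigr)
+\inf_{w>0}\lambda_{\rm max}\bigl(\mathrm{A}(0,\tfrac1w)+\tfrac1d\mathrm{A}(\diag(Bw),Bw)\bigr).
\]
The first infimum is $\le\lambda_{\rm max}(X_{\rm free})$ because dropping the positive contributions $\mathrm{A}(M^{-1},0)$ and $\EE[G\,\mathrm{A}(M,0)\,G]$ only decreases the Lehner objective; the second is bounded by choosing $w=\sqrt{d/\|B1_q\|_\infty}\,1_q$. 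The $v\wedge w$ trick is exactly what makes the error $w$-only and hence uniformly optimizable --- this is the idea your argument is missing.
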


\begin{proof}
The Lehner formula \eqref{eq:lehner} and Lemmas \ref{lem:reduction},
\ref{lem:meanA}, and \ref{lem:varA} yield
$$
	\lambda_{\rm max}(X_{\rm free}) = 
	\inf_{M,v>0} 
	\lambda_{\rm max}\big(
	\mathbf{E} X + 
	\mathrm{A}(M^{-1},\tfrac{1}{v})
	+ 
	\mathbf{E}[G\,\mathrm{A}(M,v)\,G]
	\big)
$$
using $\mathrm{A}(M,v)^{-1}=\mathrm{A}(M^{-1},\tfrac{1}{v})$.
Moreover,
$$
	\lambda = \inf_{v>0}
	\lambda_{\rm max}\big(
	\mathbf{E} X + 
	\mathrm{A}(0,\tfrac{1}{v})
	+ 
	\mathrm{A}(\diag(B\diag(c)v),B\diag(c)v)
	\big)
$$
by Lemma \ref{lem:meanA}. We must upper and lower bound
$\lambda_{\rm max}(X_{\rm free})$ in terms of $\lambda$.

\medskip

\textbf{Upper bound.} We can read off from Lemma \ref{lem:varA} that
$$
	\EE[G\,\mathrm{A}(\id_q,0)\,G]
	\le \frac{2\|B1_q\|_\infty}{d},\quad
	\EE[G\,\mathrm{A}(0,v)\,G] \le
	\mathrm{A}(\diag(B\diag(c)v),B\diag(c)v)
$$
for $v>0$, where we used $v\odot\diaginv(B)-Bv\le 0$. Restricting the
infimum over $M$ in the variational principle for $\lambda_{\rm 
max}(X_{\rm free})$ to $M=\gamma\id_q$ for $\gamma>0$ yields
\begin{align*}
	\lambda_{\rm max}(X_{\rm free}) &\le
	\inf_{v>0} 
	\lambda_{\rm max}\big(
	\mathbf{E} X + 
	\mathrm{A}(0,\tfrac{1}{v})
	+ 
	\mathbf{E}[G\,\mathrm{A}(0,v)\,G]
	\big) + 
	\inf_{\gamma>0}
	\bigg(\frac{1}{\gamma} +
	\frac{2\gamma\|B1_q\|_\infty}{d}\bigg) \\
	&\le
	\lambda + \sqrt{\frac{8\|B1_q\|_\infty}{d}},
\end{align*}
where we used that $\mathrm{A}(M,v)=\mathrm{A}(M,0) + \mathrm{A}(0,v)$.

\medskip

\textbf{Lower bound.} We can read off from Lemma \ref{lem:varA} that
$$
	\EE[G\,\mathrm{A}(0,v)\,G] \ge
	\mathrm{A}(\diag(B\diag(c-\tfrac{1}{d}1_q)v),
	B\diag(c-\tfrac{1}{d}1_q)v)
$$
for $v>0$. Then we can estimate
\begin{align*}
	\lambda &=
	\inf_{v,w>0}\lambda_{\rm max}\big(
	\mathbf{E} X + 
	\mathrm{A}(0,\tfrac{1}{v\wedge w}) +
	\tfrac{1}{d}\mathrm{A}(\diag(B(v\wedge w)),B(v\wedge w))
\\
	&
	\phantom{\mbox{}=\inf_{v,w>0}\lambda_{\rm max}\big(\mathbf{E} X}
	+ 
	\mathrm{A}(\diag(B\diag(c-\tfrac{1}{d}1_q)(v\wedge w)),
			 B\diag(c-\tfrac{1}{d}1_q)(v\wedge w))
	\big) 
\\
	&\le
	\inf_{v,w>0}\lambda_{\rm max}\big(
	\mathbf{E} X + 
	\mathrm{A}(0,\tfrac{1}{v}) +
	\mathrm{A}(0,\tfrac{1}{w}) +
	\tfrac{1}{d}\mathrm{A}(\diag(Bw),Bw)
	+ \EE[G\,\mathrm{A}(0,v)\,G]\big)	
\\
	&\le
	\lambda_{\rm max}(X_{\rm free}) +
	\inf_{w>0}\lambda_{\rm max}\big(
	\mathrm{A}(0,\tfrac{1}{w})+\tfrac{1}{d}\mathrm{A}(\diag(Bw),Bw)
	\big),
\end{align*}
where $v\wedge w$ denotes the elementwise minimum, and
we used $\frac{1}{v\wedge w}\le \frac{1}{v}+\frac{1}{w}$. Choosing
$w\leftarrow \sqrt{d}\|B1_q\|_\infty^{-\frac{1}{2}}1_q$ on the last line 
concludes the proof.
\end{proof}

The parameter $\lambda_{\varnothing}$ arises in a completely analogous 
fashion.

\begin{prop}
\label{prop:lambda0}
Define
$$
	\lambda_\varnothing :=
	\inf_{v>0} 
	\lambda_{\rm max}\big(
		\diag(v)^{-1} + \diag\big(B\diag(c)(v-1_q)\big)
	\big).
$$
Then
$$
	|\lambda_{\rm max}(X_{\varnothing,\rm free})-\lambda_\varnothing| \le
	\sqrt{\frac{8\|B1_q\|_{\infty}}{d}}.
$$
\end{prop}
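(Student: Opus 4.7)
The plan is to adapt the proof of Proposition~\ref{prop:lambda} essentially line-by-line, exploiting the fact that $\EE X_\varnothing = \mathrm{A}(-\diag(Bc), -Bc)$ (read off from Lemma~\ref{lem:meanA}) lacks the rank-$q$ spike $\mathrm{A}(\diag(c)^{\frac{1}{2}}B\diag(c)^{\frac{1}{2}}, 0)$ that was present in $\EE X$, so every matrix part appearing in the variational principle stays diagonal. First I would apply the Lehner formula~\eqref{eq:lehner} together with the reduction principle of Lemma~\ref{lem:reduction} (whose hypotheses are supplied by Lemmas~\ref{lem:meanA} and~\ref{lem:varA}) and the identity $\mathrm{A}(M,v)^{-1} = \mathrm{A}(M^{-1}, \tfrac{1}{v})$ to obtain
\begin{equation*}
\lambda_{\rm max}(X_{\varnothing,\rm free}) = \inf_{M,v>0}\lambda_{\rm max}\bigl(\EE X_\varnothing + \mathrm{A}(M^{-1}, \tfrac{1}{v}) + \EE[G\,\mathrm{A}(M,v)\,G]\bigr).
\end{equation*}
Simultaneously, a short computation confirms that the $\lambda_\varnothing$ of the statement coincides with $\inf_{v>0}\lambda_{\rm max}(\EE X_\varnothing + \mathrm{A}(0, \tfrac{1}{v}) + \mathrm{A}(\diag(B\diag(c)v), B\diag(c)v))$, since its matrix part is diagonal and strictly dominated by its $v$-part.

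For the upper bound $\lambda_{\rm max}(X_{\varnothing,\rm free}) \le \lambda_\varnothing + \sqrt{8\|B1_q\|_\infty/d}$, I would restrict the infimum to $M = \gamma\id_q$. Lemma~\ref{lem:varA} gives $\|\EE[G\,\mathrm{A}(\gamma\id_q,0)\,G]\| \le 2\gamma\|B1_q\|_\infty/d$, while $\mathrm{A}(\gamma^{-1}\id_q, 0)$ contributes $\gamma^{-1}$ to the largest eigenvalue; optimizing over $\gamma$ yields the claimed error term. The remaining part matches the $\lambda_\varnothing$ expression via the elementary bound $\EE[G\,\mathrm{A}(0,v)\,G] \le \mathrm{A}(\diag(B\diag(c)v), B\diag(c)v)$, which follows from $v\odot\diaginv(B) \le Bv$ for $v>0$ (since the entries of $B$ are nonnegative).

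The main obstacle is the matching lower bound, because Lemma~\ref{lem:varA} only yields $\EE[G\,\mathrm{A}(0,v)\,G] \ge \mathrm{A}(\diag(B\diag(c-\tfrac{1}{d}1_q)v), B\diag(c-\tfrac{1}{d}1_q)v)$, and the resulting discrepancy $\tfrac{1}{d}\mathrm{A}(\diag(Bv), Bv)$ cannot be absorbed uniformly in $v$. To handle this I would borrow the $v\wedge w$ trick from the lower bound in Proposition~\ref{prop:lambda}: since $v\wedge w$ ranges over all positive vectors as $v,w>0$ vary, one can rewrite $\lambda_\varnothing = \inf_{v,w>0}\lambda_{\rm max}(\,\cdot\,(v\wedge w)\,\cdot\,)$, and then use $\tfrac{1}{v\wedge w} \le \tfrac{1}{v} + \tfrac{1}{w}$ together with the decomposition $B\diag(c)(v\wedge w) = B\diag(c-\tfrac{1}{d}1_q)(v\wedge w) + \tfrac{1}{d}B(v\wedge w)$ to peel off a $w$-dependent piece of the form $\mathrm{A}(0, \tfrac{1}{w}) + \tfrac{1}{d}\mathrm{A}(\diag(Bw), Bw)$, which is $O(1/\sqrt{d})$ upon choosing $w = \sqrt{d}\|B1_q\|_\infty^{-\frac{1}{2}}1_q$. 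The remaining $v$-piece is bounded above by $\lambda_{\rm max}(X_{\varnothing,\rm free})$ by operator monotonicity: for any $M>0$ one has $\mathrm{A}(M^{-1}, 0)\ge 0$ and $\EE[G\,\mathrm{A}(M,0)\,G]\ge 0$ (the latter because $\mathrm{A}(M,0)\ge 0$ and $A\mapsto \EE[GAG]$ preserves positivity), so the $v$-only infimum of the simpler expression is dominated by the full $(M,v)$-infimum defining $\lambda_{\rm max}(X_{\varnothing,\rm free})$. Combining the two directions yields the stated estimate $\sqrt{8\|B1_q\|_\infty/d}$.
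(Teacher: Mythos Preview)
Your proposal is correct and takes essentially the same approach as the paper, which simply states that the proof of Proposition~\ref{prop:lambda} carries over verbatim once $\EE X$ is replaced by $\EE X_\varnothing = \mathrm{A}(-\diag(Bc),-Bc)$. You have spelled out the details that the paper leaves implicit, including the observation that the matrix part becomes diagonal and is dominated by the vector part, and the positivity argument $\mathrm{A}(M^{-1},0)\ge 0$, $\EE[G\,\mathrm{A}(M,0)\,G]\ge 0$ that justifies bounding the $v$-only infimum by $\lambda_{\rm max}(X_{\varnothing,\rm free})$.
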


\begin{proof}
Note that the only difference between the definitions of $X$ and 
$X_\varnothing$ is that $\EE X$ is replaced by $\EE X_\varnothing = 
\mathrm{A}({-\diag(Bc)},{-Bc})\in\mathcal{A}$ in Lemma \ref{lem:meanA}. 
Thus the proof of Proposition \ref{prop:lambda} carries over verbatim to
the present setting.
\end{proof}

We can now prove the upper bound on $\lambda_\varnothing$ in Theorem 
\ref{thm:krz}. Recall that $b>0$ denotes the Perron-Frobenius (right) 
eigenvector of $B\diag(c)$.

\begin{lem}
\label{lem:lambda0est}
We have
$$
	\lambda_\varnothing \le 1-
	\frac{\min_i b_i}{\max_i b_i}
	\big(1-\lambda_{\rm 
	max}(\diag(c)^{\frac{1}{2}}B\diag(c)^{\frac{1}{2}})^{\frac{1}{2}}\big)^2.
$$
\end{lem}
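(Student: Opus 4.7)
}
The plan is to exhibit an explicit $v>0$ in the variational principle for $\lambda_\varnothing$ given in Proposition \ref{prop:lambda0} and compute the resulting diagonal matrix. Let $\mu := \lambda_{\rm max}(\diag(c)^{\frac{1}{2}}B\diag(c)^{\frac{1}{2}})^{\frac{1}{2}}$, and note that $\lambda_{\rm max}(B\diag(c)) = \mu^2$ since $B\diag(c)$ is similar to $\diag(c)^{\frac{1}{2}}B\diag(c)^{\frac{1}{2}}$. Thus the Perron-Frobenius eigenvector $b>0$ of $B\diag(c)$ satisfies $B\diag(c)\,b = \mu^2 b$. Writing $b_{\max}:=\max_i b_i$, $b_{\min}:=\min_i b_i$, and $\kappa=b_{\min}/b_{\max}$, I would try the ansatz
$$
	v = 1_q + \epsilon b,\qquad \epsilon = \frac{1-\mu}{\mu\, b_{\max}},
$$
which is well-defined as long as $\mu>0$ (the degenerate case $B=0$ is trivial). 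A brief check shows $v>0$: if $\mu\le 1$ then $\epsilon\ge 0$ and $v\ge 1_q$, while if $\mu>1$ then $\epsilon<0$ but $v_i \ge 1+\epsilon b_{\max} = 1/\mu > 0$.

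With this choice, $B\diag(c)(v-1_q) = \epsilon B\diag(c)\,b = \epsilon\mu^2 b$, so the diagonal matrix appearing in Proposition \ref{prop:lambda0} has $(i,i)$ entry
$$
	\frac{1}{1+\epsilon b_i} + \mu^2\epsilon b_i = g(\epsilon b_i),\qquad g(x) := \frac{1}{1+x}+\mu^2 x.
$$
Observe that $\epsilon b_i = t_i x^*$ where $t_i := b_i/b_{\max} \in [\kappa,1]$ and $x^*:=(1-\mu)/\mu$. A direct calculation shows $g(0)=1$ and $g(x^*) = \mu + \mu(1-\mu) = 1-(1-\mu)^2$. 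Moreover, $g$ is convex on $(-1,\infty)$ since $g''(x) = 2/(1+x)^3 > 0$.

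To finish, I would apply convexity on the segment joining $0$ and $x^*$:
$$
	g(\epsilon b_i) = g(t_i x^*) \le (1-t_i)\,g(0) + t_i\, g(x^*) = 1 - t_i (1-\mu)^2.
$$
Since the diagonal matrix in question has largest eigenvalue equal to the maximum of its diagonal entries, this yields
$$
	\lambda_\varnothing \le \max_i g(\epsilon b_i) \le 1 - \Big(\min_i t_i\Big)(1-\mu)^2 = 1 - \kappa(1-\mu)^2,
$$
which is the claimed inequality.

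The nontrivial step is identifying the correct ansatz; once $v=1_q+\epsilon b$ is in hand, the collapse to a scalar function $g$ of a single variable is immediate from $b$ being a Perron eigenvector, and everything else reduces to a one-line convexity inequality. The main obstacle is thus recognizing that the Perron-Frobenius direction is the right perturbation of $v=1_q$ to exploit in the Lehner-type variational principle, and that the optimization over $\epsilon$ should be designed so that the worst diagonal entry sits at the endpoint of the interval $[0,x^*]$ (or $[x^*,0]$ when $\mu>1$) where the convexity bound is tightest. The factor $\kappa=b_{\min}/b_{\max}$ then emerges naturally as the cost of using a single scalar $\epsilon$ to tune the $q$ different values $\epsilon b_i$.
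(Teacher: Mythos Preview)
Your proof is correct and follows essentially the same approach as the paper: both use the ansatz $v=1_q+\epsilon b$ along the Perron--Frobenius direction with the identical choice $\epsilon=(\mu^{-1}-1)/b_{\max}$. The only difference is in the final elementary step: the paper bounds $\max_i\{\frac{1}{1+\epsilon b_i}+\mu^2\epsilon b_i\}$ via the algebraic inequality $\big(\frac{1}{x+(1-x)a}-x\big)(1-x)\ge(1-x)^2$ for $x>0$, $0\le a\le 1$, whereas you obtain the same bound more transparently from the convexity of $g(x)=\frac{1}{1+x}+\mu^2 x$ on $(-1,\infty)$; the two arguments are equivalent.
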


\begin{proof}
Denote $\lambda_{\rm cr}:=\lambda_{\rm 
max}(\diag(c)^{\frac{1}{2}}B\diag(c)^{\frac{1}{2}})$ for simplicity. As
$\diag(c)^{\frac{1}{2}}b$ is a positive eigenvector of 
$\diag(c)^{\frac{1}{2}}B\diag(c)^{\frac{1}{2}}$, the Perron-Frobenius
theorem implies that its eigenvalue must be maximal, and thus
$B\diag(c)b=\lambda_{\rm cr}b$.
We now upper bound $\lambda_\varnothing$ by restricting the infimum in its 
definition to $v=1_q+t b$. This yields
$$
	\lambda_\varnothing \le
	\inf_{t: 1_q+tb>0} 
	\max_i
	\bigg\{
	\frac{1}{1+tb_i} +
	t\lambda_{\rm cr}b_i
	\bigg\}
	=
	1 -
	\sup_{t: 1_q+tb>0} 
	\min_i
	\bigg\{
	\frac{tb_i}{1+tb_i} -
	t\lambda_{\rm cr}b_i
	\bigg\}.
$$
If we choose $t=
(\lambda_{\rm cr}^{-\frac{1}{2}}-1)\frac{1}{\max_i b_i}$, then
$1_q+tb>0$ and
$$
	\min_i
	\bigg\{
	\frac{tb_i}{1+tb_i} -
	t\lambda_{\rm cr}b_i
	\bigg\} =
	\min_i
	\bigg\{
	\frac{1}{
	\lambda_{\rm cr}^{\frac{1}{2}}
	+(1-\lambda_{\rm cr}^{\frac{1}{2}})\frac{b_i}{\max_i b_i}}
	-
	\lambda_{\rm cr}^{\frac{1}{2}}
	\bigg\} 
	(1-\lambda_{\rm cr}^{\frac{1}{2}})\frac{b_i}{\max_i b_i}.
$$
The conclusion follows as
$$
	\bigg(\frac{1}{x+(1-x)a} - x\bigg)(1-x) \ge
	(1-x)^2
$$
for all $x>0$ and $0\le a\le 1$.
\end{proof}

\subsection{The phase transition}
\label{sec:anisophase}

It remains to prove the phase transition for $\lambda$. To this end, we 
first develop in section \ref{sec:aniso1} some basic properties of the 
minimizers in the definitions of $\lambda$ and $\lambda_\varnothing$. 
While we will restrict attention to the present model, the methods used 
here are quite general and extend to other Lehner-type variational 
principles. We then exploit the special structure of the present model in 
section~\ref{sec:aniso2} to complete the proof of Theorem \ref{thm:krz}.

Before we begin the proof, let us make a minor simplification: while we 
assumed only that the matrix $B$ is irreducible, we can assume without 
loss of generality that $B$ has strictly positive entries in the remainder 
of the proof. Indeed, it is clear that all the quantities that appear in 
Theorem \ref{thm:krz} are continuous in $B$.  When $\lambda_{\rm 
max}(\diag(c)^{\frac{1}{2}}B\diag(c)^{\frac{1}{2}})\ne 1$, we can apply 
the result for $B\leftarrow B+\varepsilon 1_q1_q^*$ and let 
$\varepsilon\downarrow 0$ (the preservation of the strict inequality 
$\lambda_\varnothing<1$ in the limit follows from the quantitative 
estimate on $\lambda_\varnothing$). The case $\lambda_{\rm 
max}(\diag(c)^{\frac{1}{2}}B\diag(c)^{\frac{1}{2}})=1$ now follows by 
applying the result to $B\leftarrow tB$ and letting $t\to 1$ from above 
and below.

\subsubsection{Basic properties of the minimizers}
\label{sec:aniso1}

The following basic but important result collects a number of general 
properties of the variational principle that defines 
$\lambda_\varnothing$: existence and uniqueness of a minimizer, and 
first-order optimality conditions.

\begin{lem}
\label{lem:firstordbulk}
The infimum in the definition of $\lambda_\varnothing$
(Proposition \ref{prop:lambda0}) is attained at a unique vector
$v_\varnothing^*>0$. Moreover, this minimizer satisfies the optimality
conditions
\begin{equation}
\label{eq:firstopt1}
	\frac{1}{v_\varnothing^*} + B\diag(c)(v_\varnothing^*-1_q) =
	\lambda_\varnothing 1_q
\end{equation}
and
\begin{equation}
\label{eq:firstopt2}
	\lambda_{\rm max}\big(\diag(c)^{\frac{1}{2}}B\diag(c)^{\frac{1}{2}}
	- \diag(v_\varnothing^*)^{-2}\big) = 0.
\end{equation}
\end{lem}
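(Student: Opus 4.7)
The plan is to reduce to a finite-dimensional convex optimization: since the matrix inside the $\lambda_{\rm max}$ defining $\lambda_\varnothing$ is diagonal, we minimize $F(v) := \max_{i\in[q]} \Phi_i(v)$ over $v \in (0,\infty)^q$, where $\Phi_i(v) := v_i^{-1} + [B\diag(c)(v-1_q)]_i$. Each $\Phi_i$ is smooth and convex (strictly in $v_i$ via the $1/v_i$ term, affine in the remaining coordinates), hence $F$ is convex. Existence of a minimizer $v^* := v_\varnothing^*$ follows from coercivity: using that $B$ has strictly positive entries (the reduction at the start of \S\ref{sec:anisophase}), $F(v) \to \infty$ as any $v_k \downarrow 0$ or $v_k \uparrow \infty$.

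The main obstacle is to establish \eqref{eq:firstopt1}, i.e., that every index is active at the minimizer. The argument is by contradiction: if $\Phi_k(v^*) < \lambda_\varnothing$ for some $k$, I perturb in the direction $-e_k$. Using $B_{ik} c_k > 0$, a direct computation gives $\Phi_i(v^* - \epsilon e_k) = \lambda_\varnothing - \epsilon B_{ik} c_k + O(\epsilon^2) < \lambda_\varnothing$ for every active $i$ and small $\epsilon > 0$; the inactive indices remain strictly below $\lambda_\varnothing$ by continuity. Hence $F(v^* - \epsilon e_k) < \lambda_\varnothing$, contradicting optimality. Strict positivity of $B$ is essential here, and this perturbation step is the most delicate part of the argument.

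Uniqueness is a short consequence. If $v^* \neq w^*$ are both minimizers, so is the segment $v(t) := (1-t)v^* + tw^*$ for $t \in [0,1]$, and by \eqref{eq:firstopt1} applied to each $v(t)$ we have $\Phi_i(v(t)) = \lambda_\varnothing$ for all $i$ and $t$. But $\Phi_i(v(t))$ is the sum of an affine function of $t$ and $1/(v^*_i + t(w^*_i - v^*_i))$; if $v^*_i \neq w^*_i$ the latter is strictly convex in $t$, contradicting constancy. Thus $v^* = w^*$.

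For \eqref{eq:firstopt2}, I invoke the standard first-order optimality condition $0 \in \partial F(v^*) = \mathrm{conv}\{\nabla \Phi_i(v^*)\}_{i \in [q]}$, which is valid since all indices are active: there exists a nonzero $\pi \geq 0$ with $\sum_i \pi_i \nabla \Phi_i(v^*) = 0$. Using $(\nabla \Phi_i(v^*))_k = -\delta_{ik}(v^*_i)^{-2} + B_{ik} c_k$ together with the symmetry of $B$, this rearranges to $M \tilde\pi = 0$ for the symmetric matrix $M := \diag(c)^{1/2} B \diag(c)^{1/2} - \diag(v^*)^{-2}$ and $\tilde\pi := \diag(c)^{-1/2}\pi \geq 0$, $\tilde\pi \neq 0$. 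Since $M$ has strictly positive off-diagonal entries, applying Perron--Frobenius to $M + tI$ for large $t$ shows that $\lambda_{\rm max}(M)$ is the unique eigenvalue of $M$ admitting a nonnegative eigenvector; hence $\lambda_{\rm max}(M) = 0$, giving \eqref{eq:firstopt2}.
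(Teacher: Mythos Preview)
Your proof is correct and follows essentially the same approach as the paper: coercivity gives existence, the $-e_k$ perturbation (using strict positivity of $B$) forces all indices to be active, and convexity along the segment between two minimizers gives uniqueness. The only cosmetic difference is in deriving \eqref{eq:firstopt2}: you invoke the subdifferential rule $0 \in \mathrm{conv}\{\nabla\Phi_i(v^*)\}$ to produce a nonnegative null vector of $M$ and then appeal to Perron--Frobenius, whereas the paper perturbs $v^*$ directly along the Perron eigenvector of $M$ and checks the derivative of each $\Phi_i$ --- these are dual views of the same first-order condition.
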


\begin{proof}
Let us write $\lambda_\varnothing=\inf_{v>0}f(v)$ with 
$f(v)=\max_i(\frac{1}{v} + B\diag(c)(v-1_q))_i$. The existence of a 
minimizer $v_\varnothing^*>0$ follows by a routine compactness argument 
and as $f(v)$ diverges if $v_i\to\{0,\infty\}$ for any $i$.

Next, we show that \eqref{eq:firstopt1} must hold for \emph{any} 
minimizer. Suppose $v>0$ satisfies
$$
	\bigg(\frac{1}{v} + B\diag(c)(v-1_q)\bigg)_j <
	\max_i
	\bigg(\frac{1}{v} + B\diag(c)(v-1_q)\bigg)_i = \lambda_\varnothing
$$
for some $j$. As $B,c$ have positive entries, slightly decreasing $v_j$ 
will strictly
decrease all $(\frac{1}{v} + B\diag(c)(v-1_q))_i$ for $i\ne j$ while
preserving $(\frac{1}{v} + B\diag(c)(v-1_q))_j<\lambda_{\varnothing}$.
The perturbed $v$ would therefore satisfy $f(v)<\lambda_{\varnothing}$,
contradicting the
definition of $\lambda_\varnothing$.
We conclude that any minimizer must satisfy \eqref{eq:firstopt1}.

We now prove uniqueness. Let
$\lambda_\varnothing=f(v_0)=f(v_1)$ and define $v_t=(1-t)v_0+tv_1$ for
$t\in[0,1]$. As $f$ is convex, we have
$\lambda_\varnothing \le f(v_t) \le (1-t)f(v_0)+tf(v_1) =
\lambda_\varnothing$,
so $v_t$ is also a minimizer. As we have shown \eqref{eq:firstopt1} holds
for any minimizer, we have
$$
	0 = \frac{d^2}{dt^2}\bigg(
	\frac{1}{v_t} + B\diag(c)(v_t-1_q)
	\bigg)_i =
	2v_{t,i}^{-3}(v_1-v_0)_i^2
$$
for all $i$, which implies $v_0=v_1$. Thus the minimizer is unique.

It remains to prove \eqref{eq:firstopt2}. Note that as $B,c$ 
have positive entries, the Perron-Frobenius theorem yields\footnote{%
If $M$ is a self-adjoint matrix with nonnegative off-diagonal entries, 
$M+c\id$ is a nonnegative matrix for sufficiently large $c$. We can 
therefore apply the Perron-Frobenius theorem to the latter to deduce the
existence of a positive eigenvector of $M$ associated to its maximal 
eigenvalue.}
an eigenvector
$w>0$ associated to the maximal eigenvalue $\mu$ of
$\diag(c)^{\frac{1}{2}}B\diag(c)^{\frac{1}{2}}-\diag(v_\varnothing^*)^{-2}$.
Let $v_t = v_\varnothing^* - 
t\mu\diag(c)^{-\frac{1}{2}}w$, so that
$$
	\frac{d}{dt}\bigg(
        \frac{1}{v_t} + B\diag(c)(v_t-1_q)
        \bigg)\bigg|_{t=0} =
	-\mu^2\diag(c)^{-\frac{1}{2}}w.
$$
If $\mu\ne 0$, all entries of this vector are strictly negative, which would 
imply that
$f(v_t)<f(v_0)=\lambda_\varnothing$ for $t$ sufficiently small. This 
contradicts the
definition of $\lambda_\varnothing$. We must therefore have
$\mu=0$, which is \eqref{eq:firstopt2}.
\end{proof}

We now prove a partial counterpart of the above lemma for the variational
principle that defines $\lambda$. While more information could be obtained 
also in this case, we only prove the properties that will be needed below.

\begin{lem}
\label{lem:firstordedge}
The infimum in the definition of $\lambda$
(Proposition \ref{prop:lambda}) is attained at a vector
$v^*>0$. Moreover, this minimizer satisfies 
\begin{equation}
\label{eq:firstopt1edge}
	\frac{1}{v^*} + B\diag(c)(v^*-1_q) =
	\lambda 1_q
\end{equation}
and
\begin{equation}
\label{eq:firstopt2edge}
	\lambda_{\rm max}\big(\diag(c)^{\frac{1}{2}}B\diag(c)^{\frac{1}{2}}
	- \diag(v^*)^{-1}\big) \le 0.
\end{equation}
\end{lem}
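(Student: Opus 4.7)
My plan is to adapt the argument for Lemma~\ref{lem:firstordbulk} to the present setting; the key new feature is that the objective function defining $\lambda$ is a maximum of two terms rather than one. As in the preceding lemma I will assume without loss of generality that $B$ has strictly positive entries (the general case follows by continuity, exactly as in the simplification made at the start of section~\ref{sec:anisophase}). Write the objective as $f(v):=\max(f_1(v),f_2(v))$, where
$$f_1(v) := \lambda_{\rm max}\bigl(\diag(c)^{\frac{1}{2}}B\diag(c)^{\frac{1}{2}}+\diag(B\diag(c)(v-1_q))\bigr)$$
and $f_2(v) := \max_i\bigl(v^{-1}+B\diag(c)(v-1_q)\bigr)_i$, so that $\lambda=\inf_{v>0}f(v)$.

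Existence of a minimizer $v^*>0$ will follow from a routine compactness argument: $f$ is continuous on $(0,\infty)^q$ and diverges at the boundary. If $v_i\to 0$ then the $i$-th coordinate of $v^{-1}+B\diag(c)(v-1_q)$ blows up (the first term diverges while the second is bounded below by $-\sum_kB_{ik}c_k$), so $f_2\to\infty$; and if $v_i\to\infty$ then every diagonal entry of the matrix defining $f_1$ diverges (using that $B$ has strictly positive entries), so $f_1\to\infty$ by Weyl.

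The heart of the proof is the first-order condition \eqref{eq:firstopt1edge}, which I would establish by the same perturbation trick as in Lemma~\ref{lem:firstordbulk}. Suppose for contradiction that $(v^{*-1}+B\diag(c)(v^*-1_q))_j<\lambda$ for some coordinate $j$. Slightly decreasing $v_j^*$ has three simultaneous effects: it strictly decreases every other coordinate of $v^{-1}+B\diag(c)(v-1_q)$ (since $B_{ij}c_j>0$); it preserves the strict inequality in the $j$-th coordinate by continuity; and, crucially, it strictly decreases every diagonal entry of the matrix defining $f_1$, and therefore strictly decreases $f_1$ itself via Weyl's inequality $\lambda_{\rm max}(A+D)\le\lambda_{\rm max}(A)+\max_iD_{ii}$ applied with strictly negative $D$. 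Together these three effects yield $f(v)<\lambda$, contradicting the minimality of $v^*$. Hence every coordinate of $v^{*-1}+B\diag(c)(v^*-1_q)$ must equal $\lambda$, which is exactly \eqref{eq:firstopt1edge}.

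The second conclusion \eqref{eq:firstopt2edge} then drops out by direct computation: substituting \eqref{eq:firstopt1edge} into $f_1(v^*)$ gives
$$f_1(v^*) = \lambda + \lambda_{\rm max}\bigl(\diag(c)^{\frac{1}{2}}B\diag(c)^{\frac{1}{2}}-\diag(v^*)^{-1}\bigr),$$
and $f_1(v^*)\le f(v^*)=\lambda$ yields \eqref{eq:firstopt2edge} at once. The only mildly subtle step I foresee is part (iii) of the contradiction argument, where the strict positivity of $B$ (from the WLOG reduction) is what converts an entrywise strict decrease of the diagonal perturbation into a strict decrease of $\lambda_{\rm max}$. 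Note also that, in contrast to Lemma~\ref{lem:firstordbulk}, no uniqueness claim is made here and indeed none seems to be needed for the subsequent analysis of the phase transition.
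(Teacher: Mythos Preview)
Your proof is correct and follows essentially the same route as the paper's: existence via coercivity, the equality \eqref{eq:firstopt1edge} by the ``decrease $v_j$'' perturbation argument (now observing that this also strictly decreases the diagonal shift in $f_1$, hence $f_1$ itself), and then \eqref{eq:firstopt2edge} by substituting \eqref{eq:firstopt1edge} into $f_1(v^*)\le\lambda$. You have simply made explicit the Weyl step and the boundary divergence that the paper leaves implicit by its reference to Lemma~\ref{lem:firstordbulk}.
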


\begin{proof}
The existence of a minimizer $v_*>0$ follows as in the proof of Lemma 
\ref{lem:firstordbulk}. Now suppose there is a coordinate $j$ so that 
$v^*$ satisfies
$$
	\bigg(\frac{1}{v^*} + B\diag(c)(v^*-1_q)\bigg)_j < \lambda.
$$
As $B,c$ have positive entries, we can reason as in 
the proof of Lemma \ref{lem:firstordbulk} that slightly decreasing the
$j$th coordinate of $v^*$ will yield a strictly smaller value of the 
function being minimized in the definition of $\lambda$, contradicting 
the minimality of $v^*$. We conclude that $v^*$ must satisfy 
\eqref{eq:firstopt1edge}. Finally, \eqref{eq:firstopt2edge} follows
from \eqref{eq:firstopt1edge} and as
$$
	\lambda_{\rm max}\big(
                \diag(c)^{\frac{1}{2}}B\diag(c)^{\frac{1}{2}} +
                \diag\big(B\diag(c)(v^*-1_q)\big)\big)\le\lambda
$$
by the definition of $\lambda$.
\end{proof}

It is obvious from the definitions of $\lambda,\lambda_\varnothing$ that 
$\lambda_\varnothing\le\lambda$. The aim of the remainder of the proof is 
to characterize the phase transition from $\lambda_\varnothing<\lambda$ to 
$\lambda_\varnothing=\lambda$. A basic characterization of the phase 
regions follows directly from the variational principles.

\begin{lem}
\label{lem:phasereg}
$\lambda=\lambda_\varnothing$ if and only if
$\lambda_{\rm max}(\diag(c)^{\frac{1}{2}}B\diag(c)^{\frac{1}{2}}
- \diag(v^*_\varnothing)^{-1}) \le 0$.
\end{lem}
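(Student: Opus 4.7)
The plan is to evaluate both variational principles at the common candidate vector $v_\varnothing^*$ and to exploit the first-order optimality conditions of Lemmas \ref{lem:firstordbulk} and \ref{lem:firstordedge} to match up the two expressions. The trivial direction $\lambda\ge\lambda_\varnothing$ follows at once by comparing the two inf-formulas: the objective defining $\lambda$ is pointwise at least as large as that defining $\lambda_\varnothing$.

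For the ``if'' direction, I would plug $v=v_\varnothing^*$ into the expression for $\lambda$ in Proposition \ref{prop:lambda}. The optimality condition \eqref{eq:firstopt1} yields
$$
\diag\big(B\diag(c)(v_\varnothing^*-1_q)\big)=\lambda_\varnothing\id_q-\diag(v_\varnothing^*)^{-1},
$$
so the first argument of the max simplifies to $\lambda_\varnothing+\lambda_{\rm max}\bigl(\diag(c)^{1/2}B\diag(c)^{1/2}-\diag(v_\varnothing^*)^{-1}\bigr)$, while the second argument reduces identically to $\lambda_\varnothing$. Under the assumed spectral inequality the first argument is $\le\lambda_\varnothing$, so the max at $v_\varnothing^*$ equals $\lambda_\varnothing$, giving $\lambda\le\lambda_\varnothing$ and hence equality.

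For the ``only if'' direction, assume $\lambda=\lambda_\varnothing$ and pick a minimizer $v^*$ guaranteed by Lemma \ref{lem:firstordedge}. Condition \eqref{eq:firstopt1edge} reads
$$
\frac{1}{v^*}+B\diag(c)(v^*-1_q)=\lambda\,1_q=\lambda_\varnothing\,1_q,
$$
which is precisely the first-order condition \eqref{eq:firstopt1} for $\lambda_\varnothing$'s problem; in particular it shows that the objective in Proposition \ref{prop:lambda0} attains value $\lambda_\varnothing$ at $v^*$, so $v^*$ is a minimizer there. The uniqueness statement of Lemma \ref{lem:firstordbulk} then forces $v^*=v_\varnothing^*$, and the desired inequality is exactly the content of \eqref{eq:firstopt2edge} at that minimizer.

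The whole argument is essentially bookkeeping: the only subtle point is verifying that satisfying \eqref{eq:firstopt1edge} with right-hand side $\lambda\,1_q$ makes $v^*$ a genuine minimizer of $\lambda_\varnothing$'s problem (so that uniqueness can be invoked), which follows because that equation forces the objective in $\lambda_\varnothing$'s formula to equal $\lambda_\varnothing$ pointwise at $v^*$. There is no analytic obstacle; the result is a direct consequence of the structural relationship between the two variational principles.
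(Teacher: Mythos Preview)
Your proof is correct and follows essentially the same approach as the paper's: both directions plug the relevant minimizer into the other variational principle, use the first-order conditions \eqref{eq:firstopt1} and \eqref{eq:firstopt1edge} to simplify, and invoke uniqueness from Lemma~\ref{lem:firstordbulk} to identify $v^*=v_\varnothing^*$. The only cosmetic difference is that in the ``only if'' direction you invoke \eqref{eq:firstopt1edge} to see that $v^*$ is a minimizer for $\lambda_\varnothing$, whereas the paper reads this off directly from the definition of $\lambda$ (the second branch of the max is bounded by $\lambda=\lambda_\varnothing$); both routes are immediate.
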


\begin{proof}
If $\lambda_{\rm max}(\diag(c)^{\frac{1}{2}}B\diag(c)^{\frac{1}{2}}
- \diag(v^*_\varnothing)^{-1}) \le 0$, then choosing
$v\leftarrow v^*_\varnothing$ in the definition of $\lambda$ 
(cf.\ Proposition \ref{prop:lambda}) and using \eqref{eq:firstopt1} yields
$\lambda\le\lambda_\varnothing$. As $\lambda_\varnothing\le\lambda$ holds 
trivially by the definitions of $\lambda,\lambda_\varnothing$,
we conclude that $\lambda=\lambda_\varnothing$.

Now suppose that $\lambda=\lambda_\varnothing$. Then
$$
	\lambda_{\rm max}\big(
		\diag(v^*)^{-1} + \diag\big(B\diag(c)(v^*-1_q)\big)
	\le \lambda = \lambda_\varnothing
$$
by the definition of $\lambda$, which implies that $v^*$ is a minimizer
in the definition of $\lambda_\varnothing$ (cf.\ Proposition 
\ref{prop:lambda0}). But Lemma \ref{lem:firstordbulk} shows the latter 
is unique, so that $v^*=v^*_\varnothing$. 
Thus \eqref{eq:firstopt2edge} yields 
$\lambda_{\rm max}(\diag(c)^{\frac{1}{2}}B\diag(c)^{\frac{1}{2}}
- \diag(v^*_\varnothing)^{-1}) \le 0$.
\end{proof}

The difficulty in applying this lemma is that the phase transition 
criterion is not explicit as it involves $v^*_\varnothing$.
In the rest of the proof, we will exploit the special properties of the 
present model to explicitly characterize the phase transition.

\subsubsection{Proof of Theorem \ref{thm:krz}}
\label{sec:aniso2}

The following fact could be viewed as the basic reason behind the 
special properties of the present model.

\begin{lem}
\label{lem:krzmagic}
Suppose a vector $v>0$ and $\mu\in\mathbb{R}$ satisfy
$$
	\frac{1}{v}+B\diag(c)(v-1_q)=\mu 1_q,\qquad
	\lambda_{\rm max}\big(\diag(c)^{\frac{1}{2}}B\diag(c)^{\frac{1}{2}}
	- \diag(v)^{-1}\big) = 0.
$$
Then we must have $\mu=1$.
\end{lem}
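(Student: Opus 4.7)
The plan is to extract from the second hypothesis a strictly positive Perron-type vector, and then pair the first equation against an inner product weighting built from it, so that every $v$-dependent term either cancels against the $1/v$ contribution or collapses to the $v$-independent scalar $c^T w$.

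First I would set $D := \diag(c)^{1/2} B\diag(c)^{1/2}$ and $V := \diag(v)^{-1}$, both symmetric. Thanks to the reduction at the start of Section~\ref{sec:anisophase} we may assume $B$ has strictly positive entries, so $D - V$ is an irreducible symmetric matrix with strictly positive off-diagonal entries. The hypothesis $\lambda_{\rm max}(D-V)=0$ combined with Perron--Frobenius then yields a strictly positive eigenvector $u > 0$ with $(D-V)u = 0$. Changing variables to $w := \diag(c)^{-1/2} u > 0$ transforms this into the elementwise identity
\begin{equation*}
B\diag(c)\, w \;=\; \frac{w}{v}.
\end{equation*}

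Next I would rewrite the first hypothesis as $\frac{1}{v} + B\diag(c)v - Bc = \mu\, 1_q$ and take its inner product against the strictly positive vector $c \odot w$. The right-hand side produces $\mu\, c^T w$. On the left, the symmetry $B = B^T$ and the Perron identity above give
\begin{align*}
(c\odot w)^T Bc &= (B\diag(c)w)^T c = (w/v)^T c = (c \odot w)^T(1/v), \\
(c\odot w)^T B\diag(c)v &= (B\diag(c)w)^T \diag(c)\, v = (w/v)^T \diag(c)\, v = c^T w.
\end{align*}
The first identity makes the $Bc$-term cancel the $(c\odot w)^T(1/v)$-term, leaving $c^T w = \mu\, c^T w$. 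Since $c, w > 0$ we have $c^T w > 0$, forcing $\mu = 1$.

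The only real obstacle is spotting the pairing weight $c \odot w$. The reason this is the correct choice is transparent in hindsight: combined with the symmetry of $B$, it turns every appearance of $Bc$ or $B\diag(c)v$ into a quantity involving $B\diag(c)w$, which the Perron identity then replaces by $w/v$; after that replacement, all $v$-dependence either cancels with the $1/v$ term on the left or collapses to $c^T w$. The strict positivity of $u$ (and hence of $c^T w$), which rules out a degenerate $c^T w = 0$ conclusion, is precisely where the WLOG simplification $B > 0$ enters.
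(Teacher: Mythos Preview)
Your proof is correct and is essentially the same as the paper's. Both arguments obtain the Perron eigenvector $u>0$ of $\diag(c)^{1/2}B\diag(c)^{1/2}-\diag(v)^{-1}$ in the null space and pair the first hypothesis against it; your weight $c\odot w=\diag(c)^{1/2}u$ is exactly the paper's choice after it premultiplies the equation by $\diag(c)^{1/2}$. The one presentational difference is that the paper first rewrites the first hypothesis as
\[
\big(\diag(c)^{1/2}B\diag(c)^{1/2}-\diag(v)^{-1}\big)\diag(c)^{1/2}(v-1_q)=(\mu-1)\diag(c)^{1/2}1_q,
\]
after which the inner product with $u$ is a one-liner, whereas you carry out the equivalent cancellation by hand. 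The paper's rewriting has the small bonus that the displayed identity is reused verbatim in the proof of the next lemma (Lemma~\ref{lem:phasebd}).
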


\begin{proof}
The key idea is that the first equation in the statement is 
equivalent to
\begin{equation}
\label{eq:krzmagic}
	\big(\diag(c)^{\frac{1}{2}}B\diag(c)^{\frac{1}{2}}
        - \diag(v)^{-1}\big)\diag(c)^{\frac{1}{2}}(v-1_q) =
	(\mu-1)\diag(c)^{\frac{1}{2}}1_q.
\end{equation}
As $B,c$ have positive entries, the Perron-Frobenius theorem and the
second equation in the statement yield an eigenvector $w>0$ of
$\diag(c)^{\frac{1}{2}}B\diag(c)^{\frac{1}{2}}- \diag(v)^{-1}$ with
eigenvalue $0$. Taking the inner product of the above equation with $w$
yields $0=(\mu-1)\langle w,\diag(c)^{\frac{1}{2}}1_q\rangle$, which 
implies
$\mu=1$ as $\langle w,\diag(c)^{\frac{1}{2}}1_q\rangle>0$.
\end{proof}

Using this result, we can explicitly determine $v_\varnothing^*$ on the 
boundary of the phase region $\lambda=\lambda_\varnothing$ (cf.\ Lemma 
\ref{lem:phasereg}). This is the key step in the proof.

\begin{lem}
\label{lem:phasebd}
If $\lambda_{\rm max}(\diag(c)^{\frac{1}{2}}B\diag(c)^{\frac{1}{2}}
- \diag(v^*_\varnothing)^{-1}) = 0$, then $v_\varnothing^*=1_q$.
\end{lem}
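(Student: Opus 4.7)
The plan is to exploit Lemma~\ref{lem:krzmagic} to pin down $\lambda_\varnothing$, and then use Perron--Frobenius together with the two distinct eigenvalue conditions \eqref{eq:firstopt2} and the hypothesis to force $v^*_\varnothing = 1_q$.

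First I would apply Lemma~\ref{lem:krzmagic} to $(v,\mu) = (v^*_\varnothing, \lambda_\varnothing)$, using \eqref{eq:firstopt1} and the hypothesis of the present lemma. This immediately yields $\lambda_\varnothing = 1$, so that \eqref{eq:firstopt1} reduces to $\frac{1}{v^*_\varnothing} + B\diag(c)(v^*_\varnothing - 1_q) = 1_q$. Recasting this as in the identity \eqref{eq:krzmagic} (with $\mu=1$) shows that $\diag(c)^{\frac{1}{2}}(v^*_\varnothing - 1_q)$ lies in the kernel of $M - D^{-1}$, where I set $M := \diag(c)^{\frac{1}{2}} B \diag(c)^{\frac{1}{2}}$ and $D := \diag(v^*_\varnothing)$.

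Next I would invoke Perron--Frobenius. Since we may assume (by the continuity reduction at the start of this subsection) that $B$ has strictly positive entries, $M$ is a symmetric matrix with positive entries, so both $M - D^{-1}$ and $M - D^{-2}$ are symmetric with positive off-diagonal entries. Applying Perron--Frobenius to $M - D^{-\alpha} + t\,\id$ for large $t$ then gives that each has a simple maximal eigenvalue attained at a strictly positive eigenvector. As $\lambda_{\rm max}(M - D^{-1}) = 0$ (hypothesis) and $\lambda_{\rm max}(M - D^{-2}) = 0$ (by \eqref{eq:firstopt2}), there exist $w, w' > 0$ with $(M - D^{-2}) w = 0$ and $(M - D^{-1}) w' = 0$, and the kernel of $M - D^{-1}$ is one-dimensional. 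The conclusion of the previous paragraph then forces $\diag(c)^{\frac{1}{2}}(v^*_\varnothing - 1_q) = \alpha w'$ for some $\alpha \in \mathbb{R}$, so $v^*_\varnothing - 1_q$ is either identically zero or has all entries of strictly the same sign.

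To close the argument I would rule out the two nonzero sign patterns by a direct quadratic form comparison using the \emph{second} eigenvalue identity. If $v^*_\varnothing > 1_q$ componentwise, then $(v^*_{\varnothing,i})^{-1} - (v^*_{\varnothing,i})^{-2} > 0$ for every $i$, and using $(M - D^{-1})w' = 0$ gives
$$\langle w', (M - D^{-2})\, w' \rangle = \sum_i \bigl((v^*_{\varnothing,i})^{-1} - (v^*_{\varnothing,i})^{-2}\bigr) (w'_i)^2 > 0,$$
contradicting $\lambda_{\rm max}(M - D^{-2}) = 0$. Symmetrically, if $v^*_\varnothing < 1_q$ componentwise, then testing $M - D^{-1}$ against the Perron vector $w$ of $M - D^{-2}$ yields $\langle w, (M - D^{-1})\, w\rangle > 0$, contradicting $\lambda_{\rm max}(M - D^{-1}) = 0$. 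Hence $v^*_\varnothing = 1_q$. The only delicate point is ensuring that each Perron eigenvalue is simple and strictly dominant, which is precisely what the reduction to strictly positive $B$ buys us.
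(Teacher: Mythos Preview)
Your proof is correct. It shares the same opening moves as the paper's argument---apply Lemma~\ref{lem:krzmagic} to get $\lambda_\varnothing=1$, then rewrite \eqref{eq:firstopt1} as $(M-D^{-1})\diag(c)^{1/2}(v^*_\varnothing-1_q)=0$---but finishes differently. You use simplicity of the Perron eigenvalue of $M-D^{-1}$ to force $v^*_\varnothing-1_q$ to have a definite sign, and then compare the Rayleigh quotients of $M-D^{-1}$ and $M-D^{-2}$ in two cases. The paper instead takes the inner product of the kernel relation directly with the Perron vector $w$ of $M-D^{-2}$ (from \eqref{eq:firstopt2}) and observes the algebraic identity
\[
0=\langle w,(D^{-2}-D^{-1})\diag(c)^{1/2}(v^*_\varnothing-1_q)\rangle
=-\sum_i w_i c_i^{1/2}\big((v^*_{\varnothing,i})^{-1}-1\big)^2,
\]
which is a sum of nonpositive terms and hence forces $v^*_\varnothing=1_q$ in one stroke, with no case split and no need for the second Perron vector $w'$. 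Your route is sound but the paper's is shorter because the product $((v_i)^{-2}-(v_i)^{-1})(v_i-1)$ is automatically a negative square regardless of the sign of $v_i-1$.
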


\begin{proof}
By Lemma \ref{lem:krzmagic}, the assumption and \eqref{eq:firstopt1}
imply that $\lambda_\varnothing=1$. Thus
$$
	\big(\diag(c)^{\frac{1}{2}}B\diag(c)^{\frac{1}{2}}
        - \diag(v_\varnothing^*)^{-1}\big)
	\diag(c)^{\frac{1}{2}}(v_\varnothing^*-1_q) = 0
$$
by \eqref{eq:krzmagic}. Now note that the Perron-Frobenius theorem and
\eqref{eq:firstopt2} provide an eigenvector $w>0$ of
$\diag(c)^{\frac{1}{2}}B\diag(c)^{\frac{1}{2}}
- \diag(v_\varnothing^*)^{-2}$ with eigenvalue $0$. Taking the inner 
product of the above equation with $w$ yields
$$
	0 =
	\langle w,
	(\diag(v_\varnothing^*)^{-2}
        - \diag(v_\varnothing^*)^{-1})
	\diag(c)^{\frac{1}{2}}(v_\varnothing^*-1_q)\rangle =
	-\sum_i w_i c_i^{\frac{1}{2}}
	\big((v^*_\varnothing)_i^{-1}-1\big)^2,
$$
which evidently implies $v^*_\varnothing=1_q$.
\end{proof}

Lemmas \ref{lem:phasereg} and \ref{lem:phasebd} show that we must have
$\lambda_{\rm max}(\diag(c)^{\frac{1}{2}}B\diag(c)^{\frac{1}{2}})=1$
on the boundary between the phase regions. This will enable us to 
fully characterize the phase regions by a continuity argument, completing
the proof of Theorem \ref{thm:krz}

\begin{proof}[Proof of Theorem \ref{thm:krz}]
The approximation by $\lambda,\lambda_\varnothing$ and
the estimate on $\lambda_\varnothing$ were proved in
Propositions \ref{prop:lambda} and \ref{prop:lambda0} and in
Lemma \ref{lem:lambda0est}, respectively.
The remainder of the proof will be completed in three steps to be
proved below:
\medskip
\begin{enumerate}[1.]
\itemsep\abovedisplayskip
\item $\lambda_{\rm max}(\diag(c)^{\frac{1}{2}}B\diag(c)^{\frac{1}{2}})>1$
implies $\lambda_\varnothing<\lambda$.
\item $\lambda_\varnothing<\lambda$ implies $\lambda=1$.
\item $\lambda_{\rm max}(\diag(c)^{\frac{1}{2}}B\diag(c)^{\frac{1}{2}})<1$
implies $\lambda<1$.
\end{enumerate}
\medskip
Indeed, combining steps 1 and 2 yields part \textit{c} of the theorem,
while combining steps 2 and 3 yields part \textit{a} of the theorem (as 
$\lambda_\varnothing\le\lambda$).
Part \textit{b} of the theorem now follows by applying the theorem to 
$B\leftarrow tB$ and letting $t\to 1$ from above and below.

It remains to prove each of the above steps.

\medskip

\textbf{Step 1.} Suppose $\lambda_{\rm 
max}(\diag(c)^{\frac{1}{2}}B\diag(c)^{\frac{1}{2}})>1$. By Lemma 
\ref{lem:phasereg} and as $\lambda_\varnothing\le\lambda$, it suffices to 
show that $\lambda_{\rm max}(\diag(c)^{\frac{1}{2}}B\diag(c)^{\frac{1}{2}}
- \diag(v^*_\varnothing)^{-1}) > 0$.

Consider first the special case $B=2\,1_q1_q^*$, so that
$\lambda_{\rm max}(\diag(c)^{\frac{1}{2}}B\diag(c)^{\frac{1}{2}})=2$
as $\sum_i c_i=1$. Then \eqref{eq:firstopt1} 
shows that $v_\varnothing^*$ is proportional to $1_q$, so it suffices to 
minimize over $v\leftarrow t1_q$ in the definition of 
$\lambda_\varnothing$. A straightforward computation yields
$v_\varnothing^*=2^{-\frac{1}{2}}1_q$ and thus
$\lambda_{\rm max}(\diag(c)^{\frac{1}{2}}B\diag(c)^{\frac{1}{2}}
- \diag(v^*_\varnothing)^{-1}) > 0$.

For general $B$, choose a continuous family $t\mapsto B(t)$
so that
$B(0)=2\,1_q1_q^*$, $B(1)=B$, and $\lambda_{\rm
max}(\diag(c)^{\frac{1}{2}}B(t)\diag(c)^{\frac{1}{2}})>1$ to all 
$t\in[0,1]$. Denote by $v_\varnothing^*(t)$ the minimizer in the 
definition of $\lambda_\varnothing$ for $B\leftarrow B(t)$. As the 
minimizer $v_\varnothing^*(t)$ is unique by Lemma \ref{lem:firstordbulk}, 
it follows by a routine argument that $t\mapsto v_\varnothing^*(t)$ is 
continuous. On the other hand, Lemma \ref{lem:phasebd} ensures that
for all $t\in[0,1]$
$$
	\alpha(t) :=
	\lambda_{\rm max}(\diag(c)^{\frac{1}{2}}B(t)\diag(c)^{\frac{1}{2}}
	- \diag(v^*_\varnothing(t))^{-1}) \ne 0:
$$
otherwise we would have $v_\varnothing^*(t)=1_q$ 
and thus $\lambda_{\rm 
max}(\diag(c)^{\frac{1}{2}}B(t)\diag(c)^{\frac{1}{2}})=1$
for some $t$, which entails a contradiction.
As we showed that $\alpha(0)>0$ and $\alpha(t)\ne 0$ for all $t$, it 
follows by continuity that $\alpha(1)>0$. This is the desired claim.

\medskip

\textbf{Step 2.} Suppose that $\lambda_\varnothing<\lambda$. To show
this implies $\lambda=1$, it suffices by \eqref{eq:firstopt1edge} and
Lemma \ref{lem:krzmagic} to show that
$\lambda_{\rm max}(\diag(c)^{\frac{1}{2}}B\diag(c)^{\frac{1}{2}}
- \diag(v^*)^{-1})=0$.

Suppose the latter is not the case. Then \eqref{eq:firstopt1edge}
and the definition of $\lambda$ imply
\begin{multline*}
	\lambda_{\rm max}\big(
		\diag(c)^{\frac{1}{2}}B\diag(c)^{\frac{1}{2}} +
		\diag(B\diag(c)(v^*-1_q))\big)
	< \lambda \\
	= \lambda_{\rm max}\big(
		\diag(v^*)^{-1} + \diag(B\diag(c)(v^*-1_q))\big).
\end{multline*}
Then $v^*$ must also be a minimizer of the quantity on the second line:
otherwise we could slightly decrease the quantity on the second line
while preserving the strict inequality on the first line, contradicting 
the definition of $\lambda$. This implies by the definition of 
$\lambda_\varnothing$ that $\lambda=\lambda_\varnothing$, which 
contradicts the assumption of step 2.

\medskip

\textbf{Step 3.}
Suppose that 
$\lambda_{\rm max}(\diag(c)^{\frac{1}{2}}B\diag(c)^{\frac{1}{2}})<1$.
Then it follows readily that $\lambda\le 1$ by choosing $v\leftarrow 1_q$ 
in the definition of $\lambda$.

Now suppose that $\lambda=1$. Then $v^*=1_q$ would be a minimizer in 
the definition of $\lambda$. The same argument as in the proof of 
step 2 now shows that $v^*$ must also be a minimizer in the definition of
$\lambda_{\varnothing}$, so that $v_\varnothing^*=1_q$. The latter 
contradicts \eqref{eq:firstopt2}. Thus we have shown that
$\lambda<1$, concluding the proof.
\end{proof}

\section{Applications: proofs}
\label{sec:pfappl}

\subsection{Decoding node labels on graphs}
\label{sec:pfmasked}

\begin{proof}[Proof of Theorem \ref{thm:masked}]
Define
$$
	Y' := \frac{Y}{(4kp(1-p))^{\frac{1}{2}}},\qquad\quad
	\theta' := \frac{k^{\frac{1}{2}}(1-2p)}{(4p(1-p))^{\frac{1}{2}}}.
$$
Then we clearly have
$$
	\EE Y' = \frac{\theta'}{k}\diag(x)A\diag(x),\qquad\quad
	\EE[(Y'-\EE Y')^2]=\id.
$$
Moreover, as $A1_d=k1_d$, the Perron-Frobenius theorem yields
$$
	\lambda_{\rm max}(\EE Y')=\theta',\qquad\quad
	v_{\rm max}(\EE Y')=d^{-\frac{1}{2}}x,
$$
while $1_{(\theta'-\delta,\theta']}(\EE Y')=d^{-1}xx^*$ for 
$\delta:=\frac{\theta'}{k}\lambda$.
Note for future reference that 
the assumptions of the theorem imply that $\theta'=(1+o(1))\theta$ and 
$k\gg (\log d)^4$.

Let $A=\sum_{i=1}^d \lambda_i v_iv_i^*$ be an eigendecomposition of 
$A$ so that $\lambda_1=k$ and $|\lambda_i|=\mathrm{s}_i$. Then
$A_r := \sum_{i=1}^r \lambda_i v_i v_i^*$ has rank at most $r$ 
and $\|A-A_r\|\le \mathrm{s}_{r+1}$. Define
$$
	X := Y' - \EE Y' +
	\frac{\theta'}{k}
        \diag(x)A_r\diag(x).
$$
As $\EE Y = (1-2p)\diag(x)A\diag(x)$, we can estimate
$$
	\|(4kp(1-p))^{\frac{1}{2}}X - Y\| \le
	k^{-\frac{1}{2}}\theta\,\mathrm{s}_{r+1}.
$$
On the other hand, we have
$$
	\mathbf{P}\big[
	|\lambda_{\rm max}(X)-\mathrm{B}(\theta')| >
	Ck^{-\frac{1}{2}}\sqrt{r} +
	Ck^{-\frac{1}{6}}(\log d)^{\frac{2}{3}} 
	\big]
	\le \frac{C}{d^2}
$$
by applying Theorems \ref{thm:main}, \ref{thm:univ}, and \ref{thm:bbp} 
with $t=3\log d$ and using that $\sigma(X)=1$, $\sigma_*(X)\le 
v(X)\lesssim k^{-\frac{1}{2}}$, $R\lesssim k^{-\frac{1}{2}}$,
and $k\gg (\log d)^4$. Therefore
$$
	\mathbf{P}\Big[
	|\lambda_{\rm max}(Y')-\mathrm{B}(\theta')| >
	C
	k^{-1}\Big\{
	\min_{1\le r\le k}\big\{\theta\,\mathrm{s}_{r+1} + \sqrt{rk}\big\} 
	+ k^{\frac{5}{6}}(\log d)^{\frac{2}{3}} 
	\Big\}\Big]
	\le \frac{C}{d^2},
$$
where we optimized over the choice of $r$. The analogous estimates follow
readily if we replace $Y'\leftarrow Y'+s1_{(\theta'-\delta,\theta']}(\EE Y')$
and $\theta'\leftarrow\theta'+s$ for $|s|\le\delta$.

To proceed, note that the assumption of the theorem and
$\theta'=(1+o(1))\theta$ imply
$$
	k^{-1}\Big\{
	\min_{1\le r\le k}\big\{\theta\,\mathrm{s}_{r+1} + \sqrt{rk}\big\} 
	+ k^{\frac{5}{6}}(\log d)^{\frac{2}{3}} 
	\Big\} \ll \min\{\delta,1\}.
$$
We can therefore conclude using Theorem \ref{thm:eigenvec} that
$$
	\mathbf{P}\bigg[
	\bigg|
	\frac{1}{d}|\langle x,v_{\rm max}(Y)\rangle|^2
	-
	\bigg(
	1-\frac{1}{\theta^2}
	\bigg)_+	
	\bigg| >
	t+\frac{o(\min\{\delta,1\})}{t}
	\bigg]\le \frac{C}{d^2}
$$
for $0<t\le\delta$. It remains to choose $0<t\le\delta$
so that
$t+\frac{o(\min\{\delta,1\})}{t}=o(1)$.

Finally, the existence of an estimator $\hat x(Y)$ follows from Lemma 
\ref{lem:round} below.
\end{proof}

At the end of the proof we used the following general rounding procedure.

\begin{lem}
\label{lem:round}
Let $x\in\{-1,+1\}^d$ and $v\in S^{d-1}$ satisfy
$\frac{1}{d}|\langle x,v\rangle|^2\ge\varepsilon$.
Then there exists a randomized estimator $\hat x\in \{-1,+1\}^d$,
whose construction depends only on $d,v,\varepsilon$, such that
$\frac{1}{d}|\langle x,\hat x\rangle|\ge \tfrac{\varepsilon}{8}$
with probability $1-\frac{64}{d\varepsilon^2}$.
\end{lem}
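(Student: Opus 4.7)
The plan is a capped randomized rounding scheme: truncate $v$ at a carefully chosen level $M$, then round each coordinate independently. I may restrict to the regime $\varepsilon\ge 4/d$, since otherwise $1-\tfrac{64}{d\varepsilon^2}\le 0$ and any $\hat x$ satisfies the conclusion vacuously.

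Set $M:=\tfrac{2}{\sqrt{\varepsilon d}}\le 1$ and cap the entries of $v$ by $w_i:=\mathrm{sign}(v_i)\min(|v_i|,M)$. I then construct $\hat x\in\{-1,+1\}^d$ by drawing $\hat x_1,\ldots,\hat x_d$ independently with $\mathbf{E}\hat x_i=w_i/M\in[-1,1]$; the recipe depends only on $d,v,\varepsilon$, as required. Since the $\hat x_i$ are independent and $|\hat x_i|=1$, one has $\mathrm{Var}(\langle x,\hat x\rangle)=\sum_i x_i^2\,\mathrm{Var}(\hat x_i)\le d$, so that Chebyshev's inequality yields $|\langle x,\hat x\rangle-\mathbf{E}\langle x,\hat x\rangle|\le\tfrac{\varepsilon d}{8}$ with probability at least $1-\tfrac{64}{d\varepsilon^2}$.

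The key step is then to show that $|\mathbf{E}\langle x,\hat x\rangle|\ge\tfrac{\varepsilon d}{4}$, since then the deviation bound gives $|\langle x,\hat x\rangle|\ge\tfrac{\varepsilon d}{8}$ on the good event. For this I would bound the truncation error using the elementary inequality $|v_i|-M\le v_i^2/M$ valid on $\{|v_i|>M\}$, which yields $\|v-w\|_1\le\|v\|_2^2/M=\tfrac{1}{M}=\tfrac{\sqrt{\varepsilon d}}{2}$. Combining with the hypothesis $|\langle x,v\rangle|\ge\sqrt{\varepsilon d}$ gives $|\langle x,w\rangle|\ge\sqrt{\varepsilon d}/2$, so that $|\mathbf{E}\langle x,\hat x\rangle|=|\langle x,w\rangle|/M\ge\varepsilon d/4$, as wanted.

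The main point to get right is the choice of threshold $M$: the naive uncapped scheme $\mathbf{E}\hat x_i=v_i/\|v\|_\infty$ only yields mean of order $\sqrt{\varepsilon d}/\|v\|_\infty$, which is much smaller than $\varepsilon d$ when $v$ is concentrated on a few large coordinates. Capping at $M=2/\sqrt{\varepsilon d}$ balances two competing effects — the truncation loss, which scales as $\|v\|_2^2/M=1/M$, and the rescaling inflation of the mean, which is $1/M$ — producing a mean of the optimal order $\varepsilon d$. No harder ingredient than these elementary estimates and Chebyshev is needed.
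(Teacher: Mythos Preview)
Your proof is correct and follows essentially the same approach as the paper: both use threshold-based randomized rounding at the same level $M=2/\sqrt{\varepsilon d}$ (the paper writes $c=2/\sqrt{\varepsilon}=M\sqrt{d}$), control the truncation loss via $|v_i|\lesssim v_i^2/M$ on the large entries, and finish with the same Chebyshev step. The only cosmetic difference is that you cap large entries to $\pm M$ while the paper zeroes them out; both yield $|\mathbf{E}\langle x,\hat x\rangle|\ge \varepsilon d/4$ and hence the claimed bound.
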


\begin{proof}
Fix $c>0$ that will be chosen shortly, and
construct $\hat x$ by choosing each entry to be an independent random 
sign so that $\EE[\hat x_i] = \frac{v_i\sqrt{d}}{c}\, 1_{|v_i|\sqrt{d}\le c}$. Then
$$
	\frac{1}{d}|\EE[\langle x,\hat x\rangle]| =
	\Bigg|
	\sum_{i\in[d]}
	\frac{x_iv_i}{c\sqrt{d}}
	\, 1_{|v_i|\sqrt{d}\le c}\Bigg|
	\ge
	\frac{\sqrt{\varepsilon}}{c}
	-
	\Bigg|
	\sum_{i\in[d]}
	\frac{x_iv_i}{c\sqrt{d}}
	\, 1_{|v_i|\sqrt{d}> c}\Bigg|
	\ge
	\frac{\sqrt{\varepsilon}}{c}
	-
	\frac{1}{c^2}.
$$
Choosing $c=\frac{2}{\sqrt{\varepsilon}}$ yields
$\frac{1}{d}\EE|\langle x,\hat x\rangle|\ge 
\frac{1}{d}|\EE[\langle x,\hat x\rangle]| \ge \frac{\varepsilon}{4}$.

Now note that $\mathrm{Var}(\frac{1}{d}|\langle x,\hat x\rangle|) \le
\mathrm{Var}(\frac{1}{d}\langle x,\hat x\rangle) 
\le \frac{1}{d}$. We can therefore estimate 
$$
	\mathbf{P}\big[
	\tfrac{1}{d}|\langle x,\hat x\rangle| <
	\tfrac{\varepsilon}{4}-t 
	\big]
	\le
	\mathbf{P}\big[
	\big|\tfrac{1}{d}|\langle x,\hat x\rangle| -
	\tfrac{1}{d}\EE|\langle x,\hat x\rangle|\big| > t
	\big]
	\le \frac{1}{dt^2}
$$
by Chebyshev's inequality. Choosing $t=\frac{\varepsilon}{8}$ yields
the conclusion.
\end{proof}

\subsection{Tensor PCA}
\label{sec:pftensor}

We begin with some basic observations.

\begin{lem}
\label{lem:tensor1}
$M$ is a $d\times d$ self-adjoint random matrix with $d={n\choose\ell}$,
such that
$$
	\EE[(M-\EE M)^2]=\sigma(M)^2\id
	\qquad\text{with}\qquad
	\sigma(M)^2 = 
	{\ell \choose p/2} {n-\ell \choose p/2}.
$$
Moreover, we have
$$
	v(M)^2 = {p\choose p/2}{n-p\choose \ell-p/2}.
$$
In particular, $\sigma(M)^2 \asymp n^{\frac{p}{2}}$ and
$v(M)^2 \asymp n^{\ell-\frac{p}{2}}$ as $n\to\infty$ (with $p,\ell$ 
fixed).
\end{lem}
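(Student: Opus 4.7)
The plan rests on the observation that the only randomness in $M$ is the single i.i.d.\ family $(Z_R)_{R\subseteq[n],\,|R|=p}$ of standard Gaussians, since $X_{S\triangle T}$ is deterministic. This yields the clean formula $(M-\EE M)_{S,T} = Z_{S\triangle T}\,1_{|S\triangle T|=p}$, which immediately exhibits $M$ as self-adjoint (by symmetry of $\triangle$) and shows that both $\sigma(M)^2$ and $v(M)^2$ are determined by elementary counts of how ordered pairs of $\ell$-subsets $(S,T)$ share a given symmetric difference. Neither parameter depends on $\lambda$ or on $x$.

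For $\sigma(M)^2$ I would expand $[(M-\EE M)^2]_{S,T}$ as $\sum_U (M-\EE M)_{S,U}(M-\EE M)_{U,T}$ and take expectations. Independence and mean-zero of distinct $Z_R$ force $S\triangle U = U\triangle T$ in each surviving term, hence $S=T$; thus $\EE[(M-\EE M)^2]$ is diagonal. Its diagonal entry equals $|\{U : |U|=\ell,\, |S\triangle U| = p\}|$, and parametrizing $U$ by choosing $\ell - p/2$ elements of $S$ and $p/2$ elements of $[n]\setminus S$ yields the $S$-independent value ${\ell\choose p/2}{n-\ell\choose p/2}$, proving simultaneously the isotropy $\EE[(M-\EE M)^2] = \sigma(M)^2\id$ and the stated formula for $\sigma(M)^2$.

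For $v(M)^2$ I would use the variational characterization $v(M)^2 = \sup_{\tr|A|^2 \le 1} \EE[|\tr[A(M-\EE M)]|^2]$. Regrouping by $R := S\triangle T$ and using independence of the $Z_R$ gives $\EE[|\tr[A(M-\EE M)]|^2] = \sum_{|R|=p} |a_R|^2$ with $a_R := \sum_{(S,T):\,S\triangle T=R} A_{T,S}$. The crucial combinatorial observation is that the orbit size $N := |\{(S,T) : |S|=|T|=\ell,\, S\triangle T = R\}| = {p\choose p/2}{n-p\choose \ell - p/2}$ is independent of $R$: choose a bipartition of $R$ into $S\setminus T$ and $T\setminus S$ of equal size $p/2$, and choose the common part $S\cap T\subseteq [n]\setminus R$ of size $\ell - p/2$. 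Cauchy--Schwarz then yields $\sum_R |a_R|^2 \le N\,\tr|A|^2$, with equality when $A$ is supported on and constant along a single orbit $\{(S,T):\, S\triangle T = R_0\}$, giving $v(M)^2 = N$ as claimed.

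The asymptotics follow at once from Stirling: with $p, \ell$ fixed and $n\to\infty$, the binomials ${n-\ell\choose p/2}$ and ${n-p\choose \ell - p/2}$ are of order $n^{p/2}$ and $n^{\ell-p/2}$ respectively, while the remaining prefactors are $O(1)$. I do not foresee a genuine obstacle in this lemma: the argument is purely combinatorial, and the transitivity of the natural symmetric-group action on $p$-subsets and on ordered pairs of $\ell$-subsets with fixed symmetric difference makes all the relevant counts orbit-invariant, which is precisely what drives both isotropy in the $\sigma$-calculation and the tightness of Cauchy--Schwarz in the $v$-calculation.
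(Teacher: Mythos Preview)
Your proposal is correct and follows essentially the same approach as the paper. The $\sigma(M)^2$ computation is identical; for $v(M)^2$ you argue via the variational characterization and Cauchy--Schwarz on each orbit, while the paper phrases the same observation as ``$\mathrm{Cov}(M)$ is block-diagonal with blocks $1_m1_m^*$, hence $\|\mathrm{Cov}(M)\|=m$'' --- these are equivalent formulations of the same combinatorial fact.
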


\begin{proof}
We readily compute for $|S|=|T|=\ell$
$$
	\EE[(M-\EE M)^2]_{S,T} =
	\sum_{|R|=\ell} \EE[Z_{S\triangle R}Z_{T\triangle R}] 
	= 
	|\{R\subseteq[n]:|R|=\ell,|R\triangle S|=p\}|
	\,1_{S=T}
$$
using $S\triangle R=T\triangle R$ if and only if $S=T$.
As $|R|=|S|$, we have $|R\backslash S|=|S\backslash R|=
\frac{1}{2}|R\triangle S|$, so each $R$ satisfying
$|R|=\ell,|R\triangle S|=p$ is formed by replacing $\frac{p}{2}$ elements 
of $S$ by $\frac{p}{2}$ elements not in $S$. The number of all such $R$ 
is evidently $\sigma^2$.

Now note that $|S|=|T|=\ell$ and $|U|=p$ satisfy
$M_{S,T}-\EE M_{S,T} = Z_U$ if and only if $S\triangle T = U$. 
Thus given $U$, all such $S,T$ are formed by choosing $\frac{p}{2}$ 
elements of $U$ to place in $S$ (the remaining ones are placed in $T$), 
then choosing $\ell-\frac{p}{2}$ elements not in $U$ to place in
$S\cap T$. Thus there are $m:={p\choose p/2}{n-p\choose \ell-p/2}$ matrix
elements of $M-\EE M$ that coincide with each independent standard 
Gaussian variable $Z_U$. As $\mathrm{Cov}(M)$ is block-diagonal 
with blocks of the form $1_m1_m^*$, the conclusion follows.
\end{proof}

Next, we show that $\EE M$ is approximately of low rank.

\begin{lem}
\label{lem:tensor2}
Let $\mathrm{s}_1\ge\cdots\ge\mathrm{s}_d$ be the singular values
of $\EE M$. Then
$$
	\lambda_{\rm max}(\EE M)=
	\mathrm{s}_1 = 
	{\ell \choose p/2} {n-\ell \choose p/2}	\lambda ,\qquad\quad
	\mathrm{s}_{r+1} \le \frac{p}{n}\mathrm{s}_1,
$$
where $r = {n \choose \ell-p/2}\asymp n^{\ell-\frac{p}{2}}$ as
$n\to\infty$ (with $p,\ell$ fixed).
\end{lem}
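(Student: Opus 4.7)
The plan is to reduce both claims to the spectral analysis of the combinatorial $0$-$1$ matrix $A_{S,T}:=1_{|S\triangle T|=p}$ on $\binom{[n]}{\ell}$. Using $x_i^2=1$ one has $X_{S\triangle T}=X_S X_T$, so setting $D:=\diag((X_S)_{|S|=\ell})$ (a sign diagonal, hence orthogonal) gives $\EE M=\lambda\,D A D$; this preserves singular values, so it suffices to analyze $\lambda A$.

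For $\mathrm{s}_1$, the matrix $A$ is nonnegative, symmetric, and has constant row sum $\binom{\ell}{p/2}\binom{n-\ell}{p/2}$: for fixed $S$, a $T$ with $|S\triangle T|=p$ is obtained by choosing the $p/2$ elements of $S$ to remove and the $p/2$ elements outside $S$ to add. Perron--Frobenius therefore identifies this row sum with $\lambda_{\max}(A)$ (with the all-ones eigenvector), giving $\lambda_{\max}(\EE M)=\mathrm{s}_1=\lambda\binom{\ell}{p/2}\binom{n-\ell}{p/2}$.

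For the bound on $\mathrm{s}_{r+1}$, the plan is to exhibit a rank-$r$ approximation to $A$. I would consider the inclusion matrix $W\in\{0,1\}^{\binom{[n]}{\ell}\times\binom{[n]}{\ell-p/2}}$ with $W_{S,R}:=1_{R\subseteq S}$, for which $\mathrm{rank}(WW^T)\le r$. A direct count gives $(WW^T)_{S,T}=\binom{|S\cap T|}{\ell-p/2}$, which equals $A_{S,T}$ precisely when $|S\triangle T|=p$. Since $W$ is $S_n$-equivariant and $A$ is $S_n$-invariant, the orthogonal projection $P$ onto $\mathrm{im}(W)$ commutes with $A$, so $A=PAP+(I-P)A(I-P)$ with $\mathrm{rank}(PAP)\le r$, and hence $\mathrm{s}_{r+1}(A)\le\|(I-P)A(I-P)\|$.

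The remaining step, and the main obstacle, is to show $\|(I-P)A(I-P)\|\le\frac{p}{n}\mathrm{s}_1$. The image of $W$ is exactly the sum $V_0\oplus\cdots\oplus V_{\ell-p/2}$ of the first Johnson-scheme isotypic components, and by Schur's lemma $A$ acts as a scalar $E_{p/2}(i)$ on $V_i$, where $E_{p/2}$ is an Eberlein polynomial. Thus $\|(I-P)A(I-P)\|=\max_{i>\ell-p/2}|E_{p/2}(i)|$, and it suffices to prove $|E_{p/2}(i)|\le\frac{p}{n}E_{p/2}(0)$ for $i>\ell-p/2$. The explicit sum defining $E_{p/2}(i)$ contains factors $\binom{\ell-i}{p/2-h}$ that kill the leading $n^{p/2}$-contribution precisely when $i>\ell-p/2$, leaving a sub-leading term of order $n^{p/2-1}$; combined with $E_{p/2}(0)=\binom{\ell}{p/2}\binom{n-\ell}{p/2}\asymp n^{p/2}$ this produces the factor $1/n$, with the numerical constant bounded using $\ell-p/2+1\le 2\binom{\ell}{p/2}$ in the regime $\ell<3p/4$.
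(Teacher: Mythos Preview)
Your approach is essentially the one underlying the paper's proof, which simply cites \cite{WEM19}. The results invoked there (Proposition~A.1 and Lemma~A.3) are exactly the Johnson-scheme spectral analysis you set up: the matrix $A$ has $\ell+1$ distinct eigenvalues, namely the Eberlein values $E_{p/2}(0),\ldots,E_{p/2}(\ell)$, with multiplicities $\binom{n}{m}-\binom{n}{m-1}$, and the paper obtains $r=\binom{n}{\ell-p/2}$ by telescoping these multiplicities over $m\le\ell-p/2$. Your packaging via the inclusion matrix $W$ and the equivariance argument that $P$ commutes with $A$ is a clean alternative route to the same decomposition, and your identification of $\mathrm{im}(W)$ with $V_0\oplus\cdots\oplus V_{\ell-p/2}$ is the standard fact that makes the two computations of $r$ agree.

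Where your proposal remains incomplete is the final paragraph. You correctly reduce everything to the inequality $|E_{p/2}(i)|\le\frac{p}{n}E_{p/2}(0)$ for $i>\ell-p/2$, and your heuristic is the right one: the $h=0$ term of the Eberlein sum vanishes in this range, dropping the order in $n$ by at least one. But the lemma asserts the sharp non-asymptotic constant $p/n$, and your sketch does not control the remaining alternating sum tightly enough to extract that constant; your appeal to the restriction $\ell<3p/4$ also indicates you may be proving something weaker than the lemma as stated, which carries no such hypothesis. This inequality is precisely \cite[Lemma~A.3]{WEM19}, and the paper cites it rather than reproving it; to make your argument self-contained you would need to carry that estimate through in full.
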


\begin{proof}
The first statement is \cite[eq.\ (14)]{WEM19}. 
Next, by \cite[Proposition A.1]{WEM19}, the matrix $\frac{\EE 
M}{\lambda}$
has $\ell+1$ \emph{distinct} eigenvalues $\mu_0>\cdots>\mu_\ell$ 
so that $\mu_m$ has multiplicity ${n\choose m}-{n\choose m-1}$.
Moreover, it is shown in the proof of \cite[Lemma A.3]{WEM19}
that $|\mu_m|\le \frac{p}{n}\mu_0$ for $m>\ell-\frac{p}{2}$. It follows
readily that $\mathrm{s}_{r+1} \le \frac{p}{n}\mathrm{s}_1$ for
$$
	r = \sum_{m=0}^{\ell-p/2}
	\bigg( {n\choose m}-{n\choose m-1}\bigg) =
	{n \choose \ell-p/2},
$$
concluding the proof.
\end{proof}

We can now complete the proof of Theorem \ref{thm:kikuchi}.

\begin{proof}[Proof of Theorem \ref{thm:kikuchi}]
Let $d={n\choose\ell}\asymp n^\ell$ and
$r={n\choose \ell-p/2}\asymp n^{\ell-\frac{p}{2}}$. 
By Lemma \ref{lem:tensor2}, we can decompose $\EE M = B+(\EE M-B)$
so that $B$ has rank $r$, $\lambda_{\rm max}(B)=\lambda k_*$, and
$\|\EE M-B\|\le \frac{\lambda p}{n}k_*$. Now define the random matrix
$$
	X := k_*^{-\frac{1}{2}}(M-\EE M+B).
$$
Then $\EE[(X-\EE X)^2]=\id$ and $v(X) \lesssim n^{\frac{\ell-p}{2}}$
by Lemma \ref{lem:tensor1}. We obtain
$$
	\mathbf{P}\big[
	|\lambda_{\rm max}(X)-\mathrm{B}(\lambda k_*^{\frac{1}{2}})|
	>
	C n^{\frac{4\ell-3p}{4}} +
	C n^{\frac{\ell-p}{4}}(\log d)^{\frac{3}{4}}
	\big] \le \frac{1}{d^\beta}
$$
by applying Corollary \ref{cor:norm} and Theorem \ref{thm:bbp} with 
$t\leftarrow \sqrt{\beta\log d}$, where we used that $\sigma_*(X)\le v(X)$ and 
$C$ depends on $\beta$. Therefore
$$
	\mathbf{P}\big[
	|\lambda_{\rm max}(k_*^{-\frac{1}{2}}M)
	-\mathrm{B}(\lambda k_*^{\frac{1}{2}})|
	>
	C p n^{-1} \lambda k_*^{\frac{1}{2}} +
	C n^{\frac{4\ell-3p}{4}} +
	C n^{\frac{\ell-p}{4}}(\log d)^{\frac{3}{4}}
	\big] \le \frac{1}{d^\beta}.
$$
Finally, note that as $p,\ell$ are integers, $\ell<\frac{3p}{4}$ implies
that $4\ell\le 3p-1$. Thus 
$$
	n^{\frac{4\ell-3p}{4}}\le n^{-\frac{1}{4}},\qquad\qquad
	n^{\frac{\ell-p}{4}} \le n^{-\frac{p}{16}} \le n^{-\frac{1}{4}}.
$$
Since $\mathrm{B}(\lambda k_*^{\frac{1}{2}})=2$ when
$\lambda\le k_*^{-\frac{1}{2}}$ and
$\mathrm{B}(\lambda k_*^{\frac{1}{2}})\ge \mathrm{B}(1+\varepsilon)>2$
when $\lambda\ge (1+\varepsilon) k_*^{-\frac{1}{2}}$, the final part of 
the theorem follows readily.
\end{proof}

\begin{rem}
It is readily read off from the proof that the final part of the statement 
of Theorem \ref{thm:kikuchi} can be impoved in various ways: the exponent 
of $n^{-\frac{1}{5}}$ in the definition of the test can be improved in a 
manner depending on the choice of $\ell,p$, while the conclusion remains 
valid when $\varepsilon = n^{-\alpha}$ for a suitable choice of 
$\alpha>0$. Since the precise exponents provided by the proof are not 
expected to be optimal, we have stated a slightly weaker result for 
simplicity of exposition.
\end{rem}

\subsection{Spike detection in block-structured models}
\label{sec:aniso}

\begin{proof}[Proof of Theorem \ref{thm:aniso}]
Note that $X,X_\varnothing$ are precisely of the form \eqref{eq:krz} 
with $\mathbf{B}=\frac{1}{\boldsymbol{\Delta}}$ and $z = x$.
Moreover, that
$\sigma(X)^2 \le \frac{2}{d}\|\mathbf{B}1_d\|_\infty \le 2\beta$
and $\sigma_*(X)^2\le v(X)^2\le \frac{4\beta}{d}$ follows from
a straightforward computation. Thus
$$
	\mathbf{P}\big[
	|\lambda_{\rm max}(X)-\lambda_{\rm max}(X_{\rm free})|
	>
	C\beta^{\frac{1}{2}}d^{-\frac{1}{4}}
	(\log d)^{\frac{3}{4}}
	\big]
	\le e^{-d^{\frac{1}{2}}}
$$
by applying Corollary \ref{cor:norm} with $t\leftarrow d^{\frac{1}{4}}$,
and the analogous bound holds for $X_{\varnothing}$. Now note that for
$\lambda,\lambda_\varnothing$ as in Theorem \ref{thm:krz}, we have
$$
	|\lambda_{\rm max}(X_{\rm free})-\lambda| \le
	\sqrt{\frac{8q\beta}{d}},
	\qquad\quad
	|\lambda_{\rm max}(X_{\varnothing,\rm free})-\lambda_\varnothing| \le
	\sqrt{\frac{8q\beta}{d}}.
$$
The conclusion follows from Theorem \ref{thm:krz}, where we set
$\mu=\lambda_\varnothing$.
\end{proof}

\begin{rem}
\label{rem:gensignal}
The assumption that $x\in \{-1,+1\}^d$ was used in the proof only in order 
to ensure the assumption of section \ref{sec:krzdefn} that
$\sum_{i\in C_k}z_i^2 = |C_k|$ for each $k$.

Let us consider instead the case that $x$ is a random vector with i.i.d.\ 
entries such that $\EE[x_i^2]=1$, as is assumed in \cite{KKP23}.
Then the above condition does not hold exactly for $z\leftarrow x$, but it 
holds approximately by the law of large numbers. In particular, in this 
case we may choose $z\in\mathbb{R}^d$ to be defined by
$$
	z_i = 
	\frac{x_i}{\big(\frac{1}{|C_k|}\sum_{j\in C_k}
	x_j^2\big)^\frac{1}{2}}
	\quad\text{for all }i\in C_k,~k\in[q].	
$$
Then $z$ satisfies the assumption of section \ref{sec:krzdefn} by 
construction, while the law of large numbers ensures that we have
$$
	\bigg\|
	\frac{1}{d}\diag(z)\mathbf{B}\diag(z)-
	\frac{1}{d}\diag(x)\mathbf{B}\diag(x)
	\bigg\|
	= o(1)
$$
with probability $1-o(1)$ as long as $\min_k|C_k|\to\infty$ 
and $q,\beta$ do not grow too rapidly. We can therefore replace $x$ by $z$ in 
the analysis up to a negligible error, and the remainder of the analysis
proceeds verbatim as in the proof of Theorem \ref{thm:aniso}.

The above argument is readily implemented without any further assumption 
in the asymptotic setting where $q,\Delta,c$ and the distribution of $x_i$ 
are fixed as $d\to\infty$ by using the classical law of large numbers. 
However, implementing this procedure in a nonasymptotic setting would 
require us to quantify the error in the law of large numbers. This is 
readily accomplished in many situations, but the details of the bounds 
depend on the precise assumptions that are made on the distribution of 
$x_i$ (for example, if they are subgaussian, we may use Bernstein's 
inequality to obtain nonasymptotic bounds.) As this part of the 
argument is completely independent of the random matrix analysis, we 
do not pursue it further here.
\end{rem}

We now turn to the proof of Theorem \ref{thm:anisoeigen}.

\begin{proof}[Proof of Theorem \ref{thm:anisoeigen}]
Let $B=\frac{1}{\Delta}$.
We aim to apply Lemma \ref{lem:perturb} with $P=\frac{1}{d}xx^*$. 
Let us therefore define $X_t := X+\frac{t}{d}xx^*$, so that
in the notation of Lemma \ref{lem:meanA}
$$
	\EE X_t = \mathrm{A}\big(
	\diag(c)^{\frac{1}{2}}(B+t1_q1_q^*)\diag(c)^{\frac{1}{2}} -
	\diag(Bc),
	-Bc
	\big).
$$
Note that the precise form 
of $\EE X$ played no role in the proof of Proposition \ref{prop:lambda}, 
so that it transfers verbatim to the present setting. In particular, if
we define
\begin{align*}
	\lambda_t :=
	\inf_{v>0} \max\big\{
	&
	\lambda_{\rm max}\big(
		\diag(c)^{\frac{1}{2}}(B+t1_q1_q^*)\diag(c)^{\frac{1}{2}} +
		\diag\big(B\diag(c)(v-1_q)\big)
	\big), \\
	&
	\lambda_{\rm max}\big(
		\diag(v)^{-1} + \diag\big(B\diag(c)(v-1_q)\big)
	\big)
	\big\},
\end{align*}
then the proofs of Proposition \ref{prop:lambda} and Theorem 
\ref{thm:aniso} readily yield
$$
	\mathbf{P}\Big[
	|\lambda_{\rm max}(X_t)-\lambda_t|
	>
	C\beta^{\frac{1}{2}}\Big(\tfrac{(\log 
		d)^{\frac{3}{4}}}{d^{\frac{1}{4}}}+
		\tfrac{q^{\frac{1}{2}}}{d^{\frac{1}{2}}}\Big)
	\Big]
	\le e^{-d^{\frac{1}{2}}}
$$
for every $t\in\mathbb{R}$. We therefore obtain for any $t>0$
$$
	\frac{\lambda_0-\lambda_{-t}}{t}-o(1)
	\le
	\frac{1}{d}
	|\langle x,v_{\rm max}(X)
	\rangle|^2 \le \frac{\lambda_t-\lambda_0}{t} + o(1)
$$
with probability $1-o(1)$ as $d\to\infty$ using Lemma \ref{lem:perturb}.

The major simplification of the asymptotic setting where $q,B,c$ are fixed 
as $d\to\infty$ is that the definition of $\lambda_t$ is then independent 
of $d$. To conclude the proof it therefore suffices to gain a qualitative, 
rather than quantitative, understanding of the behavior of 
$\lambda_t-\lambda_0$. In the remainder of the proof, we will consider the
three cases $\mathrm{SNR}(\Delta)<1$, $\mathrm{SNR}(\Delta)>1$, and
$\mathrm{SNR}(\Delta)=1$ separately.

\medskip

\textbf{Case 1.} Suppose that $\mathrm{SNR}(\Delta)<1$, so that
$\lambda_0=:\lambda=\lambda_\varnothing$ by Theorem \ref{thm:krz}.
Denote by $v^*$ the minimizer in the definition of $\lambda_0$. 
Then it can be read off from the proof of Lemma \ref{lem:phasereg} and
from Lemma \ref{lem:phasebd} that
\begin{align*}
	&\lambda_{\rm max}\big(
		\diag(c)^{\frac{1}{2}}B\diag(c)^{\frac{1}{2}} +
		\diag\big(B\diag(c)(v^*-1_q)\big)
	\big) < \lambda_0,\\
	&
	\lambda_{\rm max}\big(
		\diag(v^*)^{-1} + \diag\big(B\diag(c)(v^*-1_q)\big)
	\big) = \lambda_0.
\end{align*}
Thus choosing $v\leftarrow v^*$ in the definition of 
$\lambda_t$ shows that $\lambda_t\le\lambda_0$ when $t>0$ is sufficiently 
small. The conclusion follows immediately.

\medskip

\textbf{Case 2.} Suppose that $\mathrm{SNR}(\Delta)>1$, so that $\lambda_0 
=: \lambda > \lambda_\varnothing$ by Theorem \ref{thm:krz}. Denote by 
$v^*$ the minimizer in the definition of $\lambda_0$. Then it follows from 
step 2 in the proof of Theorem \ref{thm:krz} that we must have 
\begin{align}
\label{eq:aniso1st}
	&
	\lambda_{\rm max}\big(
		\diag(c)^{\frac{1}{2}}B\diag(c)^{\frac{1}{2}} +
		\diag\big(B\diag(c)(v^*-1_q)\big)
	\big) = \lambda_0,\\
\label{eq:aniso2nd}
	&
	\lambda_{\rm max}\big(
		\diag(v^*)^{-1} + \diag\big(B\diag(c)(v^*-1_q)\big)
	\big) = \lambda_0.
\end{align}
Moreover, $v^*$ is not a minimizer of the left-hand side of 
\eqref{eq:aniso2nd}, as that would contradict 
$\lambda>\lambda_\varnothing$. Now note that the largest eigenvalue in
\eqref{eq:aniso1st} is simple and the associated eigenvector $w>0$ has 
strictly positive entries by the Perron-Frobenius theorem.
Thus $w$ is not orthogonal to 
$\diag(c)^{\frac{1}{2}}1_q$, so we must have\footnote{%
	Let $M$ be a self-adjoint matrix whose top eigenvalue is
	simple with eigenvector $w$, and let $x$ be a vector not 
	orthogonal to $w$. Then
	we have $\frac{d}{dt}\lambda_{\rm 
	max}(M-txx^*)|_{t=0}= - |\langle x,w\rangle|^2<0$.
}%
\begin{equation}
\label{eq:aniso3rd}
	\lambda_{\rm max}\big(
		\diag(c)^{\frac{1}{2}}(B-t1_q1_q^*)\diag(c)^{\frac{1}{2}} +
		\diag\big(B\diag(c)(v^*-1_q)\big)
	\big) < \lambda_0	
\end{equation}
for every $t>0$. But as $v^*$ is not a minimizer of \eqref{eq:aniso2nd}, 
we can slightly perturb $v_*$ to decrease the latter while preserving the
strict inequality in \eqref{eq:aniso3rd}.
This shows that $\lambda_{-t}<\lambda_0$ for every $t>0$. The conclusion 
follows immediately.

\medskip

\textbf{Case 3.} Suppose that $\mathrm{SNR}(\Delta)=1$.
Then $\lambda_0=:\lambda = 1$ by Theorem \ref{thm:krz}, and
$v^*=1_q$ is a minimizer in the definition of $\lambda_0$.

Denote by $b>0$ be the Perron-Frobenius eigenvector of 
$B\diag(c)$, and choose $s>0$ sufficiently large that
$\lambda_{\rm max}(\diag(c)^{\frac{1}{2}}1_q1_q^*
\diag(c)^{\frac{1}{2}}- s\diag(b))\le 0$.
Then choosing $v\leftarrow 1_q-tsb$ in the definition of
$\lambda_t$ readily yields
$$
	\lambda_t - \lambda_0 \le
	\max\Big\{0,
	\max_i \Big\{ \frac{1}{1-tsb_i} - 1 - tsb_i\Big\}
	\Big\}
$$
for all sufficiently small $t>0$, where we used that $\lambda_{\rm 
max}(\diag(c)^{\frac{1}{2}}B\diag(c)^{\frac{1}{2}})=: 
\mathrm{SNR}(\Delta)=1$ implies that the Perron-Frobenius eigenvalue of 
$B\diag(c)$ is $1$.

To conclude, note that $\frac{1}{1-x}-1-x = \frac{x^2}{1-x}$, so 
we have shown that 
$\lambda_t - \lambda_0 \le O(t^2)$ for $t>0$ sufficiently small.
The conclusion follows immediately.
\end{proof}

\subsection{Contextual stochastic block models}
\label{sec:pfcontext}

\begin{proof}[Proof of Theorem \ref{thm:context}]
Let $d=n+p$, and partition $[d]=C_1\sqcup C_2$ into $C_1=\{1,\ldots,n\}$ 
and $C_2=\{n+1,\ldots,n+p\}$. Define $B\in\mathrm{M}_2(\mathbb{R})_{\rm sa}$
and $\hat z\in\mathbb{R}^d$ as
$$
	B = \frac{d}{n}
	\begin{bmatrix}
	\lambda^2 & \mu \\
	\mu & 0
	\end{bmatrix},
	\qquad\quad
	\hat z = \begin{bmatrix}
	v \\ u\sqrt{p} \end{bmatrix}.
$$
Then the random matrix $\hat X$ is of the form \eqref{eq:krz} with 
$z\leftarrow \hat z$.

Next, define the random matrix $X$ as in \eqref{eq:krz} with
$$
	z = \begin{bmatrix}
        v \\ \frac{u\sqrt{p}}{\|u\|} \end{bmatrix}.
$$
The random matrix $X$ satisfies all the assumptions of section 
\ref{sec:krzdefn}. On the other hand, as $u\sim N(0,\frac{1}{p}\id_p)$, we 
have $\|u\|=1+o(1)$ with probability $1-o(1)$ by the law of large numbers.
Using $\|\diag(x)M\diag(y)\|\le \max_{ij}|M_{ij}|\,\|x\|\,\|y\|$, we 
obtain
$$
	\|X-\hat X\| 
	\le
	\frac{1}{d}
	\max_{i,j}|B_{ij}| \, 
	(\|z\|+\|\hat z\|)
	\,\|z-\hat z\| = o(1)
$$
with probability $1-o(1)$, where we used that $\max_{i,j}|B_{ij}| \to
(1+\frac{1}{\gamma})\max\{\lambda^2,\mu\}$.

To reason about $\hat v$, we would like to apply
Lemma \ref{lem:perturb} to $\hat X_t = \hat X + \frac{t}{n} 
\II_{C_1}zz^*\II_{C_1}^*$. Approximating $\hat X$ by $X$ and reasoning
as in the proof of Theorem \ref{thm:anisoeigen}, we have
$$
	\lambda_{\rm max}(\hat X_t) = \lambda_t + o(1)
	\text{ with probability }1-o(1),
$$
where $\lambda_t$ is defined by
\begin{align*}
	\lambda_t :=
	\inf_{v>0} \max\big\{
	&
	\lambda_{\rm max}\big(
		t e_1 e_1^* +
		\diag(c)^{\frac{1}{2}}\bar B\diag(c)^{\frac{1}{2}} +
		\diag\big(\bar B\diag(c)(v-1_2)\big)
	\big), \\
	&
	\lambda_{\rm max}\big(
		\diag(v)^{-1} + \diag\big(\bar B\diag(c)(v-1_2)\big)
	\big)
	\big\}
\end{align*}
where we define
$$
	\bar B = (1+\tfrac{1}{\gamma})
	\begin{bmatrix}
	\lambda^2 & \mu \\[.1cm]
	\mu & 0
	\end{bmatrix},
	\qquad\quad
	c = \begin{bmatrix}
	\frac{\gamma}{1+\gamma} \\[.1cm] \frac{1}{1+\gamma}
	\end{bmatrix}.
$$
We can now follow the remainder of the proof of Theorem
\ref{thm:anisoeigen} verbatim to show that there is asymptotically
positive overlap between $v$ and $\hat v$ if and only if
$$
	\frac{1}{2}\Big(
	\lambda^2 + \sqrt{\lambda^4+\tfrac{4\mu^2}{\gamma}}\Big)
	=
	\lambda_{\rm max}\big(\diag(c)^{\frac{1}{2}}\bar B
	\diag(c)^{\frac{1}{2}}\big)>1.
$$
Now note that
$$
	\frac{1}{2}\Big(
	\lambda^2 + \sqrt{\lambda^4+\tfrac{4\mu^2}{\gamma}}\Big) = 1
	\quad\text{if and only if}\quad
	\lambda^2+\tfrac{\mu^2}{\gamma} = 1 
$$
and both $\frac{1}{2}\big(
\lambda^2 + \sqrt{\lambda^4+\tfrac{4\mu^2}{\gamma}}\big)$ and
$\lambda^2+\tfrac{\mu^2}{\gamma}$ are monotone in $\lambda^2$ and $\mu^2$, 
so
$$
	\frac{1}{2}\Big(
	\lambda^2 + \sqrt{\lambda^4+\tfrac{4\mu^2}{\gamma}}\Big) > 1
	\quad\text{if and only if}\quad
	\lambda^2+\tfrac{\mu^2}{\gamma} > 1. 
$$
This concludes the proof.
\end{proof}

\subsection{Sample covariance error}
\label{sec:pfscov}

In the following, we define $\hat\Sigma = \frac{1}{n}XX^*$ and $\Sigma$ as 
in \eqref{eq:scov}---\eqref{eq:scovrk1} and let $\delta=\frac{p}{n}$, 
$d=\max\{n,p\}$. We begin by applying Theorem \ref{thm:quad}.

\begin{lem}
\label{lem:quadapp}
For $n \ge (\log d)^3$, we have
\begin{align*}
&	\mathbf{P}\big[\big|\|\hat\Sigma\|-
	\|\tfrac{1}{n}X_{\rm free}X_{\rm free}^*\|\big|
	>
	C(1+\lambda+\delta)\,n^{-\frac{1}{4}}
	(\log d)^{\frac{3}{4}}
	\big] \le
	e^{-Cn^{\frac{1}{2}}},
\\
&	\mathbf{P}\big[\mathrm{d_H}\big(\spc(\hat\Sigma-\Sigma),
	\spc(
	\tfrac{1}{n}X_{\rm free}X_{\rm free}^*-\Sigma\otimes\id)\big)
	>
	C(1+\lambda+\delta)\,n^{-\frac{1}{4}}
	(\log d)^{\frac{3}{4}}
	\big] \le
	e^{-Cn^{\frac{1}{2}}}.
\end{align*}
\end{lem}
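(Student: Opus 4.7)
The approach is to apply Theorem \ref{thm:quad} twice, after rescaling. Set $Y := X/\sqrt{n}$, so that $\hat\Sigma = YY^*$ and, by linearity of the noncommutative model in \eqref{eq:xfree}, $Y_{\rm free} = X_{\rm free}/\sqrt{n}$; hence $Y_{\rm free}Y_{\rm free}^* = \frac{1}{n}X_{\rm free}X_{\rm free}^*$ is exactly the operator on the right-hand side of the lemma. For the first claim, I would apply Theorem \ref{thm:quad} with $B = 0$ and then use $|\|A\|-\|B\||\le\mathrm{d_H}(\spc(A),\spc(B))$ (which holds for positive operators via $\|\cdot\|=\lambda_{\rm max}$). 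For the second claim, I would apply Theorem \ref{thm:quad} with $B = -\Sigma$.

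The key computation is the estimation of the matrix parameters of $Y$. Writing $Y = n^{-1/2}\Sigma^{1/2}\Gamma$ with $\Gamma$ having i.i.d.\ standard Gaussian entries, a direct calculation yields $\|\EE[YY^*]\| = 1+\lambda$, $\|\EE[Y^*Y]\| = \tr(\Sigma)/n \le \delta + \lambda/n$, and $\mathrm{Cov}(Y) = n^{-1}\Sigma\otimes\id_n$, from which
$$
\sigma(Y)^2 \le 1+\lambda+\delta,\qquad
v(Y)^2 = \sigma_*(Y)^2 = \tfrac{1+\lambda}{n},\qquad
\tilde v(Y) \le \tfrac{(1+\lambda+\delta)^{1/2}}{n^{1/4}}.
$$
Moreover, since $\EE Y = 0$, the dilation trick combined with Pisier's free Khintchine bound (as in Lemma \ref{lem:spcov}) yields $\|Y_{\rm free}\| \le 2\sigma(Y) \lesssim (1+\lambda+\delta)^{1/2}$; for the second claim $\|B\|^{1/2} \le (1+\lambda)^{1/2} \lesssim (1+\lambda+\delta)^{1/2}$ as well.

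With these parameters in hand, I would choose
$$
t := C_0\,(1+\lambda+\delta)^{1/2}\,n^{-1/4}(\log d)^{3/4}
$$
for a suitably large constant $C_0$. The hypothesis $t \ge \tilde v(Y)(\log d)^{3/4}$ then holds, and the main bound of Theorem \ref{thm:quad} becomes
$$
C\{\|Y_{\rm free}\|+\|B\|^{1/2}\}t + Ct^2
\lesssim (1+\lambda+\delta)\,n^{-1/4}(\log d)^{3/4},
$$
where the assumption $n \ge (\log d)^3$ is used precisely to absorb the quadratic term $Ct^2 \lesssim (1+\lambda+\delta)\,n^{-1/2}(\log d)^{3/2}$ into the linear one. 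The tail exponent evaluates to $t^2/\sigma_*(Y)^2 \gtrsim (1+\lambda+\delta)(\log d)^{3/2}\,n^{1/2}/(1+\lambda) \gtrsim n^{1/2}$, giving the desired $e^{-Cn^{1/2}}$.

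The one technical—rather than conceptual—point is that Theorem \ref{thm:quad} is stated for square matrices while $Y$ is $p\times n$. This is handled by the standard zero-padding device (cf.\ \cite[Remark 2.6]{BBV23}): embedding $Y$ into a $d\times d$ matrix with $d = \max\{n,p\}$ preserves $\sigma(Y)$, $v(Y)$, $\sigma_*(Y)$, $\|Y_{\rm free}\|$ and the nonzero parts of $\spc(YY^*+B)$ and $\spc(Y_{\rm free}Y_{\rm free}^* + B\otimes\id)$, so applying the theorem to the padded matrix and discarding the trivial zero eigenvalues recovers the statement. Everything else is bookkeeping; the real content is carried by Theorem \ref{thm:quad} together with the parameter estimates above.
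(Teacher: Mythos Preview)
Your proof is correct and essentially identical to the paper's own argument: both apply Theorem~\ref{thm:quad} to $n^{-1/2}X$ with $B\in\{0,-\Sigma\}$, compute the same parameter estimates $\sigma(Y)^2\lesssim 1+\lambda+\delta$, $v(Y)^2=\sigma_*(Y)^2=(1+\lambda)/n$, $\|Y_{\rm free}\|\le 2\sigma(Y)$, choose $t\asymp (1+\lambda+\delta)^{1/2}n^{-1/4}(\log d)^{3/4}$, and invoke the same zero-padding device for the rectangular case. Your explicit remark that the hypothesis $n\ge(\log d)^3$ is what absorbs the $Ct^2$ term is a nice clarification the paper leaves implicit.
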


\begin{proof}
We readily compute 
$$
	\sigma_*(X)^2=v(X)^2=\|\Sigma\| =1+\lambda
$$
and
$$
	\sigma(X)^2=n\max\{1+\lambda,\delta+\tfrac{\lambda}{n}\}
	\le 2n (1+\lambda+\delta).
$$
Moreover, note that $\|X_{\rm free}\| \le 2\sigma(X)$ by \cite[p.\ 208]{Pis03}.
The conclusion now follows readily by applying
Theorem \ref{thm:quad} with $X\leftarrow n^{-\frac{1}{2}}X$,
$t\leftarrow
2 (1+\lambda+\delta)^{1/2} n^{-\frac{1}{4}} (\log d)^{\frac{3}{4}}$,
and either $B\leftarrow 0$ or $B\leftarrow -\Sigma$, respectively.
(Note that while Theorem \ref{thm:quad} is formulated for $d\times d$
matrices $X$, it is applicable here as we can always add enough zero rows 
or columns to $X$ to make
it $d\times d$ without changing the relevant norms.)
\end{proof}

We must now estimate the spectra of the free operators that appear in 
the above lemma. In principle, this can be achieved using methods that are 
sketched in the work of Lehner \cite[\S 5]{Leh97}, which requires some 
lengthy computations. These computations are considerably simplified by a 
quadratic counterpart of Lehner's formula obtained in \cite{EvH24}. For 
our present purposes, we may work with the following result that arises as 
a special case of the general formulas in \cite{EvH24}.

\begin{lem}
\label{lem:emre}
Let $X$ be the $p\times n$ random matrix whose columns are i.i.d.\ 
$N(0,\Sigma)$, and denote the eigenvalues of $\Sigma$ as
$\mu_1\ge\cdots\ge\mu_p\ge 0$.
Then
\begin{align*}
	\|\tfrac{1}{n}X_{\rm free}X_{\rm free}^*\| &=
	\inf_{0<a<1} \inf_{x\in\Delta_p} \max_{i\in[p]}
	\bigg\{ \frac{\mu_i}{nax_i} + \frac{\mu_i}{1-a}\bigg\},
\\
	\lambda_{\rm max}(
	\tfrac{1}{n}X_{\rm free}X_{\rm free}^*-\Sigma\otimes\id) &=
	\inf_{0<a<1} \inf_{x\in\Delta_p} \max_{i\in[p]}
	\bigg\{ \frac{\mu_i}{nax_i} + \frac{a\mu_i}{1-a}\bigg\},
\\
	-\lambda_{\rm min}(
	\tfrac{1}{n}X_{\rm free}X_{\rm free}^*-\Sigma\otimes\id) &=
	\inf_{a>0} \inf_{x\in\Delta_p} \max_{i\in[p]}
	\bigg\{ \frac{\mu_i}{nax_i} + \frac{a\mu_i}{1+a}\bigg\},
\end{align*}
where $\Delta_p := \{x\in\mathbb{R}^p: x>0,~\sum_i x_i=1\}$.
\end{lem}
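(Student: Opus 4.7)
My plan is to derive all three identities from Lehner's linear variational principle \eqref{eq:lehner} applied to suitable self-adjoint linearizations of $\tfrac{1}{n}X_{\rm free}X_{\rm free}^*$, combined with the reduction principle of Lemma \ref{lem:reduction} to exploit the strong symmetries of the model.

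\textbf{Dilation.} For the first identity I form the $(p+n)\times(p+n)$ Hermitization
$$Y = \begin{pmatrix} 0 & n^{-1/2} X \\ n^{-1/2} X^* & 0 \end{pmatrix}\in\mathrm{M}_{p+n}(\mathbb{C})_{\rm sa},$$
so that $Y^2 = \diag(\tfrac{1}{n}XX^*,\tfrac{1}{n}X^*X)$ and hence $\|\tfrac{1}{n}X_{\rm free}X_{\rm free}^*\| = \lambda_{\rm max}(Y_{\rm free})^2$. The remaining two identities are obtained from analogous linearizations of $\tfrac{1}{n}XX^* - \Sigma$ and its negative (equivalently, from the quadratic Lehner formula of \cite{EvH24}).

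\textbf{Lehner plus reduction.} Writing $X = \Sigma^{1/2} G$ with $G$ a $p\times n$ matrix of i.i.d.\ $N(0,1)$ entries, the law of $Y$ is invariant under conjugation by $\id_p\oplus U$ for every $U\in U(n)$, and on the $p$-block by any unitary commuting with $\Sigma$. Lemma~\ref{lem:reduction} then permits restriction of the infimum in \eqref{eq:lehner} for $\lambda_{\rm max}(Y_{\rm free})$ to matrices of the form
$$N \;=\; \diag(n_1,\ldots,n_p)\,\oplus\,a^{-1}\id_n,$$
written in the eigenbasis of $\Sigma$. A direct computation using $\EE[\tfrac{1}{n}X^* N_1 X] = n^{-1}\tr(N_1\Sigma)\,\id_n$ and $\EE[\tfrac{1}{n}XX^*] = \Sigma$ yields
$$N^{-1} + \EE[YNY] \;=\; \bigl(\diag(n_i^{-1}) + a^{-1}\Sigma\bigr)\,\oplus\,\bigl(a + n^{-1}{\textstyle\sum_i} n_i\mu_i\bigr)\id_n,$$
and therefore
$$\lambda_{\rm max}(Y_{\rm free}) \;=\; \inf_{a,\,n_i>0}\,\max\!\Bigl\{\max_i\bigl(n_i^{-1}+a^{-1}\mu_i\bigr),\;a+n^{-1}{\textstyle\sum_i}n_i\mu_i\Bigr\}.$$

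\textbf{Reparametrization.} The map $(a,n_i)\mapsto(L,\alpha,x)$ defined by $L := a + n^{-1}\sum_i n_i\mu_i$, $\alpha := (L-a)/L$, and $x_i := n_i\mu_i/(nL\alpha)$ is a bijection onto $(0,\infty)\times(0,1)\times\Delta_p$. Under this change of variables the second branch of the outer max equals $L$, while the first branch equals $M/L$ with $M := \max_i\{\mu_i/(n\alpha x_i) + \mu_i/(1-\alpha)\}$; for fixed $(\alpha,x)$ the inner infimum over $L$ of $\max(L, M/L)$ is attained at $L = \sqrt M$ and equals $\sqrt M$. Squaring $\lambda_{\rm max}(Y_{\rm free})^2 = \|\tfrac{1}{n}X_{\rm free}X_{\rm free}^*\|$ and renaming $\alpha\to a$ yields the first identity. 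The second and third identities follow from the same argument applied to the shifted linearizations; the shift affects only the lower-right block of $N^{-1}+\EE[YNY]$ and hence only the second summand inside the max, producing the replacements $\mu_i/(1-a)\to a\mu_i/(1-a)$ and, via $\lambda_{\rm min}(\cdot) = -\lambda_{\rm max}(-\cdot)$, $\mu_i/(1-a)\to a\mu_i/(1+a)$, the latter bounded for all $a>0$, which explains the disappearance of the constraint $a<1$ in the third identity.

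\textbf{Main obstacle.} The substantive technical work lies in identifying the correct shifted linearizations for the second and third identities, verifying that Lemma~\ref{lem:reduction} applies to the invariant $*$-algebras they induce (in particular that the invariant algebra is preserved by the map $N\mapsto\EE[YNY]$), and carefully tracking how the shift propagates through the reduction to yield the clean single-expression form in the statement.
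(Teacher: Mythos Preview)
The paper does not prove this lemma: it is quoted from \cite{EvH24} as a special case of a quadratic counterpart of Lehner's formula, and the surrounding text says so explicitly. So you are attempting something the paper deliberately outsources.

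Your derivation of the \emph{first} identity is correct and clean: Hermitize, apply \eqref{eq:lehner}, reduce via Lemma~\ref{lem:reduction} to block-diagonal $N$, and reparametrize by $(L,\alpha,x)$. This is a valid self-contained proof of that formula.

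The sketch for the second and third identities, however, has a real gap. There is no self-adjoint Gaussian $\tilde Y$ whose top eigenvalue \emph{squared} equals $\lambda_{\rm max}(\tfrac{1}{n}XX^*-\Sigma)$, because the latter is not a positive operator; the Hermitization trick relies on positivity. The natural linearization is to shift $-\Sigma$ to $B_\beta=-\Sigma+\beta\id\ge 0$ and use the three-block matrix $\breve X$ of Lemma~\ref{lem:lin}, for which $\lambda_{\rm max}(\breve X_{\rm free})^2=\lambda_{\rm max}(\tfrac{1}{n}X_{\rm free}X_{\rm free}^*+B_\beta\otimes\id)$. But then $\EE\breve X$ has the deterministic off-diagonal block $B_\beta^{1/2}$, so the invariant algebra in Lemma~\ref{lem:reduction} is no longer block-diagonal; after reduction the $(1,1)$ and $(3,3)$ variables are coupled through $2\times 2$ blocks whose eigenvalues involve $\sqrt{\beta-\mu_i}$, and the variational problem does not collapse to the single-term form in the statement by ``affecting only the lower-right block.'' Your claimed replacement $\mu_i/(1-a)\to a\mu_i/(1-a)$ does not emerge from any shift of the two-block Hermitization either: adding a constant block $c\id_n$ to $Y$ yields eigenvalues satisfying $\lambda(\lambda-c)\in\spc(\tfrac{1}{n}XX^*)$, which is not a translation of the spectrum. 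The paper indicates that Lehner's own methods \cite[\S 5]{Leh97} can in principle be pushed through ``with some lengthy computations,'' but the clean formulas here come from a genuinely quadratic variational principle in \cite{EvH24} that does not factor through squaring a linear one.
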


\begin{proof}
Since the quantities to be computed are independent of the choice of basis 
in $\mathbb{R}^p$, we may assume without loss of generality that $\Sigma$ 
is a diagonal matrix, so that $X$ has independent entries $X_{ij}\sim 
N(0,\mu_i)$. We can also assume that $\mu_p>0$, since otherwise all 
quantities in the statement remain unchanged if we remove the zero rows.
With these simplifications, \cite[Corollary 1.5]{EvH24} yields
\begin{align*}
	\|
        \tfrac{1}{n}X_{\rm free}X_{\rm free}^*\|
	&=
	\inf_{\substack{v>0\\\frac{1}{n}\sum_k\mu_kv_k<1}}
	\max_{i\in[p]}\bigg\{
	\frac{1}{v_i} +
	\frac{\mu_i}{1-\frac{1}{n}\sum_k\mu_kv_k}
	\bigg\},
\end{align*}
and making the change of variables $x_i = 
\frac{\mu_iv_i}{\sum_k\mu_kv_k}$,
$a=\frac{1}{n}\sum_k\mu_kv_k$ yields the first equation in the statement. 
The other two equations follow analogously.
\end{proof}

In our setting, we have $\mu_1=1+\lambda$ and $\mu_2=\cdots=\mu_p=1$. We 
claim that it then suffices to minimize in the above 
variational principles only over vectors $x$ such that $x_2=\cdots=x_p$. 
That the latter yields an upper bound is obvious (as we are restricting 
the infimum to a smaller set). For the lower bound, we may use that 
$$
	\max_{2\le i\le n}\frac{1}{x_i}\ge \frac{1}{n-1}\sum_{i=2}^n 
	\frac{1}{x_i}\ge \frac{1}{\frac{1}{n-1}\sum_{i=2}^n x_i}
$$
by convexity to argue that for any vector $x$, the function being 
optimized can only decrease 
if we replace all $x_2,\ldots,x_n$ by their average.

\begin{lem}
\label{lem:sion}
In the setting of Theorem \ref{thm:scov}, we have
\begin{align*}
	&\big|\|\tfrac{1}{n}X_{\rm free}X_{\rm free}^*\| -
	\mathrm{S}(\lambda,\delta)\big|
	\le C(1+\lambda+\delta)\,n^{-\frac{1}{2}},
\\
	&\big|
	\lambda_{\rm max}(
	\tfrac{1}{n}X_{\rm free}X_{\rm free}^*-\Sigma\otimes\id)
	-\mathrm{H}_+(\lambda,\delta)\big|
        \le C(1+\lambda+\delta)\,n^{-\frac{1}{2}}.
\end{align*}
\end{lem}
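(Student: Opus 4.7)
The plan is to analyze each of the three variational formulas in Lemma \ref{lem:emre} by first applying Sion's minimax theorem to obtain a tractable dual representation, then computing the resulting optimization to leading order in $1/n$. For the norm, I would begin by writing the maximum over $i$ as a supremum over the simplex, $\max_i \xi_i = \sup_{y\in\Delta_p}\sum_i y_i\xi_i$. The resulting objective is convex in $x\in\Delta_p$ and linear in $y\in\Delta_p$, so Sion applies and I may swap $\inf_x$ with $\sup_y$. The inner infimum $\inf_{x\in\Delta_p}\sum_i c_i/x_i=(\sum_i\sqrt{c_i})^2$ (with $c_i=y_i\mu_i/(na)$) follows from Cauchy--Schwarz, yielding
$$
\|\tfrac{1}{n}X_{\rm free}X_{\rm free}^*\|=\inf_{0<a<1}\sup_{y\in\Delta_p}\frac{1}{na}\bigg(\sum_i\sqrt{y_i\mu_i}\bigg)^2+\frac{1}{1-a}\sum_i y_i\mu_i.
$$

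Since $\mu_1=1+\lambda$ and $\mu_i=1$ for $i\ge 2$, the symmetry argument used to reduce the primal variational principle applies equally to the dual: the supremum is attained with $y_2=\cdots=y_p$, so setting $y_1=s$, $y_i=(1-s)/(p-1)$ reduces to a single parameter $s\in[0,1]$. Expanding and tracking $n\to\infty$ shows the objective has the form
$$
g(s,a)=\frac{\delta(1-s)}{a}+\frac{1+s\lambda}{1-a}+E_n(s,a),
$$
where $E_n(s,a)$ collects the $O((1+\lambda+\delta)/\sqrt{n})$ corrections coming from $(p-1)/n-\delta=-1/n$, the linear term $s(2+\lambda)/(na)$, and the Cauchy--Schwarz cross-term $2\sqrt{(p-1)(1+\lambda)s(1-s)}/(na)$. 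The leading part is \emph{linear} in $s$, so $\sup_{s\in[0,1]}$ is attained at an endpoint: either $s=0$, giving $\delta/a+1/(1-a)$, or $s=1$, giving $(1+\lambda)/(1-a)$. The remaining minimization $\inf_a\max\{\cdot,\cdot\}$ is a direct calculus exercise. When $\lambda\le\sqrt\delta$, the first function dominates at its unconstrained minimizer $a=\sqrt{\delta}/(1+\sqrt{\delta})$ and gives $(1+\sqrt{\delta})^2$; when $\lambda>\sqrt{\delta}$, the two curves cross at $a=\delta/(\lambda+\delta)$, yielding $(1+\lambda)(1+\delta/\lambda)$. In both regimes this reproduces $\mathrm{S}(\lambda,\delta)$.

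The argument for $\lambda_{\rm max}(\tfrac{1}{n}X_{\rm free}X_{\rm free}^*-\Sigma\otimes\id)$ is identical in structure, with the second term replaced by $a\mu_i/(1-a)$. The endpoint values of the dual problem become $\delta/a+a/(1-a)$ and $a(1+\lambda)/(1-a)$; solving the quadratic $a^2\lambda+a\delta-\delta=0$ for the crossing point and simplifying recovers exactly the closed form $\mathrm{H}_+(\lambda,\delta)$, with the phase transition at $\lambda=1+\sqrt\delta$ corresponding to whether the minimum of the first function lies above or below the second at $a^*=\sqrt{\delta}/(1+\sqrt{\delta})$.

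The main obstacle is error control, not the optimization itself. To obtain the uniform $O((1+\lambda+\delta)n^{-1/2})$ bound, I must show that the minimizer $a^*$ stays in a compact subinterval of $(0,1)$ depending only on $\lambda,\delta$, so that the $1/(a(1-a))$ factors in $E_n$ stay bounded. This is transparent in each of the two regimes: the crossing point $\delta/(\lambda+\delta)$ is bounded away from $0$ and $1$ uniformly once $\lambda,\delta$ are bounded, and likewise for $\sqrt{\delta}/(1+\sqrt{\delta})$. One then argues that modifying $g$ by an additive perturbation of size $O((1+\lambda+\delta)/\sqrt{n})$ cannot change the value of $\inf_a\sup_s$ by more than this amount, since the perturbation is uniform on the relevant compact subset of parameters. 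Putting these pieces together gives both stated inequalities.
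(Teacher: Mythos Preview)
Your approach uses the same two ingredients as the paper---Sion's minimax theorem and the identity $\inf_{x\in\Delta_r}\sum_i c_i/x_i=(\sum_i\sqrt{c_i})^2$---but organizes them in a different order. The paper first reduces to a two-term maximum, rewrites $\max\{A,B\}=\sup_{0<\pi<1}\{\pi A+(1-\pi)B\}$, applies Sion on a three-simplex, and lands on the exact closed form
\[
\|\tfrac1n X_{\rm free}X_{\rm free}^*\|=\sup_{0<\pi<1}\Bigl(\sqrt{\tfrac{\pi(1+\lambda)}{n}}+\sqrt{\tfrac{(1-\pi)(p-1)}{n}}+\sqrt{1+\pi\lambda}\Bigr)^2,
\]
together with $\mathrm{S}(\lambda,\delta)=\sup_\pi(\sqrt{(1-\pi)\delta}+\sqrt{1+\pi\lambda})^2$. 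The error between these two suprema is then bounded \emph{uniformly} over the compact set $\pi\in[0,1]$ by $C(1+\lambda+\delta)n^{-1/2}$, with no need to locate any optimizer.

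Your route, by contrast, stops at $\inf_{0<a<1}\sup_{s\in[0,1]}[\,G(s,a)+E_n(s,a)\,]$ and tries to compare to $\inf_a\sup_s G(s,a)$ by controlling $E_n$ at the leading-order minimizer. The gap is in the error control: your $E_n(s,a)$ carries a factor $1/(na)$, and your justification (``the crossing point $\delta/(\lambda+\delta)$ is bounded away from $0$ and $1$ once $\lambda,\delta$ are bounded'') does not cover the lemma, which must hold for all $\lambda,\delta\ge 0$ with only the prefactor $1+\lambda+\delta$. When $\lambda>\sqrt\delta$ and $\lambda/\delta\to\infty$, the crossing point $a_c=\delta/(\delta+\lambda)\to 0$, and for instance the contribution $s(1+\lambda)/(na_c)\asymp (1+\lambda)\lambda/(n\delta)$ is \emph{not} $O((1+\lambda+\delta)n^{-1/2})$ in general.

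The fix is simple and brings you exactly to the paper's computation: apply Sion once more to swap $\inf_a$ and $\sup_s$ (legitimate since your objective is convex in $a$ and concave in $s$), then evaluate $\inf_a\{u(s)/a+v(s)/(1-a)\}=(\sqrt{u(s)}+\sqrt{v(s)})^2$. This yields precisely the paper's $\sup_s$ formula above, and the comparison to $\mathrm{S}(\lambda,\delta)$ (and analogously $\mathrm{H}_+(\lambda,\delta)$) becomes a uniform estimate over $s\in[0,1]$ using only $|\sqrt{(p-1)/n}-\sqrt\delta|\le n^{-1/2}$ and $\sqrt{(1+\lambda)/n}\le \sqrt{1+\lambda}\,n^{-1/2}$.
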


\begin{proof}
Restricting the infimum over $x$ in Lemma \ref{lem:emre} to
$x=(b,\frac{1-b}{p-1},\cdots,\frac{1-b}{p-1})$ yields
\begin{align*}
	\|\tfrac{1}{n}X_{\rm free}X_{\rm free}^*\| &=
	\inf_{0<a,b<1} 
	\max
	\bigg\{ \frac{1+\lambda}{nab} + \frac{1+\lambda}{1-a},
	\frac{p-1}{na(1-b)} + \frac{1}{1-a}\bigg\},
\\
	\lambda_{\rm max}(
	\tfrac{1}{n}X_{\rm free}X_{\rm free}^*-\Sigma\otimes\id) &=
	\inf_{0<a,b<1} 
	\max
	\bigg\{ \frac{1+\lambda}{nab} + \frac{(1+\lambda)a}{1-a},
	\frac{p-1}{na(1-b)} + \frac{a}{1-a}\bigg\}
\end{align*}
by the above observation. We can rewrite the first line as
\begin{align*}
	\|\tfrac{1}{n}X_{\rm free}X_{\rm free}^*\| &=
	\inf_{x\in\Delta_3}
	\sup_{0<\pi<1}
	\bigg\{
	\pi\bigg(
	\frac{1+\lambda}{nx_1} + \frac{1+\lambda}{x_3}\bigg) +
	(1-\pi)\bigg(
	\frac{p-1}{nx_2} + \frac{1}{x_3}\bigg)\bigg\}
\\
	&=
	\sup_{0<\pi<1}
	\bigg(
	\sqrt{\frac{\pi(1+\lambda)}{n}} +
	\sqrt{\frac{(1-\pi)(p-1)}{n}} +	
	\sqrt{1+\pi\lambda}
	\bigg)^2,
\end{align*}
where we used the Sion minimax theorem and
$\inf_{x\in\Delta_r}\sum_{i=1}^r \frac{a_i}{x_i} =
\big(\sum_{i=1}^r \sqrt{a_i}\big)^2$.
By exactly the same argument, the second line becomes
\begin{multline*}
	\lambda_{\rm max}(
	\tfrac{1}{n}X_{\rm free}X_{\rm free}^*-\Sigma\otimes\id) =
\\
	\sup_{0<\pi<1}
	\bigg\{
	\bigg(
	\sqrt{\frac{\pi(1+\lambda)}{n}} +
	\sqrt{\frac{(1-\pi)(p-1)}{n}} +	
	\sqrt{1+\pi\lambda} 
	\bigg)^2
	- (1+\pi\lambda)\bigg\},
\end{multline*}
where we used that $\frac{a}{1-a}=\frac{1}{1-a}-1$.

To conclude the proof, we note that
\begin{align*}
	\mathrm{S}(\lambda,\delta) &=
	\sup_{0<\pi<1}\big(\sqrt{(1-\pi)\delta}+
	\sqrt{1+\pi\lambda}\,\big)^2,\\
	\mathrm{H}_+(\lambda,\delta) &=
	\sup_{0<\pi<1}\big\{\big(\sqrt{(1-\pi)\delta}+
	\sqrt{1+\pi\lambda}\,\big)^2-
	(1+\pi\lambda)\big\}.
\end{align*}
The conclusion now follows readily.
\end{proof}

It remains to estimate the smallest eigenvalue of the centered case. The 
proof is similar to that of Lemma \ref{lem:sion}, but differs in the 
details of the computation.

\begin{lem}
\label{lem:sion2}
In the setting of Theorem \ref{thm:scov}, we have
$$
	\big|
	\lambda_{\rm min}(
	\tfrac{1}{n}X_{\rm free}X_{\rm free}^*-\Sigma\otimes\id)
	-\mathrm{H}_-(\lambda,\delta)\big|
        \le C(1+\lambda+\delta)\,n^{-\frac{1}{2}}.
$$
\end{lem}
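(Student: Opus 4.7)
The plan is to follow the template of Lemma~\ref{lem:sion} but applied to the $\lambda_{\rm min}$ formula of Lemma~\ref{lem:emre}. Using the same reduction that justified $x_2=\cdots=x_p$ in the proof of Lemma~\ref{lem:sion} (convexity of $1/x$ applied to the second through $p$th coordinates), we obtain
$$
-\lambda_{\rm min}(\tfrac{1}{n}X_{\rm free}X_{\rm free}^* - \Sigma\otimes\id)
= \inf_{a>0,\,0<b<1}\max\!\bigg\{\frac{1+\lambda}{nab}+\frac{a(1+\lambda)}{1+a},~\frac{p-1}{na(1-b)}+\frac{a}{1+a}\bigg\}.
$$
Writing the max as a sup over a convex combination indexed by $\pi\in(0,1)$ and invoking the Sion minimax theorem (the objective is convex in $(a,b)$ on a convex domain and linear in $\pi$), this becomes
$$
\sup_{0<\pi<1}\inf_{a>0,\,0<b<1}
\bigg\{\frac{\pi(1+\lambda)}{nab} + \frac{(1-\pi)(p-1)}{na(1-b)} + \frac{a(1+\pi\lambda)}{1+a}\bigg\}.
$$

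The inner optimization over $b$ is the same elementary identity $\inf_{x\in\Delta_2}\sum a_i/x_i=(\sum\sqrt{a_i})^2$ used in Lemma~\ref{lem:sion}, giving $\tfrac{1}{na}A_\pi$ with $A_\pi:=\big(\sqrt{\pi(1+\lambda)}+\sqrt{(1-\pi)(p-1)}\big)^2$. The optimization over $a>0$ must then be carried out directly: setting $B_\pi:=1+\pi\lambda$ and differentiating $a\mapsto \frac{A_\pi}{na}+\frac{aB_\pi}{1+a}$ yields an interior minimizer (when $nB_\pi>A_\pi$, which holds in the regime of interest since $B_\pi\ge 1$ and $A_\pi/n=O(\delta)$) at which the value equals $2\sqrt{A_\pi B_\pi/n}-A_\pi/n$. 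Replacing $(p-1)/n$ by $\delta$ introduces an error of order $n^{-1}$, absorbed into the claimed $O((1+\lambda+\delta)n^{-1/2})$ remainder. After these reductions,
$$
-\lambda_{\rm min}(\cdots) = \sup_{0<\pi<1}\!\big\{2\sqrt{(1-\pi)\delta(1+\pi\lambda)}+2\sqrt{\pi(1+\lambda)(1+\pi\lambda)/n}-(1-\pi)\delta\big\}+O(n^{-1/2}),
$$
where the $n$-dependent cross term contributes at most $O(n^{-1/2}(1+\lambda))$ uniformly in $\pi$.

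It remains to show that the deterministic sup equals $-\mathrm{H}_-(\lambda,\delta)$. Setting $h(\pi):=2\sqrt{(1-\pi)\delta(1+\pi\lambda)}-(1-\pi)\delta$, the critical equation $h'(\pi)=0$ reduces after squaring to the quadratic
$$
(4\lambda+\delta)y^2+2(1-\lambda)(4\lambda+\delta)y+4\lambda[(1-\lambda)^2-\delta]=0,\qquad y:=2\pi\lambda,
$$
whose discriminant simplifies miraculously to $\tfrac{4\delta(1+\lambda)^2}{4\lambda+\delta}$, yielding the relevant root $y^*=(\lambda-1)+\tfrac{(1+\lambda)\sqrt{\delta}}{\sqrt{4\lambda+\delta}}$. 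An interior maximizer $\pi^*\in(0,1)$ exists exactly when $\lambda>1-\sqrt{\delta}$; otherwise the sup is attained at the boundary $\pi=0$, giving $h(0)=2\sqrt{\delta}-\delta$, which matches $-\mathrm{H}_-$ in the first branch. When the interior maximizer exists, substituting back and simplifying (using $\sqrt{(1-\pi^*)\delta(1+\pi^*\lambda)}=1+y^*-\lambda=\tfrac{(1+\lambda)\sqrt{\delta}}{\sqrt{4\lambda+\delta}}$ and the corresponding expression for $(1-\pi^*)\delta$) produces
$$
h(\pi^*)=\frac{2(1+\lambda)\sqrt{\delta}}{\sqrt{4\lambda+\delta}+\sqrt{\delta}}=-\mathrm{H}_-(\lambda,\delta),
$$
after rationalizing $\sqrt{\delta+4\lambda}-\sqrt{\delta}$.

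The main obstacle is the last step: unlike for $\mathrm{S}$ and $\mathrm{H}_+$ in Lemma~\ref{lem:sion}, the optimization over $a\in(0,\infty)$ no longer collapses into a clean single-line identity, and the piecewise structure of $\mathrm{H}_-$ must be recovered from the discriminant condition $\lambda\gtrless 1-\sqrt{\delta}$ for the interior critical point to lie in $(0,1)$. Once that case analysis is set up, the algebraic simplification of $h(\pi^*)$ is routine but requires the specific rationalization above. Combining with the error estimates from the reduction steps and with Lemma~\ref{lem:quadapp} then delivers the stated bound.
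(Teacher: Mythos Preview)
Your approach is essentially the same as the paper's: reduce to $x_2=\cdots=x_p$, apply Sion, optimize over $b$ via the simplex identity, then over $a$, then over $\pi$. The paper omits the explicit calculus for $\sup_\pi$ and simply asserts the result, so your final discriminant computation is more detailed than what appears there.

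There is one genuine gap in your justification of the $a$-step. You claim the interior minimizer in $a>0$ exists because ``$B_\pi\ge 1$ and $A_\pi/n=O(\delta)$'', but $\delta=p/n$ is \emph{not} assumed small in Theorem~\ref{thm:scov}; when $\delta>1$ and $\pi$ is near $0$ one has $A_\pi/n\approx\delta>1=B_\pi$ and no interior critical point in $a$ exists. The paper handles this correctly by computing the full piecewise formula
\[
\varphi(u,v):=\inf_{a>0}\Big\{\tfrac{a}{1+a}u+\tfrac{v}{a}\Big\}
=\begin{cases}2\sqrt{uv}-v&\text{if }v<u,\\ u&\text{otherwise},\end{cases}
\]
and then observing that on the range $\pi\le\frac{\delta-1}{\delta+\lambda}$ (where $v\ge u$) one has $\varphi(1+\pi\lambda,(1-\pi)\delta)=1+\pi\lambda$, which is increasing in $\pi$, so the supremum over $\pi$ is attained on the complementary range where your formula $2\sqrt{uv}-v$ is valid. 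Your final answer is therefore correct, but the argument as written is incomplete: either insert the two-branch formula for $\varphi$ and the monotonicity observation, or note directly that $2\sqrt{uv}-v\le\varphi(u,v)$ for all $u,v>0$ with equality when $v\le u$, so that $\sup_\pi(2\sqrt{uv}-v)=\sup_\pi\varphi$ regardless.
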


\begin{proof}
As in Lemma \ref{lem:sion}, we may
restrict the infimum over $x$ in the last equation display of Lemma 
\ref{lem:emre} to $x=(b,\frac{1-b}{p-1},\cdots,\frac{1-b}{p-1})$. This 
yields
\begin{multline*}
	-\lambda_{\rm min}(
	\tfrac{1}{n}X_{\rm free}X_{\rm free}^*-\Sigma\otimes\id)
	= \\
	\sup_{0<\pi<1}
	\inf_{a>0} \inf_{0<b<1} 
	\bigg\{ 
	\pi\bigg(
	\frac{1+\lambda}{nab} + \frac{(1+\lambda)a}{1+a}\bigg)
	+ (1-\pi)\bigg(
	\frac{p-1}{na(1-b)} + \frac{a}{1+a}\bigg)\bigg\},
\end{multline*}
where we used the Sion minimax theorem as in the proof of Lemma 
\ref{lem:sion}.

Now note that for $u,v>0$, we have $\inf_{0<b<1}
\big(\frac{u}{b}+\frac{v}{1-b}\big) = (\sqrt{u}+\sqrt{v})^2$ and
$$
	\varphi(u,v) := 
	\inf_{a>0}\bigg\{\frac{a}{1+a}u+\frac{1}{a}v\bigg\}
	=
	\begin{cases}
	2\sqrt{uv}-v & \text{if }v<u,\\
	u & \text{otherwise}.
	\end{cases}
$$
We therefore obtain
\begin{multline*}
	-\lambda_{\rm min}(
	\tfrac{1}{n}X_{\rm free}X_{\rm free}^*-\Sigma\otimes\id)
	= \\
	\sup_{0<\pi<1}
	\varphi\bigg(
	1+\pi\lambda,
	\bigg(\sqrt{\frac{\pi(1+\lambda)}{n}} 
	+ \sqrt{\frac{(1-\pi)(p-1)}{n}}\bigg)^2
	\bigg).
\end{multline*}
Using the explicit formula for $\varphi$, we readily estimate
$$
	\bigg|\lambda_{\rm min}(
        \tfrac{1}{n}X_{\rm free}X_{\rm free}^*-\Sigma\otimes\id)
	+
	\sup_{0<\pi<1}\varphi(1+\pi\lambda,(1-\pi)\delta)
	\bigg|
	\le C(1+\lambda+\delta)n^{-\frac{1}{2}}.
$$
Now note that $(1-\pi)\delta<1+\pi\lambda$ holds if and only if
$\pi>\frac{\delta-1}{\delta+\lambda}$. Therefore
$$
	\sup_{0<\pi<1}\varphi(1+\pi\lambda,(1-\pi)\delta) =
	\sup_{\frac{\delta-1}{\delta+\lambda}\le\pi<1}
	\big\{
	2\sqrt{(1+\pi\lambda)(1-\pi)\delta} - 
	(1-\pi)\delta
	\big\},
$$
where we used that $\varphi(1+\pi\lambda,(1-\pi)\delta)$ is
increasing for $0<\pi<\frac{\delta-1}{\delta+\lambda}$.
The supremum on the right-hand side
equals $-\mathrm{H}_-(\lambda,\delta)$, concluding the proof.
\end{proof}

Combining Lemmas \ref{lem:quadapp}, \ref{lem:sion}, and \ref{lem:sion2} 
completes the proof of Theorem \ref{thm:scov}.

\SkipTocEntry\subsection*{Acknowledgments}

The authors thank Raphael Barboni, Charles Bordenave, Pravesh Kothari, 
Florent Krzakala, and Lenka Zdeborov{\'a} for useful discussions, and an 
anonymous referee for several helpful suggestions. GC was supported in 
part by the MUR Excellence Department Project MatMod@TOV awarded to the 
Department of Mathematics, University of Rome Tor Vergata, CUP 
E83C18000100006. RvH was supported in part by NSF grants DMS-2054565 and 
DMS-2347954.

\bibliographystyle{abbrv}
\bibliography{ref}

\end{document}